\documentclass[10pt, a4paper]{article}
\setlength{\textwidth}{7in}
\setlength{\marginparwidth}{-0.5in}
\setlength{\hoffset}{-0.2in}
\setlength{\oddsidemargin}{-0.1in}
\setlength{\voffset}{-0.8in}
\setlength{\textheight}{690pt}

\usepackage{graphicx}
\usepackage{color}
\usepackage{amsmath}
\usepackage{amssymb}
\usepackage{amscd}
\usepackage{bbm}
\usepackage{tikz}
\usepackage{framed}
\usetikzlibrary{patterns}
\usepackage{hyperref}
\hypersetup{
    bookmarks=true,         
    colorlinks=true,       
    linkcolor=red,          
    citecolor=green,        
    filecolor=magenta,      
    urlcolor=cyan           
}
\usepackage[most]{tcolorbox}
\usepackage[utf8]{inputenc}
\usepackage{amsthm}
\usepackage{bbm} 
\usepackage{glossaries}
\usepackage[linesnumbered,lined,boxed,commentsnumbered]{algorithm2e}

\usepackage{marginnote}

\newtheorem{Theorem}{Theorem}
\newtheorem{Assumption}{Assumption}
\newtheorem{Definition}{Definition}
\newtheorem{Lemma}{Lemma}
\newtheorem{Proposition}{Proposition}
\newtheorem{Corollary}{Corollary}
\newtheorem{Remark}{Remark}
\newtheorem{Example}{Example}



\newcounter{BigConst}                     
\newcommand{\nC}{                   
    \refstepcounter{BigConst}             
    \ensuremath{C_{\theBigConst}}
    }
\newcommand{\oC}[1]{\ensuremath{C_{\ref*{#1}}}}  

\newcounter{SmallConst}                     
\newcommand{\nc}{                   
    \refstepcounter{SmallConst}             
    \ensuremath{c_{\theSmallConst}}
    }
\newcommand{\oc}[1]{\ensuremath{c_{\ref*{#1}}}}  

\newcounter{gamma}                     


\newcounter{kappa}                     


\newcounter{delta}                     


\newcounter{theta}                     


\newcounter{L}                     


\newcounter{eps}                     

\newcommand{\inr}[1]{\bigl< #1 \bigr>}

\newcommand{\norm}[1]{\left\|#1\right\|}%
\newcommand{\vertiii}[1]{{\left\vert\kern-0.25ex\left\vert\kern-0.25ex\left\vert #1 
    \right\vert\kern-0.25ex\right\vert\kern-0.25ex\right\vert}}

\newcommand{\vertiiii}[1]{{\left\vert\kern-0.25ex\left\vert\kern-0.25ex\left\vert\kern-0.25ex\left\vert #1 
    \right\vert\kern-0.25ex\right\vert\kern-0.25ex\right\vert\kern-0.25ex\right\vert}}

\newcommand{\beginproof}{{\bf Proof. {\hspace{0.2cm}}}}
\def \endproof
{{\mbox{}\nolinebreak\hfill\rule{2mm}{2mm}\par\medbreak}}


\DeclareMathOperator*{\argmin}{argmin}
\DeclareMathOperator*{\supp}{supp}

\DeclareMathOperator{\Span}{span}

\DeclareMathOperator*{\rank}{rank}

\DeclareMathOperator*{\diam}{diam}


\def\ds1{\textrm{1\kern-0.25emI}} 

\newcommand{\1}{\ensuremath{\mathbbm{1}}}


%




\newcommand \cC{{\cal C}}

\newcommand \cF{{\cal F}}

\newcommand \cH{{\cal H}}

\newcommand \cN{{\cal N}}

\newcommand \cR{{\cal R}}

\newcommand \cX{{\cal X}}


\newcommand \bC{{\mathbb C}}

\newcommand \bE{{\mathbb E}}

\newcommand \bN{{\mathbb N}}

\newcommand \bP{{\mathbb P}}

\newcommand \bR{{\mathbb R}}

\newcommand \bX{{\mathbb X}}

\newcommand{\bbeta}{{\boldsymbol{\beta}}}



\newcommand \fL{{\mathfrak L}}

\newcommand \fR{{\mathfrak R}}

\newcommand \fU{{\mathfrak U}}


\newcommand{\vf}{{\boldsymbol{f}}}

\newcommand{\va}{{\boldsymbol{a}}}
\newcommand{\vb}{{\boldsymbol{b}}}

\newcommand{\ve}{{\boldsymbol{e}}}

\newcommand{\vlambda}{{\boldsymbol{\lambda}}}

\newcommand{\vv}{{\boldsymbol{v}}}

\newcommand{\vzero}{{\boldsymbol{0}}}
\newcommand{\vx}{{\boldsymbol{x}}}
\newcommand{\vz}{{\boldsymbol{z}}}

\newcommand{\vu}{{\boldsymbol{u}}}
\newcommand{\vw}{{\boldsymbol{w}}}

\newcommand{\vm}{{\boldsymbol{m}}}

\newcommand{\vy}{{\boldsymbol{y}}}

\newcommand{\vdelta}{{\boldsymbol{\delta}}}



\newcommand{\bxi}{{\boldsymbol{\xi}}}

\DeclareMathOperator*{\Tr}{Tr}

\newcommand{\mysymbol}[3]{%
\newglossaryentry{#1}{%
      name={\ensuremath{#2}},%
      text={\ensuremath{#2}},%
      description={#3},%
      sort={#1}%
    }%
\expandafter\newcommand\expandafter{\csname smb#1\endcsname}{\gls{#1}}%
\expandafter\newcommand\expandafter{\csname #1\endcsname}{\ensuremath{#2}}%
}

\newcommand{\op}{\text{op}}
\usepackage[english]{babel}
\usepackage{enumitem}
\usepackage{pgfplots}
\usepackage[mathscr]{euscript}
\pgfplotsset{compat=1.15}

\newif\ifEnon
\Enontrue
\ifEnon
\newcommand{\Guillaume}[1]{\color{red}{[{\bf G:} #1}]\color{black}\,}
\newcommand{\Zong}[1]{\color{blue}{[{\bf Z:} #1}]\color{black}\,}
\else
\newcommand\Guillaume[1]{}
\newcommand{\Zong}[1]{}
\fi

\begin{document}
\title{Sharp convergence rates for Spectral methods via the  feature space decomposition method}
\author{Guillaume Lecu{\'e}, Zhifan Li, and Zong Shang  \\ email: \href{mailto:lecue@essec.edu}{lecue@essec.edu} email: \href{mailto:zhifanli@bimsa.cn}{zhifanli@bimsa.cn}, email: \href{mailto:zong.shang@ensae.fr}{zong.shang@ensae.fr} \\ESSEC, business school, 3 avenue Bernard Hirsch, 95021 Cergy-Pontoise, France.\\
Beijing Institute of Mathematical Sciences and Applications, Beijing, China.\\
CREST-ENSAE, Institut Polytechnique de Paris,\\ 5, avenue Henry Le Chatelier 91120 Palaiseau, France.}
\maketitle

\begin{abstract}
    In this paper, we apply the Feature Space Decomposition (FSD) method developed in \cite{lecue_geometrical_2024,gavrilopoulos_geometrical_2025,lecueGeneralizationErrorMean2025,adamczakFeatureSpaceDecompositioninpreperation} to obtain, under fairly general conditions, matching upper and lower bounds for the population excess risk of spectral methods in linear regression under the squared loss, for every covariance and every signal. This result enables us, for a given linear regression problem, to define a pre-order on the set of spectral methods according to their convergence rates, thereby characterizing which spectral algorithm is superior for that specific problem. Furthermore, this allows us to generalize the saturation effect proposed in inverse problems and to provide necessary and sufficient conditions for its occurrence. Our method also shows that, under broad conditions, any spectral algorithm  cannot overcome the barrier of the information exponent in problems such as single-index learning. 
\end{abstract}

This paper is the third one in the series on the Feature Space Decomposition following \cite{lecue_geometrical_2024}, \cite{gavrilopoulos_geometrical_2025} and the up-coming ones  \cite{lecueGeneralizationErrorMean2025,adamczakFeatureSpaceDecompositioninpreperation}. The position of this paper within the FSD series is as follows: by studying spectral methods and the saturation effect, it illustrates how the FSD method improves the analysis of the population excess risk for these classical estimators as it did previously for minimum norm interpolant estimators as well as for ridge regression.




\section{Introduction}\label{sec:intro}

We are concerned with a supervised regression problem where we observe a vector of output $\vy\in\bR^N$ and a design matrix $\bX\in\bR^{N\times p}$ such that
\begin{equation*}
    \vy = \bX\bbeta^*+\bxi
\end{equation*}where $\bX = [X_1|\cdots|X_N]^\top\in\bR^{N\times p}$, $\bbeta^*\in\bR^p$ and $\bxi=(\xi_i)_{i=1}^N$. We assume that $X_1, \ldots, X_N$ are $N$ i.i.d. vectors in $\bR^p$ with probability distribution denoted by $\mu$ and $\xi_1, \ldots, \xi_N$ are $N$ i.i.d. centered Gaussian random variable with variance $\sigma_\xi^2$ independent of the $X_i$'s. Let $\Sigma = \bE[X\otimes X]:\vv\in\bR^p\mapsto \bE[\langle\vv,X\rangle X]\in\bR^p$ and  $\Sigma = \sum_{j=1}^p\sigma_j\ve_j\otimes\ve_j$ be the spectral decomposition of $\Sigma$ such that $ \sigma_1\geq\sigma_2\geq \cdots\geq\sigma_p>0$.  Given a linear regression problem characterized by a triple $(\Sigma, \bbeta^*, \sigma_\xi)$, our goal is to obtain sharp convergence rates for the estimation error $\|\Sigma^{1/2}(\hat\bbeta-\bbeta^*)\|_2^2$ of estimators $\hat\bbeta$ in a large class of spectral methods.
 


\paragraph{Spectral Methods.} We now introduce the family of estimators of interest in this paper, namely, the spectral methods. We denote $\hat\Sigma = \frac{1}{N}\bX^\top\bX = \frac{1}{N}\sum_{i=1}^N X_i\otimes X_i$ the empirical version of  $\Sigma$.

\begin{Definition}[Spectral method]\label{def:spec_algo}
    Let $(\varphi_t)_{t\geq1}$ be a family of real-valued functions defined on $\bR^+$ call the \it{filter functions}. For all $t\geq1$, we define the \it{spectral method} associated with $\varphi_t$ by:
\begin{equation}\label{eq:def_hat_bbeta}
    \hat\bbeta:\vy\in\bR^N\mapsto \hat\bbeta(\vy) = \frac{1}{N}\varphi_t(\hat\Sigma)\bX^\top\vy = \frac{1}{N}\bX^\top\varphi_t(\frac{1}{N}\bX\bX^\top)\vy
\end{equation} where $\varphi_t(\hat\Sigma)$ and $\varphi_t(\frac{1}{N}\bX\bX^\top)$ are defined via the spectral calculus. When there is no ambiguity, we abbreviate $\hat\bbeta(\vy)$ as $\hat\bbeta$.
\end{Definition} 

A spectral method is uniquely characterized by its filter function. 
There is also a compagnion function to a given filter function that plays an important role regarding the statistical properties of the associated spectral method: it is called the {\it{residual function}} defined for all $t\geq1$ as $\psi_t:x\in\bR^+\to 1-x\varphi_t(x)$. Spectral methods encapsulte several important estimators and algorithms. We are now listing several of them.

\begin{Example}\label{example:spec_algo}
 {\bf{Gradient flow}} with respect to the square loss and linear parameterization initialized at $\boldsymbol{0}$: that is, the solution of the ODE $\dot{\bbeta_t} = -(\nabla \frac{1}{2N}\|\vy-\bX\cdot\|_2^2)(\bbeta_t)$ for any $t\geq1$, starting from $\bbeta_1=\vzero$. Then $\hat\bbeta = \bbeta_t$ is the spectral method associated with the filter and residual functions
        \begin{equation}\label{eq:def_GF}
            \varphi_t:x\in \bR^+ \mapsto\left\{\begin{array}{cc}
             \frac{1-\exp(-tx)}{x} & \mbox{if } x>0\\
             t & \mbox{if } x=0   
            \end{array}\right. \mbox{ and }
            \psi_t:x\in \bR^+ \mapsto \exp(-tx).
    \end{equation}

{\bf{Ridge regression}} with regularization parameter $t^{-1}$, i.e., $\hat\bbeta = \frac{1}{N}(\frac{1}{N}\bX^\top\bX+t^{-1}I_p)^{-1}\bX^\top\vy$,  is the spectral method for the choice of filter and associated residual functions
    \begin{align}\label{eq:def_ridge}
        \varphi_t(x) = (t^{-1}+x)^{-1}\mbox{ and }\psi_t(x) = \frac{1}{xt+1}.
    \end{align} 

     {\bf{Gradient descent}} starting at $\bbeta_1=\vzero$ with step-size $0<\eta<1/8$ and at step $t\in \bN^*$ for minimizing $\bbeta\mapsto \frac{1}{2N}\|\vy-\bX\bbeta\|_2^2$, i.e. $\bbeta_{t} = \bbeta_{t-1} - \eta\nabla(\frac{1}{2N}\|\vy-\bX\cdot\|_2^2)(\bbeta_{t-1})$, is the spectral method for the filter function $\varphi_t(x) = (1-(1-\eta x)^t)/x$ and its associated residual function $\psi_t(x)=(1-\eta x)^t$. 

     {\bf{The heavy-ball method, \cite[Section 3.2.1]{Solr-KOHA-OAI-TEST:19722} and Nesterov's acceleration, \cite{nesterovMethodSolvingConvex1983}}} with variable parameters are also examples of spectral algorithms (see \cite{paglianaImplicitRegularizationAccelerated2019}). Their residual functions admit recursive definitions with no known closed-form expressions.

    {\bf{Principle Components Regression (PCR)}} estimator is $\hat\bbeta\in \argmin(\|\vy - \bX\bbeta\|_2^2:\, \bbeta\in \hat V_{\leq k})$ where $\hat V_{\leq k}$ is the subspace spanned by the first $k$ eigenvectors of $\hat\Sigma$. PCR equals to the spectral method with tuning parameter $\hat\sigma_{k+1}\leq bt^{-1}<\hat\sigma_k$  - where $\hat\sigma_k$ and $\hat\sigma_{k+1}$ are the $k$-th and $k+1$-th largest eigenvalue of $\hat\Sigma$ - for the filter function and its associated residual function given for some constant $b>0$ by
    \begin{align*}
        \varphi_t:x\in\bR^+\mapsto \frac{1}{x}\1( bt^{-1}\leq x)\mbox{ and } \psi_t(x) = \1(bt^{-1}>x).
    \end{align*}
\end{Example}

We are now describing the class of spectral methods  considered in this work.

\begin{Assumption}\label{ass:filter_fct} The family of filter functions $(\varphi_t)_{t\geq1}$ is such that for all $t\geq1$, $\varphi_t$ has an holomorphic extension to an open subset of $\bC$ containing the contour $\cC_t$ defined in Section~\ref{sec:choice_contour}. Furthermore,  there are two absolute constants $0\leq \nc\label{c_filter_1} \leq \nC\label{C_filter_1}$ such that for all $t\geq1$ and all $x\in[0,8]$:
\begin{equation}\label{eq:def_filter_fct}
    \frac{\oc{c_filter_1}}{x+t^{-1}}\leq \varphi_t(x) \leq \frac{\oC{C_filter_1}}{x+t^{-1}}.
\end{equation}  
\end{Assumption}

Filter functions of gradient flow, ridge regression and gradient descent all satisfy Assumption~\ref{ass:filter_fct}. Indeed, for gradient flow, \eqref{eq:def_filter_fct} holds for all $x\geq0$ if one take $c_1=1$ and $C_1=2$  and the same does for ridge regression with $c_1=C_1=1$. For gradient descent, \eqref{eq:def_filter_fct} holds only for $x\in[0,8]$ and for $c_1=\eta/2$ and $C_1=2$. In Assumption~\ref{ass:filter_fct}, we only ask \eqref{eq:def_filter_fct} to be true for $x\in[0,8]$ because later we will apply this inequality only on an event where both spectra of $\Sigma$ and $\hat\Sigma$ are in $[0,8]$.

We assume the existence of an holomorphic extension for technical reason related to the residual theorem, it however discards the PCR estimator for which we develop a special analysis. Regarding the assumption on the shape of the residual functions in \eqref{eq:def_filter_fct}: we ask for the residual function to be equivalent to the one of the ridge estimator with regularization parameter $t^{-1}$.  However, the family of spectral methods satisfying this assumption is pretty wide. We also note that \eqref{eq:def_filter_fct} is weaker than the classical assumptions used in the field of spectral methods that we recall below in Remark~\ref{assumption:classical}.


\begin{Remark}[Classical assumptions]\label{assumption:classical}
In several works \cite{blanchard_lepskii_2019}, the filter function is assumed to satisfy the following: there exist absolute constants  $\tau\in\bN_+\cup\{\infty\}$, $\nC\label{C_filter_2} = \oC{C_filter_2}(\tau)\geq 1$ such that
    \begin{enumerate}
        \item for any $0\leq\alpha\leq 1$ and any $t\geq1$, $\sup(x^\alpha\varphi_t(x):\, 0\leq x\leq 1)\leq \oC{C_filter_1}t^{1-\alpha}$;
        \item for any $t\geq1$, $\sup(|\psi_t(x)|(x+t^{-1})^\tau:\, 0\leq x\leq 1)\leq \oC{C_filter_2}t^{-\tau}$;
        \item for any $0\leq x\leq 1$ and $1<t<\infty$, $\oc{c_filter_1}\leq (x+t^{-1})\varphi_t(x)$.
    \end{enumerate}It is straightforward to see that \textit{item 1.} for $\alpha=0$ and $\alpha=1$ together with \textit{item 3.} implies \eqref{eq:def_filter_fct}.
\end{Remark}
 

The study of spectral methods, as far as we know, originated with Tikhonov regularization \cite{Engl_Hanke_Neubauer_1996} (ridge regression) and Landweber regularization (gradient descent) for (ill-posed) statistical inverse problems. The classical analysis of the statistical properties of spectral methods is generally based on regression problems in Sobolev spaces i.e. under regularity assumptions. Specifically, one assumes that $\Sigma$ exhibits power decay, i.e., there exists $\alpha > 1$ such that $\sigma_j \sim j^{-\alpha}$ for all $j$, and that there exists $s \geq 1$ such that $\|\Sigma^{\frac{1-s}{2}} \bbeta^*\|_2$ is bounded, known as the Hölder source condition. Under this framework, the properties of spectral methods are well understood; to name a few, \cite{smale_learning_2007,yao_early_2007,bauer_regularization_2007,lo_gerfo_spectral_2008,blanchard_kernel_2016,pillaud-vivien_statistical_2018,paglianaImplicitRegularizationAccelerated2019,blanchard_lepskii_2019,zhang_optimality_2023,li_generalization_2024}.  

However, beyond this setting, the statistical properties of spectral methods are not yet fully understood—even though such algorithms have existed for almost three decades \cite{Engl_Hanke_Neubauer_1996,engl_regularization_2000}. We emphasize that in modern mathematical statistics, particularly in problems motivated by machine learning, a linear regression setup often does not satisfy the above Hölder source condition. In fact, in such problems, $\Sigma$ and $\bbeta^*$ may follow arbitrary patterns (we will present an example in Section~\ref{sec:no_feature}). Thus, it is genuinely necessary to understand the statistical properties of spectral methods for arbitrary linear regression problems. 

\begin{framed}
    \centering
    Our first objective is, for a given linear regression problem $(\Sigma,\bbeta^*,\sigma_\xi)$, to obtain matching high-probability upper and lower bounds for $\|\Sigma^{1/2}(\hat\bbeta - \bbeta^*)\|_2^2$ where $\hat\bbeta$ is a spectral method whose filter function satisfy Assumption~\ref{ass:filter_fct}. Our second objective is to show how the the Feature Space Decomposition method can be used on spectral methods to achieve this goal.
\end{framed}


\subsection{Structure of this paper and Notation}
In Section~\ref{sec:FSD}, we introduce the Feature Space Decomposition. In Section~\ref{sec:main_results}, we present our main results on spectral methods. In Section~\ref{sec:partial_order}, we introduce a pre-order on the set of spectral methods based on their convergence rates. In this section, we also provide the definition of the generalized saturation effect. In Section~\ref{sec:conclusions}, we summarize the main contributions of this paper and propose several directions for future research. The proofs of all results are in Section~7 and beyond.

We use $a\lesssim b$ (respectively $a\gtrsim b$) to represent the fact that there exists an absolute constant $C$ such that $a\leq Cb$ ($a>Cb$). We use $a\sim b$ if $a\lesssim b$ and $b\lesssim a$. We say $a\lesssim_K b$ if $C=C(K)$. For a probability measure $\mu$, we write $\mu^{\otimes N}$ as its $N$-fold tensor product. We denote the $\ell_2\to\ell_2$ operator norm of a matrix by $\|\cdot\|_{\op}$ and by $\|\cdot\|_{HS}$ its Hilbert-Schmidt norm.

\section{The Feature Space Decomposition method}\label{sec:FSD}
In this section, we present the Feature Space Decomposition (FSD): a method for analyzing the population excess risk of an estimator. To that end, we consider a general scalar supervised regression problem where we aim at predicting an output $Y$ based on some input vector $X$ given $N$ examples $(X_i,Y_i)_{i=1}^N$ of this input/output relationship. Let \(\cF \subset L^2(\mu)\) be a sub-space called the model or the feature space such that $Y=f^*(X)+\xi$ for some centered noise $\xi$ that is independent of $X$ and for some unknown function $f^*\in \cF$. We are looking for a predictor $\hat f \in\cF$ with a small excess squared risk $\bE (Y-\hat f(X))^2 - \bE (Y-f^*(X))^2 = \norm{\hat f - f^*}_{L^2(\mu)}^2$.

At the heart of the FSD method is an orthogonal decomposition $V_J\oplus V_{J^c} = \cF$ in $L^2(\mu)$ of the feature space. In the FSD approach, we refer to $V_J$ as the 'estimation part' of $\cF$ and to $V_{J^c}$ as the 'noise absorption part' of $\cF$. We may justifying this terminology as follows. Given an estimator $\hat f\in\cF$, we decompose it as the sum of its two projections: $\hat f = \hat f_J + \hat f_{J^c}$ - where $P_J$ and $P_{J^c}$ are orthogonal projection operators onto $V_J$ and $V_{J^c}$ and for $f\in\cF$, $f_J = P_Jf$ and $f_{J^c} = P_{J^c}f$. Then the excess risk decomposition used in the FSD method is 
\begin{align}\label{eq:risk_decomposition_squared}
     \norm{\hat f - f^*}_{L^2(\mu)}^2 \begin{cases}
                =\norm{\hat f_J - f_J^*}_{L^2(\mu)}^2 + \norm{\hat f_{J^c} - f^*_{J^c}}_{L^2(\mu)}^2, & \mbox{ if }V_J\perp V_{J^c}\mbox{ in }L^2(\mu),\\
                \leq 2\norm{\hat f_J - f_J^*}_{L^2(\mu)}^2 + 2\norm{\hat f_{J^c} - f^*_{J^c}}_{L^2(\mu)}^2, & \mbox{ otherwise.}
            \end{cases}
\end{align}Regardless of whether $V_J$ is orthogonal to $V_{J^c}$, for the upper bound of $\|\hat f_{J^c}-f_{J^c}^*\|_{L^2(\mu)}^2$, we apply the triangle inequality to obtain $\|\hat f_{J^c}-f_{J^c}^*\|_{L^2(\mu)}^2 \leq 2\|\hat f_{J^c}\|_{L^2(\mu)}^2 + 2\|f_{J^c}^*\|_{L^2(\mu)}^2$ for the statistical analysis of $\hat f$.
In particular, the triangle inequality used in the second term says that we do not expect $\hat f_{J^c}$ to estimate $f^*_{J^c}$; whereas we expect $\hat f_J$ to estimate $f^*_J$, hence, the name 'estimation part' for $V_J$. In our previous applications of the FSD method to minimum norm interpolant estimators \cite{lecue_geometrical_2024} and ridge estimators \cite{gavrilopoulos_geometrical_2025}, it appears that $\hat f_{J^c}$ was used to either interpolate the noise or estimate it as long as we may look at $f^*_{J^c}(X)$ as been part of the noise. This explain the name 'noise absorption part' for $V_{J^c}$.  Of course this picture coming from the excess risk decomposition \eqref{eq:risk_decomposition_squared} will work only if we choose correctly the feature space decomposition $\cF = V_J\oplus V_{J^c}$.



In the context of the linear model $Y=\inr{\bbeta^*, X} + \xi$ and for spectral methods and minimum $\ell_2$-norm interpolant estimator, it appears that the optimal choice for $V_J$ and $V_{J^c}$ are two orthogonal eigenspaces of $\Sigma$:
\begin{equation}\label{eq:VJ}
   V_J = \Span(\ve_j: j\leq k^*) \mbox{ and  } V_{J^c} = \Span(\ve_j: j\geq k^*+1)
\end{equation}for some optimal choice of $k^*$ that we will call later the \textit{estimation dimension}. In particular, the FSD defined in \eqref{eq:VJ} satisfies $V_J \perp V_{J^c}$ in $L^2(\mu)$.
Furthermore, the estimator will have good estimation property when the signal is well aligned with $\Sigma$ because in that case \eqref{eq:risk_decomposition_squared} reduces to the following inequality: 
\begin{equation}\label{eq:excess_risk_decomp_v0}
  \|\Sigma^{1/2}(\hat \bbeta - \bbeta^*)\|_2 \leq \|\Sigma_J^{1/2}(\hat \bbeta_J - \bbeta^*_J)\|_2 + \|\Sigma_{J^c}^{1/2}\hat \bbeta_{J^c}\|_2 + \|\Sigma_{J^c}^{1/2}\bbeta_{J^c}^*\|_2
\end{equation} where $\Sigma_J = \bE[X_J\otimes X_J]$ and $\Sigma_{J^c} = \bE[X_{J^c}\otimes X_{J^c}]$. Hence, $\|\Sigma_{J^c}^{1/2}\bbeta_{J^c}^*\|_2^2$ is part of the estimator error of the estimator. To make this term small we need $\bbeta^*$ to have most of its 'energy supported' on the first $k^*$ eigenvectors of $\Sigma$; that is what we call \textit{signal alignment}.

\paragraph{FSD as an Analytical Method.} FSD is a mathematical method for analyzing the excess risk of estimators. That is to say, statisticians have no direct control over the choice of $V_J$ and $V_{J^c}$—because the estimator itself does not take $V_J$ or $V_{J^c}$ as parameters. Therefore, we assert that the decomposition of $\cF$ into two subspaces is performed autonomously by the estimator, not by the statistician. Consequently, when statisticians execute this statistical algorithm, this selection occurs as a black-box operation. For estimators with tunable parameters, given a parameter set by the statistician, the estimator automatically determines the decomposition based on both this parameter and the regression problem itself. Different estimators employ distinct decomposition strategies. For instance, the ridge regression studied in \cite{gavrilopoulos_geometrical_2025} decomposes the feature space $\cF$ solely based on the spectrum of $\Sigma$, whereas the basis pursuit studied in \cite{adamczakFeatureSpaceDecompositioninpreperation} decomposes the feature space $\cF$ depending on the alignment between $\bbeta^*$ and the eigenvectors of $\Sigma$—hence possessing the sparsity recovery property.


In summary, this decomposition of $\cF$ affects the bound we obtain for $\|\hat f-f^*\|_{L^2(\mu)}$. Thus, we should understand FSD as follows: For each $(V_J,V_{J^c})$, we obtain a bound $r(V_J,V_{J^c})$ such that with high probability $\|\hat f-f^*\|_{L^2(\mu)} \leq r(V_J, V_{J^c})$. However, since the decomposition is not unique, there must exist an optimal decomposition $(V_{J_*},V_{J_*^c})$ among all feasible decompositions, yielding $\|\hat f-f^*\|_{L^2(\mu)}\leq r(V_{J_*},V_{J_*^c}) = \min\{r(V_J,V_{J^c}) : (V_J,V_{J^c})\}$, where this optimal decomposition is selected autonomously by the estimator $\hat f$. Our result holds for all feasible decompositions and consequently applies to this optimal decomposition $(V_{J_*}, V_{J_*^c})$ as well. The final step being to show that the rate $r(V_{J_*},V_{J_*^c})$ is optimal up to absolute constant by proving a matching lower bound. That is, one needs to show that for the decomposition $(V_{J_*},V_{J_*^c})$, there exists an absolute constant $c>0$ such that, with high probability or in expectation, $\|\hat f - f^*\|_{L^2(\mu)} \geq c\, r(V_{J_*},V_{J_*^c})$. This would demonstrate that the decomposition indeed captures the essence of the population excess risk of $\hat f$. Note that this lower bound differs from a minimax lower bound. We emphasize that the present lower bound concerns a given regression problem $(f^*,\mu,\sigma_\xi)$ and a fixed estimator $\hat f$, providing a lower bound on its population excess risk, whereas a minimax lower bound is a worst case analysis, over a family of regression problems $\{(f^*,\mu,\sigma_\xi)\}$ and a class of estimators $\{\hat f\}$, the minimal possible population excess risk attainable among them. Our bound depends on all three parameters $(f^*,\mu,\sigma_\xi)$ of a regression problem showing how the optimal rate depend on the interaction between the signal $f^*$ and $\Sigma$.

\paragraph{FSD and Alignment Property.}
In this paragraph, we consider the optimal feature space decomposition \((V_{J_*}, V_{J_*^c})\) of the feature space \(\cF\) induced by \(\hat f\). From the previous paragraph, we know that this decomposition characterizes the estimation ability of the estimator \(\hat f\)---that is, the estimation of \(f_{J_*}^*\) by \(\hat f_{J_*}\).  We define alignment property as follows.

\begin{Definition}[Alignment Property]\label{def:alignment_property}
    Let $(\mu,f^*,\xi)$ be a supervised regression problem, let $\cH$ be an RKHS, and let $\hat f_N$ be an estimator taking values in $\cH$. Let $T_K:L^2(\mu)\to L^2(\mu)$ be its integral operator and $\{f_j\}_{j=1}^\infty$ be its eigenfunctions. Let $(V_{J_*},V_{J_*^c})$ be any optimal FSD. Let $f_{\cH}$ be any element in $L^2(\mu)$. Given a sequence of non-negative real numbers $\{\gamma_j\}_{j\in J_*^c}$, a real number $0<\delta<1$ and a strictly increasing function $\Phi:\bR_+\to\bR_+$, we say that $\hat f_N$ satisfies the $(\Phi,f_\cH,J_*,\delta)$ alignment property with respect to $\{\gamma_j\}_{j\in J_*^c}$ if with probability at least $1-\delta$, there holds $\|\hat f_N-f_{\cH}\|_{L^2(\mu)}^2\leq\Phi(\sum_{j\in J_*^c}\gamma_j\langle f_\cH,f_j\rangle^2)$, where $\langle f_\cH,f_j\rangle$ is the inner product in $L^2(\mu)$ between $f_\cH$ and $f_j$. For convenience, we say that $\hat f_N$ satisfies the $(f_\cH,V_{J_*})$-alignment property.
\end{Definition}

For example, it is proved in \cite{gavrilopoulos_geometrical_2025} that there exist some absolute constants \(0 < b< 1\), such that for the ridge regression \eqref{eq:def_ridge} with tuning parameter $t^{-1}$, we have \(V_{J_*} = \mathrm{Span}(\ve_j : j \le k^{**})\), where
\begin{align}\label{eq:def_estimation_dimension_ridge}
k^{**}= \min\bigl\{ k\in[p]: \sigma_{k+1} N \le b\, (\Tr(\Sigma_{k+1:p})+Nt^{-1}) \bigr\}.
\end{align} 
In this paper, we prove that, under mild assumptions, spectral algorithm are all sharing the same optimal estimation dimension $k^*$ (which is equivalent to $k^{**}$ under our assumptions) and have the alignment property.

\paragraph{FSD method applied for spectral algorithm.} In contrast to the self-regularization techniques employed in \cite{lecue_geometrical_2024,gavrilopoulos_geometrical_2025,adamczakFeatureSpaceDecompositioninpreperation} for the study of ridge regression and the minimum norm interpolant estimator, our setting allows for a closed-form solution of the projections of the spectral method (see \eqref{eq:def_hat_bbeta}). Consequently, rather than expressing $\hat\bbeta$ as the solution to a convex optimization problem as in \cite{lecue_geometrical_2024,gavrilopoulos_geometrical_2025,adamczakFeatureSpaceDecompositioninpreperation}, we directly decompose its bias and variance components over the two mutually orthogonal subspaces $V_J$ and $V_{J^c}$.
Therefore, the FSD method adopted in our analysis constitutes an ``estimation-noise absorption'' decomposition that goes beyond the standard bias-variance framework. Specifically, this approach primarily gives rise to five terms: the bias and variance of both $\hat\bbeta_J$ and  $\hat\bbeta_{J^c}$ as well as the alignment term $\norm{\Sigma^{1/2}_{J^c}\bbeta^*_{J^c}}_2$ that follows from the excess risk decomposition \eqref{eq:excess_risk_decomp_v0}. 


\section{Main Results}\label{sec:main_results}
In this section, we present the main results of this paper. Our main results are divided into two settings: the case where $X$ is a sub-Gaussian random vector, and the case in the RKHS regression problems.

\paragraph{Sub-Gaussian design.}
We first gather all the model assumptions in sub-Gaussian design case.
\begin{Assumption}\label{assumption:main_upper}
    We assume that $\|\Sigma\|_{\op}\leq 1$.
    The noise $\xi$ satisfies $\xi\sim\cN(0,\sigma_\xi^2)$ and it is independent with $X$. Assume $X$ is sub-Gaussian: there exists an absolute constant $C>0$ such that for any $\vv\in\bR^p$ and $q\geq 2$, $\|\langle X,\vv\rangle\|_{L^q(\mu)}\leq C\sqrt{q}\|\langle X,\vv\rangle\|_{L^2(\mu)}$.
\end{Assumption}

Next, we introduce the optimal dimension used to split the feature space in the case of spectral methods.

\begin{Definition}\label{def:estimation_dimension}
 Let $b>0$ and $t\geq1$. The \underline{estimation dimension} of the spectral method $\hat \bbeta$ with filter function $\varphi_t$ is defined as
\begin{align}\label{eq:def_estimation_dimension}
    k^*=k_{t^{-1},b}^* = \min \Bigl\{ k\in[p]: \sigma_{k+1} \le bt^{-1} \Bigr\}.
\end{align}
\end{Definition}

The estimation dimension $k^*$ is the dimension of the space $V_{J_*}$ where estimation of the spectral method $\hat \bbeta$ with filter function $\varphi_t$ happens. It coincides with the optimal one for ridge regression recalled in \eqref{eq:def_estimation_dimension_ridge} when $\Tr[\Sigma_{J_{**}^c}]\leq N t^{-1}$ where $\Tr[\Sigma_{J_{**}^c}] = \sum_{j>k^{**}}\sigma_j$.  In particular, we see that this dimension does not depend on the shape of the filter function but just on its parameter $t$. However, the optimal convergence rate of a spectral method depends on its filter function via its residual function since we will show that it is given by
\begin{align}\label{eq:def_r_V_J_star}
    r(V_{J_*},V_{J_*^c}) = \norm{\Sigma_{J_*}^{1/2}\psi_t(\Sigma)\bbeta_{J_*}^*}_2 +\sigma_\xi\sqrt{\frac{|J_*|}{N}} + \norm{\Sigma_{J_*^c}^{1/2}\bbeta_{J_*^c}^*}_2 +  \sigma_\xi t\sqrt{\frac{\Tr(\Sigma_{J_*^c}^2)}{N}}, 
\end{align}where $V_{J_*}=\Span(\ve_j:\, j\in J_*)$, $J_* = [k^*]$, $(\ve_j)_j$ are the eigenvectors of $\Sigma$ and  $\psi_t$ is the residual function defined in Definition~\ref{def:spec_algo}.

We are now in a position to state our main results in sub-Gaussian case: two upper and lower bounds for the excess risk of spectral methods and a corollary identifying the conditions where the two bounds match, giving the optimal rate from \eqref{eq:def_r_V_J_star}.  The proof of the following results may be found in Section~\ref{sec:proof_main} for the upper bound and in Section~\ref{sec:optimal_FSD} for the lower bound.

\begin{Theorem}[Main Result - upper bound]\label{theo:main} We consider a linear regression model with parameter $(\bbeta^*, \Sigma, \sigma_\xi)$ satisfying Assumption~\ref{assumption:main_upper}. Let $(\varphi_t)_{t\geq1}$ be a family of filter functions satisfying Assumption~\ref{ass:filter_fct} for $c_1=0$. Let $t\geq1$.
Then, there exists an absolute constant $c>0$ such that for all $0<\square<1/9$, if $\square^2 N\gtrsim \Tr\left(\Sigma(\Sigma + t^{-1}I_p)^{-1}\right)\vee 1$ and $\square\lesssim \log^{-1}(et)$ then with probability at least $1-2\exp(-c|J_*|)-\exp(-c\square^2 N)$,
\begin{align*}
    \norm{\Sigma^{1/2}(\hat\bbeta-\bbeta^*)}_2 \lesssim r(V_{J_*}, V_{J_*^c}) +  \frac{\square}{t}\norm{\Sigma_{J_*}^{-\frac{1}{2}}\bbeta^*_{J_*}}_2.
\end{align*}
\end{Theorem}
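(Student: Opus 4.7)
The plan is to combine the Feature Space Decomposition from \eqref{eq:excess_risk_decomp_v0} with the explicit bias--variance split enabled by the closed form \eqref{eq:def_hat_bbeta}. Since $\vy = \bX\bbeta^*+\bxi$ and $\varphi_t(\hat\Sigma)\hat\Sigma = I_p - \psi_t(\hat\Sigma)$,
\begin{equation*}
\hat\bbeta-\bbeta^* = -\psi_t(\hat\Sigma)\bbeta^* + \frac{1}{N}\varphi_t(\hat\Sigma)\bX^\top\bxi.
\end{equation*}
Taking $V_{J_*}=\Span(\ve_j:j\leq k^*)$ and $V_{J_*^c}$ its orthogonal complement, \eqref{eq:excess_risk_decomp_v0} reduces the target to controlling $\norm{\Sigma_{J_*}^{1/2}(\hat\bbeta_{J_*}-\bbeta^*_{J_*})}_2$ and $\norm{\Sigma_{J_*^c}^{1/2}\hat\bbeta_{J_*^c}}_2$, since $\norm{\Sigma_{J_*^c}^{1/2}\bbeta^*_{J_*^c}}_2$ is already the third term of $r(V_{J_*},V_{J_*^c})$. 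Each of these splits further into a \emph{bias} part driven by $\psi_t(\hat\Sigma)\bbeta^*$ and a \emph{variance} part driven by $N^{-1}\varphi_t(\hat\Sigma)\bX^\top\bxi$, producing four quantities to estimate.

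All four are bounded on a high-probability event $\Omega$ supplied by sub-Gaussian matrix concentration, on which the spectrum of $\hat\Sigma$ lies in $[0,8]$, the block $P_{J_*}\hat\Sigma P_{J_*}$ is within a constant of $\Sigma_{J_*}$, the off-diagonal block $P_{J_*}\hat\Sigma P_{J_*^c}$ has small operator norm, and $P_{J_*^c}\hat\Sigma P_{J_*^c}$ obeys the expected operator-norm and Hilbert--Schmidt estimates. The hypothesis $\square^2 N\gtrsim \Tr(\Sigma(\Sigma+t^{-1}I_p)^{-1})\vee 1$ is exactly what turns these into quantitative bounds with relative error $\square$ and accounts for the exponent $c\square^2 N$; the $|J_*|$-exponent comes from Gaussian concentration applied to $\bxi$. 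The key analytic lever made available by Assumption~\ref{ass:filter_fct} is the resolvent identity
\begin{equation*}
\psi_t(\hat\Sigma)-\psi_t(\Sigma)=\frac{1}{2\pi\iu}\oint_{\cC_t}\psi_t(z)(zI_p-\hat\Sigma)^{-1}(\hat\Sigma-\Sigma)(zI_p-\Sigma)^{-1}\,dz,
\end{equation*}
together with its $\varphi_t$-analogue; along $\cC_t$ the bounds $|\varphi_t(z)|\lesssim |z+t^{-1}|^{-1}$ and $|\psi_t(z)|\lesssim 1$ inherited from \eqref{eq:def_filter_fct} control the integrand.

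I then bound the four terms in turn. For the variance on $V_{J_*}$, Gaussian conditioning on $\bX$ gives $\norm{\Sigma_{J_*}^{1/2}N^{-1}\varphi_t(\hat\Sigma)\bX^\top\bxi}_2^2\lesssim \sigma_\xi^2 N^{-1}\Tr(\Sigma_{J_*}\varphi_t(\hat\Sigma)^2\hat\Sigma)$; the identity $\varphi_t(x)^2 x=\varphi_t(x)(1-\psi_t(x))\leq \varphi_t(x)\lesssim (x+t^{-1})^{-1}$ reduces this trace to an effective dimension $\sim |J_*|$, producing $\sigma_\xi\sqrt{|J_*|/N}$. For the variance on $V_{J_*^c}$, the eigenvalues of $\Sigma$ are $\leq bt^{-1}$ there, so $\varphi_t(\hat\Sigma)\lesssim t$ on the relevant block and the analogous trace becomes $t^2\Tr(\Sigma_{J_*^c}^2)$ on $\Omega$, yielding $\sigma_\xi t\sqrt{\Tr(\Sigma_{J_*^c}^2)/N}$. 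For the bias on $V_{J_*}$, I write $\psi_t(\hat\Sigma)\bbeta^*_{J_*}=\psi_t(\Sigma)\bbeta^*_{J_*}+(\psi_t(\hat\Sigma)-\psi_t(\Sigma))\bbeta^*_{J_*}$: the first summand produces $\norm{\Sigma_{J_*}^{1/2}\psi_t(\Sigma)\bbeta^*_{J_*}}_2$, and the resolvent identity above, combined with $|\psi_t|\lesssim 1$ and the block concentration, contributes $\square t^{-1}\norm{\Sigma_{J_*}^{-1/2}\bbeta^*_{J_*}}_2$ after pairing the outer $\Sigma_{J_*}^{1/2}$ with $(\hat\Sigma-\Sigma)$. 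For the bias on $V_{J_*^c}$, the $\bbeta^*_{J_*^c}$ contribution is bounded by $\norm{\Sigma_{J_*^c}^{1/2}\bbeta^*_{J_*^c}}_2$ using $|\psi_t|\leq 1$, and the $\bbeta^*_{J_*}$ cross-term is absorbed into the same perturbation estimate thanks to the off-diagonal concentration.

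The principal obstacle is that $\psi_t(\hat\Sigma)$ does not respect the $\Sigma$-eigendecomposition, so the contour integrand couples the $V_{J_*}$ and $V_{J_*^c}$ blocks; propagating the off-diagonal bound into a multiplicative error of order $\square/t$ rather than $\square$ requires carefully tracking the decay of $|\psi_t(z)|$ along $\cC_t$ and matching it with the relative scale $t\cdot\|\hat\Sigma-\Sigma\|$ on $V_{J_*}$. The specific design of the contour $\cC_t$ in Section~\ref{sec:choice_contour}, together with the auxiliary condition $\square\lesssim \log^{-1}(et)$, is precisely what ensures the contour integral is evaluated without losing a logarithmic factor, closing the argument. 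Summing the four controlled pieces produces $r(V_{J_*},V_{J_*^c})+\square t^{-1}\norm{\Sigma_{J_*}^{-1/2}\bbeta^*_{J_*}}_2$ with the stated probability.
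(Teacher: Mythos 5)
Your architecture matches the paper's (FSD split, a concentration event on $\Sigma_t^{-1/2}(\hat\Sigma-\Sigma)\Sigma_t^{-1/2}$, resolvent/contour perturbation for the bias, Gaussian conditioning for the noise), but two steps as you state them would not go through. First, the terms driven by $\bbeta^*_{J_*^c}$ are not controllable by the tools you invoke. The cross term $\norm{\Sigma_{J_*}^{1/2}\varphi_t(\hat\Sigma)\hat\Sigma\,\bbeta^*_{J_*^c}}_2$ (equivalently $\Sigma_{J_*}^{1/2}\psi_t(\hat\Sigma)\bbeta^*_{J_*^c}$, since $P_{J_*}\bbeta^*_{J_*^c}=0$) is simply absent from your four-term accounting, and the self term $\norm{\Sigma_{J_*^c}^{1/2}\psi_t(\hat\Sigma)\bbeta^*_{J_*^c}}_2$ cannot be bounded by $\norm{\Sigma_{J_*^c}^{1/2}\bbeta^*_{J_*^c}}_2$ ``using $|\psi_t|\leq1$'': $\psi_t(\hat\Sigma)$ does not commute with $\Sigma_{J_*^c}^{1/2}$, and an operator-norm bound only gives $\sqrt{\sigma_{k^*+1}}\norm{\bbeta^*_{J_*^c}}_2$, which is not dominated by $\norm{\Sigma_{J_*^c}^{1/2}\bbeta^*_{J_*^c}}_2$ when the signal sits on directions with tiny eigenvalues. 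A resolvent perturbation also fails here, since it produces $\norm{\Sigma_t^{-1/2}\bbeta^*_{J_*^c}}_2$-type quantities not dominated by $r(V_{J_*},V_{J_*^c})$. The paper handles both terms through the design itself: writing $\hat\Sigma\bbeta^*_{J_*^c}=N^{-1}\bX^\top\bX\bbeta^*_{J_*^c}$, using the signal-dependent concentration $\norm{\bX\bbeta^*_{J_*^c}}_2\lesssim\sqrt N\norm{\Sigma_{J_*^c}^{1/2}\bbeta^*_{J_*^c}}_2$, the Dvoretsky--Milman bound on $\norm{\Sigma_{J_*^c}^{1/2}\bX^\top}_{\op}$, the definition of $k^*$ (so $\norm{\Sigma_{J_*^c}}_{\op}\leq bt^{-1}$) and the sample-complexity condition $\square^2N\gtrsim t\Tr(\Sigma_{J_*^c})$ (see \eqref{eq:first_ineq_var} and \eqref{eq:noise_part_2}); none of this is supplied by the operator-norm/HS concentration of the blocks of $\hat\Sigma$ that you put into $\Omega$.

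Second, the $\square/t$ error in front of $\norm{\Sigma_{J_*}^{-1/2}\bbeta^*_{J_*}}_2$ does not follow from the contour bounds you state. With only $|\psi_t(z)|\lesssim1$ on $\cC_t$ (whose length is of order $\sigma_1+t^{-1}\lesssim1$), the perturbation $\Sigma_{J}^{1/2}(\psi_t(\hat\Sigma)-\psi_t(\Sigma))\bbeta^*_{J}$ only comes out as $\square\norm{\Sigma_{J_*}^{-1/2}\bbeta^*_{J_*}}_2$ --- a factor $t$ too large. The paper's proof rests on the sharper estimate $\oint_{\cC_t}|\psi_t(z)||\mathrm{d}z|\lesssim t^{-1}$ from Lemma~\ref{lemma:spectral_calculus_li} (imported from the spectral-algorithm literature), combined with the three-way algebraic split of the plug-in bias in which the $\psi_t$-difference acts on $\Sigma\varphi_t(\Sigma)\bbeta^*_{J}$ and the $\varphi_t$-difference acts on $\Sigma\psi_t(\Sigma)\bbeta^*_{J}$; the latter produces the $\square\log(et)\norm{\Sigma_{J_*}^{1/2}\psi_t(\Sigma)\bbeta^*_{J_*}}_2$ term, and this is where the hypothesis $\square\lesssim\log^{-1}(et)$ is actually consumed (not in ``avoiding a log when evaluating the contour integral,'' as you suggest). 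You acknowledge that tracking the decay of $\psi_t$ along $\cC_t$ is the crux, but you do not provide the mechanism, so as written the proposal proves a weaker bound with $\square$ in place of $\square/t$ and with the $\bbeta^*_{J_*^c}$-driven terms unaccounted for.
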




\begin{Theorem}[Main result - lower bound]\label{theo:main_LB} There are absolute positive  constants $c_0, c, c_2$ and $c_3$ such that the following holds. Let $(\bbeta^*, \Sigma, \sigma_\xi)$ be the parameters of a linear regression model under Assumption~\ref{assumption:main_upper} where $X$ is assumed to have independent and centered coordinates with respect to $\{\ve_1,\cdots,\ve_p\}$. Let $\hat\bbeta$ be a spectral method with filter function satisfying Assumption~\ref{ass:filter_fct} for $0<c_1\leq C_1$. Let $0<\square<1/9$ be such that $\square \lesssim \log^{-1}(et)$ and $\square^2 N\gtrsim \Tr\left(\Sigma(\Sigma + t^{-1}I_p)^{-1}\right)\vee 1$.  Let $k^*$ be the estimation dimension introduced in Definition~\ref{def:estimation_dimension} for some $0<b\leq c_0$ and $J_*=[k^*]$.  Then, with probability at least $1 - c\exp(-k^*/c)-\exp(-\square^2 N/c)$, 
\begin{equation}\label{eq:lower_main_result}
    \norm{\Sigma^{1/2}(\hat \bbeta-\bbeta^*)}_2 \geq c_2 r(V_{J_*}, V_{J_*^c}) - \frac{c_3 \square}{t} \norm{\Sigma_{J_*}^{-\frac{1}{2}}\bbeta^*_{J_*}}_2.
\end{equation}



\end{Theorem}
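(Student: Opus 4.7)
The plan is to decompose $\hat\bbeta-\bbeta^*$ into a bias and a variance part, split the resulting squared error across the orthogonal subspaces $V_{J_*}$ and $V_{J_*^c}$, and then produce a matching lower bound for each of the four summands of $r(V_{J_*},V_{J_*^c})$ by combining a perturbation comparison (for the bias) with Gaussian anti-concentration (for the variance). Writing
\[
\hat\bbeta-\bbeta^* \;=\; -\psi_t(\hat\Sigma)\bbeta^* + \tfrac{1}{N}\varphi_t(\hat\Sigma)\bX^\top\bxi \;=:\; B + V,
\]
the map $B$ depends only on $\bX$, while conditionally on $\bX$ the term $V$ is centered Gaussian with covariance $\Gamma=\frac{\sigma_\xi^2}{N}\varphi_t(\hat\Sigma)\hat\Sigma\varphi_t(\hat\Sigma)$. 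Orthogonality of the eigenspaces of $\Sigma$ gives
\[
\|\Sigma^{1/2}(\hat\bbeta-\bbeta^*)\|_2^2 \;=\; \|\Sigma_{J_*}^{1/2}(B+V)_{J_*}\|_2^2 + \|\Sigma_{J_*^c}^{1/2}(B+V)_{J_*^c}\|_2^2,
\]
and since $\sqrt{a^2+b^2}\geq(a+b)/\sqrt{2}$, it is enough to bound each block below by the corresponding pair of summands of $r$.

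I would work on the good event $\mathscr{E}$ on which: the spectra of $\Sigma$ and $\hat\Sigma$ both lie in $[0,8]$; $\|\Sigma_{J_*}^{-1/2}\hat\Sigma_{J_*J_*}\Sigma_{J_*}^{-1/2}-I\|_{\op}\lesssim\square$; the cross-block $\hat\Sigma_{J_*J_*^c}$ has small operator norm; and $\|\hat\Sigma_{J_*^cJ_*^c}\|_{\op}\lesssim t^{-1}$. These are standard consequences of sub-Gaussian matrix concentration, exploiting the independence of the coordinates of $X$ in the eigenbasis of $\Sigma$ and the assumption $\square^2N\gtrsim\Tr(\Sigma(\Sigma+t^{-1}I_p)^{-1})$; the event $\mathscr{E}$ occurs with probability $\geq1-c\exp(-k^*/c)-\exp(-\square^2N/c)$. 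The holomorphic extension of $\varphi_t$ to a neighborhood of $\cC_t$ from Assumption~\ref{ass:filter_fct} then allows writing $\psi_t(\hat\Sigma)=\frac{1}{2\pi i}\oint_{\cC_t}\psi_t(z)(zI-\hat\Sigma)^{-1}\,dz$ and comparing it to $\psi_t(\Sigma)$ via the resolvent identity $(zI-\hat\Sigma)^{-1}-(zI-\Sigma)^{-1}=(zI-\hat\Sigma)^{-1}(\hat\Sigma-\Sigma)(zI-\Sigma)^{-1}$. Tracking how the off-diagonal blocks of $\hat\Sigma-\Sigma$ propagate through this identity yields, on $\mathscr{E}$,
\[
\bigl\|\Sigma_{J_*}^{1/2}\psi_t(\hat\Sigma)\bbeta^*\big|_{J_*} - \Sigma_{J_*}^{1/2}\psi_t(\Sigma)\bbeta_{J_*}^*\bigr\|_2 + \bigl\|\Sigma_{J_*^c}^{1/2}\psi_t(\hat\Sigma)\bbeta^*\big|_{J_*^c} - \Sigma_{J_*^c}^{1/2}\bbeta_{J_*^c}^*\bigr\|_2 \;\lesssim\; \frac{\square}{t}\|\Sigma_{J_*}^{-1/2}\bbeta_{J_*}^*\|_2,
\]
which is exactly the irreducible error term of the theorem; here the condition $\square\lesssim \log^{-1}(et)$ is what allows the logarithmic length of $\cC_t$ to be absorbed. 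On the second piece I additionally use that the lower estimate in Assumption~\ref{ass:filter_fct} forces $\psi_t(x)\in[1-C_1b,1]$ for $x\leq bt^{-1}$, so the ``leading order'' of $\psi_t(\hat\Sigma)\bbeta^*|_{J_*^c}$ is $\bbeta_{J_*^c}^*$ itself.

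To close the argument I combine the above comparison with Gaussian anti-concentration. Conditionally on $\bX$, the map $\bxi\mapsto\|\Sigma_\bullet^{1/2}(B+V)_\bullet\|_2$ is Lipschitz with constant $\sqrt{\|\Sigma_\bullet^{1/2}\Gamma_{\bullet\bullet}\Sigma_\bullet^{1/2}\|_{\op}}$, while its square has conditional mean $\|\Sigma_\bullet^{1/2}B_\bullet\|_2^2+\Tr(\Sigma_\bullet^{1/2}\Gamma_{\bullet\bullet}\Sigma_\bullet^{1/2})$, so Gaussian concentration promotes the identity to the high-probability lower bound $\|\Sigma_\bullet^{1/2}(B+V)_\bullet\|_2\geq\tfrac{1}{2}\bigl(\|\Sigma_\bullet^{1/2}B_\bullet\|_2+\sqrt{\Tr(\Sigma_\bullet^{1/2}\Gamma_{\bullet\bullet}\Sigma_\bullet^{1/2})}\bigr)$ on an event of probability $1-\exp(-c\square^2N)$. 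Using the $c_1>0$ lower estimate in Assumption~\ref{ass:filter_fct} (crucially unavailable in the upper bound theorem), the trace terms evaluate to $\gtrsim\sigma_\xi^2 k^*/N$ on $V_{J_*}$ and to $\gtrsim\sigma_\xi^2 t^2\Tr(\Sigma_{J_*^c}^2)/N$ on $V_{J_*^c}$, while the bias comparison turns the deterministic parts into $\gtrsim\|\Sigma_{J_*}^{1/2}\psi_t(\Sigma)\bbeta_{J_*}^*\|_2$ and $\gtrsim\|\Sigma_{J_*^c}^{1/2}\bbeta_{J_*^c}^*\|_2$, respectively, up to the $(\square/t)\|\Sigma_{J_*}^{-1/2}\bbeta_{J_*}^*\|_2$ remainder. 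Summing and applying $\sqrt{a^2+b^2}\geq(a+b)/\sqrt{2}$ yields \eqref{eq:lower_main_result}.

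The hard part is expected to be the resolvent comparison producing only the single factor $(\square/t)\|\Sigma_{J_*}^{-1/2}\bbeta_{J_*}^*\|_2$ rather than a cruder remainder involving $\|\bbeta_{J_*}^*\|_2$: this requires a careful design of $\cC_t$ separating the two spectral regions at scale $t^{-1}$ together with uniform resolvent bounds in the presence of the cross-block $\hat\Sigma_{J_*J_*^c}$. A secondary obstacle is the Gaussian anti-concentration on $V_{J_*^c}$, which needs $\Tr(\Gamma_{J_*^cJ_*^c})/\|\Gamma_{J_*^cJ_*^c}\|_{\op}$ to be large enough for the Lipschitz deviation to be absorbed in $\exp(-\square^2N/c)$, explaining a second role of the assumption $\square^2N\gtrsim\Tr(\Sigma(\Sigma+t^{-1}I_p)^{-1})$. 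Finally, the independence of the coordinates of $X$ in the eigenbasis of $\Sigma$ that is assumed only for the lower bound is used here to genuinely decouple the conditional Gaussian behavior of $V$ across the two subspaces.
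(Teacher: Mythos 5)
Your overall architecture (bias/variance split of $\hat\bbeta-\bbeta^*$, conditional Gaussian concentration in $\bxi$, a resolvent/contour comparison to reduce the empirical bias to the population one up to $(\square/t)\norm{\Sigma_{J_*}^{-1/2}\bbeta^*_{J_*}}_2$, and the lower estimate $c_1>0$ to activate the variance) is the same as the paper's. But there is a genuine gap at the step you dismiss in one sentence: the claim that the conditional variance traces ``evaluate to'' $\gtrsim \sigma_\xi^2 k^*/N$ on $V_{J_*}$ and $\gtrsim \sigma_\xi^2 t^2\Tr(\Sigma_{J_*^c}^2)/N$ on $V_{J_*^c}$. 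Using $\varphi_t(x)\geq c_1/(x+t^{-1})$ only reduces the problem to lower bounding $\Tr\big[\Sigma_{J_*}\hat\Sigma_t^{-1}\hat\Sigma\hat\Sigma_t^{-1}\big]\gtrsim k^*$ and $\Tr\big[\Sigma_{J_*^c}\hat\Sigma_t^{-1}\hat\Sigma\hat\Sigma_t^{-1}\big]\gtrsim t^2\Tr(\Sigma_{J_*^c}^2)$, and neither follows from your good event. For instance, on a direction $\ve_j$ with $\sigma_j$ close to the threshold $bt^{-1}$, the event $\Omega_t$ (or your block version of it) only forces $\ve_j^\top\hat\Sigma\ve_j\geq \sigma_j-\square(\sigma_j+t^{-1})$, which can be zero once $\square\gtrsim b$ — a regime allowed by the theorem — so the per-direction variance can vanish; and writing $\hat\Sigma_t^{-1}\hat\Sigma\hat\Sigma_t^{-1}=\hat\Sigma_t^{-1}-t^{-1}\hat\Sigma_t^{-2}$, the two-sided equivalence $\tfrac12\Sigma_t\preceq\hat\Sigma_t\preceq 2\Sigma_t$ only yields the useless cancellation $t^{-1}\Tr[\Sigma_{J}\hat\Sigma_t^{-2}]\leq\Tr[\Sigma_J\hat\Sigma_t^{-1}]$. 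The tail term is even more delicate: it is exactly the ``benign overfitting'' variance lower bound, which needs control of the empirical tail spectrum, not just a restricted isomorphy. This is why the paper isolates this step as Lemma~\ref{lemma:universality_variance}: it compares $\varphi_t$ to the ridge filter pointwise on $[0,8]$ and then imports the ridge variance lower bound of Tsigler--Bartlett (their Lemma~7 and Theorem~2), which is precisely where the assumption of independent centered coordinates of $X$ in the eigenbasis is consumed. You correctly flag that independence must enter, but you do not supply (or cite) the argument, and it cannot be replaced by the spectral good event you set up.

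A secondary, fixable flaw is the calibration of the Gaussian concentration step. With Lipschitz constant $\norm{\Sigma^{1/2}\varphi_t(\hat\Sigma)\bX^\top/N}_{\op}\lesssim \sigma_\xi/\sqrt N$ (this is where $\Omega_t$ and Lemma~\ref{lemma:operator_norm_Sigma_J} are used), taking the deviation level $r=\square^2N$ as you propose produces a loss of order $\sigma_\xi\square$, which under the sample-complexity assumption $\square^2N\gtrsim \Tr(\Sigma\Sigma_t^{-1})\gtrsim b k^*$ can dominate the term $\sigma_\xi\sqrt{k^*/N}$ you are trying to retain; so the claimed ``factor $1/2$ on an event of probability $1-\exp(-c\square^2N)$'' does not hold in general. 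The correct choice is $r\sim k^*$ (a small multiple), which is exactly why the theorem's probability is $1-c\exp(-k^*/c)-\exp(-\square^2N/c)$ rather than $1-2\exp(-c\square^2N)$; the paper also notes that when the conditional second moment is smaller than the Lipschitz constant the inequality must be run with $r$ bounded below, a case you would need to handle in the blockwise version as well. Finally, for the $J_*^c$ bias, the scalar fact $\psi_t(x)\geq 1-C_1b/(1+b)$ for $x\leq bt^{-1}$ does not transfer to $P_{J_*^c}\psi_t(\hat\Sigma)\bbeta^*_{J_*^c}$ because the eigenbasis of $\hat\Sigma$ does not respect the split; the paper instead bounds $\norm{\Sigma_{J_*^c}^{1/2}\varphi_t(\hat\Sigma)\hat\Sigma\bbeta^*_{J_*^c}}_2\leq Cb\norm{\Sigma_{J_*^c}^{1/2}\bbeta^*_{J_*^c}}_2$ via a Dvoretzky--Milman-type operator-norm estimate, which is also where the requirement $b\leq c_0$ comes from.
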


The next result is a high probability upper and lower bound for spectral methods showing that $r(V_{J_*}, V_{J_*^c})$ is the right quantity describing the statistical properties of these estimators for a given linear regression model. It follows from Theorem~\ref{theo:main} and Theorem~\ref{theo:main_LB}.

\begin{Corollary}\label{coro:main_coro}
There are absolute positive  constants $c_0,c,(c_k)_{k=2,3,4,5}$ such that the following holds.
Under the same assumptions as in Theorem~\ref{theo:main_LB}. Let $t\geq1$ and $0<\square<1/9$ be such that $\square \leq c_0 \log^{-1}(et)$, $\square^2 N\geq c (\Tr\left(\Sigma(\Sigma + t^{-1}I_p)^{-1}\right)\vee 1)$, $k^*\geq c$ and 
    \begin{equation}\label{eq:condition_for_matching_UB_LB}
\frac{\square}{t} \norm{\Sigma_{J_*}^{-\frac{1}{2}}\bbeta^*_{J_*}}_2  \leq c_2 r(V_{J_*},V_{J_*^c}).
    \end{equation}Then, with probability at least $1 - c_3\exp(-k^*/c_3)-\exp(-\square^2 N/c_3)$, 
\begin{equation*}
    c_4 r(V_{J_*}, V_{J_*^c}) \leq \norm{\Sigma^{1/2}(\hat \bbeta-\bbeta^*)}_2 \leq c_5 r(V_{J_*}, V_{J_*^c}).
\end{equation*} 
\end{Corollary}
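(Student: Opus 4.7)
The corollary is a direct synthesis of Theorems~\ref{theo:main} and~\ref{theo:main_LB}, exploiting the matching condition~\eqref{eq:condition_for_matching_UB_LB} to absorb the residual term $\frac{\square}{t}\|\Sigma_{J_*}^{-1/2}\bbeta^*_{J_*}\|_2$ into the rate $r(V_{J_*},V_{J_*^c})$. First I would check that the hypotheses of the corollary match those of both theorems: Assumption~\ref{assumption:main_upper}, together with the independence-of-coordinates assumption inherited from Theorem~\ref{theo:main_LB} and the quantitative bounds $\square\leq c_0 \log^{-1}(et)$ and $\square^2 N\geq c(\Tr(\Sigma(\Sigma+t^{-1}I_p)^{-1})\vee 1)$, allow one to invoke both theorems simultaneously, provided the absolute constants $c_0$ and $c$ are chosen small/large enough to be compatible with the requirements of each of them.

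On the intersection $\Omega$ of the two high-probability events produced by the two theorems, a union bound (combined with $|J_*|=k^*$ and with the assumption $k^*\geq c$, which lets one absorb constant factors in front of exponentials into slightly smaller decay exponents) yields $\bP(\Omega)\geq 1 - c_3\exp(-k^*/c_3) - \exp(-\square^2 N/c_3)$ for some suitable absolute constant $c_3>0$. On this event, both inequalities
\begin{align*}
\|\Sigma^{1/2}(\hat\bbeta-\bbeta^*)\|_2 &\leq C_{\mathrm{UB}}\,r(V_{J_*},V_{J_*^c}) + C_{\mathrm{UB}}'\,\tfrac{\square}{t}\|\Sigma_{J_*}^{-1/2}\bbeta^*_{J_*}\|_2,\\
\|\Sigma^{1/2}(\hat\bbeta-\bbeta^*)\|_2 &\geq c_{\mathrm{LB}}\,r(V_{J_*},V_{J_*^c}) - c_{\mathrm{LB}}'\,\tfrac{\square}{t}\|\Sigma_{J_*}^{-1/2}\bbeta^*_{J_*}\|_2
\end{align*}
hold simultaneously, where $C_{\mathrm{UB}}, C_{\mathrm{UB}}', c_{\mathrm{LB}}, c_{\mathrm{LB}}'$ are absolute constants supplied by the two theorems.

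Finally, I would calibrate the constant $c_2$ appearing in \eqref{eq:condition_for_matching_UB_LB} to be any positive number strictly smaller than $c_{\mathrm{LB}}/c_{\mathrm{LB}}'$. The matching condition then bounds $\tfrac{\square}{t}\|\Sigma_{J_*}^{-1/2}\bbeta^*_{J_*}\|_2 \leq c_2\, r(V_{J_*},V_{J_*^c})$; substituting this into both displays yields
\begin{equation*}
(c_{\mathrm{LB}}-c_{\mathrm{LB}}'c_2)\,r(V_{J_*},V_{J_*^c}) \;\leq\; \|\Sigma^{1/2}(\hat\bbeta-\bbeta^*)\|_2 \;\leq\; (C_{\mathrm{UB}}+C_{\mathrm{UB}}'c_2)\,r(V_{J_*},V_{J_*^c}),
\end{equation*}
so that one may define $c_4 := c_{\mathrm{LB}}-c_{\mathrm{LB}}'c_2 > 0$ and $c_5 := C_{\mathrm{UB}}+C_{\mathrm{UB}}'c_2$, which are the two announced constants.

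There is no genuine analytical obstacle once Theorems~\ref{theo:main} and~\ref{theo:main_LB} are in hand; the proof is purely a calibration and union-bound step. The only point that needs some care is that $c_2$ must be chosen strictly less than $c_{\mathrm{LB}}/c_{\mathrm{LB}}'$ so that the lower constant $c_4$ remains strictly positive, which is exactly the reason why \eqref{eq:condition_for_matching_UB_LB} demands a sufficiently small absolute constant $c_2$.
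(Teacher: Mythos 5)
Your proposal is correct and follows exactly the route the paper intends: the paper gives no separate proof of Corollary~\ref{coro:main_coro}, simply declaring that it follows from Theorem~\ref{theo:main} and Theorem~\ref{theo:main_LB}, which is precisely your intersection-of-events argument with the matching condition \eqref{eq:condition_for_matching_UB_LB} used to absorb the term $\frac{\square}{t}\norm{\Sigma_{J_*}^{-1/2}\bbeta^*_{J_*}}_2$ and $c_2$ calibrated strictly below the ratio of the lower-bound constants. Your handling of the union bound (using $k^*\geq c$ to fold multiplicative prefactors into the exponents) is also the right way to obtain the stated probability.
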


Condition~\eqref{eq:condition_for_matching_UB_LB} holds when $\big(\square/t\big)\norm{\Sigma_{J_*}^{-\frac{1}{2}}\bbeta^*_{J_*}}_2 $ is smaller than one of the four terms in $r(V_{J_*}, V_{J_*^c})$; for instance, it holds when 
\begin{enumerate}
    \item $\frac{1}{t\sigma_\xi}\norm{\Sigma_{J_*}^{-\frac{1}{2}}\bbeta^*_{J_*}}_2\lesssim \frac{1}{\square}\sqrt{\frac{|J_*|}{N}},$ where we recall that $t^{-1}\|\Sigma_{J_*}^{-\frac{1}{2}}\bbeta^*_{J_*}\|_2$ is the bias of $\hat\bbeta_J^{(\mathrm{Ridge})}$ when $\hat\bbeta_J^{(\mathrm{Ridge})}$ is the ridge regression with tuning parameter $t$, and $\frac{1}{\square}$ may be taken to be $\sqrt{N/(\Tr(\Sigma(\Sigma + t^{-1}I_p)^{-1})\wedge 1)}$;
    \item or when $
\frac{\square}{t} \norm{\Sigma_{J_*}^{-\frac{1}{2}}\bbeta^*_{J_*}}_2  \lesssim \norm{\Sigma_{J_*}^{1/2}\psi_t(\Sigma)\bbeta_{J_*}^*}_2,$ which is the case when $\square/t$ is small enough so that $\psi_t(x)\geq \square/(tx)$ for all $x\in[0,1]$ (recall that we assumed that $\norm{\Sigma}_{\op}\leq1$ in Assumption~\ref{assumption:main_upper}) which is equivalent to assume that $\varphi_t(x)\leq (t-\square x)/(xt)$.
\end{enumerate}

As mentioned earlier the case of PCR is special since it requires a property on the $k^*$-th spectral gap of $\Sigma$. We therefore state a result devoted to PCR. The proof of the following result is different from the one of Theorem~\ref{theo:main} and may be found in Section~\ref{sec:statistical_analysis_of_pcr}.

\begin{Theorem}[Upper bound for PCR]\label{theo:main_PCR} We consider a linear regression model with parameter $(\bbeta^*, \Sigma, \sigma_\xi)$ satisfying Assumption~\ref{assumption:main_upper}. Let $t\geq1$ and $0<b<1$.  Denote by $\hat \bbeta$ the PCR estimator with filter function $\varphi_t:x>0\mapsto x^{-1}\1(x\geq bt^{-1})$. Let $0<\square<1/9$ and assume that $\square^2 N\gtrsim \Tr\left(\Sigma(\Sigma + t^{-1}I_p)^{-1}\right)\vee 1$ and that $\theta>0$ where
\begin{equation}\label{eq:def_theta_PCR}
    \theta := \min\left(bt^{-1} - \big(\sigma_{k^*+1} + \square(\sigma_{k^*+1} + t^{-1})\big), \big(\sigma_{k^*} - \square(\sigma_{k^*} + t^{-1})\big) - bt^{-1}\right) .
\end{equation}Then, there exists an absolute constant $c>0$ such that with probability at least $1-2\exp(-c|J_*|)-\exp(-c\square^2 N)$,
\begin{align*}
    \norm{\Sigma^{1/2}(\hat\bbeta-\bbeta^*)}_2 \lesssim r(V_{J_*}, V_{J_*^c}) +  \frac{\square}{\theta^2}\norm{\Sigma_{J_*}^{-\frac{1}{2}}\bbeta^*_{J_*}}_2.
\end{align*}
\end{Theorem}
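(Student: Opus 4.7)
The plan is to run the Feature Space Decomposition used for Theorem~\ref{theo:main} but to replace its contour-integral representation of $\varphi_t(\hat\Sigma)$ by a direct spectral-projector analysis, since the PCR filter $\varphi_t(x)=x^{-1}\1(x\geq bt^{-1})$ is non-holomorphic and the residue calculus of Theorem~\ref{theo:main} is therefore unavailable. The key identity that replaces it is $\varphi_t(\hat\Sigma)\hat\Sigma=\hat P$, where $\hat P:=\1(\hat\Sigma\geq bt^{-1})$ is a spectral projector. I would first work on the high-probability event built in the proof of Theorem~\ref{theo:main} (into which the condition $\square^2 N\gtrsim \Tr(\Sigma(\Sigma+t^{-1}I_p)^{-1})\vee 1$ enters), on which $\|(\Sigma+t^{-1}I_p)^{-1/2}(\hat\Sigma-\Sigma)(\Sigma+t^{-1}I_p)^{-1/2}\|_\op\leq \square$. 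The identity $\hat\Sigma+t^{-1}I_p=(\Sigma+t^{-1}I_p)^{1/2}(I_p+E)(\Sigma+t^{-1}I_p)^{1/2}$ with $\|E\|_\op\leq \square$ then yields $|\hat\sigma_j-\sigma_j|\leq\square(\sigma_j+t^{-1})$ for every $j$ (by Weyl applied to the positive operator $\hat\Sigma+t^{-1}I_p$), and the hypothesis $\theta>0$ forces $\hat\sigma_j>bt^{-1}$ for $j\leq k^*$ and $\hat\sigma_j<bt^{-1}$ for $j>k^*$. Hence $\hat P$ is a rank-$k^*$ projector onto the top $k^*$ eigenvectors of $\hat\Sigma$.

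Next, from $\hat\bbeta=\hat P\bbeta^*+\hat\Sigma^{-1}\hat P\bX^\top\bxi/N$ and the algebraic identity $I_p-\hat P=P_{J_*^c}-(\hat P-P)$ (where $P:=\1(\Sigma\geq bt^{-1})$ is the orthogonal projector onto $V_{J_*}$ and $P_{J_*^c}$ that onto $V_{J_*^c}$), one obtains the decomposition
\[
\Sigma^{1/2}(\hat\bbeta-\bbeta^*)=-\Sigma_{J_*^c}^{1/2}\bbeta^*_{J_*^c}+\Sigma^{1/2}(\hat P-P)\bbeta^*+\Sigma^{1/2}\hat\Sigma^{-1}\hat P\tfrac{1}{N}\bX^\top\bxi.
\]
The first term is the alignment piece of $r(V_{J_*},V_{J_*^c})$. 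The third, stochastic term is handled exactly as in the proof of Theorem~\ref{theo:main}: since $\varphi_t(x)\leq 2(x+t^{-1})^{-1}$ on the spectral support of $\hat P$ (indeed $\varphi_t(x)=0$ for $x<bt^{-1}$), the FSD $J_*/J_*^c$-decomposition combined with Gaussian quadratic-form concentration produces the two remaining components $\sigma_\xi\sqrt{|J_*|/N}+\sigma_\xi t\sqrt{\Tr(\Sigma_{J_*^c}^2)/N}$ of $r(V_{J_*},V_{J_*^c})$.

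The main obstacle, and where the factor $\square/\theta^2$ appears, is the projector-mismatch term $\Sigma^{1/2}(\hat P-P)\bbeta^*$. I would use the Riesz/resolvent representation
\[
\hat P-P=\frac{1}{2\pi i}\oint_\Gamma (zI_p-\hat\Sigma)^{-1}(\hat\Sigma-\Sigma)(zI_p-\Sigma)^{-1}\,dz
\]
along a rectangular contour $\Gamma$ of length $O(1)$ that encloses exactly the top-$k^*$ eigenvalues of both $\Sigma$ and $\hat\Sigma$ and stays at distance at least $\theta/2$ from both spectra (feasible precisely by $\theta>0$). Each resolvent on $\Gamma$ has operator norm at most $2/\theta$ (contributing the $1/\theta^2$ factor), the perturbation satisfies $\|\hat\Sigma-\Sigma\|_\op\leq 2\square$ (contributing the factor $\square$), and the dependence on $\|\Sigma_{J_*}^{-1/2}\bbeta^*_{J_*}\|_2$ is obtained by factoring the signal as $\bbeta^*_{J_*}=\Sigma_{J_*}^{1/2}\cdot\Sigma_{J_*}^{-1/2}\bbeta^*_{J_*}$ inside $(zI_p-\Sigma)^{-1}\bbeta^*_{J_*}$ and observing that $\|(zI_p-\Sigma_{J_*})^{-1}\Sigma_{J_*}^{1/2}\|_\op$ is uniformly bounded on $\Gamma$. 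The subtle point is handling the cross contribution of $\bbeta^*_{J_*^c}$ inside $(\hat P-P)\bbeta^*$, which I would control via the block structure of the spectral projector in the eigenbasis of $\Sigma$ (its off-diagonal $J_*J_*^c$-block being of order $\|\hat\Sigma_{J_*J_*^c}\|_\op/\theta$) and absorb into either the alignment term $\|\Sigma_{J_*^c}^{1/2}\bbeta^*_{J_*^c}\|_2$ already present in $r(V_{J_*},V_{J_*^c})$, or into the $(\square/\theta^2)\|\Sigma_{J_*}^{-1/2}\bbeta^*_{J_*}\|_2$ contribution.
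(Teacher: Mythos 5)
Your overall architecture is in fact very close to the paper's own proof: the paper also represents the projector mismatch by a Riesz contour formula (see \eqref{eq:residual_theorem_PCR}) over a rectangle kept at distance $\theta$ from both spectra (\eqref{eq:contour_PCR}, Lemma~\ref{lemma:spectral_calculus_li_PCR}), localizes the eigenvalues of $\hat\Sigma$ on $\Omega_t$ exactly as you do, and recycles the stochastic bounds of Theorem~\ref{theo:main} for the noise terms; so the residue calculus is only unavailable for $\varphi_t$ itself, not for the spectral projector. The genuine gap is your treatment of the cross term $\Sigma^{1/2}(\hat P-P)\bbeta^*_{J_*^c}=\Sigma^{1/2}\hat P\,\bbeta^*_{J_*^c}$. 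The mechanism you propose --- bounding the off-diagonal $J_*J_*^c$ block of $\hat P$ by $\|\hat\Sigma_{J_*J_*^c}\|_{\op}/\theta\lesssim \square/\theta$ --- produces a bound proportional to the \emph{unweighted} norm $\|\bbeta^*_{J_*^c}\|_2$, and this cannot be absorbed into $r(V_{J_*},V_{J_*^c})$ nor into $(\square/\theta^2)\|\Sigma_{J_*}^{-1/2}\bbeta^*_{J_*}\|_2$: the eigenvalues $\sigma_j$, $j\in J_*^c$, may be arbitrarily small, so $\|\bbeta^*_{J_*^c}\|_2$ can be arbitrarily larger than the alignment term $\|\Sigma_{J_*^c}^{1/2}\bbeta^*_{J_*^c}\|_2$, and it has no relation to $\|\Sigma_{J_*}^{-1/2}\bbeta^*_{J_*}\|_2$. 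A purely resolvent-based repair would require a factor like $\|(zI_p-\Sigma)^{-1}\Sigma_{J_*^c}^{-1/2}\|_{\op}$, which blows up on small eigenvalues. The correct route (and the one the paper takes) is to treat this piece stochastically through the design rather than through projector perturbation: write $\hat P\bbeta^*_{J_*^c}=\varphi_t(\hat\Sigma)\hat\Sigma\bbeta^*_{J_*^c}=N^{-1}\varphi_t(\hat\Sigma)\bX^\top(\bX\bbeta^*_{J_*^c})$, use Lemma~\ref{lem:sum_squares} to get $N^{-1/2}\|\bX\bbeta^*_{J_*^c}\|_2\lesssim\|\Sigma_{J_*^c}^{1/2}\bbeta^*_{J_*^c}\|_2$, and then bound the $J_*$ and $J_*^c$ components as in \eqref{eq:first_ineq_var} and \eqref{eq:noise_part_2} (Dvoretsky--Milman plus $\|\Sigma_{J_*}^{1/2}\varphi_t(\hat\Sigma)\Sigma_t^{1/2}\|_{\op}\lesssim1$ on $\Omega_t$), which yields a contribution $\lesssim\|\Sigma_{J_*^c}^{1/2}\bbeta^*_{J_*^c}\|_2$, i.e.\ it is absorbed by the alignment term. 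In other words, this term belongs with your ``stochastic'' terms, not with the deterministic projector analysis.

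A second, smaller flaw is the claim that $\|(zI_p-\Sigma_{J_*})^{-1}\Sigma_{J_*}^{1/2}\|_{\op}$ is uniformly bounded on $\Gamma$: on the vertical edge at $\Re z=bt^{-1}$ this norm equals $\max_{j\le k^*}\sqrt{\sigma_j}/|\sigma_j-z|$, which can be of order $\sqrt{\sigma_{k^*}}/\theta$, so it is only $O(1/\theta)$ in general; moreover your accounting double counts, since this factor and the $1/\theta$ you already charge to the resolvent $(zI_p-\Sigma)^{-1}$ are the same object. Fortunately the false claim is not needed: using $\|(zI_p-\hat\Sigma)^{-1}\|_{\op}\le 2/\theta$, $\|\hat\Sigma-\Sigma\|_{\op}\le 2\square$ (valid on $\Omega_t$ since $\|\Sigma_t\|_{\op}\le2$) and $\|(zI_p-\Sigma)^{-1}\Sigma_{J_*}^{1/2}\|_{\op}\lesssim 1/\theta$ along a contour of length $O(1)$ already gives the required $(\square/\theta^2)\|\Sigma_{J_*}^{-1/2}\bbeta^*_{J_*}\|_2$, matching the paper's weighted computation in \eqref{eq:result_upper_bias_3_PCR}. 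With these two corrections --- rerouting the cross term through the design and fixing the resolvent accounting --- your sketch becomes essentially the proof given in Section~\ref{sec:statistical_analysis_of_pcr}.
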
In the case of PCR, the convergence rate $r(V_{J_*}, V_{J_*^c})$ contains only the three terms
\begin{equation*}
    \norm{\Sigma_{J_*^c}^{1/2}\bbeta_{J_*^c}^*}_2 +  \sigma_\xi \sqrt{\frac{|J_*|}{N}} + \sigma_\xi t \sqrt{\frac{\Tr(\Sigma_{J_*^c}^2)}{N}}
\end{equation*}since $\norm{\Sigma_{J_*}^{1/2}\psi_t(\Sigma)\bbeta_{J_*}^*}_2 = 0$ because $\psi_t(\Sigma)=P_{J_*^c}$. Note also that compare with Theorem~\ref{theo:main} we don't need to choose $\square$ less than $\log^{-1}(et)$ and so one can choose $\square$ to be of the order of a constant. The choice $\square\sim \sqrt{k^*/N}$ is also legitimate as long as the sample complexity assumption $\square^2 N\gtrsim \Tr\left(\Sigma(\Sigma + t^{-1}I_p)^{-1}\right)\vee 1$ is satisfied that is when $k^*\gtrsim \Tr\left(\Sigma(\Sigma + t^{-1}I_p)^{-1}\right)\vee 1$ which holds (see the discussion below \eqref{eq:sample_complexity_ass_imply}) when $k^*\gtrsim t \Tr[\Sigma_{J_*^c}]$. This is for instance, the case when $\sigma(\Sigma)$ has a fast decay. However, Theorem~\ref{theo:main_PCR} requires $\theta>0$ that holds iff the $k^*$-th spectral gap of $\Sigma$ is large enough:
\begin{equation*}
    \sigma_{k^*} - \sigma_{k^+1}> \square\left(\sigma_{k^*} + \sigma_{k^+1} + 2t^{-1}\right)
\end{equation*}and when $bt^{-1}\in\left[\sigma_{k^*+1} + \square(\sigma_{k^*+1} + t^{-1})\big), \sigma_{k^*} - \square(\sigma_{k^*} + t^{-1})\right]$.

\paragraph{Kernel case.} In this section, we study spectral methods in Reproducing Kernel Hilbert Spaces (RKHS). Let $\cX$ be a compact set, let $K:\cX\times\cX\to\bR$ be a bounded kernel function. Later in Assumption~\ref{assumption:bounded}, we assume, without loss of generality, that $\|K\|_{L^\infty(\mu\otimes\mu)}\leq 1$. Let $\cH$ be the RKHS generated by $K$. We call the map $\phi:\vx\in\cX\mapsto K(\vx,\cdot)\in\cH$ the canonical feature map of $\cH$. Although, in $\cH$, functions are in general non linear, the space $\cH$ itself is a linear space, which induces a linear parametrization on this space. Therefore, we still refer to this setting as linear regression. Let $X$ be a $\cX$-valued random variable, and let $X_1,\cdots,X_N$ be its independent copies. In that case, we define the sampling operator $\bX:f\in\cH\mapsto (f(X_i))_{i=1}^N\in\bR^N$ and its adjoint $\bX^\top:\vlambda\in\bR^N\mapsto \sum_{i=1}^N \lambda_i \phi(X_i)\in\cH$ since for all $f\in\cH$ and $\lambda\in\bR^N$, we have $\inr{\bX f, \lambda} = \inr{\bX^\top \lambda, f}_{\cH}$. Moreover, $\hat\Sigma=\frac{1}{N}\sum_{i=1}^N \phi(X_i)\otimes_\cH\phi(X_i)$ and $\Sigma=\bE[\phi(X)\otimes_\cH\phi(X)]:f\in\cH\mapsto \bE[f(X)\phi(X)]\in\cH$ is the covariance operator of this RKHS - where $(f\otimes_\cH g)(h) = \inr{g,h}_\cH f$ for all $f,g,h\in\cH$. We now use the following notation for spectral methods:
\begin{align*}
    \hat f_N:\vy\in\bR^N\mapsto \hat f_N(\vy) = \frac{1}{N}\varphi_t(\hat\Sigma)\bX^\top\vy\in\cH.
\end{align*}When there is no ambiguity, we abbreviate $\hat f_N(\vy)$ as $\hat f_N$. In kernel regression, we assume that there exists $f^\circ\in L^2(\mu)$ such that $Y = f^\circ(X)+\xi$, where $\xi$ is a centered random variable independent of $X$ with variance $\sigma_\xi^2$. Here, $f^\circ$ does not necessarily belong to $\cH$ so that we allow $\cH$ to be a misspecified. Define $\overline{\cH}$ as the closure of $\cH$ in $L^2(\mu)$. We let $f^* = \mathrm{Proj}_{\overline{\cH}}f^\circ$, where the projection is taken with respect to the $L^2(\mu)$ inner product.

For RKHSs, in this section, we consider the spectral decomposition $\Sigma = \sum_{j=1}^\infty \sigma_j\ve_j\otimes\ve_j$, where $\ve_j=\ve_j(\cdot)\in\cH$ is a real-valued function. Let $S:\cH\to L^2(\mu)$ be the canonical embedding, that is, $Sg=g$ as an $L^2(\mu)$ function. For convenience, we generally omit the $S$ in $Sg$, and treat $g$ directly as a function in $L^2$. Then $\Sigma = S^*S:\cH\to\cH$. Define $T_K = SS^*:g\in L^2(\mu)\mapsto T_Kg(\cdot) = \int_\cX K(\vx,\cdot)g(\vx)d\mu(\vx)\in L^2(\mu)$ as the integral operator associated with the kernel function $K$. For any $j\in\bN_+$, let $f_j = \frac{1}{\sqrt{\sigma_j}}S\ve_j$. Then $T_K f_j = \sigma_j f_j$ and $\langle f_i,f_j\rangle_{L^2(\mu)} = \delta_{ij}$ - $\Sigma$ and $T_K$ have the same nonzero spectrum and $\{f_j\}_{j=1}^\infty$ forms an ONB of $L^2(\mu)$.

In the kernel regression problem, $f^*$ does not necessarily belong to $\cH$. Therefore, we perform FSD directly in $L^2(\mu)$. That is, for any $J \subset \bN_+$ finite, let $V_J = \Span(f_j: j\in J)$ and $V_{J^c} = \overline{\Span(f_j: j\in J^c)}^{L^2(\mu)}$. For any $f\in L^2(\mu)$, we denote by $f_J$ the projection of $f$ onto $V_J$, and by $f_{J^c}$ the projection of $f$ onto $V_{J^c}$. For $J=J_*$, we define the optimal rate function analogously
\begin{align}\tag{\ref{eq:def_r_V_J_star}'}
    r(V_{J_*},V_{J_*^c}) = \norm{\psi_t(T_K)f_{J_*}^*}_{L^2(\mu)} +\sigma_\xi\sqrt{\frac{|J_*|}{N}} + \norm{f_{J_*^c}^*}_{L^2(\mu)} +  \sigma_\xi t\sqrt{\frac{\sum_{j\in J_*^c}\sigma_j^2}{N}}, \mbox{ where } f^* = \mathrm{Proj}_{\overline{\cH}}f^\circ.
\end{align}

\begin{Assumption}\label{assumption:bounded}
    We assume that $\|K\|_{L^\infty(\mu\otimes\mu)}\leq 1$. The noise $\xi$ satisfies either $\xi\sim\cN(0,\sigma_\xi^2)$ or $\|\xi\|_{L^\infty}<\infty$. We assume that $\xi$ is independent of $X$. Assume that there exists $f^\circ\in L^2(\mu)$ such that $Y=f^\circ(X)+\xi$ (where $f^\circ$ is not assumed to be in $\cH$).
\end{Assumption}
The following is the main result in this setting. The proof may be found in Section~\ref{sec:proof_main_RKHS}. We also provide some examples in Section~\ref{sec:examples_theo_main_RKHS} where the assumptions of Theorem~\ref{theo:main_RKHS} hold.

\begin{Theorem}\label{theo:main_RKHS}
    Let $(\Sigma,f^\circ,\sigma_\xi)$ be a kernel regression problem satisfying Assumption~\ref{assumption:bounded}.
    Let $f^* = \mathrm{Proj}_{\overline{\cH}}f^\circ$. Let $(\varphi_t)_{t\geq 1}$ be a family of filter functions satisfying Assumption~\ref{ass:filter_fct} with $\oc{c_filter_1}=0$. Suppose $1\geq t^{-1}>100\frac{\Tr(\Sigma_{J_{**}^c})}{N}$. Suppose $N\square^2\gtrsim\Tr(T_K(T_K+t^{-1}I)^{-1})\vee 1$ and $\square\leq c\log^{-1}(et)\wedge\frac{1}{9}$ for some absolute constant $c$. Suppose $k^*\geq C'\frac{\|\xi\|_{L^\infty}^2}{\sigma_\xi^2}$ if $\xi$ is bounded, or $k^*\geq C'$ if $\xi$ is Gaussian, for some absolute constant $C'$. Then there exist absolute constants $C$ and $N_0\in\bN_+$, such that for any $N\geq N_0$, with probability at least $\frac{9990}{10000}$,
    \begin{align*}
        \|\hat f_N - f^\circ \|_{L^2(\mu)} \leq C \| f^\circ - f^*\|_{L^2(\mu)} + C r(V_{J_*},V_{J_*^c}) + C\frac{\square}{t}\|T_K^{-1} f_{J_*}^*\|_{L^2(\mu)},
    \end{align*}and if $(\varphi_t)_{t\geq 1}$ further satisfies Assumption~\ref{ass:filter_fct} with $\oc{c_filter_1}>0$, then there exists an absolute constant $c$ such that with probability at least $\frac{9990}{10000},$
    \begin{align*}
        \|\hat f_N - f^\circ \|_{L^2(\mu)} \geq c\|f^\circ - f^*\|_{L^2(\mu)} + c\sigma_\xi\sqrt{\frac{k^*}{N}} + c\|\psi_t(T_K)f_J^*\|_{L^2(\mu)} + c\|f_{J_*^c}^*\|_{L^2(\mu)} - C\frac{\square}{t}\|T_K^{-1}f_{J_*}^*\|_{L^2(\mu)}.
    \end{align*}Moreover, if there exists an absolute constant $0<\delta<1$ such that $\bP(\sum_{j\in J_*^c}\sigma_j^2 f_j^2(X)\geq\frac{1}{10}\sum_{j\in J_*^c}\sigma_j^2)\geq \delta$, then with the same probability minus $\delta'$ (which is given in Lemma~\ref{lemma:lower_trace_lower_bound_variance}),
    \begin{align*}
        \|\hat f_N - f^\circ \|_{L^2(\mu)} \geq c\|f^\circ - f^*\|_{L^2(\mu)} + cr(V_{J_*},V_{J_*^c}) - C\frac{\square}{t}\|T_K^{-1}f_{J_*}^*\|_{L^2(\mu)}.
    \end{align*}
\end{Theorem}
Theorem~\ref{theo:main_RKHS} shows that, when the condition $\bP(\sum_{j\in J_*^c}\sigma_j^2 f_j^2(X)\geq\frac{1}{10}\sum_{j\in J_*^c}\sigma_j^2)\geq \delta$ holds, the estimation error of spectral methods is almost equivalent to $\|f^\circ - f^*\|_{L^2(\mu)} + r(V_{J_*},V_{J_*^c})$. In Section~\ref{sec:examples_theo_main_RKHS}, we provide several examples of RKHSs satisfying the assumption $\bP(\sum_{j\in J_*^c}\sigma_j^2 f_j^2(X)\geq\frac{1}{10}\sum_{j\in J_*^c}\sigma_j^2)\geq \delta$, including RKHSs whose eigenfunctions are uniformly bounded in $L^\infty(\mu)$, spherical inner-product kernels, Sobolev kernels, and so on.  In the proof, this assumption is only used to provide a nontrivial small-ball probability for $\sum_{j\in J_*^c}\sigma_j^2 f_j^2(X)$ relative to its expectation.

Let us now comment on the consequences of the results above.
\paragraph{Contribution to the understanding of the statistical properties of spectral methods.}

For an arbitrary linear regression problem $ (\Sigma, \bbeta^*, \sigma_\xi)$, Corollary~\ref{coro:main_coro} provides, under fairly general conditions, matching upper and lower bounds (up to a multiplicative constant) for the population excess risk of spectral methods in this problem. 
            \begin{enumerate}
                \item Compared with classical results in the statistical properties of spectral methods, such as \cite{smale_learning_2007,yao_early_2007,bauer_regularization_2007,lo_gerfo_spectral_2008,blanchard_kernel_2016,blanchard_optimal_2018,blanchard_lepskii_2019,zhang_optimality_2023,li_generalization_2024}, we observe that the classical results are typically restricted to Sobolev spaces (which impose a power decay on the eigenvalues of $\Sigma$), or require certain eigenvalue decay conditions. Among them, \cite{blanchard_kernel_2016} does not rely on power decay, but still requires the eigenvalues to satisfy certain specific decay conditions. In contrast, Theorem~\ref{theo:main} imposes no restrictions on the spectrum of $\Sigma$.
                \item In addition, the aforementioned classical literature typically assumes that $\bbeta^*$ satisfies a certain Hölder-type source condition, namely, that there exists $s > 1$ such that $\|\Sigma^{\frac{1-s}{2}}\bbeta^*\|_2$ is bounded. In contrast, our Theorem~\ref{theo:main} requires no assumptions whatsoever on $\bbeta^*$, yet still yields a precise characterization of its statistical properties.
            \end{enumerate}
            Precisely because Theorem~\ref{theo:main} yields a precise (up to a multiplicative constant) characterization of the population excess risk for any linear regression problem, it allows us to describe the statistical properties of spectral methods in the most general linear regression setting. To the best of our knowledge, this is the first result that establishes a universal statistical property of spectral methods valid for any linear regression problem.
            
From Section~\ref{sec:FSD}, we know that estimation of $\bbeta^*$ occurs only on $V_{J_*}$, while absorption of noise occurs on $V_{J_*^c}$. Theorem~\ref{theo:main} shows that, for any given linear regression problem $(\Sigma, \bbeta^*, \sigma_\xi)$ and tuning parameter $t$, the space $V_{J_*}$ where estimation takes place is determined solely by the spectrum of $\Sigma$ and the tuning parameter, and is independent of the signal $\bbeta^*$ to be approximated, the eigenvectors of $\Sigma$, and the family of filter functions $(\varphi_t)_{t\geq1}$. This observation indicates the following facts:
                \begin{enumerate}
                    \item Since $V_{J_*}$ is independent of $(\varphi_t)_{t\geq1}$, we know that for a given linear regression problem, all algorithms in the class of spectral methods decompose the feature space in the same way to estimate the signal. By examining the definition of $r(V_{J_*}, V_{J_*^c})$ in \eqref{eq:def_r_V_J_star}, we find that only the term $\|\Sigma_{J_*}^{1/2}\psi_t(\Sigma)\bbeta_{J_*}^*\|_2$ depends on the specific choice of the filter / residual functions. In other words— the only difference in the statistical properties of different spectral methods for a given linear regression problem lies in how close the residual function $\psi_t$ is to $0$ on $\{x>0: tx>b\}$—the closer it is to $0$, the better the statistical properties (i.e., the faster the convergence rate). For example, when the eigenvalues of $\Sigma$ satisfy power decay, i.e., there exists $\alpha > 1$ such that $\sigma_j \sim j^{-\alpha}$ for all $j$ (corresponding to regression problems in Sobolev spaces with sufficient smoothness), the residual function of ridge regression is $\psi_t(x) = \tfrac{1}{xt+1}$, that of gradient flow is $\psi_t(x) = \exp(-t x)$, and that of gradient descent is $\psi_t(x) = (1 - \eta x)^t$, see Example~\ref{example:spec_algo}. For the latter two, when $t x > b$, their convergence to $0$ as functions of $x$ is much faster than that of ridge regression. This provides an explanation of the saturation effect \cite{bauer_regularization_2007}: on the set $\{x > 0 : t x > b\}$, the residual function of ridge regression decays too slowly. We provide more general situations in Section~\ref{sec:partial_order}.

                    \item In Theorem~\ref{theo:main_RKHS}, our lower bound does not require imposing any condition on the components of $\phi(X)$. This is in sharp contrast to \cite{tsigler_benign_2023,wuRiskComparisonsLinear2025}, where the coordinates of the design vector are required to satisfy conditions such as symmetry or independence. This is precisely what allows our lower bound to remain valid for kernel regression.


                    \item The fact that $V_{J_*}$ does not depend on $\bbeta^*$ has the following drawback: if the alignment between $\bbeta^*$ and $\Sigma$ is poor, spectral methods exhibit unfavorable statistical properties. For example, when the support of $\bbeta^*$ satisfies $\supp(\bbeta^*) = V_{J_*^c}$, the term $\|\Sigma_{J_*^c}^{1/2}\bbeta_{J_*^c}^*\|_2$ in $r(V_{J_*},V_{J_*^c})$ reduces to $\|\langle X, \bbeta^* \rangle\|_{L^2(\mu)}$, which may be big. Of course, one can change $V_{J_*}$ by adjusting the tuning parameter $t$, but we stress that statisticians usually do not know the support of $\bbeta^*$ in the basis of eigenvectors of  $\Sigma$ in advance, and hence cannot preselect an appropriate $t$. Therefore, unlike statistical algorithms with the sparsity inducing property such as basis pursuit or the LASSO (see \cite[pp. 30]{shangFeatureSpaceDecomposition2026}), or the algorithms with feature learning property like mean-field Langevin dynamics (see \cite{lecueGeneralizationErrorMean2025}), the fact that $V_{J_*}$ is independent with $\bbeta^*$ implies that, when the signal and the eigenvectors of $\Sigma$ are poorly aligned, spectral methods generally have inferior statistical performance. We discuss further in Section~\ref{sec:partial_order}. 

                    \item Theorem~\ref{theo:main}, Theorem~\ref{theo:main_LB}, and Theorem~\ref{theo:main_RKHS} in fact establish the Gaussian universality of spectral methods in RKHSs; see \cite{gavrilopoulos_geometrical_2025}.
                \end{enumerate}

From Example~\ref{example:spec_algo}, we know that the residual function of gradient flow is smaller than the one of ridge regression. Therefore, for a given linear regression problem and for the same tuning parameter, we always have $r^{(\mathrm{GF})}(V_{J_*},V_{J_*^c}) \leq r^{(\mathrm{Ridge})}(V_{J_*},V_{J_*^c})$. This means that, from the perspective of population excess risk, whenever one can choose between ridge regression and gradient flow, gradient flow should always be preferred, regardless of the specific linear regression problem under consideration. 
In Section~\ref{sec:partial_order}, we will further discuss the notion of pre-order on the set of spectral algorithms.

\paragraph{Contribution within the FSD series of papers.}
The high-level idea of the proof of Theorem~\ref{theo:main} is to wrap the classical analysis of the statistical properties of spectral methods, such as \cite{li_generalization_2024}, with a FSD layer—namely, instead of analyzing the statistical properties over the entire feature space $\mathbb{R}^p$, we restrict the analysis to $V_J$, while on $V_{J^c}$ we perform only a signal-free analysis. Remarkably, we obtain the precise result of Theorem~\ref{theo:main}. We therefore believe that the proof of Theorem~\ref{theo:main} itself suggests that the FSD method may serve as a systematic tool in mathematical statistics for deriving precise non-asymptotic results on the population excess risk of general supervised learning algorithms.
        
Theorem~\ref{theo:main} can be regarded as an extension of the results of \cite{mei_generalization_2022,tsigler_benign_2023,cheng_dimension_2022,barzilai_generalization_2024,gavrilopoulos_geometrical_2025} on ridge regression to spectral methods. In this theorem, we apply the FSD method for the first time to estimators beyond ridge regression and the minimum norm interpolant estimator. Unlike the ridge results in \cite{mei_generalization_2022,tsigler_benign_2023,cheng_dimension_2022,barzilai_generalization_2024,gavrilopoulos_geometrical_2025}, in \eqref{eq:def_estimation_dimension} we do not observe an ``effective regularization'' term of the form $Nt^{-1} + \mathrm{Tr}(\Sigma_{J^c})$. This is because we only consider the well-regularized regime, namely, when the spectral algorithm is far from overfitting. The overfitting regime of spectral methods—for example, when the running time $t$ of gradient descent/flow tends to infinity—yields the minimum $\ell_2$ norm interpolant estimator, which has already been studied in \cite{tsigler_benign_2020,lecue_geometrical_2024}.

\section{Pre-order of Spectral Algorithms, Generalized Saturation Effect, and Absence of Feature Learning}\label{sec:partial_order}
Thanks to the FSD method, Corollary~\ref{coro:main_coro} provides matching upper and lower bounds for arbitrary $\cR$, rather than being restricted to a specific spectrum decay or a particular class of $\bbeta^*$. Therefore, in a rough sense, Corollary~\ref{coro:main_coro} characterize the following fact: the random variable $\|\Sigma^{1/2}(\hat\bbeta-\bbeta^*)\|_2$ is ``equivalent'', with high probability, to the real number $r(V_{J_*},V_{J_*^c})$. Consequently, for any $\cR$, comparing the population excess risk of two spectral methods is reduced to comparing two real numbers.
Corollary~\ref{coro:main_coro} also enables us to generalize the definition of the saturation effect. In fact, to the best of our knowledge, the notion of saturation effect was first introduced by \cite{bauer_regularization_2007}. It describes the following phenomenon: when $\sigma_j$ exhibits power decay, i.e., there exists $\alpha > 1$ such that for any $j \in [p]$ we have $\sigma_j \sim j^{-\alpha}$ (a classical result for nonparametric regression in Sobolev spaces), and when there exists $s \geq 1$ such that $\|\Sigma^{\frac{1-s}{2}} \bbeta^*\|_2 < \infty$ (meaning that $\bbeta^*$ has good smoothness in the eigen-basis of $\Sigma$), ridge regression, even with the optimal tuning parameter, achieves a (squared loss) population excess risk convergence rate of only $N^{-\frac{\alpha (s \wedge 2)}{1+\alpha (s \wedge 2)}}$. Since larger $s$ corresponds to smoother $\bbeta^*$ (in the Fourier sense), one might expect ridge regression to exploit this information and achieve a faster convergence rate; however, ridge regression saturates—when $s \geq 2$, the convergence rate of ridge regression cannot be further improved. This phenomenon is called the saturation effect. \cite{bauer_regularization_2007} showed that such a saturation effect arises because ridge regression corresponds to a finite value of $\tau$ in Assumption~\ref{assumption:classical}, item \emph{2.}. For gradient flow/descent, we have $\tau = \infty$, and thus no saturation effect occurs.

In this section, we use Corollary~\ref{coro:main_coro} to define a generalized saturation effect, specify the conditions under which it occurs, and provide a geometric perspective on the phenomenon.  

\subsection{Pre-order on Spectral Algorithms}

We begin by extending the definition of the saturation effect. The original definition given in \cite{bauer_regularization_2007} was intended to describe the relative advantages and disadvantages of ridge regression versus other spectral methods (such as gradient flow) for certain specific statistical problems (e.g., regression on Sobolev spaces). We follow this line of thought to generalize the definition. Since a spectral algorithm is uniquely determined by its filter function, we consider two spectral methods $\hat{\bbeta}_{t_A}^{(A)}$ and $\hat{\bbeta}_{t_B}^{(B)}$ with parameters $t_A,t_B$ , and with filter functions $\varphi_{t_A}^{(A)}$ and $\varphi_{t_B}^{(B)}$, respectively. By Theorem~\ref{theo:main}, there exist $r_{t_A}^{(A)}(V_{J_*}^{(A)},V_{J_*^c}^{(A)})$ and $r_{t_B}^{(B)}(V_{J_*}^{(B)},V_{J_*^c}^{(B)})$ characterizing the squared loss population excess risk $\|\Sigma^{1/2}(\hat{\bbeta}_{t_A}^{(A)} - \bbeta^*)\|_2$ and $\|\Sigma^{1/2}(\hat{\bbeta}_{t_B}^{(B)} - \bbeta^*)\|_2$ for these two spectral methods in this linear regression problem. Given any $\cR = (\Sigma, \bbeta^*, \sigma_\xi) \in \mathbb{R}^{p \times p} \times \mathbb{R}^p \times \mathbb{R}$, we define the following pre-order ``$\preceq_\cR$'' on the set of all spectral methods.

\begin{Definition}[Pre-order of Spectral Algorithms in Linear Regression Problems]\label{def:partial_order}
    For the linear regression problem $\cR := (\Sigma, \bbeta^*, \sigma_\xi)$, we write $\hat{\bbeta}_{t_A}^{(A)} \preceq_\cR \hat{\bbeta}_{t_B}^{(B)}$ if $r_{t_A}^{(A)}(V_{J_*}^{(A)},V_{J_*^c}^{(A)}) = O\!\left(r_{t_B}^{(B)}(V_{J_*}^{(B)},V_{J_*^c}^{(B)})\right)$ as $N$ and $p$ go to infinity. In particular, if $r_{t_A}^{(A)}(V_{J_*}^{(A)},V_{J_*^c}^{(A)}) = \Theta\!\left(r_{t_B}^{(B)}(V_{J_*}^{(B)},V_{J_*^c}^{(B)})\right)$, we write $\hat{\bbeta}_{t_A}^{(A)} \asymp_\cR \hat{\bbeta}_{t_B}^{(B)}$. It is straightforward to verify that ``$\asymp_\cR$'' defines an equivalence relation on the set of all spectral methods, while $\preceq_\cR$ defines a pre-order.
\end{Definition}

Definition~\ref{def:partial_order} describes, for a specific linear regression problem $\cR = (\Sigma, \bbeta^*, \sigma_\xi)$, the relative speed of convergence of the population excess risk for any two given spectral methods $\hat\bbeta_{t_A}^{(A)}$ and $\hat\bbeta_{t_B}^{(B)}$, thereby characterizing the relative performance of different spectral methods for that problem.

In the following, we consider the case when $t_A=t_B$.
Since the choice of $V_{J_*}$ for a given $t\geq1$ in the optimal decomposition of the feature space given by Theorem~\ref{theo:main} is universal for any spectral algorithm (see \eqref{eq:def_estimation_dimension}), it follows that, for any fixed $(\Sigma, \bbeta^*, \sigma_\xi)$, Theorem~\ref{theo:main} can be applied to any spectral algorithm to obtain the corresponding $r(V_{J_*}, V_{J_*^c})$. In the sense of equality up to a multiplicative constant, the squared loss population excess risk of each spectral algorithm differs only in the bias term $\|\Sigma_{J_*}^{1/2} \psi_t(\Sigma) \bbeta_{J_*}^*\|_2$ of $\hat{\bbeta}_J$. This means that, for any spectral algorithm $\hat{\bbeta}$, the variance of $\hat{\bbeta}_J$ and both the bias and variance of $\hat{\bbeta}_{J^c}$ are identical—the only difference lies in the convergence rate of $\hat{\bbeta}_J$ used to estimate $\bbeta_{J}$. Therefore, we have the following corollary.

\begin{Corollary}\label{coro:partial_order}
    Given any linear regression problem $\cR=(\Sigma,\bbeta^*,\sigma_\xi)$. For any $t\geq1$ satisfying the assumptions of Theorem~\ref{theo:main} and Theorem~\ref{theo:main_LB}, $\hat\bbeta_t^{(A)}\preceq_\cR \hat\bbeta_t^{(B)}$ if and only if as $N$ and $p$ go to infinity
    \begin{align*}
        \norm{\Sigma_{J_*}^{\frac{1}{2}}\psi_t^{(A)}(\Sigma)\bbeta_{J_*}^*}_2 = O\bigg( \norm{\Sigma_{J_*}^{\frac{1}{2}}\psi_t^{(B)}(\Sigma)\bbeta_{J_*}^*}_2 \bigg).
    \end{align*}
\end{Corollary}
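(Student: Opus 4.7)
The plan is to derive the corollary directly from Corollary~\ref{coro:main_coro}, combined with one structural observation: when $t_A = t_B = t$, the estimation dimension $k^*$ from Definition~\ref{def:estimation_dimension} depends only on the spectrum of $\Sigma$ and on $t$, not on the filter function. Consequently $V_{J_*}^{(A)} = V_{J_*}^{(B)} =: V_{J_*}$ (and likewise $V_{J_*^c}^{(A)} = V_{J_*^c}^{(B)}$), so applying Corollary~\ref{coro:main_coro} to each algorithm yields, on a common high-probability event,
\begin{equation*}
  \norm{\Sigma^{1/2}(\hat\bbeta_t^{(\star)} - \bbeta^*)}_2 \;\asymp\; r_t^{(\star)}(V_{J_*}, V_{J_*^c}), \qquad \star \in \{A,B\}.
\end{equation*}
By Definition~\ref{def:partial_order}, this reduces the claim to comparing the two scalars $r_t^{(A)}$ and $r_t^{(B)}$.

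The second step is to read off from formula \eqref{eq:def_r_V_J_star} that three of its four summands, namely $\sigma_\xi\sqrt{|J_*|/N}$, $\norm{\Sigma_{J_*^c}^{1/2}\bbeta_{J_*^c}^*}_2$ and $\sigma_\xi t\sqrt{\Tr(\Sigma_{J_*^c}^2)/N}$, depend only on $(\cR, t)$ and not on the filter function. Only the first summand $B^{(\star)} := \norm{\Sigma_{J_*}^{1/2}\psi_t^{(\star)}(\Sigma)\bbeta_{J_*}^*}_2$ distinguishes the two algorithms. Writing $S$ for the common filter-independent part, we therefore have $r_t^{(\star)} \asymp B^{(\star)} + S$ for $\star \in \{A,B\}$.

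The ``if'' direction is immediate: if $B^{(A)} = O(B^{(B)})$ as $N, p \to \infty$, then $r_t^{(A)} \asymp B^{(A)} + S \lesssim B^{(B)} + S \asymp r_t^{(B)}$, so $\hat\bbeta_t^{(A)} \preceq_\cR \hat\bbeta_t^{(B)}$. For the converse, $\hat\bbeta_t^{(A)} \preceq_\cR \hat\bbeta_t^{(B)}$ yields $B^{(A)} \leq r_t^{(A)} \lesssim r_t^{(B)} \asymp B^{(B)} + S$; in the non-degenerate regime $B^{(B)} \gtrsim S$ this gives $B^{(A)} = O(B^{(B)})$ directly, whereas in the opposite regime $B^{(B)} = o(S)$ both rates collapse to $S$ and the two algorithms are already $\asymp_\cR$-equivalent, so both sides of the stated equivalence become degenerate and trivially compatible.

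The one piece of ``housekeeping'' worth stressing is uniformity of constants: one must check that the multiplicative constants furnished by Corollary~\ref{coro:main_coro} do not depend on the specific filter function but only on the absolute constants $c_1, C_1$ of Assumption~\ref{ass:filter_fct}. Once this is noted, the $\asymp$ relation between the estimation error and $r_t^{(\star)}$ transfers cleanly into the big-$O$ comparison of the two rates, and no further probabilistic argument is required beyond the ones already carried out for Theorem~\ref{theo:main} and Theorem~\ref{theo:main_LB}.
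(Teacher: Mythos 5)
Your core argument is the same as the paper's: Corollary~\ref{coro:partial_order} receives no separate proof there; it is justified by exactly the observation you make, namely that for a common $t$ the estimation dimension of Definition~\ref{def:estimation_dimension} — hence $V_{J_*}$ — does not depend on the filter function, and that in \eqref{eq:def_r_V_J_star} the three terms $\sigma_\xi\sqrt{|J_*|/N}$, $\norm{\Sigma_{J_*^c}^{1/2}\bbeta_{J_*^c}^*}_2$ and $\sigma_\xi t\sqrt{\Tr(\Sigma_{J_*^c}^2)/N}$ are common to $A$ and $B$, so comparing $r_t^{(A)}$ and $r_t^{(B)}$ reduces to comparing the bias terms. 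One simplification you could make: the detour through Corollary~\ref{coro:main_coro} is unnecessary for the literal statement, because Definition~\ref{def:partial_order} defines $\preceq_\cR$ directly through the deterministic rates $r_t^{(\star)}(V_{J_*},V_{J_*^c})$, so the corollary is a comparison of two numbers of the form $B^{(\star)}+S$; the matching upper and lower bounds of Corollary~\ref{coro:main_coro} are only what gives the partial order its statistical interpretation, as the paper itself points out right after the statement.

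The one genuine weak point is your handling of the converse in the regime $B^{(B)}=o(S)$. There $r_t^{(A)}\asymp r_t^{(B)}\asymp S$, so $\hat\bbeta_t^{(A)}\preceq_\cR\hat\bbeta_t^{(B)}$ holds, yet $B^{(A)}=O(B^{(B)})$ can perfectly well fail (take $B^{(A)}\sim S$ and $B^{(B)}\sim S/N$); hence the two sides of the stated equivalence are not ``trivially compatible'' in that regime, and your dismissal does not close the ``only if'' direction. The honest conclusion is that the unconditional implication is the ``if'' direction, while the ``only if'' direction requires the common part $S$ to be dominated by, or comparable to, the bias term $B^{(B)}$. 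This looseness is inherited from the corollary as stated — the paper's own informal argument likewise only supports the reduction to bias terms — but the patch you propose, as written, is not a proof of the converse.
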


Corollary~\ref{coro:partial_order} characterizes the following: for any two spectral methods, given the same $t$, if they satisfy the assumptions of Corollary~\ref{coro:main_coro}, then the necessary and sufficient condition for the pre-order $\preceq_\cR$ depends solely on the bias of $\hat{\bbeta}_J$—which is consistent with our intuition—because, as we noted in Section~\ref{sec:FSD}, only the component of $\bbeta^*$ projected onto $V_{J_*}$ is actually estimated by $\hat{\bbeta}$. We emphasize that Corollary~\ref{coro:partial_order} itself merely provides a formal verification of the definition introduced in Definition~\ref{def:partial_order}. However, when the conditions of Corollary~\ref{coro:main_coro} are satisfied, Definition~\ref{def:partial_order} genuinely reflects the population excess risk associated with the corresponding spectral methods. Consequently, Corollary~\ref{coro:partial_order} captures the pre-order of spectral methods in terms of their population excess risk. We further stress that this framework is particularly effective for comparing the population excess risk of ridge regression with that of other spectral methods, since the lower bound for ridge does not require any condition (see \cite{gavrilopoulos_geometrical_2025}). This observation naturally leads to the following corollary. The following corollary is a direct consequence of the elementary inequality $\exp(-tx) \leq 1/(1+xt)$.
\begin{Corollary}[GF outperforms Ridge]\label{coro:spiked_cov}
    For any linear regression problem such hat \eqref{eq:condition_for_matching_UB_LB} holds, $\varphi_t^{(\mathrm{GF})}\preceq_\cR\varphi_t^{(\mathrm{Ridge})}$, where $\varphi_t^{(\mathrm{Ridge})}$ is the filter function of ridge regression, \eqref{eq:def_ridge}; while $\varphi_t^{(\mathrm{GF})}$ is the filter function of gradient flow, \eqref{eq:def_GF}.
\end{Corollary}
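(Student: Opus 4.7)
The plan is to reduce the statement to Corollary~\ref{coro:partial_order} and then to a pointwise inequality between residual functions that lifts to the operator level via the spectral theorem.

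First I would verify the assumptions of Corollary~\ref{coro:partial_order} are met for both estimators. By Example~\ref{example:spec_algo} and the discussion immediately below Assumption~\ref{ass:filter_fct}, both $\varphi_t^{(\mathrm{GF})}$ and $\varphi_t^{(\mathrm{Ridge})}$ satisfy Assumption~\ref{ass:filter_fct} (in fact on all of $\bR^+$), and condition \eqref{eq:condition_for_matching_UB_LB} is assumed. Hence Corollary~\ref{coro:partial_order} applies, and the partial order $\varphi_t^{(\mathrm{GF})}\preceq_\cR \varphi_t^{(\mathrm{Ridge})}$ reduces to showing the bias inequality
\begin{equation*}
\norm{\Sigma_{J_*}^{1/2}\psi_t^{(\mathrm{GF})}(\Sigma)\bbeta_{J_*}^*}_2 \;\lesssim\; \norm{\Sigma_{J_*}^{1/2}\psi_t^{(\mathrm{Ridge})}(\Sigma)\bbeta_{J_*}^*}_2,
\end{equation*}
where $\psi_t^{(\mathrm{GF})}(x)=e^{-tx}$ and $\psi_t^{(\mathrm{Ridge})}(x)=1/(1+tx)$.

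Next I would establish the pointwise inequality $e^{-u}\leq 1/(1+u)$ for all $u\geq 0$. This is equivalent to $1+u\leq e^u$, which is just the tangent-line bound at $0$ for the convex function $u\mapsto e^u$ (or a single-term truncation of its power series). Applying this with $u=t\sigma_j\geq 0$ for each eigenvalue $\sigma_j$ of $\Sigma$ yields $0\leq \psi_t^{(\mathrm{GF})}(\sigma_j)\leq \psi_t^{(\mathrm{Ridge})}(\sigma_j)$ for every $j$.

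Finally I would lift this scalar inequality to the norm statement. Since $V_{J_*}=\Span(\ve_j:\,j\leq k^*)$ is spanned by eigenvectors of $\Sigma$, it is $\Sigma$-invariant, so $\Sigma$, $\Sigma_{J_*}^{1/2}$ and $\psi_t(\Sigma)$ all commute and are simultaneously diagonalized in the basis $(\ve_j)_j$. Writing $\bbeta_{J_*}^* = \sum_{j\in J_*} \inr{\bbeta^*,\ve_j}\ve_j$, one obtains
\begin{equation*}
\norm{\Sigma_{J_*}^{1/2}\psi_t(\Sigma)\bbeta_{J_*}^*}_2^2 \;=\; \sum_{j\in J_*} \sigma_j\,\psi_t(\sigma_j)^2\,\inr{\bbeta^*,\ve_j}^2,
\end{equation*}
and the pointwise bound from the previous paragraph gives the required comparison termwise, hence the $O(\cdot)$ relation (indeed with constant $1$). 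By Corollary~\ref{coro:partial_order} this yields $\hat\bbeta_t^{(\mathrm{GF})}\preceq_\cR \hat\bbeta_t^{(\mathrm{Ridge})}$, which is the claim. There is no real obstacle here since both filter functions are explicit and the inequality $e^{-u}\leq (1+u)^{-1}$ is elementary; the only point that deserves a line of justification is the simultaneous diagonalizability used in the last step, which follows because $V_{J_*}$ is an invariant subspace of $\Sigma$ by construction.
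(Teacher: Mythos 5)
Your proposal is correct and follows the same route as the paper, which presents this corollary as a direct consequence of the elementary inequality $\exp(-tx)\leq 1/(1+xt)$ combined with the reduction to the bias term given by Corollary~\ref{coro:partial_order}. Your extra lines spelling out the simultaneous diagonalization on $V_{J_*}$ just make explicit what the paper leaves implicit.
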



For a fixed parameter $t$, the difference in population excess risk between different spectral methods arises from the structure of their residual function $\psi_t$, and this naturally leads to the saturation effect -- the cause of the saturation effect also lies in the properties of the residual function. We first introduce the following generalized definition.

\begin{Definition}[Generalized Saturation Effect]\label{def:generalized_saturation}
    For any linear regression problem $\cR$, any interval $I\subset[1,+\infty)$ and families of filter functions $\{\varphi_t^{(A)}\}_{t\geq1}$ and $\{\varphi_t^{(B)}\}_{t\geq1}$, we write $\{\varphi_t^{(A)}\}_{t\in I} \preceq_{\cR} \{\varphi_t^{(B)}\}_{t\in I}$ if as $N$ and $p$ go to infinity
    \begin{align*}
        \inf\!\left( r_{t_A}^{(A)}(V_{J_*},V_{J_*^c}) : t_A \in I \right) = O\!\left( \inf\!\left( r_{t_B}^{(B)}(V_{J_*},V_{J_*^c}) : t_B \in I \right) \right).
    \end{align*}
    If $\{\varphi_t^{(A)}\}_{t\in I} \preceq_{\cR} \{\varphi_t^{(B)}\}_{t\in I}$, we say that the spectral algorithm $\hat{\bbeta}^{(B)}$ defined by the filter function family $\{\varphi_t^{(B)}\}_{t\geq1}$ is saturated compared to the filter function family $\{\varphi_t^{(A)}\}_{t\geq1}$ in $I$. In particular, if $I = \bR_+$, we write $\{\varphi_t^{(A)}\}_{t\geq1} \preceq_{\cR} \{\varphi_t^{(B)}\}_{t\geq1}$ and say that the spectral algorithm $\hat{\bbeta}^{(B)}$ defined by the filter function family $\{\varphi_t^{(B)}\}_{t\geq1}$ is saturated compared to the filter function family $\{\varphi_t^{(A)}\}_{t\geq1}$. It is straightforward to verify that $\preceq_{\cR}$ is a pre-order. Similarly, we can define an equivalence relation $\asymp_{\cR}$ on families of filter functions. When big-$O$ is replaced by small-$o$, we denote by $\prec_{\cR}.$
\end{Definition}
Definition~\ref{def:partial_order} describes the relative performance of two spectral methods for given parameters $t_A$ and $t_B$, whereas Definition~\ref{def:generalized_saturation} concerns their relative performance under their respective optimal parameters within interval $I$. It is easy to see that the classical saturation effect defined in \cite{bauer_regularization_2007} corresponds  to the pre-order on the following set of linear regression problems.
\begin{align*}
    \cR\in \fR_{\mathrm{Sob}}(s,\alpha):= \bigg\{ (\Sigma,\bbeta^*,\sigma_\xi):\,  \Sigma = \sum_{j=1}^p \sigma_j\ve_j\otimes\ve_j, \, \sigma_j\sim j^{-\alpha},\, \|\Sigma^{\frac{1-s}{2}}\bbeta^*\|_2<\infty,\sigma_\xi \mbox{ is constant} \bigg\}.
\end{align*}Moreover, in \cite{bauer_regularization_2007}, $\{r_t^{(B)}\}_{t\geq1}$ is the family of ridge regression, \eqref{eq:def_ridge}.
In addition, on $\fR_{\mathrm{Sob}}(s,\alpha)$, the optimal tuning parameter is $t^{-1} \sim N^{-\frac{\alpha}{1 + \tilde{s} \alpha}}$, where $\tilde{s} = s \wedge \tau$ and $\tau$ is defined in Assumption~\ref{assumption:classical}, item~\emph{2}. We say this choice is optimal, because it achieves the minimax rate on $\fR_{\mathrm{Sob}}$, \cite{li_asymptotic_2023}. 
Applying to $\varphi_t^{(A)} : x \mapsto (1 - \exp(-t x))/x$, i.e., gradient flow \eqref{eq:def_GF}, and to $\varphi_t^{(B)} : x \mapsto (x + t^{-1})^{-1}$, i.e., ridge regression \eqref{eq:def_ridge}, we have the following. For the same $t \sim N^{\frac{\alpha}{1 + \tilde{s} \alpha}}$, \cite{gavrilopoulos_geometrical_2025} computed that $\|\Sigma_{J_*}^{1/2} \psi_t^{(B)}(\Sigma) \bbeta_{J_*}^*\|_2 \sim N^{-\frac{\alpha (s \wedge 2)}{1 + \alpha (s \wedge 2)}}$, while the following corollary yields $\|\Sigma_{J_*}^{1/2} \psi_t^{(A)}(\Sigma) \bbeta_{J_*}^*\|_2 \sim N^{-\frac{\alpha s}{1 + \alpha s}}$. Combined with Corollary~\ref{coro:partial_order}, this recovers the classical saturation effect in the sense of \cite{bauer_regularization_2007}. The proof of Corollary~\ref{coro:Sobo} may be found in Section~\ref{sec:proof_Coro_Sobo}
\begin{Corollary}[Saturation Effect in Sobolev Space]\label{coro:Sobo}
    Let $\varphi_t^{(\mathrm{GF})} : x \mapsto (1 - \exp(-t x))/x$ and $\varphi_t^{(\mathrm{Ridge})} : x \mapsto (x + t^{-1})^{-1}$. Let $\cR\in\fR_{\mathrm{Sob}}(s,\alpha)$. We have $\{\varphi^{(\mathrm{GF})}\}_{t\geq1}\preceq_{\cR} \{\varphi^{(\mathrm{Ridge})}\}_{t\geq1}$. Moreover, when $t^{-1}\sim N^{-\frac{\alpha}{1 + \tilde{s} \alpha}}$, where $\tilde s=s\wedge 2$ for ridge regression, and $\tilde s=s$ for gradient flow, we have $(r_t^{(\mathrm{GF})}(V_{J_*},V_{J_*^c}))^2\sim N^{-\frac{\alpha s}{1+s\alpha}}$ and $(r_t^{(\mathrm{Ridge})}(V_{J_*},V_{J_*^c}))^2\sim N^{-\frac{\alpha\tilde s}{1+\tilde s\alpha}}$.
\end{Corollary}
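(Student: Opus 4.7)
The plan is to make every term in $r(V_{J_*}, V_{J_*^c})$ from \eqref{eq:def_r_V_J_star} explicit under the Sobolev structure $\sigma_j \sim j^{-\alpha}$ and $\|\Sigma^{(1-s)/2}\bbeta^*\|_2 \leq R$, and then to optimise over $t$. First, Definition~\ref{def:estimation_dimension} together with $\sigma_j \sim j^{-\alpha}$ immediately yields $k^* \sim t^{1/\alpha}$, hence $\sigma_\xi^2 |J_*|/N \sim t^{1/\alpha}/N$ and, since $\Tr(\Sigma_{J_*^c}^2) \sim \sum_{j > k^*} j^{-2\alpha} \sim (k^*)^{1-2\alpha} \sim t^{(1-2\alpha)/\alpha}$, also $\sigma_\xi^2 t^2 \Tr(\Sigma_{J_*^c}^2)/N \sim t^{1/\alpha}/N$. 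Thus the variance contribution to $r^2$ has order $t^{1/\alpha}/N$ for both methods.

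Second, I would reparameterise the signal by writing $\bbeta_j^* = \sigma_j^{(s-1)/2} b_j$, so that the source condition becomes $\sum_j b_j^2 \leq R^2$, and every signal-dependent term rewrites as a weighted sum $\sum_j \sigma_j^s \psi_t(\sigma_j)^2 b_j^2 = \sum_j j^{-\alpha s}\psi_t(\sigma_j)^2\, b_j^2$ (with $\psi_t \equiv 1$ for the alignment part). The alignment is then bounded uniformly by $\|\Sigma_{J_*^c}^{1/2}\bbeta_{J_*^c}^*\|_2^2 \leq \sigma_{k^*+1}^s R^2 \lesssim t^{-s}$, while the bias of either method reduces to the scalar supremum $M_t := \sup_{x \in (0,1]} x^s \psi_t(x)^2$. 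For gradient flow, $\psi_t(x) = e^{-tx}$ and the maximum of $x^s e^{-2tx}$ at $x = s/(2t)$ gives $M_t \lesssim_s t^{-s}$. For ridge, $\psi_t(x) = 1/(1+tx)$, and a direct calculus exercise shows that $\sup_{x \in (0,1]} x^s/(1+tx)^2$ is attained at an interior point of order $1/t$ when $s < 2$ (yielding $\sim t^{-s}$), but at the boundary $x = 1$ when $s \geq 2$ (yielding only $\sim t^{-2}$). This scalar dichotomy is the geometric source of ridge saturation. Combining these estimates produces $(r_t^{(\mathrm{GF})})^2 \lesssim t^{-s} + t^{1/\alpha}/N$ and $(r_t^{(\mathrm{Ridge})})^2 \lesssim t^{-(s \wedge 2)} + t^{1/\alpha}/N$.

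Third, I would exhibit adversarial $\bbeta^* \in \fR_{\mathrm{Sob}}(s,\alpha)$ matching these upper bounds. For gradient flow, or for ridge with $s \leq 2$, concentrating $b_j$ on a single index $j^*$ with $\sigma_{j^*} \sim 1/t$ (i.e.\ $j^* \sim t^{1/\alpha}$) saturates the scalar maximum and produces a bias of order $R^2 t^{-s}$; likewise, placing $b_j$ on indices just above $k^*$ forces the alignment term to have order $R^2 t^{-s}$. For ridge with $s > 2$, the adversarial choice instead concentrates $b_j$ on the top eigendirection ($\sigma_1 \sim 1$), for which the bias is $\sim R^2/t^2$ and the source condition is trivially satisfied; this is the matching lower bound that certifies saturation. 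Together with the lower bound provided by Corollary~\ref{coro:main_coro}, this realises $r_t$ up to absolute constants on each regime.

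Finally, balancing bias and variance via $t^{-\tilde s} \sim t^{1/\alpha}/N$ yields $t \sim N^{\alpha/(1+\tilde s \alpha)}$ and the announced rate $r_t^2 \sim N^{-\alpha \tilde s/(1+\tilde s \alpha)}$, with $\tilde s = s$ for gradient flow and $\tilde s = s \wedge 2$ for ridge. Since $u \mapsto \alpha u/(1+\alpha u)$ is increasing, this immediately yields $\{\varphi_t^{(\mathrm{GF})}\}_{t\geq 1} \preceq_{\cR} \{\varphi_t^{(\mathrm{Ridge})}\}_{t\geq 1}$. The only delicate step is the scalar maximisation $\sup_{x \in (0,1]} x^s/(1+tx)^2$ together with the dual construction of an adversarial profile $b_j$ when $s > 2$; everything else reduces to routine arithmetic with the power decay $\sigma_j \sim j^{-\alpha}$.
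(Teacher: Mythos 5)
Your computations in the first two steps are sound and in fact more self-contained than the paper's own proof, which simply imports the variance, alignment and ridge-bias estimates from Proposition~7 of the earlier FSD paper and bounds the gradient-flow bias through the qualification property of Remark~\ref{assumption:classical}; your scalar maximisation $\sup_{x\in(0,1]}x^s\psi_t(x)^2$ (interior maximum of order $t^{-s}$ for gradient flow and for ridge when $s<2$, boundary value of order $t^{-2}$ for ridge when $s\geq 2$) reproduces exactly the same estimates and cleanly isolates the source of saturation. Moreover, the two-sided claims $(r_t)^2\sim N^{-\alpha\tilde s/(1+\tilde s\alpha)}$ at the prescribed $t$ follow already from what you have: the lower-bound side is supplied by the variance term $\sigma_\xi^2|J_*|/N\sim t^{1/\alpha}/N$, which you computed two-sidedly, and the upper-bound side by your bias and alignment estimates plugged into \eqref{eq:def_r_V_J_star}. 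Note in passing that invoking Corollary~\ref{coro:main_coro} here is off target: $r_t(V_{J_*},V_{J_*^c})$ is a deterministic quantity, and Corollary~\ref{coro:main_coro} lower bounds the excess risk by $r_t$, not $r_t$ by anything.

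The genuine gap is in your third step and in how you deduce $\{\varphi^{(\mathrm{GF})}_t\}\preceq_{\cR}\{\varphi^{(\mathrm{Ridge})}_t\}$. The relation $\preceq_{\cR}$ of Definition~\ref{def:generalized_saturation} is defined for a \emph{fixed} problem $\cR=(\Sigma,\bbeta^*,\sigma_\xi)$ and compares $\inf_t r_t^{(\mathrm{GF})}$ with $\inf_t r_t^{(\mathrm{Ridge})}$ for that same $\bbeta^*$; constructing adversarial signals is a worst-case statement over $\fR_{\mathrm{Sob}}(s,\alpha)$ and cannot be used to certify the order for a given $\cR$ (for which, e.g., the ridge bias may vanish identically). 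Even on their own terms your constructions do not deliver what your final comparison of exponents needs: concentrating $b_j$ on a single index with $\sigma_{j^*}\sim 1/t$ is a $t$-dependent choice, so it lower bounds $r_t$ only at that particular $t$ and says nothing about $\inf_{t}r_t$ (for such a spike gradient flow beats the claimed rate by taking $t$ only logarithmically larger). Consequently the step ``since $u\mapsto\alpha u/(1+\alpha u)$ is increasing, this yields $\preceq_{\cR}$'' is unsupported as written, because it implicitly uses a lower bound on $\inf_t r_t^{(\mathrm{Ridge})}$ that you have not legitimately established. The repair is elementary and is the route the paper takes (see Corollary~\ref{coro:spiked_cov}): $\exp(-tx)\leq 1/(1+tx)$ for all $x,t\geq 0$, so $\psi_t^{(\mathrm{GF})}\leq\psi_t^{(\mathrm{Ridge})}$ pointwise, all other terms of $r_t$ coincide, hence $r_t^{(\mathrm{GF})}\leq r_t^{(\mathrm{Ridge})}$ for every $t$ and therefore $\inf_t r_t^{(\mathrm{GF})}\leq\inf_t r_t^{(\mathrm{Ridge})}$ for the given $\cR$, with no adversarial construction needed.
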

Here, however, we offer a geometric perspective on the classical saturation effect: its occurrence is due to the fact that, on $V_{J_*}$, the residual function of ridge regression decays too slowly in the eigen-basis with power decay, compared to the residual function of gradient flow. We emphasize that Corollary~\ref{coro:partial_order} provides not only this most classical example of the saturation effect in Sobolev spaces, but also necessary and sufficient conditions for the occurrence of more general saturation effects.

\begin{Corollary}[Saturation effect in the plateau covariance model]\label{coro:saturation}
    Suppose there exists some $k\lesssim N\lesssim p-k$, $\sigma>\varepsilon>0$ such that $\sigma_1=\cdots=\sigma_k=\sigma$, and $\sigma_{k+1}=\cdots=\sigma_p=\varepsilon$. Let $J = \{1,\cdots,k\}$ and suppose there exists a real number $\alpha_*>0$ such that $|\langle\bbeta^*,\ve_j\rangle|=\alpha_*$ for any $j\in J$ while $\langle\bbeta^*,\ve_j\rangle=0$ otherwise. Let
    \begin{align*}
        \mathrm{SNR} = \frac{\|\Sigma^{1/2}\bbeta^*\|_2}{\sigma_\xi}\frac{\sigma\sqrt{N}}{\sqrt{\Tr(\Sigma_{J^c}^2)}}.
    \end{align*}Suppose $4<\mathrm{SNR}\leq b\frac{\sigma}{\varepsilon}$, where $b$ is from \eqref{eq:def_estimation_dimension}. Let $I = \{t>1:\, b^{-1}\varepsilon\leq t^{-1}<\sigma\}$. Then
    \begin{align*}
    \min_{t\in I}r^{(\mathrm{GF})}(V_{J_*},V_{J_*^c})\leq \min_{t\in I} r^{(\mathrm{Ridge})}(V_{J_*},V_{J_*^c}).
    \end{align*}
    Moreover, when $\mathrm{SNR}\to\infty$ and $\sigma=\Omega(\varepsilon)$, $\{\varphi_t^{(\mathrm{Ridge})}\}_{t\in I}\prec_{\cR}\{\varphi_t^{(\mathrm{GF})}\}_{t\in I}$.
\end{Corollary}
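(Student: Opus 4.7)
The plan is to apply Corollary~\ref{coro:main_coro} to reduce the problem to a deterministic comparison of $r^{(\mathrm{GF})}(t)$ and $r^{(\mathrm{R})}(t)$ in the plateau covariance model, and then to minimize explicitly over $t\in I$. The first step is to identify $k^{*}$: for every $t\in I$, $bt^{-1}\in[\varepsilon,b\sigma]$; since $b<1$, we have $\sigma_{k+1}=\varepsilon\leq bt^{-1}$ and $\sigma_{k}=\sigma>bt^{-1}$, so Definition~\ref{def:estimation_dimension} gives $k^{*}=k$ and $J_{*}=J=[k]$ throughout $I$. Because $\bbeta^{*}$ is supported on $J$, the alignment term $\|\Sigma_{J_{*}^{c}}^{1/2}\bbeta_{J_{*}^{c}}^{*}\|_{2}$ in \eqref{eq:def_r_V_J_star} vanishes. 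Since $\Sigma$ acts as $\sigma I_{k}$ on $V_{J_{*}}$, the bias term simplifies to $\|\Sigma_{J_{*}}^{1/2}\psi_{t}(\Sigma)\bbeta_{J_{*}}^{*}\|_{2}=\alpha_{*}\sqrt{\sigma k}\,\psi_{t}(\sigma)$, and $\Tr(\Sigma_{J_{*}^{c}}^{2})=(p-k)\varepsilon^{2}$.

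Next I reduce the rate to a scalar optimization. Substituting $u=t\sigma$ and setting $B=\sigma_{\xi}\varepsilon\sqrt{(p-k)/N}$, the identity $\alpha_{*}\sqrt{\sigma k}\cdot\sigma=B\cdot\mathrm{SNR}$ (direct from the definition of $\mathrm{SNR}$) gives
\begin{equation*}
r(V_{J_{*}},V_{J_{*}^{c}}) \;=\; \sigma_{\xi}\sqrt{k/N} \;+\; \frac{B}{\sigma}\bigl(\mathrm{SNR}\cdot\psi(u)+u\bigr),
\end{equation*}
where $\psi^{(\mathrm{GF})}(u)=e^{-u}$ and $\psi^{(\mathrm{R})}(u)=(1+u)^{-1}$. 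As $t$ ranges over $I$, $u$ ranges over $(1,b\sigma/\varepsilon]$, an interval that contains $[e,\mathrm{SNR}]$ since $4<\mathrm{SNR}\leq b\sigma/\varepsilon$. Minimizing $f(u)=\mathrm{SNR}\cdot\psi(u)+u$ by setting $f'(u)=0$ yields $u^{(\mathrm{GF})}=\ln(\mathrm{SNR})$ with $f^{(\mathrm{GF})}(u^{(\mathrm{GF})})=1+\ln(\mathrm{SNR})$, and $u^{(\mathrm{R})}=\sqrt{\mathrm{SNR}}-1$ with $f^{(\mathrm{R})}(u^{(\mathrm{R})})=2\sqrt{\mathrm{SNR}}-1$. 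Both optimizers lie in $(1,b\sigma/\varepsilon]$: the lower bounds use $\mathrm{SNR}>4$, while the upper bounds use $\sqrt{\mathrm{SNR}}\leq\mathrm{SNR}\leq b\sigma/\varepsilon$. The elementary inequality $1+\ln x\leq 2\sqrt{x}-1$ for $x\geq 4$ then yields the first claim.

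For the asymptotic statement, I use $\sigma=\Omega(\varepsilon)$ together with $\sigma>\varepsilon$ to get $\varepsilon/\sigma=\Theta(1)$; combined with $k\lesssim p-k$, this implies
\begin{equation*}
\frac{(B/\sigma)\sqrt{\mathrm{SNR}}}{\sigma_{\xi}\sqrt{k/N}} \;=\; \frac{\varepsilon}{\sigma}\sqrt{\frac{p-k}{k}}\sqrt{\mathrm{SNR}} \;\longrightarrow\; +\infty
\end{equation*}
as $\mathrm{SNR}\to+\infty$, and the same conclusion holds with $\ln(\mathrm{SNR})$ in place of $\sqrt{\mathrm{SNR}}$. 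Hence both minimum rates are dominated by their respective bias-type terms, so that
\begin{equation*}
\frac{\min_{t\in I} r^{(\mathrm{GF})}}{\min_{t\in I} r^{(\mathrm{R})}} \;\sim\; \frac{1+\ln(\mathrm{SNR})}{2\sqrt{\mathrm{SNR}}-1} \;\longrightarrow\; 0,
\end{equation*}
which gives the claimed strict saturation relation. The main obstacle is really just bookkeeping: verifying interval membership for the two optimizers, and keeping careful track of which of the three rate terms (fixed variance $\sigma_{\xi}\sqrt{k/N}$, variable variance $Bu/\sigma$, and bias $A\psi(u)$) dominates in each regime; everything else is one-variable calculus.
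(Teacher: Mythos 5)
Your treatment of the first claim is correct and follows essentially the same route as the paper: identify $k^*=k$ on $I$, note the alignment term vanishes, reduce $r$ to the scalar problem $\min_u\{\mathrm{SNR}\,\psi(u)+u\}$ (the paper works directly in $t$ rather than $u=t\sigma$, but the computation is identical), obtain the optimal values $1+\ln(\mathrm{SNR})$ for gradient flow and $2\sqrt{\mathrm{SNR}}-1$ for ridge with the same interval-membership checks, and conclude via $1+\ln x\le 2\sqrt{x}-1$.

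The gap is in the asymptotic ("moreover") part. You correctly identify that the strict relation requires the common term $\sigma_\xi\sqrt{k/N}$ to be negligible, i.e.\ that $\frac{\varepsilon}{\sigma}\sqrt{\frac{p-k}{k}}\,\sqrt{\mathrm{SNR}}\to\infty$, but your justification of this divergence is wrong: from $\sigma=\Omega(\varepsilon)$ and $\sigma>\varepsilon$ you can only conclude $\varepsilon/\sigma=O(1)$, not $\varepsilon/\sigma=\Theta(1)$ (both hypotheses bound $\varepsilon/\sigma$ from above, neither from below). Worse, $\varepsilon/\sigma=\Theta(1)$ is actually incompatible with the standing hypotheses: since $\mathrm{SNR}\le b\,\sigma/\varepsilon$, having $\mathrm{SNR}\to\infty$ forces $\sigma/\varepsilon\to\infty$, so $\varepsilon/\sigma\to0$ in any admissible sequence of problems. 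Under the stated assumptions one only gets $\frac{\varepsilon}{\sigma}\sqrt{\mathrm{SNR}}\le b/\sqrt{\mathrm{SNR}}\to 0$ as an upper bound on that factor, and one can build admissible sequences (e.g.\ $\varepsilon/\sigma$ decaying much faster than $1/\mathrm{SNR}$ with $(p-k)/k$ bounded) for which the ratio of the two minimal rates tends to $1$ rather than $0$. So your final limit does not follow from the hypotheses as you argue it; an additional assumption guaranteeing $\frac{\varepsilon}{\sigma}\sqrt{\frac{p-k}{k}}\,\sqrt{\mathrm{SNR}}\to\infty$ (or a reading of the claim that compares only the $t$-dependent parts of the rates) is needed. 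For what it is worth, the paper's own proof of this last assertion is a one-line claim that does not address the common $\sigma_\xi\sqrt{k/N}$ term at all, so your analysis isolates the right quantity; it is the step deducing $\varepsilon/\sigma=\Theta(1)$ that fails.
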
The proof of Corollary~\ref{coro:saturation} may be found in Section~\ref{sec:proof_coro_saturation}. The quantity $\mathrm{SNR}$ in Corollary~\ref{coro:saturation} can be interpreted as a signal-to-noise ratio, but it is rescaled according to the sample size and the spectrum of $\Sigma$. The lower bound in the condition $4 < \mathrm{SNR} \leq b\frac{\sigma}{\varepsilon}$ is intended to ensure that the signal-to-noise ratio is not too small, while the upper bound is rather mild. For example, if we take $\sigma = 1$, $\varepsilon = (p-k)^{-1}$, and $\|\Sigma_J^{1/2}\bbeta^*\|_2 / \sigma_\xi$ to be a constant, then this condition is satisfied.
This corollary considers the case where the signal $\bbeta^*$ is well aligned with the covariance structure, and shows that the saturation effect occurs over a rather broad range of tuning parameters (which is reasonable, since the tuning parameter is neither too large, causing overfitting, nor too small, leading to underfitting). This illustrates our claim that the saturation effect is a fairly general phenomenon in linear regression problems.

We conclude this subsection with the following observation. By Corollary~\ref{coro:partial_order}, we know that for any $\cR$, the smaller $\psi_t(x)$ is on the interval $\{x: xt > b\}$, the smaller it is in the sense of the pre-order $\preceq_\cR$. Hence, the minimal element of this pre-order should satisfy $\psi_t(x)=0$ for all $x>bt^{-1}$. The PCR method precisely satisfies this condition and therefore should be regarded as the minimal element in the set of spectral methods under the pre-order defined by any linear regression problem. We emphasize, however, that although this formally satisfies Definition~\ref{def:partial_order}, our Theorem~\ref{theo:main} does not support assigning a statistical meaning to this definition. This is because the filter function corresponding to PCR does not satisfy Assumption~\ref{ass:filter_fct}—that is, although PCR is a classical spectral method, its filter function cannot be analytically extended to an open subset of the complex plane containing the entire spectrum, and thus Theorem~\ref{theo:main} does not apply. However, by modifying the definition of the contour and following the same proof strategy as ours, we derive in Theorem~\ref{theo:main_PCR} an upper bound on the population excess risk for PCR. At present, we do not know how to obtain a corresponding lower bound. We conjecture that by incorporating the FSD framework into the classical analysis of the population excess risk of PCR (see, e.g., \cite{blanchard_optimal_2018,zhang_optimality_2023,huckerNotePredictionError2023}), one could extend both Theorem~\ref{theo:main} and Theorem~\ref{theo:main_LB} to spectral methods that are not necessarily analytically continuable, thereby encompassing the analysis of PCR.


\subsection{The efficiency of alignment}\label{sec:no_feature}

An estimator $\hat f_N$ satisfying the alignment property defined in Definition~\ref{def:alignment_property} has the following characteristic: when the target function $f_\cH$ is well aligned $V_{J_*}$, the estimator can exploit this structure and thereby achieve a smaller estimation error $\|\hat f_N - f_\cH\|_{L^2(\mu)}$. Many estimators are known to satisfy this property, for instance, the spectral methods investigated in this paper, as shown by Theorem~\ref{theo:main} and Theorem~\ref{theo:main_RKHS}.

The alignment property characterizes the ability of an estimator to exploit an existing favorable geometric alignment; however, such an alignment is not always inherently present. When this alignment relationship is suboptimal, certain estimators yield large prediction errors. To investigate the impact of the unfavorable alignment between the target function $f_\cH$ and the eigenfunctions of the covariance operator of $\cH$ on the estimation error of all the spectral methods, we introduce the following concept of the efficiency of alignment.

\begin{Definition}\label{def:no_need_for_feature_learning}
    Let $(\mu, f^*, \xi)$ be a supervised regression problem, $\sigma_\xi^2$ be the variance of $\xi$, $\mathcal{H}$ an RKHS, and $N$ be the sample size. Let $T_K:L^2(\mu)\to L^2(\mu)$ be the integral operator of $\cH$, and $\{f_j\}_{j=1}^\infty$ be its eigenfunctions. Let $0 < \alpha < 1$ be a pre-specified threshold. We define the balance dimension $k^\circ(N) = \min\{k \in \bN : \| P_{k+1:\infty}f^*\|_{L^2(\mu)}^2 \leq \sigma_{\xi}^2 \frac{k}{N}\}$, where $P_{k+1:\infty}$ is the projection onto $\overline{\Span(f_j : j > k)}$. If $k^\circ(N) \leq \alpha N$, we say that alignment is efficient; if $k^\circ(N) > \alpha N$, we say that alignment is deficient.
\end{Definition}

Definition~\ref{def:no_need_for_feature_learning} is intended to characterize the relationship between the critical estimation dimension and the sample size $N$. Based on Section~\ref{sec:FSD}, the projection $P_{k+1:\infty}f^*$ is not estimated and thus enters the estimation error as the price for no estimation. Meanwhile, $\sigma_\xi^2\frac{k}{N}$ represents the variance term associated with the estimation subspace. Since both terms are independent of the specific choice of the tuning parameter and filter functions, the balance dimension—by describing the equilibrium between these two components—reveals the impact on the estimation error that depends exclusively on the alignment of $f^*$ with the eigenvectors of the integral operator of $\cH$, regardless of the specific filter function and tuning parameter. 

In particular, by the definition of $k^\circ(N)$, for any $k \in \bN$ (especially for the estimation dimension $k^*$), we always have $\|P_{k^\circ + 1:\infty}f^*\|_{L^2(\mu)}^2 + \sigma_\xi^2 \frac{k^\circ}{N} \leq 2 ( \|P_{k + 1:\infty}f^*\|_{L^2(\mu)}^2 + \sigma_\xi^2 \frac{k}{N} )$. This implies that the optimal rate function of FSD is always subject to a lower bound provided by $\sigma_\xi^2 \frac{k^\circ}{N}$, regardless of which filter function or the tuning parameter is selected. When the balance dimension is excessively large, this lower bound can be quite substantial, leading to a suboptimal estimation error. A key element of the feature learning property introduced in \cite{lecueGeneralizationErrorMean2025} is that feature learning can automatically construct favorable geometric alignments through the autonomous learning of features.

Below, we provide some representative examples where alignment is efficient and deficient respectively. The proof of Proposition~\ref{prop:single_index_problem} and Proposition~\ref{prop:sobolev_regression} may be found in Section~\ref{sec:proof_single_index_problem} and in Section~\ref{sec:proof_sobolev_regression} respectively.
\begin{Proposition}\label{prop:single_index_problem}
    Let $L$ be a positive integer, $\sigma: \mathbb{R} \to \mathbb{R}$ be a polynomial of degree $L$, and $\vw^*$ be a vector on the unit Euclidean sphere. Suppose $f^*(\vx) = \sigma(\langle \vw^*, \vx \rangle)$, where $X$ follows the uniform distribution on the unit Euclidean sphere and $\xi \sim \mathcal{N}(0, \sigma_\xi^2)$ is independent of $X$. Let $K_{\mathrm{ran}}: (\vx_1, \vx_2) \in \Omega_X \times \Omega_X \mapsto \mathbb{E}[\sigma(\langle \vx_1, W \rangle) \sigma(\langle \vx_2, W \rangle)]$, where $W$ is uniformly distributed on the unit Euclidean sphere, and let $\mathcal{H}$ be the RKHS generated by $K_{\mathrm{ran}}$. 
    Let $D_L$ be a positive integer depending only on $d$ and $L$ (see Section~\ref{sec:proof_single_index_problem} for precise definition), and $D_L\sim d^L$. Let $\alpha = \frac{|\langle f^*,\ve_{D_L}\rangle|^2}{2\sigma_\xi^2}$. Then the following conclusions hold.
    \begin{enumerate}
        \item If $N\leq D_L$, alignment is deficient, and $k^\circ(N)\geq \min(\lceil 2\alpha N\rceil, D_L)$.
        \item If $N>\frac{1}{\alpha}D_L$, alignment is efficient, and $k^\circ(N)=D_L$.
    \end{enumerate}
\end{Proposition}

The single-index model serves as a prototypical example in the literature for demonstrating the importance of feature learning, \cite{ba_high-dimensional_2022,bietti_learning_2022,damian_neural_2022}. In this setting, for the same sample size, the estimation error of neural networks possessing the ``feature learning property'' is significantly smaller than that of ridge regression employing rotationally invariant kernels, \cite{donhauser_how_2021}. This phenomenon is thus widely used to illustrate the crucial role of feature learning in improving estimation performance. 
In Proposition~\ref{prop:single_index_problem}, we observe a phase transition. The intuitive explanation is as follows: when the sample size is sufficiently large, i.e., $N \gtrsim d^L \sim D_L$, the learning problem effectively reduces to a low-dimensional one. In this regime, the role of alignment is not significant, as the ample sample size is sufficient to overcome the impact of the non-optimal alignment between $f^*$ and the eigenfunctions. However, when the sample size is small, this unfavorable alignment leads to a substantial price for no estimation. To mitigate this price, the estimator on $\mathcal{H}$ tends to reduce the free subspace, which is equivalent to expanding the estimation subspace; however, this results in an increase in the variance term within the estimation error. A direct consequence of Proposition~\ref{prop:single_index_problem} is that any spectral algorithm satisfying Assumption~\ref{ass:filter_fct} with $K_{\mathrm{ran}}$ cannot overcome the polynomial barrier in the well-regularized regime, that is, when $t^{-1}>100\frac{\Tr(\Sigma_{J_{**}^c})}{N}$. The reason is twofold: first, when $N\leq D_L$, this is due to a deficiency of alignment; second, when $N>\frac{1}{\alpha}D_L$, this is because the alignment provided by the inner product kernel itself makes the balance dimension of spectral methods too large. 

\begin{Proposition}\label{prop:sobolev_regression}
    Assume that $\cH$ is a Sobolev-type RKHS, i.e., it satisfies $\sigma_j \leq C j^{-\alpha}$ where $C$ is an absolute constant and $\alpha > 1$. Suppose $f^*$ satisfies the source condition, namely, there exists $s \geq 0$ such that $\|T_K^{-\frac{s}{2}}f^*\|_{L^2(\mu)} < \infty$. Under this setting, for any $N \in \bN_+$ and any $\sigma_{\xi} > 0$, it holds that $k^\circ(N) \lesssim N^{\frac{1}{1+s\alpha}}$, which implies that the alignment is efficient as long as $s>0$.
\end{Proposition}
Proposition~\ref{prop:sobolev_regression} investigates the classical Sobolev space regression problem in nonparametric statistics, showing that for such problems, the alignment is always favorable provided that $s > 0$. Here, the favorable alignment is guaranteed by the source condition, which ensures good alignment through the rapid decay of the coefficients of $f^*$ with respect to the eigenvectors of $\Sigma$. In fact, $k^\circ(N)$'s upper bound coincides with the estimation dimension of spectral methods with infinite qualification ($\tau = \infty$ in Remark~\ref{assumption:classical}) at the optimal tuning parameter.

\section{Conclusions and Future Work}\label{sec:conclusions}
Corollary~\ref{coro:main_coro} establishes the first matching high-probability upper and lower bounds on the population excess risk under squared loss that hold for \emph{any} linear regression problem $(\Sigma,\bbeta^*,\sigma_\xi)$. This result enables us to define a pre-order over the class of spectral methods according to their rates of convergence in population excess risk for a given regression problem, and, in turn, to extend the notion of the saturation effect. 

Our proof strategy follows the following scheme: we wrap the FSD method around the classical analysis of the statistical properties of spectral methods together with the analysis of the noise absorption part to obtain precise characterizations of the population excess risk of any spectral methods under Assumption~\ref{ass:filter_fct}. This demonstrates that the FSD method may serve as a general tool to sharpen population excess risk bounds for other classical estimators—most notably, upgrading minimax optimality bounds to problem-specific optimality for a given regression problem - that is for a target dependent bounds and not a worst case analysis. The present analysis of the statistical properties of spectral methods constitutes the first application of FSD beyond ridge regression and minimum-norm interpolant estimators. We hope to see future work exploiting FSD to analyze a broader range of estimators. For instance, an interesting research direction is to apply the FSD method to the analysis of the Nadaraya–Watson estimator, aiming to obtain sharp bounds for every $\Sigma$ and $\bbeta^*$, rather than just obtaining a generic convergence rate like $N^{-\gamma}$ for some $\gamma>0$.

Finally, the spectral methods studied in this paper concern scalar-valued supervised regression problems. An interesting future direction is to apply the approach developed here to investigate the population excess risk of spectral methods in vector-valued RKHSs, \cite{alvarezKernelsVectorValuedFunctions2012}, or more generally, in reproducing kernel Hilbert C*-modules, \cite{hashimoto_reproducing_2021}. Such an extension would provide new insights into classical methods used in functional data analysis, kernel mean embedding \cite{muandetKernelMeanEmbedding2017}, and related problems.

\section*{Acknowledgments}

This work originated during ZS's long-term visit to the Center for Statistical Science at Tsinghua University, and ZS gratefully acknowledges Professor Qian Lin for his warm hospitality. Part of the work was completed during ZS's long-term visit to RIKEN–AIP, Japan, where he thanks Taiji Suzuki for his generous hospitality. The authors are grateful to Gilles Blanchard, Kenji Fukumizu, Jaouad Mourtada, and Taiji Suzuki for stimulating discussions, and especially to Martin Wahl for many valuable conversations during ZS's visit to Bielefeld University.

\section{Proof of the upper bound in Theorem~\ref{theo:main}}\label{sec:proof_main}

We abbreviate $J_*$ by $J$ in this section, i.e. $J=[k^*]$ where $k^*$ is the estimation dimension from Definition~\ref{def:estimation_dimension}. Following the FSD method, we recall the risk decomposition of $\hat\bbeta$ given by
\begin{equation}\label{eq:risk_decomposition_origin_FSD}
 \norm{\Sigma^{1/2}\left(\hat\bbeta - \bbeta^*\right)}_2  
    \leq \norm{\Sigma_J^{1/2}\left(\hat\bbeta_J - \bbeta^*_J\right)}_2 + \norm{\Sigma_{J^c}^{1/2}\hat\bbeta_{J^c}}_2 +\norm{ \Sigma_{J^c}^{1/2}\bbeta_{J^c}^*}_2
\end{equation}where $\hat\bbeta_J=P_J \hat\bbeta$ and $\hat\bbeta_{J^c}=P_{J^c} \hat\bbeta$. The next two sections are devoted to show high probability upper bounds on the estimation part $\norm{\Sigma_J^{1/2}\left(\hat\bbeta_J - \bbeta^*_J\right)}_2$ and the noise absorption part $\norm{\Sigma_{J^c}^{1/2}\hat\bbeta_{J^c}}_2$ appearing in \eqref{eq:risk_decomposition_origin_FSD}. 

In multiple occasions, we will use the following relations that follows for instance from SVD: we recall that $P_J:\bR^p\to \bR^p$ is the projection operator onto $V_J$ and $\bX_J^\top := [P_J X_1| \cdots| P_J X_N]$. We have $\bX_J = \bX P_J$, $\bX_J^\top = P_J \bX^\top$ and $\hat \Sigma_J:=\frac{1}{N}\bX_J^\top \bX_J = P_J \hat\Sigma P_J$. Since, $V_J$ is an eigen-space of $\Sigma$, we also have $P_J \varphi_t(\Sigma) \Sigma = \varphi_t(\Sigma_J) \Sigma_J$ where $\Sigma_J := \bE (P_J X)(P_J X)^\top = P_J \Sigma P_J$. We also define $\Sigma_t = \Sigma + t^{-1}I_p$ and $\hat \Sigma_t = \hat \Sigma + t^{-1}I_p$. 

It also follows from the definition of  $k^*$ that $b^{-1}\sigma_{k^*+1} \leq t^{-1} \leq b^{-1}\sigma_{k^*}$. Consequently,
\begin{align}\label{eq:compare_Sigma_t_and_Sigma_J_Jc}
    \begin{aligned}
        &\norm{\Sigma_J^{\frac{1}{2}}\Sigma_t^{-\frac{1}{2}}}_{\op} \leq \norm{\Sigma^{\frac{1}{2}}\Sigma_t^{-\frac{1}{2}}}_{\op}   \leq 1,\, \norm{\Sigma_{J^c}^{\frac{1}{2}}\Sigma_t^{-\frac{1}{2}}}_{\op} \leq \sqrt{\frac{b}{1+b}}\mbox{ and } \norm{\Sigma_J^{-\frac{1}{2}}\Sigma_t^{\frac{1}{2}}}_{\op}\leq \sqrt{\frac{1+b}{b}}.
    \end{aligned}
\end{align}We also have from the definition of $k^*$ that for all $x\in V_J$, 
\begin{equation}\label{eq:Sigma_t_equiv_Sigma_on_VJ}
    \norm{\Sigma_t^{1/2}x}_2^2 = \norm{\Sigma_J^{1/2} x}_2^2 + t^{-1}\norm{x}_2^2 \leq \frac{1+b}{b}\norm{\Sigma_J^{1/2}x}_2^2
\end{equation}because $bt^{-1}\norm{x}_2^2\leq \sigma_{k^*}\norm{x}_2^2\leq \norm{\Sigma_J^{1/2}x}_2^2$.

\subsection{The main property of \texorpdfstring{$\hat \Sigma$}{hat Sigma} required for the analysis and the event \texorpdfstring{$\Omega_t$}{Omega t}.}


The main uniform property we need $\hat \Sigma$ to satisfy for the analysis is the one from the following event: let $0<\square<1/9$ (a typical choice of $\square$ will be of the order of $\log^{-1}(et)$), we consider the event
\begin{align}\label{eq:def_Omega_isomorphy}
    \begin{aligned}
        \Omega_t := \left\{ \norm{ \Sigma_t^{-1/2} (\hat\Sigma - \Sigma) \Sigma_t^{-1/2} }_{\op}\leq \square \right\}.
    \end{aligned}
\end{align} We show in the next result that $\Omega_t$ holds with large probability as long as $\square^2 N$ is larger than the effective rank $\Tr\left[\Sigma(\Sigma+t^{-1}I_p)^{-1}\right]$.
\begin{Lemma}\label{lemma:IP}
    Grant Assumption~\ref{assumption:main_upper}. Let $t\geq1$ and assume that $\square^2 N\gtrsim \Tr\left[\Sigma(\Sigma+t^{-1}I_p)^{-1}\right]$ and $\square^2 N\gtrsim 1$. There exists an absolute constant $c>0$ such that $\Omega_t$ happens with probability at least $1-\exp(-c\square^2 N)$.
\end{Lemma}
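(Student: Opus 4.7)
The plan is to reduce the statement to a standard concentration inequality for the empirical covariance of sub-Gaussian vectors after a whitening transformation, and then to verify that the sample complexity assumptions on $\square$ translate into the claimed bound.

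First I would set $Z_i = \Sigma_t^{-1/2} X_i$ for $i=1,\dots,N$, so that the event $\Omega_t$ is exactly the event that $\|\hat \Sigma_Z - \Sigma_Z\|_{\op} \leq \square$, where $\hat\Sigma_Z = \frac{1}{N}\sum_i Z_i Z_i^\top$ and $\Sigma_Z = \Sigma_t^{-1/2}\Sigma\Sigma_t^{-1/2}$. Two facts are immediate: the vectors $Z_i$ are i.i.d. and sub-Gaussian, inheriting the $\psi_2$ moment condition from $X_i$ with an absolute constant (since for any $v\in\bR^p$, $\langle Z_i,v\rangle = \langle X_i, \Sigma_t^{-1/2}v\rangle$); and the second moment matrix $\Sigma_Z = \Sigma(\Sigma+t^{-1}I_p)^{-1}$ (up to a similarity inside the bracket) satisfies $\|\Sigma_Z\|_{\op}\leq 1$ together with $\Tr(\Sigma_Z) = \Tr\bigl(\Sigma(\Sigma+t^{-1}I_p)^{-1}\bigr)$, which is precisely the effective rank appearing in the assumption.

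Next I would invoke a standard concentration inequality for sample covariance matrices of sub-Gaussian vectors (e.g.\ Koltchinskii--Lounici, or an $\varepsilon$-net plus Bernstein argument as in Vershynin's textbook): there exist absolute constants $c_0, C_0>0$ such that for every $u\geq 0$, with probability at least $1-2\exp(-u)$,
\begin{equation*}
    \|\hat\Sigma_Z - \Sigma_Z\|_{\op} \leq C_0 \|\Sigma_Z\|_{\op}\left(\sqrt{\frac{r(\Sigma_Z)+u}{N}} + \frac{r(\Sigma_Z)+u}{N}\right),
\end{equation*}
where $r(\Sigma_Z) = \Tr(\Sigma_Z)/\|\Sigma_Z\|_{\op}$. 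Plugging in $\|\Sigma_Z\|_{\op}\leq 1$ and $r(\Sigma_Z)\leq \Tr\bigl(\Sigma(\Sigma+t^{-1}I_p)^{-1}\bigr)$, and choosing $u = c_0 \square^2 N$, the assumed sample-complexity $\square^2 N \gtrsim \Tr(\Sigma(\Sigma+t^{-1}I_p)^{-1})\vee 1$ and the constraint $\square<1/9$ guarantee that each of the four terms $\sqrt{r/N}$, $r/N$, $\sqrt{u/N}$, $u/N$ is bounded by a constant multiple of $\square$, which after adjusting absolute constants in the definition of $\square$ (or in the statement of the lemma) yields the claim.

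The proof is in essence routine; the only point that deserves care is checking that sub-Gaussianity as stated in Assumption~\ref{assumption:main_upper} — i.e. $\|\langle X,v\rangle\|_{L^q}\leq C\sqrt q\|\langle X,v\rangle\|_{L^2}$ — survives the rescaling by $\Sigma_t^{-1/2}$ with an absolute sub-Gaussian constant, which it does because the definition is invariant under linear maps when measured relative to the second moment. If a self-contained derivation is preferred rather than citing Koltchinskii--Lounici, one can produce the same bound directly by writing $\|\hat\Sigma_Z - \Sigma_Z\|_{\op} = \sup_{v\in S^{p-1}}\bigl|\frac{1}{N}\sum_i\langle Z_i,v\rangle^2 - \bE\langle Z_1,v\rangle^2\bigr|$, controlling each fixed $v$ by Bernstein's inequality for subexponential variables (since $\langle Z_1,v\rangle^2$ is subexponential with parameter $\bE\langle Z_1,v\rangle^2\leq \|\Sigma_Z\|_{\op}$), and patching via a standard generic chaining or $\varepsilon$-net argument whose complexity is controlled by the effective rank $r(\Sigma_Z)$ rather than the ambient dimension.
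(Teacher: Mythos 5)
Your proposal is correct and is essentially the paper's own argument in different packaging: the paper applies Dirksen's generic-chaining bound for quadratic empirical processes to the whitened class $\{\langle\cdot,\vv\rangle:\vv\in\Sigma_t^{-1/2}S_2^{p-1}\}$ (with diameter $\leq 1$ and $\gamma_2^2\sim\Tr(\Sigma(\Sigma+t^{-1}I_p)^{-1})$), which is exactly the engine behind the sub-Gaussian, effective-rank covariance bound you invoke. One small slip worth fixing: since $\|\Sigma_Z\|_{\op}\leq 1$, one has $r(\Sigma_Z)=\Tr(\Sigma_Z)/\|\Sigma_Z\|_{\op}\geq\Tr(\Sigma_Z)$, not $\leq$; the conclusion nonetheless stands because the prefactor compensates, e.g. $\|\Sigma_Z\|_{\op}\sqrt{r(\Sigma_Z)/N}\leq\sqrt{\Tr(\Sigma_Z)/N}$ and $\|\Sigma_Z\|_{\op}\,r(\Sigma_Z)/N=\Tr(\Sigma_Z)/N$, so each contribution is still $\lesssim\square$ under the assumed sample complexity.
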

\beginproof 
    It follows from Theorem~5.5 in \cite{dirksen_tail_2015} on the control of empirical quadratic processes and the sub-gaussian assumption from Assumption~\ref{assumption:main_upper}   that  there is an absolute constant $C\geq1$ such that for all $r\geq 1$, with probability at least $1-\exp(-r)$,
\begin{equation}\label{eq:sjoerd}
\begin{aligned}
 \sup_{f\in F}\left|\frac{1}{N}\sum_{i=1}^N f^2(X_i) - \bE f^2(X)\right|\leq C&\bigg( \frac{D\gamma_2}{\sqrt{N}}+ \frac{\gamma_2^2}{N}+ D^2\left(\sqrt{\frac{r}{N}} + \frac{r}{N}\right)\bigg)
\end{aligned}
\end{equation} where $\gamma_2 = \gamma_2(F,\norm{\cdot}_{L^2(\mu)})$ is Talagrand's $\gamma_2$-functional of $\cF$ with respect the $L^2(\mu)$-norm \cite[Definition 2.2.19]{talagrand_upper_2014} and $D= \diam(F,L^2(\mu)):=\sup(\norm{f}_{L^2(\mu)}:f\in F)$. Applying \eqref{eq:sjoerd} to $F = \{\langle\cdot,\vv\rangle:\, \vv\in\Sigma_t^{-1/2}S_2^{p-1}\}$ where $S_2^{p-1}$ is the unit $\ell_2^p$-sphere, we have $D=\diam(F,L^2(\mu)) = \norm{\Sigma^{1/2}\Sigma_t^{-1/2}}_{op}\leq 1$ and $\gamma_2(F,\|\cdot\|_{L^2(\mu)})\sim \bE\norm{\Sigma^{1/2}\Sigma_t^{-1/2}G}_2\sim\sqrt{\Tr(\Sigma(\Sigma+t^{-1}I_p)^{-1})}$ where $G\sim\cN(0, I_p)$. As a consequence, it follows from the sample complexity assumption $\square^2 N\gtrsim \Tr\left[\Sigma(\Sigma+t^{-1}I_p)^{-1}\right]$ that for $r = \square^2 N/(16C^2)$ (which is larger than $1$ since we assumed that $\square^2 N\gtrsim 1$), with probability at least $1-\exp(-\square^2 N/(16C^2))$,
\begin{align*}
     &\norm{ \Sigma_t^{-1/2} (\hat\Sigma - \Sigma) \Sigma_t^{-1/2} }_{\op} = \sup_{\vu\in S_2^{p-1}}\bigg|\vu^\top \Sigma_t^{-1/2}(\hat\Sigma-\Sigma)\Sigma_t^{-1/2} \vu\bigg|\\
     &= \sup_{\vu\in S_2^{p-1}}\bigg|\|\hat\Sigma^{\frac{1}{2}}\Sigma_t^{-\frac{1}{2}}\vu\|_2^2 - \|\Sigma^{\frac{1}{2}}\Sigma_t^{-\frac{1}{2}}\vu\|_2^2\bigg| = \sup_{\vu\in S_2^{p-1}}\bigg|\frac{1}{N}\sum_{i=1}^N\langle \Sigma_t^{-\frac{1}{2}} \vu,X_i\rangle^2 - \bE\langle \Sigma_t^{-\frac{1}{2}} \vu,X_i\rangle^2 \bigg|\leq \square.
\end{align*}
\endproof

The sample complexity assumption $\square^2 N\gtrsim \Tr\left[\Sigma(\Sigma+t^{-1}I_p)^{-1}\right]$ is classical in the analysis of spectral methods. It has some consequences on the definition of the estimation dimension $k^*$. Indeed, one has
\begin{equation*}
     \Tr\left[\Sigma(\Sigma+t^{-1}I_p)^{-1}\right] = \sum_j \frac{\sigma_j}{\sigma_j + t^{-1}} = \sum_{j\in J}\frac{\sigma_j}{\sigma_j + t^{-1}} + \sum_{j\notin J}\frac{\sigma_j}{\sigma_j + t^{-1}} 
 \end{equation*} where we recall that $J = \{j:\sigma_j\geq bt^{-1}\}$ is of cardinality $k^*$, by definition of $k^*$ and so 
\begin{equation}\label{eq:sample_complexity_ass_imply}
  \frac{bk^*}{1+b} + \frac{t}{1+b} \Tr[\Sigma_{J^c}] \leq   \Tr\left[\Sigma(\Sigma+t^{-1}I_p)^{-1}\right] \leq k^* + t \Tr[\Sigma_{J^c}].
\end{equation} As a consequence, the sample complexity assumption implies both $\square^2 N\gtrsim b k^*$ - meaning that we require the estimation dimension to be smaller than $N$ - and $\square^2 N \gtrsim t \Tr[\Sigma_{J^c}]$ implying that the estimation dimension of ridge obtained in \cite{gavrilopoulos_geometrical_2025} coincides with the one used here in Definition~\ref{def:estimation_dimension}, i.e. $k^{**} = k^{*}$, for other spectral methods.

In the classical analysis of spectral methods, the property induced by the event $\Omega_t$ is referred as the ``Change-of-Norm argument'' (see, for example, \cite{JMLR:v22:20-358}). From a geometrical perspective, the event $\Omega_t$ is the union of two type of events that are part of the FSD method. Indeed, $\Omega_t$ is equivalent to: for all $\vu\in\bR^p$,
\begin{equation}\label{eq:change_of_norm}
    \left|\norm{\hat\Sigma^{1/2} \vu}_2^2 - \norm{\Sigma^{1/2}\vu}_2^2\right|\leq \square \norm{\Sigma_t^{1/2}\vu}_2^2.
\end{equation} As a consequence, there are two regimes depending on the relative values of $\norm{\Sigma^{1/2}\vu}_2$ and $\norm{\Sigma_t^{1/2}\vu}_2$ that can be described via the following cone
\begin{equation}\label{eq:cone}
    C := \left\{ \vu \in \bR^p: \square\norm{\Sigma_t^{1/2}\vu}_2^2\leq \frac{1}{2}\norm{\Sigma^{1/2}\vu}_2^2 \right\}= \left\{ \vu \in \bR^p: \square t^{-1}\norm{\vu}_2^2\leq \left(\frac{1}{2}-\square\right)\norm{\Sigma^{1/2}\vu}_2^2 \right\}.
\end{equation}Then, we consider the decomposition of $\bR^p$ as the union: $\bR^p=C \cup C^c$. This decomposition is closed to the one of  the FSD $\bR^p = V_J \oplus^\perp V_{J^c}$ since one can see that $C$ contains all singular vectors of $\Sigma$ with singular values such that $\sigma_j \gtrsim \square t^{-1}$ which is, up to the $\square$ term, the inequality appearing in the definition of $k^*$. We see that an isomorphic property restricted to this cone follows from \eqref{eq:change_of_norm}: for all $\vu\in C$, 
\begin{equation*}
    \frac{1}{\sqrt{2}}\norm{\Sigma^{1/2}\vu}_2 \leq \norm{\hat \Sigma^{1/2}\vu}_2  \leq \sqrt{\frac{3}{2}}\norm{\Sigma^{1/2}\vu}_2. 
\end{equation*}This type of 'RIP' (i.e. restricted isomorphic property) is expected in the FSD method on the estimation part of the feature space i.e. $V_J$ or the slightly bigger cone $C$. On the 'noise absorption part' of the feature space, i.e. $V_{J^c}$ - or the slightly bigger cone $C^c$, when $\square$ is of the order of a constant - we don't need such an isomorphic property but only a control of the largest 'restricted' singular value of $\hat\Sigma$: for all $\vu\notin C$, 
\begin{equation*}
    \norm{\hat \Sigma^{1/2}\vu}_2  \leq \sqrt{3\square} \norm{\Sigma_t^{1/2}\vu}_2 = \sqrt{3\square} \left(\norm{\Sigma^{1/2}\vu}_2^2+t^{-1}\norm{\vu}_2^2\right)^{1/2}\leq 3\sqrt{t^{-1}\square}\norm{\vu}_2\leq \sqrt{t^{-1}}\norm{\vu}_2.
\end{equation*}
 In particular, we see that, on the event $\Omega_t$, for all $\vu\in\bR^p$, we have 
\begin{equation*}
   \norm{\hat \Sigma^{1/2}\vu}_2  \leq  \max\left(\sqrt{3/2}\norm{\Sigma^{1/2}\vu}_2, \sqrt{t^{-1}}\norm{\vu}_2\right)
\end{equation*}In particular, the following Lemma holds.

\begin{Lemma}\label{lem:largest_sing_Hat_Sigma}
     On the event $\Omega_t$, $\hat \sigma_1 = \norm{\hat \Sigma}_{op}\leq 4(\sigma_1+t^{-1})$.
 \end{Lemma}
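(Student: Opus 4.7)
The plan is to unpack the operator-norm statement defining $\Omega_t$ into a scalar quadratic-form inequality, apply it to an arbitrary unit vector, and then combine it with $\norm{\Sigma}_{\op}\leq 1$ and $t\geq 1$ (so $\sigma_1+t^{-1}\leq 2$) to read off the constant $4$.

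More precisely, the event $\Omega_t$ is by definition
\begin{equation*}
    \sup_{\vu\in S_2^{p-1}} \bigl|\vu^\top \Sigma_t^{-1/2}(\hat\Sigma-\Sigma)\Sigma_t^{-1/2}\vu\bigr| \leq \square.
\end{equation*}
Substituting $\vu = \Sigma_t^{1/2}\vw/\norm{\Sigma_t^{1/2}\vw}_2$ (which is always admissible since $\Sigma_t$ is positive definite) yields, for every $\vw\in\bR^p$,
\begin{equation*}
   \bigl| \vw^\top (\hat\Sigma-\Sigma)\vw\bigr| \leq \square\, \norm{\Sigma_t^{1/2}\vw}_2^2 = \square\bigl(\norm{\Sigma^{1/2}\vw}_2^2+t^{-1}\norm{\vw}_2^2\bigr).
\end{equation*}
This is precisely the inequality \eqref{eq:change_of_norm} already stated in the paper, so I would just invoke it.

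Applying this to an arbitrary unit vector $\vw\in S_2^{p-1}$ and using $\norm{\Sigma^{1/2}\vw}_2^2\leq \sigma_1$, I obtain
\begin{equation*}
    \vw^\top \hat\Sigma \vw \leq \vw^\top\Sigma\vw + \square(\sigma_1+t^{-1}) \leq (1+\square)(\sigma_1+t^{-1}).
\end{equation*}
Taking the supremum over unit $\vw$ and using $\square<1/9<3$ gives $\hat\sigma_1 = \norm{\hat\Sigma}_{\op}\leq 4(\sigma_1+t^{-1})$, as required.

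There is essentially no obstacle here; the only subtlety worth noting is that the constant $4$ is deliberately loose. Since Assumption~\ref{assumption:main_upper} forces $\sigma_1\leq 1$ and we have $t\geq 1$, this bound in particular yields $\hat\sigma_1\leq 8$, which aligns with the earlier remark (below Assumption~\ref{ass:filter_fct}) that the filter-function bound \eqref{eq:def_filter_fct} only needs to hold on $[0,8]$, i.e.\ on an event where both spectra of $\Sigma$ and $\hat\Sigma$ lie in $[0,8]$. Thus the role of this lemma is really to record this containment of $\Spec(\hat\Sigma)$ in $[0,8]$ on $\Omega_t$, which will be invoked later when applying the holomorphic functional calculus along the contour $\cC_t$.
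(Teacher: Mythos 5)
Your proof is correct and rests on the same mechanism as the paper: the paper obtains the lemma from the change-of-norm reformulation \eqref{eq:change_of_norm} of $\Omega_t$ (passing through the cone split and the bound $\norm{\hat\Sigma^{1/2}\vu}_2\leq\max(\sqrt{3/2}\,\norm{\Sigma^{1/2}\vu}_2,\sqrt{t^{-1}}\norm{\vu}_2)$), while you apply \eqref{eq:change_of_norm} directly to unit vectors, which is just a streamlined version of the same argument and even gives the sharper constant $1+\square\leq 10/9$ in place of $4$. Your closing remark about $\Spec(\hat\Sigma)\subset[0,8]$ and the role of the lemma for the contour $\cC_t$ matches the paper's intended use.
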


For our proof strategy, it is important to localize the spectrum of $\hat \Sigma$. Indeed, the spectral method $\hat\bbeta$ depends on the filter function via the term $\varphi_t(\hat\Sigma)$ in its definition from  \eqref{eq:def_hat_bbeta}). In particular, we will need to tell how $\varphi_t(\hat\Sigma)$ is close to $\varphi_t(\Sigma)$ . However, it is well-known that for a general non-linear function $f$ (for which the spectral calculus is well-defined), $\bE[f(\hat\Sigma)] \neq f(\Sigma)$; for example, when $f(x) = x^2$. This illustrates that $f(\hat\Sigma)$, as a plug-in estimator for $f(\Sigma)$, is a biased estimator (in fact, this is one of the motivations behind \cite{koltchinskii_asymptotic_2018}).  Methods for handling this bias have been developed in \cite{li_generalization_2024}, they are based on the residue theorem: for any counterclock-wise contour $\cC_t$ surrounding both spectra of $\hat\Sigma$ and  $\Sigma$, we have
\begin{align}\label{eq:residual_theorem}
    \begin{aligned}
    \varphi_t(\hat\Sigma)-\varphi_t(\Sigma)&=-\frac{1}{2 \pi i} \oint_{\cC_t} \varphi_t(z)\left[(\hat\Sigma -z I_p)^{-1}-(\Sigma - zI_p)^{-1}\right] d z\\ 
    &=\frac{1}{2 \pi i} \oint_{\mathcal{C}_t}(\hat\Sigma -z I_p)^{-1}(\hat\Sigma - \Sigma)(\Sigma - zI_p)^{-1} \varphi_t(z) d z.
\end{aligned}
\end{align}In particular, for the choice of contour $\cC_t$ from Section~\ref{sec:choice_contour}, we have $\cC_t$ surrounding both spectra of $\hat\Sigma$ and of $\Sigma$ on the event $\Omega_t$ thanks to Lemma~\ref{lem:largest_sing_Hat_Sigma}. So that the residue theorem applies to both $\varphi_t(\hat\Sigma)$ and  $\varphi_t(\Sigma)$ and the formulae above is valid on $\Omega_t$.
Next, to handle the summand in this integral, we use the following lemma taken from \cite{li_generalization_2024}.
\begin{Lemma}[\cite{li_generalization_2024}]\label{lemma:spectral_calculus_li} There exists an absolute constant $C>1$ such that the following holds. Let $t\geq1$. For the contour $\cC_t$ defined in \eqref{contour} and  for any $z\in\cC_t$, we have
\begin{align*}
    \left\|\Sigma_t^{\frac{1}{2}}(\Sigma-zI_p)^{-1} \Sigma_t^{\frac{1}{2}}\right\|_{\op} \leq C,\, \oint_{\mathcal{C}_t}\left|\varphi_t(z) d z\right|  \leq C \log(t),\mbox{ and } \oint_{\mathcal{C}_{t}}\left|\psi_t(z) d z\right|  \leq C t^{-1}.
\end{align*}
Moreover, on $\Omega_t$, for any $z\in\cC_t$, we further have
    \begin{align*}
        \left\|\Sigma_t^{\frac{1}{2}}\left(\hat\Sigma-zI_p\right)^{-1} \Sigma_t^{\frac{1}{2}}\right\|_{\op} \leq  C.
    \end{align*}
\end{Lemma}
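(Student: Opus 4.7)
The plan is to establish the four estimates in sequence; the first three are analytic/geometric properties of $\Sigma$ and $\cC_t$, while the fourth is a perturbation argument that uses the event $\Omega_t$. For the first bound, I diagonalize $\Sigma$ in its eigenbasis so that $\Sigma_t^{1/2}(\Sigma-zI_p)^{-1}\Sigma_t^{1/2}$ is itself diagonal with entries $(\sigma_j+t^{-1})/(\sigma_j-z)$, giving
\begin{equation*}
\bigl\|\Sigma_t^{1/2}(\Sigma-zI_p)^{-1}\Sigma_t^{1/2}\bigr\|_{\op}=\max_j\frac{\sigma_j+t^{-1}}{|\sigma_j-z|}.
\end{equation*}
The contour $\cC_t$ of Section~\ref{sec:choice_contour} is designed so that it encircles the whole spectrum of $\Sigma$ while staying at distance $\gtrsim \sigma_j+t^{-1}$ from each $\sigma_j$; the first bound is then immediate once this geometry is unpacked.

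For the two contour integrals, I would first extend the estimate from Assumption~\ref{ass:filter_fct} to the complex plane. Since $\varphi_t$ admits a holomorphic extension to a neighborhood of $\cC_t$ and satisfies $|\varphi_t(x)|\leq C_1/(x+t^{-1})$ on $[0,8]$, a standard complex-analytic argument (Cauchy representation within the region of analyticity) yields $|\varphi_t(z)|\lesssim 1/|z+t^{-1}|$ for $z\in\cC_t$. The matching decay $|\psi_t(z)|\lesssim t^{-1}/|z+t^{-1}|$ for the residual function then follows from $\psi_t(z)=1-z\varphi_t(z)$, mirroring the real-axis behavior (for ridge, $\psi_t(z)=t^{-1}/(z+t^{-1})$ exactly). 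Both integrals reduce to the parametric line integral $\oint|dz|/|z+t^{-1}|$ along $\cC_t$: the $\log(t)$ factor for $\varphi_t$ comes from the portion of $\cC_t$ at distance $O(t^{-1})$ to $O(1)$ from the origin, and the $t^{-1}$ prefactor for $\psi_t$ is extracted from the tighter pointwise bound on the residual.

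The fourth bound is where $\Omega_t$ enters, via a resolvent-perturbation argument. The second resolvent identity gives
\begin{equation*}
(\hat\Sigma-zI_p)^{-1}=(\Sigma-zI_p)^{-1}-(\Sigma-zI_p)^{-1}(\hat\Sigma-\Sigma)(\hat\Sigma-zI_p)^{-1}.
\end{equation*}
Conjugating by $\Sigma_t^{1/2}$ and inserting $\Sigma_t^{-1/2}\Sigma_t^{1/2}=I_p$ on either side of $\hat\Sigma-\Sigma$, I obtain the identity $B=A-AEB$ with
\begin{equation*}
A:=\Sigma_t^{1/2}(\Sigma-zI_p)^{-1}\Sigma_t^{1/2},\quad B:=\Sigma_t^{1/2}(\hat\Sigma-zI_p)^{-1}\Sigma_t^{1/2},\quad E:=\Sigma_t^{-1/2}(\hat\Sigma-\Sigma)\Sigma_t^{-1/2}.
\end{equation*}
The first bound gives $\|A\|_{\op}\leq C$, and on $\Omega_t$ the definition \eqref{eq:def_Omega_isomorphy} yields $\|E\|_{\op}\leq\square<1/9$. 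If $\square$ is small enough that $\|AE\|_{\op}\leq C\square\leq 1/2$ (so that one can rearrange $(I+AE)B=A$), then $I+AE$ is invertible with $\|(I+AE)^{-1}\|_{\op}\leq 2$, and we conclude $\|B\|_{\op}\leq 2C$.

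The main obstacle I anticipate is the geometric bookkeeping for $\cC_t$: the same contour must simultaneously enclose $\Spec(\Sigma)\cup\Spec(\hat\Sigma)$ (the latter controlled on $\Omega_t$ by Lemma~\ref{lem:largest_sing_Hat_Sigma}), stay separated from each eigenvalue $\sigma_j$ by at least a constant multiple of $\sigma_j+t^{-1}$, lie within the open subset where the holomorphic extension of $\varphi_t$ still enjoys the decay $1/|z+t^{-1}|$, and have a parametrization tuned so that the line integrals reproduce exactly the advertised $\log(t)$ and $t^{-1}$ rates. Once these geometric properties are verified for the specific $\cC_t$ of Section~\ref{sec:choice_contour}, the diagonal reduction, the Cauchy-type estimates, and the Neumann inversion become routine.
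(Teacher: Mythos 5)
Your treatment of the two contour integrals contains a genuine gap. You claim that, because $\varphi_t$ is holomorphic near $\cC_t$ and satisfies \eqref{eq:def_filter_fct} on the real interval $[0,8]$, a ``Cauchy representation'' argument yields $|\varphi_t(z)|\lesssim 1/|z+t^{-1}|$ for $z\in\cC_t$. No such implication holds: a bound on a real segment plus holomorphy in a neighbourhood gives no pointwise control off the real axis (Cauchy's formula and the maximum principle transfer bounds from a surrounding contour inward, not from a segment outward; e.g.\ adding $\varepsilon\sin(Mz)$ to a ridge filter barely changes \eqref{eq:def_filter_fct} on $[0,8]$ but explodes on $\cC_t$ as $M\to\infty$). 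The same problem, in a worse form, affects your bound $|\psi_t(z)|\lesssim t^{-1}/|z+t^{-1}|$: this fails even on the real axis under Assumption~\ref{ass:filter_fct} alone, since $\varphi_t(x)=\tfrac12(x+t^{-1})^{-1}$ satisfies \eqref{eq:def_filter_fct} while $\psi_t(x)=1-x\varphi_t(x)\to 1/2$, which is not $O(t^{-1}/(x+t^{-1}))$. In the paper (following \cite{li_generalization_2024}), these pointwise decays of $\varphi_t$ and $\psi_t$ \emph{on the contour} are taken as given — they are essentially the classical hypotheses recalled in Remark~\ref{assumption:classical} (item 2 with $\tau\ge 1$ for $\psi_t$), imported with the lemma — and the proof only performs the length/parametrization computation $\oint_{\cC_t}|z+t^{-1}|^{-1}|dz|\lesssim\log t$ over the three pieces of \eqref{contour}. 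Your proposal instead purports to \emph{derive} these decays from \eqref{eq:def_filter_fct}, and that derivation is false; this is the step that would break.

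The rest is sound. The diagonal reduction for $\|\Sigma_t^{1/2}(\Sigma-zI_p)^{-1}\Sigma_t^{1/2}\|_{\op}$ is exactly what the paper does (the separation $|\sigma_j-z|\gtrsim\sigma_j+t^{-1}$ does hold for the contour \eqref{contour}, though verifying it is the actual content, via the case analysis on the linear and circular pieces). For the last bound your route differs from the paper's: you use the second resolvent identity and a Neumann series for $(I+AE)^{-1}$ with $\|E\|_{\op}\le\square$ on $\Omega_t$, whereas the paper bounds $\|\hat\Sigma_t^{1/2}(\hat\Sigma-zI_p)^{-1}\hat\Sigma_t^{1/2}\|_{\op}$ directly through the spectrum of $\hat\Sigma$ (using Lemma~\ref{lem:largest_sing_Hat_Sigma} to locate it) and then converts weights via Lemma~\ref{lemma:operator_norm_Sigma_J}. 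Your perturbative argument is correct and even gives invertibility of $\hat\Sigma-zI_p$ on $\cC_t$ for free, but it needs $C\square<1$ where $C$ is the explicit constant of the first bound, so you must track that constant (it is below $3$ for this contour, so $\square<1/9$ suffices) rather than leave it as a conditional assumption.
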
For the sake of completeness, we provide the proof of Lemma~\ref{lemma:spectral_calculus_li} in Section~\ref{sec:proof_Lemma_Li}.
On the event $\Omega_t$, other properties that will be useful in our analysis hold. For instance, to obtain an upper bound for $\|\Sigma_J^{1/2}(\hat\bbeta_J-\bbeta^*_J)\|_2$, we will further require the following result.
\begin{Lemma}\label{lemma:operator_norm_Sigma_J}Let $t\geq1$ and recall that $\hat\Sigma_t = \hat\Sigma + t^{-1} I_p$. On the event $\Omega_t$, we have   $\|\Sigma_J^{\frac{1}{2}}\hat{\Sigma}_t^{-\frac{1}{2}}\|_{\op}^2\leq \|\Sigma_t^{\frac{1}{2}}\hat{\Sigma}_t^{-\frac{1}{2}}\|_{\op}^2  \le 2$ and $\|\Sigma_t^{-\frac{1}{2}}\hat{\Sigma}_t^{\frac{1}{2}}\|_{\op}^2 \le 2$.
\end{Lemma}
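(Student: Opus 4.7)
The plan is to extract from the defining inequality of the event $\Omega_t$ a two-sided operator inequality between $\Sigma_t$ and $\hat\Sigma_t$, and then deduce each of the three bounds by standard manipulations with positive operators.

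First, I would observe that on $\Omega_t$ the bound $\|\Sigma_t^{-1/2}(\hat\Sigma - \Sigma)\Sigma_t^{-1/2}\|_{\op} \leq \square$ is equivalent, by a change of variable $\vu = \Sigma_t^{1/2}\vw$, to the quadratic form bound $|\vw^\top(\hat\Sigma - \Sigma)\vw| \leq \square\, \vw^\top \Sigma_t \vw$ for every $\vw\in\bR^p$. Since $\hat\Sigma_t - \Sigma_t = \hat\Sigma - \Sigma$, this rewrites as the operator inequality
\begin{equation*}
(1-\square)\,\Sigma_t \;\preceq\; \hat\Sigma_t \;\preceq\; (1+\square)\,\Sigma_t.
\end{equation*}

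Next, I would derive the two principal bounds. The identity $\|A\|_{\op}^2 = \|AA^\top\|_{\op}$ yields $\|\Sigma_t^{-1/2}\hat\Sigma_t^{1/2}\|_{\op}^2 = \|\Sigma_t^{-1/2}\hat\Sigma_t\Sigma_t^{-1/2}\|_{\op}$, and the right-hand inequality above gives $\Sigma_t^{-1/2}\hat\Sigma_t\Sigma_t^{-1/2}\preceq (1+\square)I_p$, so $\|\Sigma_t^{-1/2}\hat\Sigma_t^{1/2}\|_{\op}^2\leq 1+\square$. For the other direction, inverting the left-hand inequality (valid since both operators are positive definite) gives $\hat\Sigma_t^{-1}\preceq (1-\square)^{-1}\Sigma_t^{-1}$, hence $\Sigma_t^{1/2}\hat\Sigma_t^{-1}\Sigma_t^{1/2}\preceq (1-\square)^{-1}I_p$, and therefore $\|\Sigma_t^{1/2}\hat\Sigma_t^{-1/2}\|_{\op}^2 \leq (1-\square)^{-1}$. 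Since $\square< 1/9$, both quantities are at most $9/8 \leq 2$.

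Finally, for the comparison $\|\Sigma_J^{1/2}\hat\Sigma_t^{-1/2}\|_{\op}^2 \leq \|\Sigma_t^{1/2}\hat\Sigma_t^{-1/2}\|_{\op}^2$, I would simply note that $\Sigma_J = P_J\Sigma P_J \preceq \Sigma \preceq \Sigma_t$, so that for any $\vu$,
\begin{equation*}
\|\Sigma_J^{1/2}\hat\Sigma_t^{-1/2}\vu\|_2^2
= \vu^\top \hat\Sigma_t^{-1/2}\Sigma_J\hat\Sigma_t^{-1/2}\vu
\leq \vu^\top \hat\Sigma_t^{-1/2}\Sigma_t\hat\Sigma_t^{-1/2}\vu
= \|\Sigma_t^{1/2}\hat\Sigma_t^{-1/2}\vu\|_2^2,
\end{equation*}
and the stated bound follows. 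There is no substantive obstacle here: the only point to be careful about is the (standard but necessary) inversion of operator inequalities and the identity $\|A\|_{\op}^2 = \|AA^\top\|_{\op}$, which let one translate the event $\Omega_t$ into the sharp two-sided comparison of $\hat\Sigma_t$ with $\Sigma_t$.
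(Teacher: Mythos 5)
Your proof is correct and follows essentially the same route as the paper: both translate the defining inequality of $\Omega_t$ into a two-sided comparison of $\hat\Sigma_t$ with $\Sigma_t$ and conclude by conjugation. The only cosmetic differences are that where you invert the Loewner bound $(1-\square)\Sigma_t \preceq \hat\Sigma_t$ using anti-monotonicity of the matrix inverse, the paper bounds $\bigl\|\bigl(\Sigma_t^{-1/2}\hat\Sigma_t\Sigma_t^{-1/2}\bigr)^{-1}\bigr\|_{\op}$ by a Neumann series, and for the $\Sigma_J$ term it uses $\|\Sigma_J^{1/2}\Sigma_t^{-1/2}\|_{\op}\leq 1$ in place of your direct $\Sigma_J \preceq \Sigma_t$ — identical content either way.
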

Lemma~\ref{lemma:operator_norm_Sigma_J} provides the following insight: for a suitably chosen $J$, the (modified) population covariance and the (modified) sample covariance can be interchanged. 
The proof of Lemma~\ref{lemma:operator_norm_Sigma_J} may be found in Section~\ref{sec:proof_operator_norm_Sigma_J}

The event $\Omega_t$ contains all the properties on $\hat \Sigma$ that are enough for our analysis. The only remaining stochastic argument used in the proof from now are only dealing with the noise. As a consequence, if one wants to extend the conclusion from Theorem~\ref{theo:main} beyond Assumption~\ref{assumption:main_upper}, one may only focus on proving that $\Omega_t$ happens with large probability under the new considered setup. Now, that we have dealt with mostly all the stochastic aspect of the proof we can move to the deterministic one, as long as we work on the event $\Omega_t$.

\subsection{The estimation property of \texorpdfstring{$\hat\bbeta_{J}$}{hat beta J}}\label{sec:estimation}

In this subsection, we investigate the estimation properties of $\hat\bbeta_J$, i.e. we obtain a high probability upper bound on $\norm{\Sigma_J^{1/2}(\hat\bbeta_J - \bbeta^*_J)}_2$. In the following analysis, we will see that the estimation error analysis for the estimator on $V_J$, namely $\hat\bbeta_J$, is similar to the classical analysis of spectral methods but performed over $V_J$. This is because on this subspace the problem reduces to standard estimation. 
From this perspective, the FSD method can be viewed as an additional layer around classical analysis only requiring an isomorphic property on the estimation space instead of the entire space, thereby providing better estimation properties under smaller sample complexity.

\subsubsection{Risk decomposition of the estimation part \texorpdfstring{$\hat \bbeta_J$}{hat beta J}.}\label{ssub:risk_decomposition_of_the_estimation_part_hat_bbeta_j}
We start with a risk decomposition of the estimation part $\hat \bbeta_J$ of the spectral method $\hat \bbeta$. Let the ``population'' spectral method be defined as $\tilde\bbeta = \varphi_t(\Sigma)\Sigma\bbeta^*$. It is the 'population version' of $\hat\bbeta(\bX\bbeta^*) = \varphi_t(\hat\Sigma)\hat\Sigma\bbeta^*$ where $\hat \Sigma$ has been replaced by $\Sigma$; we therefore look at $\hat\bbeta(\bX\bbeta^*)$ as a plug-in estimator of  $\tilde\bbeta$ in the noise free case and in the estimation part of the feature space.  Then, by linearity of $\hat\bbeta$, we may decompose $\hat\bbeta_J - \bbeta^*_J$ as follows:
\begin{align*}
    \hat\bbeta_J(\vy) - \bbeta^*_J = \hat\bbeta_J(\bX\bbeta^*_J) - \bbeta^*_J + \hat\bbeta_J(\bX\bbeta_{J^c}^*+\bxi) = \left(\hat\bbeta_J(\bX\bbeta^*_J) - \tilde\bbeta_J\right) + \left(\tilde\bbeta_J - \bbeta^*_J\right) + \hat\bbeta_J(\bX\bbeta_{J^c}^*+\bxi).
\end{align*} Here, $\hat\bbeta_J(\bX\bbeta^*_J) - \tilde\bbeta_J$ plays the role of a bias term of the plug-in estimator $\hat\bbeta_J(\bX\bbeta^*_J)$ in the free noise case, while $\tilde\bbeta_J - \bbeta^*_J$ denotes an approximation error and $\hat\bbeta_J(\bX\bbeta_{J^c}^*+\bxi)$ is considered as a variance term. The following risk decomposition follows from the decomposition above:
\begin{align}\label{eq:risk_decomp_esti_part}
    &\norm{\Sigma_J^{1/2}(\hat\bbeta_J - \bbeta^*_J)}_2 \leq \norm{\Sigma_J^{1/2}(\hat\bbeta_J(\bX\bbeta^*_J) - \tilde\bbeta_J)}_2 + \norm{\Sigma_J^{1/2}(\tilde\bbeta_J - \bbeta^*_J)}_2 + \norm{\Sigma_J^{1/2}\hat\bbeta_J(\bX\bbeta_{J^c}^*+\bxi)}_2.
\end{align}Next, we upper bound the three terms from this sum.

\subsubsection{Upper bound on the approximation term \texorpdfstring{$\norm{\Sigma_J^{1/2}(\tilde\bbeta_J - \bbeta^*_J)}_2$}{tilde beta J}.}  It follows from the definition of the residual function $\psi_t:x\in\bR^+\to 1-x\varphi_t(x)$ that  
$\tilde\bbeta_J - \bbeta^*_J = (\varphi_t(\Sigma)\Sigma - I_p)\bbeta^*_J = -\psi_t(\Sigma)\bbeta^*_J$ and so
\begin{equation}\label{eq:approximation_term_esti_part}
    \norm{\Sigma_J^{1/2}(\tilde\bbeta_J - \bbeta^*_J)}_2= \norm{\Sigma_J^{1/2}\psi_t(\Sigma)\bbeta^*_J}_2.
\end{equation} Next, we move to an upper bound on the bias of the plug-in estimator $\hat\bbeta_J(\bX\bbeta^*_J)$. We will see that the approximation term above is dominating the bias term.

\subsubsection{Upper bound on the bias term \texorpdfstring{$\norm{\Sigma_J^{1/2}(\hat\bbeta_J(\bX\bbeta^*_J) - \tilde\bbeta_J)}_2$}{lower order terms}.} The filter and residual functions satisfy the relation $\varphi_t(x) x + \psi_t(x) = 1$, hence, we have
\begin{align*}
    \begin{aligned}
    &\hat\bbeta_J(\bX\bbeta^*_J) - \tilde\bbeta_J  = 
  P_J\varphi(\hat \Sigma)\hat \Sigma \bbeta^*_J - P_J(\varphi_t(\hat\Sigma)\hat\Sigma+ \psi_t(\hat\Sigma))\tilde\bbeta_J = P_J\varphi_t(\hat\Sigma)\hat \Sigma (\bbeta^*_J - \tilde\bbeta_J) - P_J\psi_t(\hat\Sigma)\tilde\bbeta_J \\
  & = P_J\varphi_t(\hat\Sigma)(\hat\Sigma-\Sigma) (\bbeta^*_J-\tilde\bbeta_J) + P_J\varphi_t(\hat\Sigma)\Sigma (\bbeta^*_J-\tilde\bbeta_J)- P_J\psi_t(\hat\Sigma)\tilde\bbeta_J\\
  &= P_J\varphi_t(\hat\Sigma)(\hat\Sigma-\Sigma) (\bbeta^*_J-\tilde\bbeta_J) + P_J\left(\varphi_t(\hat\Sigma)-\varphi_t(\Sigma)\right)\Sigma\psi_t(\Sigma)\bbeta^*_J + P_J\left(\psi_t(\Sigma)-\psi_t(\hat\Sigma)\right)\Sigma\varphi_t(\Sigma)\bbeta^*_J
\end{aligned}
\end{align*}where we used the fact that $\tilde\bbeta_J :=P_J \tilde \bbeta =\varphi_t(\Sigma_J)\Sigma_J\bbeta^*_J = \varphi_t(\Sigma)\Sigma\bbeta^*_J$ and so $\bbeta^*_J - \tilde\bbeta_J = \psi_t(\Sigma)\bbeta^*_J$  because $V_J$ is an eigenspace of $\Sigma$ and the fact that $\Sigma$, $\varphi_t(\Sigma)$ and $\psi_t(\Sigma)$ commute. Now, by taking $\|\Sigma_J^{1/2}\cdot\|_2$ on both sides, we obtain the following decomposition of the bias term:
\begin{align}\label{eq:def_bias_plug_in}
\begin{aligned}
    \|\Sigma_J^{1/2}(\hat\bbeta_J(\bX\bbeta^*_J)-\tilde\bbeta_J)\|_2\leq & \left\|\Sigma_J^{1/2}\varphi_t(\hat\Sigma)(\hat\Sigma-\Sigma) (\bbeta^*_J-\tilde\bbeta_J)\right\|_2 + \left\|\Sigma_J^{1/2}\left(\varphi_t(\hat\Sigma)-\varphi_t(\Sigma)\right)\Sigma\psi_t(\Sigma)\bbeta^*_J\right\|_2\\ & + \left\|\Sigma_J^{1/2}\left(\psi_t(\Sigma)-\psi_t(\hat\Sigma)\right)\Sigma\varphi_t(\Sigma)\bbeta^*_J\right\|_2.
\end{aligned}
\end{align}Next, we provide upper bounds on the three terms in this sum.

\paragraph{Upper bound for $\left\|\Sigma_J^{1/2}\varphi_t(\hat\Sigma)(\hat\Sigma-\Sigma) (\bbeta^*_J-\tilde\bbeta_J)\right\|_2$.} We recall that $\hat\Sigma_t=\hat\Sigma + t^{-1} I_p$. We have
\begin{equation}\label{eq:bias_21}
    \begin{aligned}
        \left\|\Sigma_J^{1/2}\varphi_t(\hat\Sigma)(\hat\Sigma-\Sigma) (\bbeta^*_J-\tilde\bbeta_J)\right\|_2 
        &\leq \|\Sigma_J^{\frac{1}{2}}\Sigma_t^{-\frac{1}{2}}\|_{\op}\|\Sigma_t^{\frac{1}{2}}\varphi_t(\hat\Sigma)\Sigma_t^{\frac{1}{2}}\|_{\op}\|\Sigma_t^{-\frac{1}{2}}(\hat\Sigma-\Sigma)\Sigma_t^{-\frac{1}{2}}\|_{\op} \|\Sigma_t^{\frac{1}{2}}(\bbeta^*_J-\tilde\bbeta_J)\|_2.
\end{aligned}
\end{equation}
    Under Assumption~\ref{ass:filter_fct}, we know that $\varphi_t(x) \leq \oC{C_filter_1}(x+t^{-1})^{-1}$ hence,  by Lemma~\ref{lemma:operator_norm_Sigma_J}, we have, on $\Omega_t$, 
    \begin{equation}\label{eq:IP_varphi_t_hat_Sigma}
    \begin{aligned}
        \|\Sigma_t^{\frac{1}{2}}\varphi_t(\hat\Sigma)\Sigma_t^{\frac{1}{2}}\|_{\op} \leq 
\norm{\Sigma_t^{\frac{1}{2}} \hat \Sigma_t^{-\frac{1}{2}}}_{\op} \norm{\hat \Sigma_t^{\frac{1}{2}} \varphi_t(\hat \Sigma) \hat \Sigma_t^{\frac{1}{2}}}_{\op} \norm{\hat \Sigma_t^{\frac{1}{2}} \Sigma_t^{-\frac{1}{2}}}_{\op} \leq 2 \oC{C_filter_1}.
    \end{aligned} 
    \end{equation}Moreover, by \eqref{eq:compare_Sigma_t_and_Sigma_J_Jc}, $\|\Sigma_J^{\frac{1}{2}}\Sigma_t^{-\frac{1}{2}}\|_{\op}\leq 1$.
    Plugging \eqref{eq:IP_varphi_t_hat_Sigma} into \eqref{eq:bias_21} together with \eqref{eq:def_Omega_isomorphy}, on $\Omega_t$, we have
    \begin{align}\label{eq:result_upper_bias_1}
     \begin{aligned}
        &\left\|\Sigma_J^{\frac{1}{2}}\varphi_t(\hat\Sigma)(\hat\Sigma-\Sigma) (\bbeta^*_J-\tilde\bbeta_J)\right\|_2 \leq  2\square C_1\|\Sigma_t^{\frac{1}{2}}(\bbeta^*_J-\tilde\bbeta_J)\|_2 \\
        &\leq 2 C_1\left(\frac{1+b}{b}\right) \square\|\Sigma_J^{1/2}(\bbeta^*_J-\tilde\bbeta_J)\|_2\leq 2C_1\left(\frac{1+b}{b}\right) \square \norm{\Sigma_J^{1/2}\psi_t(\Sigma)\bbeta^*_J}_2
    \end{aligned}
    \end{align}where we used \eqref{eq:approximation_term_esti_part} and \eqref{eq:Sigma_t_equiv_Sigma_on_VJ} in the last inequality. 
    
\paragraph{Upper bound for $\left\|\Sigma_J^{1/2}(\varphi_t(\hat\Sigma)-\varphi_t(\Sigma))\Sigma\psi_t(\Sigma)\bbeta^*_J\right\|_2$.} To handle this term, we use \eqref{eq:residual_theorem} which is valid on $\Omega_t$: on $\Omega_t$, we have
    \begin{equation*}
    \begin{aligned}
        & \Sigma_J^{\frac{1}{2}}(\varphi_t(\hat\Sigma)-\varphi_t(\Sigma)) \Sigma \psi_t(\Sigma)\bbeta^*_J = \frac{1}{2 \pi i} \oint_{\mathcal{C}_t} \Sigma_J^{\frac{1}{2}}\left(\hat\Sigma-zI_p\right)^{-1}\left(\hat\Sigma-\Sigma\right)(\Sigma-zI_p)^{-1}  \Sigma \psi_t(\Sigma)\bbeta^*_J \varphi_t(z) d z \\
        = & \frac{1}{2 \pi i} \oint_{\mathcal{C}_t} \Sigma_J^{\frac{1}{2}} \Sigma_t^{-\frac{1}{2}} \Sigma_t^{\frac{1}{2}}   \left(\hat\Sigma-zI_p\right)^{-1} \Sigma_t^{\frac{1}{2}} \Sigma_t^{-\frac{1}{2}}\left(\Sigma-\hat\Sigma\right) \Sigma_t^{-\frac{1}{2}} \Sigma_t^{\frac{1}{2}}(\Sigma-zI_p)^{-1} \Sigma^{\frac{1}{2}} \Sigma^{\frac{1}{2}} \psi_t(\Sigma)\bbeta^*_J\varphi_t(z) d z .
    \end{aligned}
    \end{equation*}
    Taking the $\|\cdot\|_2$ norm on both sides and applying Lemma~\ref{lemma:spectral_calculus_li} yields, on $\Omega_t$,
    \begin{align}\label{eq:result_upper_bias_2}
        \nonumber & \left\|\Sigma_J^{\frac{1}{2}}\left(\varphi_t(\hat\Sigma)-\varphi_t(\Sigma)\right)\Sigma\psi_t(\Sigma)\bbeta^*_J\right\|_2 \\
       \nonumber  \leq &\|\Sigma_J^{\frac{1}{2}}\Sigma_t^{-\frac{1}{2}}\|_{\op} \oint_{\mathcal{C}_t}\left\|\Sigma_t^{\frac{1}{2}}\left(\hat\Sigma-zI_p\right)^{-1} \Sigma_t^{\frac{1}{2}}\right\|_{op} \left\|\Sigma_t^{-\frac{1}{2}}\left(\Sigma-\hat\Sigma\right) \Sigma_t^{-\frac{1}{2}}\right\|_{op}   \left\|\Sigma_t^{\frac{1}{2}}(\Sigma-zI_p)^{-1} \Sigma^{\frac{1}{2}}\right\|_{op}\left\|\Sigma^{\frac{1}{2}} \psi_t(\Sigma)\bbeta^*_J\right\|_{ 2 }\left|\varphi_t(z) d z\right| \\
        \lesssim &  \square \left\|\Sigma^{\frac{1}{2}} \psi_t(\Sigma)\bbeta^*_J\right\|_{ 2 }\oint_{\mathcal{C}_t}\left|\varphi_t(z) d z\right| \lesssim  \square \log(t) \left\|\Sigma^{\frac{1}{2}} \psi_t(\Sigma)\bbeta^*_J\right\|_{ 2 },
    \end{align}where we have used that $\Sigma\preceq \Sigma_t$  to get $\left\|\Sigma_t^{\frac{1}{2}}(\Sigma-zI_p)^{-1} \Sigma^{\frac{1}{2}}\right\|_{op}\leq \left\|\Sigma_t^{\frac{1}{2}}(\Sigma-zI_p)^{-1} \Sigma_t^{\frac{1}{2}}\right\|_{op}\lesssim 1$ from Lemma~\ref{lemma:spectral_calculus_li}.

 \paragraph{Upper bound for $\left\|\Sigma_J^{1/2}\left(\psi_t(\Sigma)-\psi_t(\hat\Sigma)\right)\Sigma\varphi_t(\Sigma)\bbeta^*_J\right\|_2$.} We have on $\Omega_t$ and from \eqref{eq:residual_theorem} 
    \begin{equation*}
    \begin{aligned}
        \Sigma_J^{\frac{1}{2}}&\left(\psi_t(\hat\Sigma)-\psi_t(\Sigma)\right) \Sigma \varphi_t(\Sigma) \bbeta^*_J = \frac{1}{2 \pi i} \oint_{ \mathcal{C} _t} \Sigma_J^{\frac{1}{2}}\left(\hat\Sigma-zI_p\right)^{-1}\left(\hat\Sigma-\Sigma\right)(\Sigma-zI_p)^{-1} \Sigma \varphi_t(\Sigma) \bbeta^*_J \psi_t(z)  d z \\
        = & \frac{1}{2 \pi i} \oint_{ \mathcal{C} _t} \Sigma_J^{\frac{1}{2}}  \left(\hat\Sigma-zI_p\right)^{-1} \Sigma_t^{\frac{1}{2}} \cdot \Sigma_t^{-\frac{1}{2}}\left(\hat\Sigma-\Sigma\right) \Sigma_t^{-\frac{1}{2}} \Sigma_t^{\frac{1}{2}}(\Sigma-zI_p)^{-1} \Sigma_J^{\frac{1}{2}} \cdot \Sigma_J^{\frac{1}{2}} \varphi_t(\Sigma) \bbeta^*_J \psi_t(z) d z .
    \end{aligned}
    \end{equation*}
    Therefore,
    \begin{align}\label{eq:result_upper_bias_3}
    \begin{aligned}
        & \left\|\Sigma_J^{\frac{1}{2}}\left(\psi_{t}(\Sigma)-\psi_{t}(\hat\Sigma)\right)\Sigma\varphi_{t}(\Sigma)\bbeta^*_J\right\|_2  \leq \frac{1}{2\pi} \oint_{\mathcal{C}_t} \left\|\Sigma_t^{\frac{1}{2}}\left(\hat\Sigma-zI_p\right)^{-1} \Sigma_t^{\frac{1}{2}}\right\|_{op} \cdot\left\|\Sigma_t^{-\frac{1}{2}}\left(\hat\Sigma - \Sigma\right) \Sigma_t^{-\frac{1}{2}}\right\|_{op} \\
        &  \cdot\left\|\Sigma_t^{\frac{1}{2}}(\Sigma-zI_p)^{-1} \Sigma_J^{\frac{1}{2}}\right\|_{op} \cdot\left\|\Sigma_J^{\frac{1}{2}} \varphi_t(\Sigma) \bbeta^*_J\right\|_{2}\left|\psi_t(z) d z\right| \\
        &\lesssim   \square \cdot\left\|\Sigma_J^{\frac{1}{2}} \varphi_t(\Sigma) \bbeta^*_J\right\|_{ 2 } \cdot \oint_{\mathcal{C}_t}\left|\psi_t(z) d z\right|\lesssim \square \left\|\Sigma_J^{1/2} \varphi_t(\Sigma) \bbeta^*_J\right\|_{ 2 } t^{-1}.
    \end{aligned}
    \end{align}

Collecting \eqref{eq:result_upper_bias_1}, \eqref{eq:result_upper_bias_2} and \eqref{eq:result_upper_bias_3} all together in \eqref{eq:def_bias_plug_in}, we obtain that, on $\Omega_t$, it holds
\begin{align}\label{eq:expectation_bias_plug_in_result_v0}
    &\norm{\Sigma_J^{1/2}(\hat\bbeta_J(\bX\bbeta^*_J) - \tilde\bbeta_J)}_2 \lesssim \square\left(\log(et)\norm{\Sigma_J^{1/2} \psi_t(\Sigma)\bbeta^*_J}_2 + t^{-1}\norm{\Sigma_J^{1/2}\varphi_t(\Sigma)\bbeta^*_J}_2\right)
\end{align}and since $\varphi_t(\Sigma)\preceq C_1 \Sigma_t^{-1}$, we obtain $\norm{\Sigma_J^{1/2}\varphi_t(\Sigma)\bbeta^*_J}_2\leq C_1 \norm{\Sigma_J^{-1/2}\bbeta^*_J}_2$, we finally get, on $\Omega_t$,
\begin{equation}\label{eq:expectation_bias_plug_in_result}
   \norm{\Sigma_J^{1/2}(\hat\bbeta_J(\bX\bbeta^*_J) - \tilde\bbeta_J)}_2 \lesssim \square\left(\log(et)\norm{\Sigma_J^{1/2} \psi_t(\Sigma)\bbeta^*_J}_2 + t^{-1}\norm{\Sigma_J^{-1/2}\bbeta^*_J}_2\right). 
\end{equation}


\subsubsection{Upper bound on the variance term \texorpdfstring{$\norm{\Sigma_J^{1/2}\hat\bbeta_J(\bX\bbeta_{J^c}^*+\bxi)}_2$}{hat beta J}.}\label{ssub:upper_bound_on_the_variance_term_texorpdfstring_norm_sigma_j_1_2} By linearity of the spectral estimator (see \eqref{eq:def_hat_bbeta}), we have
\begin{equation*}
    \|\Sigma_J^{1/2}\hat\bbeta_J(\bX\bbeta_{J^c}^*+\bxi)\|_2 \leq \|\Sigma_J^{1/2}\varphi_t(\hat\Sigma)\hat\Sigma\bbeta_{J^c}^*\|_2 + \frac{1}{N}\|\Sigma_J^{1/2}\varphi_t(\hat\Sigma)\bX^\top\bxi\|_2.
\end{equation*}Now, we prove high probability upper bounds on the two terms from the sum above.

\paragraph{Upper bound for \texorpdfstring{$\|\Sigma_J^{1/2}\varphi_t(\hat\Sigma)\hat\Sigma\bbeta_{J^c}^*\|_2$}{variance caused by not estimating beta Jc *}.} We have 
\begin{equation*}
    \|\Sigma_J^{1/2}\varphi_t(\hat\Sigma)\hat\Sigma\bbeta_{J^c}^*\|_2 = \frac{1}{N}\norm{\Sigma_J^{1/2}\varphi_t(\hat\Sigma)\Sigma_t^{\frac12} \Sigma_{t}^{-\frac12}\bX^\top \bX \bbeta_{J^c}^*}_2\leq \frac{1}{\sqrt{N}}\norm{\Sigma_J^{1/2}\varphi_t(\hat\Sigma)\Sigma_t^{1/2}}_{\op} \norm{\Sigma_t^{-1/2}\bX^\top}_{\op} \frac{\|\bX\bbeta_{J^c}^*\|_2}{\sqrt{N}}.
\end{equation*}It follows from \eqref{eq:IP_varphi_t_hat_Sigma}, \eqref{eq:compare_Sigma_t_and_Sigma_J_Jc} and Lemma~\ref{lemma:operator_norm_Sigma_J}, that on the event $\Omega_t$,
\begin{equation}\label{eq:upper_operator_norm_Sigma_t_inverse_X_transpose}
    \frac{1}{\sqrt{N}}\norm{\Sigma_t^{-1/2}\bX^\top}_{\op} =  \norm{\Sigma_t^{-1/2}\hat \Sigma^{1/2}}_{\op} \leq \sqrt{2}
\end{equation}and 
\begin{align*}
    \norm{\Sigma_J^{1/2}\varphi_t(\hat\Sigma)\Sigma_t^{1/2}}_{\op}\leq \norm{\Sigma_J^{1/2}\Sigma_t^{-1/2}}_{\op} \norm{\Sigma_t^{1/2}\varphi_t(\hat\Sigma)\Sigma_t^{1/2}}_{\op}\leq 2 C_1.
    \end{align*}Next, it follows from the sub-gaussian property of the design vector $X$ from Assumption~\ref{assumption:main_upper} and Lemma~\ref{lem:sum_squares} that, for some absolute constant $c>0$, with probability at least $1-\exp(-cN)$, 
    \begin{equation*}
        \frac{1}{N}\|\bX\bbeta_{J^c}^*\|_2^2 = \frac{1}{N}\sum_{i=1}^N \inr{X_i,  \bbeta_{J^c}^*}^2 \leq 2\|\Sigma_{J^c}^{1/2}\bbeta_{J^c}^*\|_2^2.
    \end{equation*}
    As a result, there exist an absolute constants $c>0$ such that  with probability at least $1-\exp(-c|J|)-\bP[\Omega_t^c]$,
    \begin{equation}\label{eq:first_ineq_var}
         \|\Sigma_J^{1/2}\varphi_t(\hat\Sigma)\hat\Sigma\bbeta_{J^c}^*\|_2\leq 16 C_1 \|\Sigma_{J^c}^{1/2}\bbeta_{J^c}^*\|_2.
     \end{equation} 
\paragraph{Upper bound for \texorpdfstring{$(1/N)\|\Sigma_J^{1/2}\varphi_t(\hat\Sigma)\bX^\top\bxi\|_2^2$}{true variance of hat beta J}.} We first work conditionally on $\bX$ and consider the randomness coming only from the Gaussian vector $\bxi$ so that we can apply the Borel-TIS inequality (see Theorem~7.1 in \cite{dobrushin_isoperimetry_1996} or p.56-57 in \cite{ledoux_probability_1991}) in order to get: for almost all $\bX$, for all $t\geq1$ with probability at least $1-\exp(-t/2)$, $\|A\bxi\|_2\leq \sigma_\xi \sqrt{\Tr[AA^\top]} + \sigma_\xi\norm{A}_{op}\sqrt{t}$ where $A=\Sigma_J^{1/2}\varphi_t(\hat\Sigma)\bX^\top$. This implies that for almost all $\bX$, with probability at least $1-\exp(-|J|/2)$,
\begin{equation*}
    \frac{1}{N}\|\Sigma_J^{1/2}\varphi_t(\hat\Sigma)\bX^\top\bxi\|_2^2\leq 2\sigma_\xi^2 \Tr\left[\Sigma_J^{1/2}\varphi_t(\hat \Sigma)\hat \Sigma \varphi_t(\hat \Sigma)\Sigma_J^{1/2}\right] 
      + \frac{2\sigma_\xi^2}{N}\norm{\Sigma_J^{1/2}\varphi_t(\hat\Sigma)\bX^\top}_{op}^2|J|.
\end{equation*} For the weak variance term in the inequality above, we have $\hat \Sigma_t^{1/2}\varphi_t(\hat\Sigma)\hat\Sigma \varphi_t(\hat\Sigma) \hat \Sigma_t^{1/2} \preceq \oC{C_filter_1}^2 I_p$ and so by Lemma~\ref{lemma:operator_norm_Sigma_J} we get, on $\Omega_t$,
\begin{align*}
    &\frac{1}{N}\norm{\Sigma_J^{1/2}\varphi_t(\hat\Sigma)\bX^\top}_{\op}^2 \leq \norm{\Sigma_J^{1/2} \varphi_t(\hat\Sigma)\hat\Sigma \varphi_t(\hat\Sigma) \Sigma_J^{1/2}}_{\op}\\ 
    & \leq \norm{\Sigma_J^{1/2} \hat\Sigma_t^{-1/2}}_{\op}\norm{\hat \Sigma_t^{1/2}\varphi_t(\hat\Sigma)\hat\Sigma \varphi_t(\hat\Sigma) \hat \Sigma_t^{1/2}}_{\op} \norm{\hat \Sigma_t^{-1/2} \Sigma_J^{1/2}}_{\op}
     \leq 2 \oC{C_filter_1}^2. 
\end{align*}For the strong variance term in the inequality above, we use that $\varphi_t(\hat\Sigma)\hat\Sigma \varphi_t(\hat\Sigma) \preceq \oC{C_filter_1}^2 \hat\Sigma _t^{-1}$ and apply Lemma~\ref{lemma:operator_norm_Sigma_J} to get, on $\Omega_t$,
\begin{align*}
  & \Tr\left[\Sigma_J^{1/2}\varphi_t(\hat \Sigma)\hat \Sigma \varphi_t(\hat \Sigma)\Sigma_J^{1/2}\right]\leq \oC{C_filter_1}^2 \Tr\left[\Sigma_J^{1/2} \hat \Sigma_t^{-1} \Sigma_J^{1/2}\right] = \oC{C_filter_1}^2 \left(\Tr\left[\hat \Sigma_t^{-1} (\Sigma_J - \hat\Sigma_J) \right] + \Tr\left[\hat \Sigma_t^{-1}\hat\Sigma_J\right]\right)\\
   &\leq \oC{C_filter_1}^2 \left(\Tr\left[\hat \Sigma_t^{-1/2} (\Sigma_J - \hat\Sigma_J) \hat \Sigma_t^{-1/2}  \right] + |J|\right) \leq \oC{C_filter_1}^2\left(|J|\norm{\hat \Sigma_t^{-1/2} (\Sigma_J - \hat\Sigma_J) \hat \Sigma_t^{-1/2} }_{op}   + |J|\right)\leq 2 \oC{C_filter_1}^2 |J|.
\end{align*}As a consequence, we obtain that with probability at least $1-2\exp(-c|J|)-\bP[\Omega_t^c]$, $(1/N)\|\Sigma_J^{1/2}\varphi_t(\hat\Sigma)\bX^\top\bxi\|_2^2\lesssim \sigma_\xi^2 |J|$. 

Finally, gathering the last inequality together with \eqref{eq:first_ineq_var} we obtain that  with probability at least $1-2\exp(-c|J|) - \bP[\Omega_t^c]$,
\begin{equation*}
     \|\Sigma_J^{1/2}\hat\bbeta_J(\bX\bbeta_{J^c}^*+\bxi)\|_2 \lesssim \|\Sigma_{J^c}^{1/2}\bbeta_{J^c}^*\|_2 + \sigma_\xi \sqrt{\frac{|J|}{N}}.
\end{equation*}

\subsubsection{Conclusion on the estimation property of \texorpdfstring{$\hat\bbeta_J$}{hat beta J}}It follows from the results obtained in the previous sections, that with probability at least $1-2\exp(-c|J|)- \bP[\Omega_t^c]$,
\begin{equation}\label{eq:final_esti_part}
        \norm{\Sigma_J^{1/2}(\hat\bbeta_J - \bbeta^*_J)}_2\lesssim \sigma_\xi \sqrt{\frac{|J|}{N}}+ \norm{\Sigma_{J^c}^{1/2}\bbeta_{J^c}^*}_2 + \left(\square \log(t) + 1\right) \norm{\Sigma_J^{1/2}\psi_t(\Sigma)\bbeta^*}_2 + \frac{\square}{t}\norm{\Sigma_J^{-1/2}\bbeta^*_J}_2.
\end{equation}  
This result finishes our analysis of the statistical property of the estimation part $\hat\bbeta_J$ of the spectral method $\hat\bbeta$. The next step of the FSD method is to handle the 'noise absorption part' of $\hat\bbeta$.


\subsection{Control of the noise absorption part \texorpdfstring{$\hat\bbeta_{{J}^c}$.}{hat beta Jc}}\label{sec:noise_absorption}
In this section, we derive an upper bound for $\|\Sigma_{J^c}^{1/2}\hat\bbeta_{J^c}\|_2$, where $\hat\bbeta_{J^c} = P_{J^c}\hat\bbeta$. We recall that $\hat\bbeta = N^{-1}\varphi_t(\hat\Sigma)\bX^\top\vy$ and $\vy = \bX\bbeta^* + \bxi = \bX\bbeta^*_J + \bX\bbeta_{J^c}^*+\bxi$. Therefore, we have
\begin{equation}\label{eq:first_noise_absorption_part_risk}
   \|\Sigma_{J^c}^{1/2}\hat\bbeta_{J^c}\|_2\leq  \norm{\Sigma_{J^c}^{1/2}\varphi_t(\hat\Sigma)\hat\Sigma \bbeta^*_J }_2 + \norm{\Sigma_{J^c}^{1/2}\varphi_t(\hat\Sigma)\hat\Sigma \bbeta^*_{J^c} }_2+ \norm{\Sigma_{J^c}^{1/2}\varphi_t(\hat\Sigma)[N^{-1}\bX^\top]\bxi }_2.
\end{equation}Next, we prove high probability upper bounds on the three terms in the sum above.

\paragraph{Upper bound for \texorpdfstring{$\norm{\Sigma_{J^c}^{1/2}\varphi_t(\hat\Sigma)\hat\Sigma \bbeta^*_J }_2$}{bias caused by beta J *}.}
By definition of the residual function, we have $\varphi_t(\hat \Sigma)\hat \Sigma = I_p - \psi_t(\hat\Sigma)$ and so  $\Sigma_{J^c}^{1/2}\varphi_t(\hat\Sigma)\hat\Sigma \bbeta^*_J = -\Sigma_{J^c}^{1/2}\psi_t(\hat\Sigma)\bbeta^*_J$ where we have used the fact that $\Sigma_{J^c}^{1/2}\bbeta^*_J=0$.  Next, we take the $\ell_2^p$-norm on both sides and use the fact that $\Sigma_{J^c}^{1/2}\psi_t(\Sigma)\bbeta^*_J=0$ to get 
\begin{equation*}
    \norm{\Sigma_{J^c}^{1/2}\varphi_t(\hat\Sigma)\hat\Sigma \bbeta^*_J }_2 = \|\Sigma_{J^c}^{1/2}\big(\psi_t(\hat\Sigma)-\psi_t(\Sigma)\big)\bbeta^*_J\|_2.
\end{equation*}
Next, on $\Omega_t$, we can apply the residual theorem to $\psi_t(\hat\Sigma)$ and $\psi_t(\Sigma)$ and get a result similar to the one of \eqref{eq:residual_theorem} where $\varphi_t$ is replaced by $\psi_t$. Thanks to this result we get (on  $\Omega_t$)
\begin{align*}
    &\norm{\Sigma_{J^c}^{1/2}\big(\psi_t(\hat\Sigma)-\psi_t(\Sigma)\big)\bbeta^*_J}_2 = \norm{\Sigma_{J^c}^{\frac{1}{2}}\Sigma_t^{-\frac{1}{2}}\Sigma_t^{\frac{1}{2}} \big(\psi_t(\hat\Sigma)-\psi_t(\Sigma)\big)\bbeta^*_J }_2\\
    &\leq \norm{\Sigma_{J^c}^{\frac{1}{2}}\Sigma_t^{-\frac{1}{2}}}_{\op} \norm{\Sigma_t^{\frac{1}{2}} \big(\psi_t(\hat\Sigma)-\psi_t(\Sigma)\big)\bbeta^*_J}_2\\
    &\leq \sqrt{\frac{b}{1+b}} \norm{\oint_{\cC_t} \Sigma_t^{\frac{1}{2}} (\hat\Sigma - zI_p)^{-1}(\hat\Sigma-\Sigma)(\Sigma-zI_p)^{-1} \bbeta^*_J\psi_t(z)dz }_2\\
    &\leq \sqrt{\frac{b}{1+b}}\oint_{\cC_t} \norm{\Sigma_t^{\frac{1}{2}}(\hat\Sigma - zI_p)^{-1}\Sigma_t^{\frac{1}{2}}}_{\op}\norm{\Sigma_t^{-\frac{1}{2}}(\hat\Sigma-\Sigma)\Sigma_t^{-\frac{1}{2}}}_{\op} \norm{\Sigma_t^{\frac{1}{2}}(\Sigma-zI_p)^{-1}\Sigma_t^{\frac{1}{2}}}_{\op}\norm{\Sigma_t^{-\frac{1}{2}}\bbeta^*_J}_2 |\psi_t(z)dz|
\end{align*}and so, on $\Omega_t$, by applying Lemma~\ref{lemma:spectral_calculus_li} we obtain
\begin{align}\label{eq:result_upper_bias_1_in_V_Jc}
        \norm{\Sigma_{J^c}^{1/2}\varphi_t(\hat\Sigma)\hat\Sigma \bbeta^*_J }_2 =\norm{\Sigma_{J^c}^{1/2}\big(\psi_t(\hat\Sigma)-\psi_t(\Sigma)\big)\bbeta^*_J}_2\lesssim \frac{\square}{t}\norm{\Sigma_J^{-\frac{1}{2}}\bbeta^*_J}_2.
\end{align}

\paragraph{Upper bound for \texorpdfstring{$\norm{\Sigma_{J^c}^{1/2}\varphi_t(\hat\Sigma)\hat\Sigma \bbeta^*_{J^c} }_2$}{bias caused by beta J c*}.} It follows from the 'upper side of Dvoretsky-Milman' theorem (see for instance Section~2.2.0.3 in \cite{gavrilopoulos_geometrical_2025}) that under Assumption~\ref{assumption:main_upper}, there are absolute constants $C,c>0$ such that with probability at least $1-\exp(-cN)$,
\begin{equation}\label{eq:noise_part_DM}
    \bP\left(\norm{\Sigma_{J^c}^{1/2}\bX^\top}_{\op}\leq C\left(\sqrt{\Tr(\Sigma_{J^c}^2)}+\sqrt{N}\norm{\Sigma_{J^c}}_{\op}\right)\right)\geq 1-\exp(-cN).
\end{equation}
Moreover, we have $\|\bX\bbeta_{J^c}^*\|_2\leq C\sqrt{N}\|\Sigma_{J^c}^{1/2}\bbeta_{J^c}^*\|_2$ with probability at least $1-\exp(-cN)$. Next, we observe that thanks to Assumption~\ref{ass:filter_fct}, $\varphi_t(x)\leq C_1(x+t^{-1})^{-1}\leq C_1 t$ so that we have $\varphi_t(\hat \Sigma)\leq C_1 t I_p$ and (since $\hat \Sigma$ and $I_p$ commute) for all $x\in\bR^p, \norm{\varphi_t(\hat \Sigma)x}_2\leq C_1 t \norm{x}_2$. It follows that with probability at least $1-2\exp(-cN)$,
\begin{align}\label{eq:result_upper_bias_2_in_V_Jc}
    \begin{aligned}
        \norm{\Sigma_{J^c}^{1/2}\varphi_t(\hat\Sigma)\hat\Sigma \bbeta^*_{J^c} }_2 = &\frac{1}{N}\norm{\Sigma_{J^c}^{1/2}\bX^\top\varphi_t(\hat \Sigma)\bX\bbeta_{J^c}^*}_2 \leq C C_1\frac{\sqrt{\Tr(\Sigma_{J^c}^2)}+\sqrt{N}\norm{\Sigma_{J^c}}_{\op}}{\sqrt{N}t^{-1}}\norm{\Sigma_{J^c}^{1/2}\bbeta_{J^c}^*}_2.
    \end{aligned}
\end{align}Finally, it follows from the definition of $k^*$ that $\sigma_{k^*+1} = \norm{\Sigma_{J^c}}_{\op}\leq b t^{-1}$ and from the sample complexity assumption (i.e. $\square^2 N\gtrsim \Tr\left[\Sigma(\Sigma+t^{-1}I_p)^{-1}\right]$) - see the discussion below \eqref{eq:sample_complexity_ass_imply} - that $\square^2 N \gtrsim t \Tr[\Sigma_{J^c}]$ so that
\begin{equation}\label{eq:noise_part_sample_comp_imply}
    \frac{\sqrt{\Tr(\Sigma_{J^c}^2)}+\sqrt{N}\norm{\Sigma_{J^c}}_{\op}}{\sqrt{N}t^{-1}}\leq \sqrt{\frac{\norm{\Sigma_{J^c}}_{\op}}{t^{-1}}} \sqrt{\frac{\Tr(\Sigma_{J^c})}{N t^{-1}}} + \frac{\norm{\Sigma_{J^c}}_{\op}}{t^{-1}}\leq \sqrt{b\square} + b\leq 2b
\end{equation}as long as $\square\leq b$. We conclude that with probability at least $1-2\exp(-cN)$,
\begin{equation}\label{eq:noise_part_2}
    \norm{\Sigma_{J^c}^{1/2}\varphi_t(\hat\Sigma)\hat\Sigma \bbeta^*_{J^c} }_2\leq C C_1 b \norm{\Sigma_{J^c}^{1/2}\bbeta_{J^c}^*}_2.
\end{equation}

\paragraph{Upper bound for \texorpdfstring{$\norm{\Sigma_{J^c}^{1/2}\varphi_t(\hat\Sigma)[N^{-1}\bX^\top]\bxi }_2$}{bias caused by bxi}.}
As in the previous section we first condition on $\bX$ and apply the Borell-TIS inequality: for almost all $\bX$, for all $r>0$, with probability at least $1-\exp(-r/2)$, $\norm{A\bxi}_2\leq \sigma_\xi \sqrt{\Tr[AA^\top]} + \sigma_\xi \norm{A}_{op}\sqrt{r}$ where $A = \Sigma_{J^c}^{1/2}\varphi_t(\hat\Sigma)[N^{-1}\bX^\top]$. Hence, we have with probabiltity at least $1-\exp(-|J|/2)$,
\begin{align}\label{eq:first_xi_noise_absorp}
   \nonumber \norm{\Sigma_{J^c}^{1/2}\varphi_t(\hat\Sigma)[N^{-1}\bX^\top]\bxi }_2  & \leq \sigma_\xi \sqrt{\frac{\Tr[\Sigma_{J^c}\hat \Sigma\varphi_t^2(\hat\Sigma)]}{N}} + \sigma_\xi \norm{\Sigma_{J^c}^{1/2} \hat\Sigma^{1/2} \varphi_t(\hat\Sigma) }_{op}\sqrt{\frac{|J|}{N}} \\
   &\leq \sigma_\xi\oC{C_filter_1}t \sqrt{\frac{\Tr[\Sigma_{J^c}\hat \Sigma]}{N}} + \sigma_\xi\oC{C_filter_1}t \norm{\Sigma_{J^c}^{1/2} \hat\Sigma^{1/2}}_{op}\sqrt{\frac{|J|}{N}}
\end{align}where in the last inequality we used that $\varphi_t(x)\leq \oC{C_filter_1} (x+t^{-1})^{-1}\leq \oC{C_filter_1}t$.  Next, it follows from Lemma~\ref{lem:sum_norm_square_psi_2} that there exists an absolute constant $c>0$ such that with probability at least $1-\exp(-cN)$,
\begin{align*}
    \Tr[\Sigma_{J^c}\hat \Sigma] = \frac{1}{N}\Tr(\bX\Sigma_{J^c}\bX^\top) = \frac{1}{N}\sum_{i=1}^N\norm{\Sigma_{J^c}^{1/2}X_i}_2^2 \leq 2\Tr(\Sigma_{J^c}^2).
\end{align*}Then, it follows from \eqref{eq:noise_part_DM} that there are absolute constants $C, c>0$ such that with probability at least $1-\exp(-cN)$,
\begin{align}\label{eq:DM}
    \norm{\Sigma_{J^c}^{1/2} \hat\Sigma^{1/2}}_{op} = \frac{1}{\sqrt{N}}\norm{\Sigma_{J^c}^{1/2}\bX^\top }_{op}\leq C\left(\sqrt{\frac{\Tr(\Sigma_{J^c}^2)}{N}}+\norm{\Sigma_{J^c}}_{\op}\right).
\end{align}Finally, collecting the last two results together with \eqref{eq:noise_part_sample_comp_imply} in the Borell-TIS inequality above, we get that with probability at least $1-2\exp(-c|J|)$,
\begin{align}\label{eq:final_xi_noise_absorp}
   \norm{\Sigma_{J^c}^{1/2}\varphi_t(\hat\Sigma)[N^{-1}\bX^\top]\bxi }_2 \lesssim \sigma_\xi t \sqrt{\frac{\Tr(\Sigma_{J^c}^2)}{N}} + \sigma_\xi t \left(\sqrt{\frac{\Tr(\Sigma_{J^c}^2)}{N}}+\norm{\Sigma_{J^c}}_{\op}\right) \sqrt{\frac{|J|}{N}} \lesssim \sigma_\xi \sqrt{\frac{|J|}{N}} + \sigma_\xi t \sqrt{\frac{\Tr(\Sigma_{J^c}^2)}{N}}. 
\end{align}


\paragraph{Concluding on the noise absorption property.} Combining \eqref{eq:result_upper_bias_1_in_V_Jc}, \eqref{eq:noise_part_2} and \eqref{eq:final_xi_noise_absorp}, we obtain that with probability at least $1-2\exp(-c|J|) - \bP[\Omega_t^c]$,
\begin{equation}\label{eq:final_noise_absorption}
    \norm{\Sigma_{J^c}^{1/2}\hat\bbeta_{J^c}}_2 \lesssim \frac{\square}{t}\norm{\Sigma_J^{-\frac{1}{2}}\bbeta^*_J}_2 + \norm{\Sigma_{J^c}^{1/2}\bbeta_{J^c}^*}_2 +  \sigma_\xi \sqrt{\frac{|J|}{N}} + \sigma_\xi t \sqrt{\frac{\Tr(\Sigma_{J^c}^2)}{N}}   
\end{equation}

\subsection{End of the proof of the upper bound from Theorem~\ref{theo:main}.}
Going back to the original risk decomposition from the FSD method in \eqref{eq:risk_decomposition_origin_FSD} and collecting both results on the estimation part and the noise absorption part from  \eqref{eq:final_esti_part} and \eqref{eq:final_noise_absorption}, we obtain that with probability at least $1-\exp(-c|J|) - \bP[\Omega_t^c]$,
\begin{align*}
&\norm{\Sigma^{1/2}\left(\hat\bbeta - \bbeta^*\right)}_2  \leq  \norm{\Sigma_J^{1/2}\left(\hat\bbeta_J - \bbeta^*_J\right)}_2 + \norm{\Sigma_{J^c}^{1/2}\hat\bbeta_{J^c}}_2 +\norm{ \Sigma_{J^c}^{1/2}\bbeta_{J^c}^*}_2\\
    \lesssim  & \left( \sigma_\xi\sqrt{\frac{|J|}{N}} + \norm{\Sigma_{J^c}^{1/2}\bbeta_{J^c}^*}_2 + \left(\square \log(et) + 1\right) \norm{\Sigma_J^{1/2}\psi_t(\Sigma)\bbeta^*}_2 + \frac{\square}{t}\norm{\Sigma_J^{-\frac{1}{2}}\bbeta^*_J}_2 \right) \\  
    & +  \left(\frac{\square}{t}\norm{\Sigma_J^{-\frac{1}{2}}\bbeta^*_J}_2 + \sigma_\xi \sqrt{\frac{|J|}{N}} + \norm{\Sigma_{J^c}^{1/2}\bbeta_{J^c}^*}_2 + \sigma_\xi t \sqrt{\frac{\Tr(\Sigma_{J^c}^2)}{N}}\right) + \norm{\Sigma_{J^c}^{1/2}\bbeta_{J^c}^*}_2\\
    &\lesssim \sigma_\xi\sqrt{\frac{|J|}{N}} + \norm{\Sigma_{J^c}^{1/2}\bbeta_{J^c}^*}_2 + \left(\square \log(et) + 1\right) \norm{\Sigma_J^{1/2}\psi_t(\Sigma)\bbeta^*}_2 + \sigma_\xi t \sqrt{\frac{\Tr(\Sigma_{J^c}^2)}{N}}  + \frac{\square}{t}\norm{\Sigma_J^{-\frac{1}{2}}\bbeta^*_J}_2
\end{align*}and the result follows if one takes $\square\lesssim \log^{-1}(et)$.


\section{Proof of the lower bound result from Theorem~\ref{theo:main_LB}}\label{sec:optimal_FSD}
In this section, we prove the lower bound result from Theorem~\ref{theo:main_LB}. We first work conditionally to $\bX$ so that we can use the concentration inequality of a Lipschitz function of the Gaussian vector $\bxi$ (see Eq.(2.35) in \cite{ledoux_concentration_2005} or Theorem~5.2.2 in \cite{vershynin_high-dimensional_2018}): for almost all $\bX$, for all $r>0$, with probability at least $1-\exp(-r)$, $\phi(\bxi) \geq  \bE_\bxi \phi(\bxi) -  \sigma_\xi\norm{\phi}_{Lip}\sqrt{2r}$ where $\phi(\bxi) = \norm{\Sigma^{1/2}(\hat \bbeta(\bX \bbeta^* + \bxi) - \bbeta^*)}_2$ and $\norm{\phi}_{Lip}$ is the Lipschitz constant of $\phi$ with respect to the Euclidean norm. Moreover, thanks to the concentration of Lipschitz functions of Gaussian vectors recalled above we have: for almost all $\bX$,
\begin{align*}
   \bE_\bxi \phi(\bxi)^2 - [\bE_\bxi \phi(\bxi)]^2 = \bE_\bxi\left[\left(\phi(\bxi) - \bE_\bxi \phi(\bxi)\right)^2\right] = \int_0^\infty \bP_\bxi\left[|\phi(\bxi) - \bE_\bxi \phi(\bxi)|\geq \sqrt{r}\right]dr\leq 2 \sigma_\xi^2\norm{\phi}_{Lip}^2.
\end{align*}As a consequence, $[\bE_\bxi \phi(\bxi)]^2 \geq \bE_\bxi [\phi(\bxi)^2] - 2 \sigma_\xi^2\norm{\phi}_{Lip}^2$ and so, for almost all $\bX$, with $\bP_\bxi$-probability at least $1-\exp(-r)$, 
\begin{equation}\label{eq:lower_bound_borell}
    \phi(\bxi) \geq  \bE_\bxi \phi(\bxi) - \sigma_\xi\norm{\phi}_{Lip}\sqrt{2r}\geq \sqrt{\frac{\bE_\bxi[\phi(\bxi)^2]}{2}} - \sigma_\xi\norm{\phi}_{Lip}\sqrt{2r}
\end{equation}when $\bE_\bxi \phi(\bxi)^2 \geq 4 \sigma_\xi^2\norm{\phi}_{Lip}^2$. We note that \eqref{eq:lower_bound_borell} also holds when  $\bE_\bxi \phi(\bxi)^2 \leq 4 \sigma_\xi^2\norm{\phi}_{Lip}^2$ as long as $r\geq 4 \sqrt{2}$ since $\phi(\bxi)\geq0$ a.s.. As a consequence, we (always) have for all $r\geq 4 \sqrt{2}$, 
\begin{equation*}
   \phi(\bxi) \geq  \sqrt{\frac{\bE_\bxi[\phi(\bxi)^2]}{2}} - \sigma_\xi\norm{\phi}_{Lip}\sqrt{2r}. 
\end{equation*}Next, thanks to the linearity of the estimator $\hat\bbeta$ we have for all $\xi_1, \xi_2 \in\bR^p, |\phi(\xi_1) - \phi(\xi_2)|\leq \norm{\Sigma^{1/2}\hat\bbeta(\xi_1-\xi_2)}_2$ and so $\norm{\phi}_{Lip}\leq \norm{A}_{op}$ where $A=\Sigma^{1/2}\varphi_t(\hat\Sigma) [N^{-1}\bX^\top]$ and 
\begin{align*}
 \bE_\bxi[\phi(\bxi)^2] =    \bE_\bxi\norm{\Sigma^{\frac{1}{2}}(\hat\bbeta-\bbeta^*)}_2^2 = \norm{\Sigma^{\frac{1}{2}}(\hat\bbeta(\bX\bbeta^*)-\bbeta^*)}_2^2 + \bE_\bxi\norm{\Sigma^{\frac{1}{2}}\hat\bbeta(\bxi)}_2^2 = \norm{\Sigma^{\frac{1}{2}}(\hat\bbeta(\bX\bbeta^*)-\bbeta^*)}_2^2 + \sigma_\xi \Tr[AA^\top].
\end{align*}Finally, we have for almost all $\bX$ and all $r\geq 4 \sqrt{2}$, with probability at least $1-\exp(-r)$,
\begin{equation}\label{eq:first_lower_bound}
  \norm{\Sigma^{\frac{1}{2}}(\hat\bbeta-\bbeta^*)}_2\geq \frac{1}{\sqrt{2}}\norm{\Sigma^{\frac{1}{2}}(\hat\bbeta(\bX\bbeta^*)-\bbeta^*)}_2 + \frac{\sigma_\xi}{\sqrt{2}} \sqrt{\frac{\Tr[\Sigma \varphi^2_t(\hat\Sigma)\hat\Sigma]}{N}} -  \sigma_\xi \norm{\Sigma^{1/2} \varphi_t(\hat\Sigma)\frac{\bX^\top}{\sqrt{N}} }_{op} \sqrt{\frac{2r}{N}}.
\end{equation}
In the next two sections, we obtain lower bounds on the three main terms appearing in the right hand side of \eqref{eq:first_lower_bound}.

\subsection{A lower bound for the bias term \texorpdfstring{$\norm{\Sigma^{\frac{1}{2}}(\hat\bbeta(\bX\bbeta^*)-\bbeta^*)}_2$}{bias}.}\label{sec:proof_lower_bound_bias}

As before, we decompose the feature space as $\bR^p= V_{J}\oplus^\perp V_{{J}^c}$ where $J=J_*$ is the optimal decomposition, so that the bias term can be decomposed as
\begin{align*}
    &\norm{\Sigma^{\frac{1}{2}}(\hat\bbeta(\bX\bbeta^*)-\bbeta^*)}_2^2 = \norm{\Sigma_{J}^{\frac{1}{2}}(\hat\bbeta(\bX\bbeta^*)-\bbeta^*)}_2^2 + \norm{\Sigma_{{J}^c}^{\frac{1}{2}}(\hat\bbeta(\bX\bbeta^*)-\bbeta^*)}_2^2.
\end{align*}

\paragraph{A lower bound for the bias term on $V_{J}$.} In Section~\ref{ssub:risk_decomposition_of_the_estimation_part_hat_bbeta_j}, we introduced $\tilde\bbeta = \varphi_t(\Sigma)\Sigma\bbeta^*$ and proved in \eqref{eq:approximation_term_esti_part} that
\begin{equation}\tag{\ref{eq:approximation_term_esti_part}}
    \norm{\Sigma_J^{1/2}(\tilde\bbeta_J - \bbeta^*_J)}_2= \norm{\Sigma_J^{1/2}\psi_t(\Sigma)\bbeta^*_J}_2
\end{equation} and in \eqref{eq:expectation_bias_plug_in_result_v0} that, for some absolute constant $C>0$, on $\Omega_t$,
\begin{equation}\tag{\ref{eq:expectation_bias_plug_in_result_v0}}
    \norm{\Sigma_J^{1/2}(\hat\bbeta_J(\bX\bbeta^*_J) - \tilde\bbeta_J)}_2 \leq C\square\left(\log(et)\norm{\Sigma_J^{1/2} \psi_t(\Sigma)\bbeta^*_J}_2 + t^{-1}\norm{\Sigma_J^{1/2}\varphi_t(\Sigma)\bbeta^*_J}_2\right).
\end{equation}We also recall
\begin{equation}\tag{\ref{eq:first_ineq_var}}
         \norm{\Sigma_J^{\frac{1}{2}}\hat\bbeta_J(\bX\bbeta_{J^c}^*)}_2 = \|\Sigma_J^{1/2}\varphi_t(\hat\Sigma)\hat\Sigma\bbeta_{J^c}^*\|_2\leq 16 C_1 \|\Sigma_{J^c}^{1/2}\bbeta_{J^c}^*\|_2.
     \end{equation}
By triangular inequality,
\begin{align*}
    \norm{\Sigma_J^{\frac{1}{2}}\left( \hat\bbeta_J(\bX\bbeta_J^*) - \bbeta_J^* \right)}_2 &\geq \norm{\Sigma_J^{\frac{1}{2}}(\tilde\bbeta_J - \bbeta_J^*)}_2 - \norm{\Sigma_J^{\frac{1}{2}}\left( \hat\bbeta_J(\bX\bbeta_J^*) - \tilde\bbeta_J \right)}_2\\
    &= \norm{\Sigma_J^{\frac{1}{2}}\psi_t(\Sigma)\bbeta_J^*}_2 - \norm{\Sigma_J^{\frac{1}{2}}\left( \hat\bbeta_J(\bX\bbeta_J^*) - \tilde\bbeta_J \right)}_2.
\end{align*}By \eqref{eq:expectation_bias_plug_in_result_v0},
\begin{align*}
    \norm{\Sigma_J^{\frac{1}{2}}\left( \hat\bbeta_J(\bX\bbeta_J^*) - \bbeta_J^* \right)}_2 &\geq \left(1-C\square\log(et)\right)\norm{\Sigma_J^{\frac{1}{2}}\psi_t(\Sigma)\bbeta_J^*}_2 - C\frac{\square}{t}\norm{\Sigma_J^{\frac{1}{2}}\varphi_t(\Sigma)\bbeta_J^*}_2.
\end{align*}
Next, it follows from Assumption~\ref{ass:filter_fct} that $\varphi_t(\Sigma)\preceq C_1\Sigma_t^{-1}$ and so $\norm{\Sigma_J^{1/2}\varphi_t(\Sigma)\bbeta^*_J}_2 \leq C_1\norm{\Sigma_J^{-1/2}\bbeta^*_J}_2$.  As a consequence, as long as $\square\log(e^2t)\lesssim 1$, the following lower bound holds on $\Omega_t$:
\begin{align*}
    \|\Sigma_{J}^{\frac{1}{2}}(\hat\bbeta(\bX\bbeta_J^*)-\bbeta_J^*)\|_2 &\geq \frac12 \|\Sigma_{J}^{\frac{1}{2}}\psi_t(\Sigma)\bbeta_J^*\|_2-\frac{CC_1\square }{t} \norm{\Sigma_J^{-1/2}\bbeta^*_J}_2.
\end{align*}Now, $\hat\bbeta_J(\bX\bbeta^*) - \bbeta_J^* = \hat\bbeta_J(\bX\bbeta_J^*)-\bbeta_J^* + \hat\bbeta_J(\bX\bbeta_{J^c}^*)$, hence by \eqref{eq:first_ineq_var}, together with the triangular inequality,
\begin{align}\label{eq:bias_lower_1_result}
    \begin{aligned}
        &\norm{\Sigma_J^{\frac{1}{2}}\left(\hat\bbeta_J(\bX\bbeta^*) - \bbeta_J^*\right)}_2 \geq \left( \frac{1}{2}\norm{\Sigma_J^{\frac{1}{2}}\psi_t(\Sigma)\bbeta_J^*}_2 - 16C_1\norm{\Sigma_{J^c}^{\frac{1}{2}}\bbeta_{J^c}^*}_2 - CC_1\frac{\square}{t}\norm{\Sigma_J^{-\frac{1}{2}}\bbeta_J^*}_2\right)_+ \\
        &\geq \left( \frac{1}{2}\norm{\Sigma_J^{\frac{1}{2}}\psi_t(\Sigma)\bbeta_J^*}_2 - 16C_1\norm{\Sigma_{J^c}^{\frac{1}{2}}\bbeta_{J^c}^*}_2 \right)_+ - CC_1\frac{\square}{t}\norm{\Sigma_J^{-\frac{1}{2}}\bbeta_J^*}_2,
    \end{aligned}
\end{align}where we have used the fact that $(r-s)_+\geq r_+-s$ for any $r\in\bR$ and $s\geq 0$.
\paragraph{A lower bound for the bias on $V_{{J}^c}$.} 
We recall
\begin{align}\tag{\ref{eq:result_upper_bias_1_in_V_Jc}}
        \norm{\Sigma_{J^c}^{\frac{1}{2}}\hat\bbeta_{J^c}(\bX\bbeta_J^*)}_2 = \norm{\Sigma_{J^c}^{1/2}\varphi_t(\hat\Sigma)\hat\Sigma \bbeta^*_J }_2 \lesssim \frac{\square}{t}\norm{\Sigma_J^{-\frac{1}{2}}\bbeta^*_J}_2,
\end{align}as well as 
\begin{equation}\tag{\ref{eq:noise_part_2}}
    \norm{\Sigma_{J^c}^{\frac{1}{2}}\hat\bbeta_{J^c}(\bX\bbeta_{J^c}^*)}_2 = \norm{\Sigma_{J^c}^{1/2}\varphi_t(\hat\Sigma)\hat\Sigma \bbeta^*_{J^c} }_2\leq C C_1 b \norm{\Sigma_{J^c}^{1/2}\bbeta_{J^c}^*}_2.
\end{equation}Take $b$ small enough such that $CC_1b\leq \frac{1}{4}$.
We have
    \begin{align*}
        &\norm{\Sigma_{{J}^c}^{\frac{1}{2}}(\hat\bbeta(\bX\bbeta^*)-\bbeta^*)}_2 \geq \norm{\Sigma_{{J}^c}^{\frac{1}{2}}\bbeta_{{J}^c}^*}_2 - \norm{\Sigma_{{J}^c}^{\frac{1}{2}}\hat\bbeta(\bX\bbeta^*)}_2
    \end{align*}and using that $\hat\bbeta(\bX\bbeta^*) = \hat\bbeta(\bX\bbeta^*_J) + \hat\bbeta(\bX\bbeta_{J^c}^*) = \varphi_t(\hat\Sigma)\hat\Sigma \bbeta^*_J + \varphi_t(\hat\Sigma)\hat\Sigma \bbeta^*_{J^c}$ we get
    \begin{equation*}
   \|\Sigma_{J^c}^{1/2}\hat\bbeta(\bX\bbeta^*)\|_2\leq  \norm{\Sigma_{J^c}^{1/2}\varphi_t(\hat\Sigma)\hat\Sigma \bbeta^*_J }_2 + \norm{\Sigma_{J^c}^{1/2}\varphi_t(\hat\Sigma)\hat\Sigma \bbeta^*_{J^c} }_2.
\end{equation*}Therefore,
\begin{align}\label{eq:bias_lower_2_result}
    \begin{aligned}
        \norm{\Sigma_{J^c}^{\frac{1}{2}}\left( \hat\bbeta_{J^c}(\bX\bbeta^*) - \bbeta_{J^c}^* \right)}_2 &\geq \norm{\Sigma_{J^c}^{\frac{1}{2}}\bbeta_{J^c}^*}_2 - \norm{\Sigma_{J^c}^{\frac{1}{2}}\hat\bbeta_{J^c}(\bX\bbeta_J^*)}_2 - \norm{\Sigma_{J^c}^{\frac{1}{2}}\hat\bbeta_{J^c}(\bX\bbeta_{J^c}^*)}_2\\
        &\geq \frac{1}{2}\norm{\Sigma_{J^c}^{\frac{1}{2}}\bbeta_{J^c}^*}_2 - C\frac{\square}{t}\|\Sigma_J^{-\frac{1}{2}}\bbeta_J^*\|_2.
    \end{aligned}
\end{align}

\paragraph{End of the proof of the lower bound for the bias term.} Since $V_J\perp V_{J^c}$, we obtain
\begin{align*}
    \norm{\Sigma^{\frac{1}{2}}(\hat\bbeta(\bX\bbeta^*) - \bbeta^*)}_2 \geq \frac{1}{\sqrt{2}}\norm{\Sigma_J^{\frac{1}{2}}(\hat\bbeta_J(\bX\bbeta^*) - \bbeta_J^*)}_2 + \frac{1}{\sqrt{2}}\norm{\Sigma_{J^c}^{\frac{1}{2}}(\hat\bbeta_{J^c}(\bX\bbeta^*) - \bbeta_{J^c}^*)}_2.
\end{align*} Combining \eqref{eq:bias_lower_1_result} and \eqref{eq:bias_lower_2_result}, there exists an absolute constant $C$ such that
\begin{align*}
    &\norm{\Sigma^{\frac{1}{2}}(\hat\bbeta(\bX\bbeta^*) - \bbeta^*)}_2\geq \frac{1}{\sqrt{2}}\left( \frac{1}{2}\norm{\Sigma_J^{\frac{1}{2}}\psi_t(\Sigma)\bbeta_J^*}_2 - 16 C_1\norm{\Sigma_{J^c}^{\frac{1}{2}}\bbeta_{J^c}^*}_2 \right)_+ + \frac{1}{2\sqrt{2}}\norm{\Sigma_{J^c}^{\frac{1}{2}}\bbeta_{J^c}^*}_2 - C\frac{\square}{t}\norm{\Sigma_J^{-\frac{1}{2}}\bbeta_J^*}_2.
\end{align*}Now we apply Lemma~\ref{lemma:positive_part} to
\begin{align*}
    x = \norm{\Sigma_{J^c}^{\frac{1}{2}}\bbeta_{J^c}^*}_2,\, y = \norm{\Sigma_J^{\frac{1}{2}}\psi_t(\Sigma)\bbeta_J^*}_2,\, A = \frac{1}{2\sqrt{2}},\, B = \frac{1}{2\sqrt{2}},\mbox{ and } D = \frac{16C_1}{\sqrt{2}},
\end{align*}then there holds
\begin{align*}
    &\frac{1}{\sqrt{2}}\left( \frac{1}{2}\norm{\Sigma_J^{\frac{1}{2}}\psi_t(\Sigma)\bbeta_J^*}_2 - 16 C_1\norm{\Sigma_{J^c}^{\frac{1}{2}}\bbeta_{J^c}^*}_2 \right)_+ + \frac{1}{2\sqrt{2}}\norm{\Sigma_{J^c}^{\frac{1}{2}}\bbeta_{J^c}^*}_2\\
    &\geq \frac{\sqrt{2}}{8+128C_1}\left( \norm{\Sigma_{J^c}^{\frac{1}{2}}\bbeta_{J^c}^*}_2 + \norm{\Sigma_J^{\frac{1}{2}}\psi_t(\Sigma)\bbeta_J^*}_2 \right).
\end{align*}
As a result,
\begin{align}\label{eq:LB_final_bias_term}
    \norm{\Sigma^{\frac{1}{2}}(\hat\bbeta(\bX\bbeta^*)-\bbeta^*)}_2 \geq \frac{\sqrt{2}}{8+128C_1}\|\Sigma_{J}^{\frac{1}{2}}\psi_t(\Sigma)\bbeta^*_J\|_2 + \frac{\sqrt{2}}{8+128C_1}\norm{\Sigma_{J^c}^{1/2}\bbeta_{J^c}^*}_2 - \frac{C \square}{t} \norm{\Sigma_J^{-\frac{1}{2}}\bbeta^*_J}_2.
\end{align}

\subsection{Lower bound for the conditional variance term \texorpdfstring{$\bE_\bxi\|\Sigma^{1/2}\hat\bbeta(\bxi)\|_2^2$}{variance}.}\label{sec:variance_hat_bbeta}

In this section, we obtain a lower bound on the conditional (with respect to $\bX$) variance of $\hat{\bbeta}$: $\bE_\bxi\|\Sigma^{1/2}\hat\bbeta(\bxi)\|_2^2$. It follows from Assumption~\ref{ass:filter_fct} that for all $t\geq 1$ and $x\in[0,8]$ , we have
\begin{align}\label{eq:equivalence_varphi}
   \varphi_t(x) \geq   \frac{\oc{c_filter_1}}{x+t^{-1}}:= \oc{c_filter_1}\varphi_t^{(\mathrm{Ridge})}(x)
\end{align}where we recall (see \eqref{eq:def_ridge}) that $\varphi_t^{(\mathrm{Ridge})}(x)=(x+t^{-1})^{-1}$ is the filter function of ridge regression with regularization parameter $t^{-1}$.

\begin{Lemma}\label{lemma:universality_variance}
    Grant Assumption~\ref{assumption:main_upper} and assume that $X$ has independent and centered coordinates with respect to $\{\ve_1,\cdots,\ve_p\}$.
    Let $\hat\bbeta$ be a spectral algorithm defined in Definition~\ref{def:spec_algo} with filter function $\varphi_t$ satisfying \eqref{eq:equivalence_varphi}. Then,  there exists absolute constants $c,\nc\label{c_variance}>0$ such that with probability at least $1-c\exp(-N/c)-\bP[\Omega_t^c]$,
    \begin{align*}
       \sigma_\xi^2 \frac{\Tr[\Sigma \varphi^2_t(\hat\Sigma)\hat\Sigma]}{N} =  \bE_\bxi\|\Sigma^{1/2}\hat\bbeta(\bxi)\|_2^2 \geq \oc{c_variance} \oc{c_filter_1} \sigma_\xi^2  \bigg( \frac{|J|}{N} + t^2 \frac{\Tr(\Sigma_{{J}^c}^2)}{N}\bigg).
    \end{align*}
\end{Lemma}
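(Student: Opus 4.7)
The plan is to first reduce to a deterministic bound on $\Omega_t$ via the pointwise filter lower bound, and then separately analyze the $V_J$ and $V_{J^c}$ contributions; the $V_{J^c}$ part crucially relies on the independence of the coordinates of $X$. On $\Omega_t$, Lemma~\ref{lem:largest_sing_Hat_Sigma} localizes the spectrum of $\hat\Sigma$ in $[0,8]$, so \eqref{eq:equivalence_varphi} lifts, via spectral calculus, to the operator inequality $\varphi_t^2(\hat\Sigma)\hat\Sigma \succeq \oc{c_filter_1}^2\,\hat\Sigma\hat\Sigma_t^{-2}$, where $\hat\Sigma_t := \hat\Sigma + t^{-1}I_p$. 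It therefore suffices to lower bound
\[
\Tr[\Sigma\,\hat\Sigma\hat\Sigma_t^{-2}] \;=\; \Tr[\Sigma_J\,\hat\Sigma\hat\Sigma_t^{-2}] + \Tr[\Sigma_{J^c}\,\hat\Sigma\hat\Sigma_t^{-2}]
\]
by a constant multiple of $|J| + t^2\Tr(\Sigma_{J^c}^2)$.

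For the $\Sigma_J$ piece, I would use the identity $\hat\Sigma\hat\Sigma_t^{-2} = \hat\Sigma_t^{-1} - t^{-1}\hat\Sigma_t^{-2}$ together with the isomorphic bounds $(1-\square)\Sigma_t \preceq \hat\Sigma_t \preceq (1+\square)\Sigma_t$ valid on $\Omega_t$, yielding
\[
\Tr[\Sigma_J\hat\Sigma\hat\Sigma_t^{-2}] \;\geq\; (1+\square)^{-1}\Tr[\Sigma_J\Sigma_t^{-1}] - (1-\square)^{-2}\,t^{-1}\Tr[\Sigma_J\Sigma_t^{-2}].
\]
Keeping the exact cancellation $\Tr[\Sigma_J\Sigma_t^{-1}] - t^{-1}\Tr[\Sigma_J\Sigma_t^{-2}] = \sum_{j\in J}\sigma_j^2/(\sigma_j+t^{-1})^2$ visible, and using that $\sigma_j \geq bt^{-1}$ for $j\in J$ implies $\sigma_j^2/(\sigma_j+t^{-1})^2 \geq b^2/(1+b)^2$, one rewrites the right-hand side as $(1+\square)^{-1}\sum_{j\in J}\sigma_j^2/(\sigma_j+t^{-1})^2$ plus an $O(\square)\cdot |J|$ correction. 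Provided $\square$ is smaller than a fixed constant multiple of $b^2$, this gives a lower bound of order $|J|$.

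For the $\Sigma_{J^c}$ piece the analogous argument is too lossy because the two terms $\hat\Sigma_t^{-1}$ and $t^{-1}\hat\Sigma_t^{-2}$ nearly cancel on $V_{J^c}$. Instead I would exploit the scalar inequality $g(x) := x/(x+t^{-1})^2 \geq t^2 x/4$ on $[0, t^{-1}]$, which lifts, via the spectral calculus applied to $\hat\Sigma$, to $g(\hat\Sigma) \succeq (t^2/4)\,\hat\Sigma\,\hat P$, where $\hat P$ is the spectral projection of $\hat\Sigma$ onto $[0, t^{-1}]$. Consequently
\[
\Tr[\Sigma_{J^c}\,\hat\Sigma\hat\Sigma_t^{-2}] \;\geq\; \frac{t^2}{4}\bigl(\Tr[\Sigma_{J^c}\hat\Sigma] - \Tr[\Sigma_{J^c}\hat\Sigma(I_p - \hat P)]\bigr).
\]
The leading term $\Tr[\Sigma_{J^c}\hat\Sigma] = N^{-1}\sum_i \|\Sigma_{J^c}^{1/2}X_i\|_2^2$ is a sum of $N$ i.i.d.\ sub-exponential variables with common mean $\bE\|\Sigma_{J^c}^{1/2}X\|_2^2 = \Tr(\Sigma_{J^c}^2)$; because the coordinates of $X$ are independent and sub-Gaussian (Assumption~\ref{assumption:main_upper}), a Hanson--Wright / Bernstein inequality gives $\Tr[\Sigma_{J^c}\hat\Sigma] \geq \Tr(\Sigma_{J^c}^2)/2$ with probability at least $1 - e^{-cN}$. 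For the correction, Weyl's inequality together with the isomorphy $\hat\sigma_k \leq (1+\square)\sigma_k + \square t^{-1}$ shows that $\hat\sigma_k \leq t^{-1}$ for every $k > |J|$ once $b,\square$ are small enough, so $\mathrm{rank}(I_p - \hat P) \leq |J|$, and the correction is then controlled via $\|\Sigma_{J^c}\|_{\op} \leq bt^{-1}$ and $\|\hat\Sigma\|_{\op} \leq 8$.

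The main obstacle is making this correction genuinely dominated by $\Tr(\Sigma_{J^c}^2)$: the naive operator-norm bound only gives $\Tr[\Sigma_{J^c}\hat\Sigma(I_p - \hat P)] \lesssim bt^{-1}|J|$, which after the $t^2/4$ factor is comparable to $bt\,|J|$ and can swamp the $|J|$ coming from the $\Sigma_J$ piece. I would resolve this either by a Davis--Kahan-type $\sin\theta$ bound---exploiting the sub-Gaussian independent-coordinate structure of $\hat\Sigma - \Sigma$ to show that the top eigenvectors of $\hat\Sigma$ are almost supported in $V_J$, so that $P_{J^c}(I_p - \hat P)P_{J^c}$ has trace of strictly smaller order than $|J|$---or by absorbing the resulting $bt\,|J|$ contribution into the $|J|$ from the $\Sigma_J$ analysis after rescaling constants, so that the final bound still has the stated form $\oc{c_variance}\oc{c_filter_1}(|J|/N + t^2\Tr(\Sigma_{J^c}^2)/N)$.
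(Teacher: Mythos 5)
Your first step coincides with the paper's: on $\Omega_t$ the spectrum of $\hat\Sigma$ lies in $[0,8]$ (Lemma~\ref{lem:largest_sing_Hat_Sigma}), so \eqref{eq:equivalence_varphi} reduces the problem to lower bounding the ridge variance functional $\Tr[\Sigma\hat\Sigma\hat\Sigma_t^{-2}]/N$ (the paper gets there via the SVD of $\bX/\sqrt N$, which also yields the stated identity $\sigma_\xi^2\Tr[\Sigma\varphi_t^2(\hat\Sigma)\hat\Sigma]/N=\bE_\bxi\|\Sigma^{1/2}\hat\bbeta(\bxi)\|_2^2$). But at that point the paper simply invokes \cite[Lemma~7 and Theorem~2]{tsigler_benign_2023} — which is precisely where the independent-coordinates hypothesis does its work — together with $\Tr(\Sigma_{J^c})\lesssim t^{-1}N$ from the sample-complexity assumption, whereas you attempt a self-contained proof of that lower bound, and your sketch has two genuine gaps.

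First, on the $V_J$ piece: from $\hat\Sigma_t\succeq(1-\square)\Sigma_t$ you deduce $\Tr[\Sigma_J\hat\Sigma_t^{-2}]\leq(1-\square)^{-2}\Tr[\Sigma_J\Sigma_t^{-2}]$, which requires $\hat\Sigma_t^{-2}\preceq(1-\square)^{-2}\Sigma_t^{-2}$; since $x\mapsto x^2$ (hence $x\mapsto x^{-2}$) is not operator monotone, this does not follow, and because the two traces you subtract are of the same order the step cannot be repaired by crude norm bounds ($\|\hat\Sigma_t^{-1}\Sigma_t^{1/2}\|_{\op}$ is only $O(\sqrt t)$, not $O(1)$). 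Second, and more seriously, on the $V_{J^c}$ piece your correction term is only controlled by $\tfrac{t^2}{4}\Tr[\Sigma_{J^c}\hat\Sigma(I_p-\hat P)]\lesssim bt\,|J|$, and your fallback of ``absorbing'' it into the $|J|$ from the $V_J$ piece is impossible: $t\geq1$ is unbounded, so $bt|J|$ can dominate $|J|$ (and also $t^2\Tr(\Sigma_{J^c}^2)$, e.g.\ when $\Tr(\Sigma_{J^c}^2)\sim t^{-1}|J|$) by an arbitrary factor. The Davis--Kahan alternative is not available either, since the paper assumes no spectral gap at $k^*$; what is actually needed is a quantitative statement that the top $k^*$ empirical eigendirections do not capture a constant fraction of $\Tr[\Sigma_{J^c}\hat\Sigma]$, and that is exactly the content of the cited lemmas of \cite{tsigler_benign_2023}, proved there using independence of the coordinates. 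Tellingly, your argument uses independence only for the concentration of $\Tr[\Sigma_{J^c}\hat\Sigma]$ (which already follows from sub-Gaussianity via Lemma~\ref{lem:sum_norm_square_psi_2}), so the hypothesis that makes the lower bound true is never genuinely exploited. As written, the proposal does not establish the lemma; either import the ridge variance lower bound as the paper does, or supply a full replacement for it.
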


\begin{proof} 

Let $\sum_{j=1}^p\hat\sigma_i^{\frac{1}{2}}\hat\vu_i\otimes\hat\ve_i$ be the singular value decomposition of $\frac{1}{\sqrt{N}}\bX$, where $\hat\sigma_j=0$ if $j>N$, $\{\hat\vu_i\}_{i=1}^N$ is an orthonormal basis of $\bR^N$ and $\{\hat\ve_j\}_{j=1}^p$ is an orthonormal basis of $\bR^p$. It follows from  \eqref{eq:def_hat_bbeta} that
\begin{align*}
    \hat\bbeta(\bxi) = \frac{1}{N}\varphi(\hat\Sigma)\bX^\top\bxi = \frac{1}{\sqrt{N}}\varphi_t(\hat\Sigma) \sum_{i=1}^N \hat\ve_i\sqrt{\hat\sigma_i}\langle\hat\vu_i,\bxi\rangle = \frac{1}{\sqrt{N}}\sum_{i=1}^N \sqrt{\hat\sigma_i}\varphi_t(\hat\sigma_i)\langle\hat\vu_i,\bxi\rangle\hat\ve_i
\end{align*}and by taking $\|\Sigma^{1/2}\cdot\|_2^2$, we obtain
\begin{align*}
    & \norm{\Sigma^{1/2}\hat\bbeta(\bxi)}_2^2 = \frac{1}{N}\norm{\sum_{i=1}^N\sqrt{\hat\sigma_i}\varphi_t(\hat\sigma_i)\langle\hat\vu_i,\bxi\rangle\Sigma^{1/2}\hat\ve_i}_2^2 = \frac{1}{N}\sum_{i,j=1}^N \sqrt{\hat\sigma_i\hat\sigma_j}\varphi_t(\hat\sigma_i)\varphi_t(\hat\sigma_j)\langle\hat\vu_i,\bxi\rangle\langle\hat\vu_j,\bxi\rangle\langle\Sigma^{1/2}\hat\ve_i,\Sigma^{1/2}\hat\ve_j\rangle.
\end{align*}Taking expectation with respect to $\bxi$ and using that $\bE_\bxi[\langle\hat\vu_i,\bxi\rangle\langle\hat\vu_j,\bxi\rangle] = \sigma_\xi^2 \inr{\hat\vu_i,\hat\vu_j}=\sigma_\xi^2\1_{\{i=j\}}$, we obtain that for almost all $\bX$,
\begin{align*}
    \bE_\bxi\norm{\Sigma^{1/2}\hat\bbeta(\bxi)}_2^2 = \frac{\sigma_\xi^2}{N}\sum_{i=1}^N \hat\sigma_i\varphi_t^2(\hat\sigma_i)\norm{\Sigma^{1/2}\hat\ve_i}_2^2.
\end{align*}The latter result is actually true for any filter function. By applying it to the filter function from ridge regression and using \eqref{eq:equivalence_varphi}, we have on the event $\Omega_t$ (where we know, thanks to Lemma~\ref{lem:largest_sing_Hat_Sigma}, that the spectrum of $\hat\Sigma$ is in $[0,8]$ because $\sigma_1, t^{-1}\leq1$) that
\begin{align*}
    \bE_\bxi\norm{\Sigma^{1/2}\hat\bbeta(\bxi)}_2^2  \geq \oc{c_filter_1}^2\frac{\sigma_\xi^2}{N}\sum_{i=1}^N\hat\sigma_i\big(\varphi_t^{(\mathrm{Ridge})}(\hat\sigma_i)\big)^2\|\Sigma^{1/2}\hat\ve_i\|_2^2 = \oc{c_filter_1}^2\bE_\bxi\norm{\Sigma^{1/2}\hat\bbeta^{(Ridge)}(\bxi)}_2^2. 
\end{align*}

Finally, by \cite[Lemma~7 and Theorem~2]{tsigler_benign_2023}, there exists an absolute constant $0<\nc\label{c_lower_variance}<1$ such that with probability at least $1-c\exp(-N/c)$,
\begin{align*}
    & \bE_\bxi\norm{\Sigma^{1/2}\hat\bbeta^{(\mathrm{Ridge})}(\bxi)}_2^2 \geq \oc{c_lower_variance}\sigma_\xi^2 \bigg(\frac{|J|}{N} + \frac{N\Tr(\Sigma_{{J}^c}^2)}{(Nt^{-1} + \Tr(\Sigma_{{J}^c}))^2} \bigg).
\end{align*}Lemma~\ref{lemma:universality_variance} then follows since $\Tr(\Sigma_{{J}^c})\lesssim \square t^{-1}N\lesssim t^{-1}N$ thanks to the sampling complexity assumption (see the discussion below \eqref{eq:sample_complexity_ass_imply}).
\end{proof}

\subsection{An upper bound for the weak variance term and the conclusion. } 
\label{sub:an_upper_bound_for_the_weak_variance_term_}
In this section, we provide a high probability upper bound on the weak variance term coming from Borell's inequality in \eqref{eq:first_lower_bound} i.e. $ \sigma_\xi \norm{\Sigma^{1/2} \varphi_t(\hat\Sigma)(\bX^\top/ \sqrt{N})}_{op}$. It follows from \eqref{eq:compare_Sigma_t_and_Sigma_J_Jc} and Lemma~\ref{lemma:operator_norm_Sigma_J} that, on the event $\Omega_t$, we have
\begin{equation}\label{eq:LB_weak_var_term}
  \norm{\Sigma^{1/2} \varphi_t(\hat\Sigma)(\bX^\top/ \sqrt{N})}_{op} \leq \norm{\Sigma^{1/2} \Sigma_t^{-1/2}}_{\op}  \norm{\Sigma_t^{1/2} \hat \Sigma_t^{-1/2}}_{\op} \norm{\hat \Sigma_t\varphi_t(\hat\Sigma)^{2}\hat \Sigma}^{1/2}_{op}\lesssim 1 
\end{equation}where we used Assumption~\ref{ass:filter_fct} to get $\hat \Sigma_t\varphi_t(\hat\Sigma)^2\hat \Sigma\preceq C_1 \hat \Sigma_t\hat\Sigma_t^{-2}\hat \Sigma\preceq C_1 I_p$.

Finally, plugging \eqref{eq:LB_weak_var_term} and \eqref{eq:LB_final_bias_term} together with Lemma~\ref{lemma:universality_variance} in \eqref{eq:first_lower_bound}, we get that for all $r\geq 4 \sqrt{2}$, with probability at least $1-\exp(-r) - c\exp(-N/c)-\bP[\Omega_t^c]$, 
\begin{align*}
    \norm{\Sigma^{1/2}(\hat \bbeta-\bbeta^*)}_2 & \geq \left( \|\Sigma_{J}^{\frac{1}{2}}\psi_t(\Sigma)\bbeta^*_J\|_2 + \frac{1}{2}\norm{\Sigma_{J^c}^{1/2}\bbeta_{J^c}^*}_2 - \frac{C \square}{t} \norm{\Sigma_J^{-\frac{1}{2}}\bbeta^*_J}_2\right) +\oc{c_variance} \bigg(\sigma_\xi\sqrt{\frac{|J|}{N}} + \sigma_\xi t \sqrt{\frac{\Tr(\Sigma_{{J}^c}^2)}{N}}\bigg) - c_0\sigma_\bxi\sqrt{\frac{r}{N}}\\
    &\geq c r(V_J, V_{J^c}) - \frac{C \square}{t} \norm{\Sigma_J^{-\frac{1}{2}}\bbeta^*_J}_2 - c_0\sigma_\bxi\sqrt{\frac{ r}{N}}
\end{align*} as long as $b\lesssim1$. Finally, the result follows by taking $r\sim k^*$ in the inequality above.



\section{Proof of Theorem~\ref{theo:main_RKHS}}\label{sec:proof_main_RKHS}

\subsection{Upper bound of Theorem~\ref{theo:main_RKHS}}
The proof of the upper bound is almost identical to that in the sub-Gaussian setting.  We divide the proof into an estimation error and an approximation error. For clarity of exposition, we only point out the differences from the proof of Theorem~\ref{theo:main} in Section~\ref{sec:proof_main}. Here, for notational simplicity, we similarly write $J = J_*$ and $J^c=J_*^c$. Since, when $f^*$ does not necessarily belong to $\cH$, for instance when $f^*\in\overline{\cH}\backslash\cH$, the quantity $\bX f^*$ is not well-defined, we use $\boldsymbol{f^*}$ in place of $\bX f^*$ in the original proof, where $\boldsymbol{f^*} = (f^*(X_i))_{i=1}^N\in\bR^N$. Similarly, let $\vf_J^* = (f_J^*(X_i))_{i=1}^N$ and $\vf_{J^c}^* = (f_{J^c}^*(X_i))_{i=1}^N$.

If we expand $f^*$ with respect to $\{f_j\}_{j=1}^\infty$ as $f^* = \sum_{j\geq 1}a_j^* f_j$, where $a_j^* = \langle f^*,f_j\rangle_{L^2(\mu)}$, then we have $f_J^* = P_Jf^* = \sum_{j\in J}a_j^* f_j$, and $f_{J^c}^* = \sum_{j\in J^c}a_j^* f_j$. Note here that $V_J$ is a finite-dimensional Hilbert space, and therefore $f_J^*$ admits an $\cH$-lift, namely $h_J^* = \sum_{j\in J}\frac{a_j^*}{\sqrt{\sigma_j}}\ve_j \in\cH$, which satisfies $S h_J^* = f_J^*$. Here we recall that $S:g\in \cH\mapsto g\in L^2(\mu)$ satisfies $S\ve_j = \sqrt{\sigma_j}f_j$. On the other hand, $V_{J^c}$ is an infinite-dimensional Hilbert space, and therefore $f_{J^c}^*$ does not necessarily admit a lift. Hence, in the original proof of Theorem~\ref{theo:main} in Section~\ref{sec:proof_main}, every occurrence involving $\bX\bbeta_{J^c}^*$ is replaced by $\vf_{J^c}^*$, whereas every occurrence involving $\bbeta_J^*$ is replaced by $h_J^*$. For instance, $\tilde\bbeta_J = \varphi_t(\Sigma)\Sigma \bbeta_J^*$ in the original proof is now replaced, in the proof for kernel regression, by $\tilde h_J = \varphi_t(\Sigma)\Sigma h_J^* \in \cH$. Then we still have $h_J^* - \tilde h_J = \psi_t(\Sigma)h_J^*$, and $\| S \tilde h_J - f_J^* \|_{L^2(\mu)} = \|\Sigma_J^{\frac{1}{2}}\psi_t(\Sigma)h_J^*\|_\cH = \|\psi_t(T_K)f_J^*\|_{L^2(\mu)}$.

\subsubsection{Estimation error}
For the stochastic argument, we list the differences in the following.
\begin{enumerate}
    \item For the noise, we replace the Borel--TIS inequality for $\bxi$ by the Hanson--Wright inequality, c.f., \cite[Section 6.2]{vershynin_high-dimensional_2018} in the bounded case. In the Gaussian case, we continue using the Borel-TIS inequality.
    
    \item For the random event $\Omega_t$, we replace the proof of Lemma~\ref{lemma:IP} by the classical proof based on a change-of-norm argument; see, for instance, \cite[Lemma 5]{huckerNotePredictionError2023}. Since this step is standard, we do not repeat its proof here. Therefore, in the following proof, we may always continue to work on the set $\Omega_t$. 

    \item For $\|\vf_{J^c}^*\|_2\leq C\sqrt{N}\|f_{J^c}^*\|_{L^2(\mu)}$, we use Markov's inequality, so that $\bP(\|\vf_{J^c}^*\|_2\leq C\sqrt{N}\|f_{J^c}^*\|_{L^2(\mu)})\geq \frac{9999}{10000}$.

    \item We replace the ``upper side of Dvoretzky-Milman theorem'', namely \eqref{eq:noise_part_DM} by \eqref{eq:alternative_upper_DM} below.

\end{enumerate}
We summarize the stochastic argument as below.
\begin{Proposition}\label{prop:stochastic_argument_upper_theo_RKHS}
    Under the assumptions of Theorem~\ref{theo:main_RKHS}, there exists an absolute constant $C>1$ such that with probability at least $\frac{9998}{10000}$, the following random event holds.
    \begin{align*}
        \Omega_{\mathrm{upper}} = \left\{ \|\vf_{J^c}^*\|_2\leq C\sqrt{N}\|f_{J^c}^*\|_{L^2(\mu)}\right\} \cap \Omega_t.
    \end{align*}
\end{Proposition}
Repeating the proof of Theorem~\ref{theo:main}, we obtain
\begin{Proposition}\label{prop:upper_RKHS}
    Grant the assumption of Theorem~\ref{theo:main_RKHS}. There exists an absolute constant $C$ such that with probability at least $\frac{9997}{10000}$, there holds $\|\hat f_N(\vf^*+\bxi) - f^*\|_{L^2(\mu)}\leq Cr(V_{J_*},V_{J_*^c}) + C\frac{\square}{t}\|T_K^{-1}f_{J_*}^*\|_{L^2(\mu)}$.
\end{Proposition}
In the proof of Proposition~\ref{prop:upper_RKHS}, there are several differences from the proof of Theorem~\ref{theo:main}.
\begin{enumerate}
    \item Upper bound for $\|\Sigma_{J^c}^{\frac{1}{2}}\varphi_t(\hat\Sigma)\frac{1}{N}\bX^\top\vf_{J^c}^*\|_\cH$.

    On $\Omega_t$, $\hat\Sigma\preceq \Sigma + \square\Sigma_t$. For any $f\in\cH$, there holds $\frac{1}{N}\|\bX\Sigma_{J^c}^{\frac{1}{2}}f\|_2^2 = \|\hat\Sigma^{\frac{1}{2}}\Sigma_{J^c}^{\frac{1}{2}}f\|_\cH^2\leq \|\Sigma^{\frac{1}{2}}\Sigma_{J^c}^{\frac{1}{2}}f\|_\cH^2 + \square\|\Sigma_t^{\frac{1}{2}}\Sigma_{J^c}^{\frac{1}{2}}f\|_\cH^2 = (1+\square)\|\Sigma_{J^c}f\|_\cH^2 + \square t^{-1}\|\Sigma_{J^c}f\|_\cH^2$. Therefore,
    \begin{align*}
        \frac{1}{\sqrt{N}}\norm{\Sigma_{J^c}^{\frac{1}{2}}\bX^\top}_{\op} = \frac{1}{\sqrt{N}}\norm{\bX\Sigma_{J^c}^{\frac{1}{2}}}_{\op} \leq \left((1+\square)\|\Sigma_{J^c}\|_{\op}^2 + \square t^{-1}\|\Sigma_{J^c}\|_{\op}\right)^{\frac{1}{2}}.
    \end{align*}Since $\square\lesssim 1$ and $t^{-1}\geq \frac{1}{b}\|\Sigma_{J^c}\|_{\op}$, there exists some absolute constant $C$ depending only on $b$ such that
    \begin{align}\label{eq:alternative_upper_DM}
        \frac{1}{\sqrt{N}}\norm{\Sigma_{J^c}^{\frac{1}{2}}\bX^\top}_{\op}\leq Ct^{-1}.
    \end{align}
    Recall that $\varphi_t(\hat\Sigma)\frac{1}{N}\bX^\top = \frac{1}{N}\bX^\top\varphi_t(\frac{1}{N}\bX\bX^\top)$, we have
    \begin{align*}
        &\|\Sigma_{J^c}^{\frac{1}{2}}\varphi_t(\hat\Sigma)\frac{1}{N}\bX^\top\vf_{J^c}^*\|_\cH \leq \frac{1}{N}\|\Sigma_{J^c}^{\frac{1}{2}}\bX^\top\|_{\op}\|\varphi_t(\frac{1}{N}\bX\bX^\top)\|_{\op}\|\vf_{J^c}^*\|_2 \leq Ct^{-1}\|\varphi_t(\frac{1}{N}\bX\bX^\top)\|_{\op}\frac{1}{\sqrt{N}}\|\vf_{J^c}^*\|_2.
    \end{align*}Now, by Assumption~\ref{ass:filter_fct}, together with $\Omega_{\mathrm{upper}}$, $\|\varphi_t(\frac{1}{N}\bX\bX^\top)\|_{\op}\leq\oC{C_filter_1} t$ and $\|\vf_{J^c}^*\|_2\lesssim \sqrt{N}\|f_{J^c}^*\|_{L^2(\mu)}$. In summary, we still have $\|\Sigma_{J^c}^{\frac{1}{2}}\varphi_t(\hat\Sigma)\frac{1}{N}\bX^\top\vf_{J^c}^*\|_\cH\lesssim \|f_{J^c}^*\|_{L^2(\mu)}$.

    \item Upper bound for $\|\Sigma_{J^c}^{\frac{1}{2}}\varphi_t(\hat\Sigma)[N^{-1}\bX^\top]\bxi\|_\cH$.

    Applying \eqref{eq:alternative_upper_DM} to the original analysis, we still obtain $\|\Sigma_{J^c}^{\frac{1}{2}}\varphi_t(\hat\Sigma)[N^{-1}\bX^\top]\bxi\|_\cH\lesssim \sigma_\xi t\sqrt{\frac{\Tr(\Sigma_{J^c}^2)}{N}} + \sigma_\xi\sqrt{\frac{|J|}{N}}$.
\end{enumerate}

\subsubsection{Approximation error}
For the deterministic argument, by the definition of $\hat f_N$,
\begin{align*}
    \hat f_N - f^\circ = \bigg[ \bigg( \big( \hat f_N(\vf^*) - f^*\big) + \frac{1}{N}\varphi_t(\hat\Sigma)\bX^\top\bxi \bigg) + \frac{1}{N}\varphi_t(\hat\Sigma)\bX^\top(\vf^\circ - \vf^*) \bigg] + (f^* - f^\circ),
\end{align*}where $\vf^\circ - \vf^* = (f^\circ(X_i)-f^*(X_i))_{i=1}^N$.
Note that, since $f^\circ-f^*\perp\overline{\cH}$ in $L^2(\mu)$ inner product, taking $L^2(\mu)$ norm on both sides gives
\begin{align}\label{eq:decomposition_misspecified}
    \|\hat f_N - f^\circ\|_{L^2(\mu)}^2 &= \| f^\circ - f^*\|_{L^2(\mu)}^2 + \norm{\bigg( \big( \hat f_N(\vf^*) - f^*\big) + \frac{1}{N}\varphi_t(\hat\Sigma)\bX^\top\bxi \bigg) + \frac{1}{N}\varphi_t(\hat\Sigma)\bX^\top(\vf^\circ - \vf^*)}_{L^2(\mu)}^2.
\end{align}By triangular inequality,
\begin{align*}
    &\norm{\bigg( \big( \hat f_N(\vf^*) - f^*\big) + \frac{1}{N}\varphi_t(\hat\Sigma)\bX^\top\bxi \bigg) + \frac{1}{N}\varphi_t(\hat\Sigma)\bX^\top(\vf^\circ - \vf^*)}_{L^2(\mu)}^2\leq 2\norm{\hat f_N(\vf^*) - f^* + \frac{1}{N}\varphi_t(\hat\Sigma)\bX^\top\bxi}_{L^2(\mu)}^2\\
    &+ 2\norm{\frac{1}{N}\varphi_t(\hat\Sigma)\bX^\top(\vf^\circ - \vf^*)}_{L^2(\mu)}^2 = 2\norm{\hat f_N(\vf^*+\bxi) - f^*}_{L^2(\mu)}^2 + 2\norm{\frac{1}{N}\varphi_t(\hat\Sigma)\bX^\top(\vf^\circ - \vf^*)}_{L^2(\mu)}^2,
\end{align*}where we have used the definition $\hat f_N:\vy\in\bR^N\mapsto \frac{1}{N}\varphi_t(\hat\Sigma)\bX^\top\vy$. By repeating the proof in Section~\ref{sec:proof_main}, one immediately obtains a high-probability upper bound for $2\norm{\hat f_N(\vf^*+\bxi) - f^*}_{L^2(\mu)}^2$. In the following, we only study a high-probability upper bound for $2\norm{\frac{1}{N}\varphi_t(\hat\Sigma)\bX^\top(\vf^\circ - \vf^*)}_{L^2(\mu)}^2$. Since $\varphi_t(\hat\Sigma)\bX^\top(\vf^\circ - \vf^*)\in\cH$, we have
\begin{align*}
    &2\norm{\frac{1}{N}\varphi_t(\hat\Sigma)\bX^\top(\vf^\circ - \vf^*)}_{L^2(\mu)}^2 = \frac{2}{N^2}\norm{\Sigma^{1/2}\varphi_t(\hat\Sigma)\bX^\top(\vf^\circ - \vf^*)}_\cH^2\leq \frac{2}{N^2}\|\Sigma^{\frac{1}{2}}\Sigma_t^{-\frac{1}{2}}\|_{\op}^2\norm{\Sigma_t^{\frac{1}{2}}\varphi_t(\hat\Sigma)\bX^\top(\vf^\circ - \vf^*)}_\cH^2\\
    &\leq \frac{2}{N^2}\|\Sigma^{\frac{1}{2}}\Sigma_t^{-\frac{1}{2}}\|_{\op}^2\norm{\Sigma_t^{\frac{1}{2}}\varphi_t(\hat\Sigma)\Sigma_t^{\frac{1}{2}}}_{\op}^2\norm{\Sigma_t^{-\frac{1}{2}}\bX^\top}_{\op}^2\norm{\vf^\circ - \vf^*}_2^2.
\end{align*}By \eqref{eq:IP_varphi_t_hat_Sigma}, \eqref{eq:upper_operator_norm_Sigma_t_inverse_X_transpose}, and $\Sigma \preceq \Sigma_t$, on $\Omega_t$, there holds
\begin{align*}
    2\norm{\frac{1}{N}\varphi_t(\hat\Sigma)\bX^\top(\vf^\circ - \vf^*)}_{L^2(\mu)}^2\leq \frac{16\oC{C_filter_1}}{N}\sum_{i=1}^N\left( f^\circ(X_i) - f^*(X_i) \right)^2.
\end{align*}By Markov's inequality, there exists an absolute constant $C^2$ such that
\begin{align}\label{eq:upper_approximation_error_empirical}
    \bP\left(2\|\frac{1}{N}\varphi_t(\hat\Sigma)\bX^\top(\vf^\circ - \vf^*)\|_{L^2(\mu)}^2\leq C^2\| f^\circ - f^* \|_{L^2(\mu)}^2\right)\geq \frac{9999}{10000}.
\end{align}
In conclusion, the proof of the upper bound is complete.

\subsection{Lower bound of Theorem~\ref{theo:main_RKHS}}

We use the following lemma.
\begin{Lemma}\label{lemma:comparision}
    Let $\vu, \vm$ be vectors in a normed space and $A \ge 0$. Suppose $\|\vu\| \ge \rho$ and $\|\vm\| \le K A$ for some $\rho, K > 0$. Let $E := (\|\vu+\vm\|^2 + A^2)^{1/2}$. Then $E \ge \frac{\rho + A}{K + 2}$.
\end{Lemma}
\beginproof (of Lemma~\ref{lemma:comparision})
To prove this lemma, we note that $A \le E$ is obvious. By the triangle inequality,
\begin{align*}
\rho \le \|\vu\| \le \|\vu+\vm\| + \|\vm\| \le E + K A \le E + K E = (K + 1)E.
\end{align*}
Adding the two inequalities yields $\rho + A \le (K + 1)E + E = (K + 2)E$, which implies $E \ge \frac{\rho + A}{K + 2}$.
\endproof
Let $\vu = \big( \hat f_N(\vf^*) - f^*\big) + \frac{1}{N}\varphi_t(\hat\Sigma)\bX^\top\bxi$, $\vm = \frac{1}{N}\varphi_t(\hat\Sigma)\bX^\top(\vf^\circ - \vf^*)$, let $\|\cdot\| = \|\cdot\|_{L^2(\mu)}$, let $K = C$, let $\rho = \|\vu\|_{L^2(\mu)}$, and let $A = \| f^\circ - f^*\|_{L^2(\mu)}$. Then combining Lemma~\ref{lemma:comparision}, \eqref{eq:decomposition_misspecified} and \eqref{eq:upper_approximation_error_empirical} gives that with probability at least $\frac{9999}{10000}$,
\begin{align}\label{eq:lower_estimation_error_bounded_0}
    \norm{\hat f_N - f^\circ}_{L^2(\mu)} \geq c\left( \norm{f^\circ - f^*}_{L^2(\mu)} + \norm{\hat f_N(\vf^*+\bxi) - f^*}_{L^2(\mu)} \right).
\end{align}
Moreover, in Section~\ref{sec:optimal_FSD}, we proved a lower bound for the term $\|\hat f_N(\vf^*+\bxi) - f^* \|_{L^2(\mu)}$ but in the case where $X$ has independent coordinates. In the following, we derive a similar lower bound but without this assumption. 

Let $\psi:\bR^N\ni\vx\mapsto  \|\hat{f}_N(\vf^* + \vx) - f^*\|_{L^2(\mu)}$. Since this function is the composition of a norm and an affine mapping, $\psi$ is clearly convex. When $\bxi$ is Gaussian, the proof is the same as the linear regression case (we use Borel-TIS inequality). When $\boldsymbol{\xi}$ has independent and identically distributed coordinates and $\|\xi\|_{L^\infty} < \infty$, we apply Talagrand's concentration inequality for convex-Lipschitz functions of independent and bounded variables (c.f., \cite[Theorem 7.12]{boucheron_concentration_2013}): there exists an absolute constant $c > 0$, such that for any $t > 0$,
\begin{align*}
\mathbb{P}_{\boldsymbol{\xi}}\left(|\psi(\boldsymbol{\xi}) - \mathbb{E}_{\boldsymbol{\xi}}\psi(\boldsymbol{\xi})| \ge t\right) \le 2 \exp\left(-\frac{c t^2}{\|\xi\|_{L^\infty}^2 \|\psi\|_{Lip}^2}\right).
\end{align*}
By integrating the tail probability above, we directly obtain the variance bound:
\begin{align*}
\mathbb{E}_{\boldsymbol{\xi}}[\psi(\boldsymbol{\xi})^2] - [\mathbb{E}_{\boldsymbol{\xi}}\psi(\boldsymbol{\xi})]^2 \le \frac{2}{c} \|\xi\|_{L^\infty}^2 \|\psi\|_{Lip}^2.
\end{align*}
Meanwhile, the lower tail probability of Talagrand's inequality shows that, for any $r > 0$, with probability at least $1 - \exp(-r)$,
\begin{align*}
\psi(\boldsymbol{\xi}) \ge \mathbb{E}_{\boldsymbol{\xi}}\psi(\boldsymbol{\xi}) - \frac{\|\xi\|_{L^\infty} \|\psi\|_{Lip}}{\sqrt{c}} \sqrt{r}.
\end{align*}
In the following, we prove that for any $r \ge 2$, with probability at least $1 - \exp(-r)$, we always have:
\begin{align*}
\psi(\boldsymbol{\xi}) \ge \sqrt{\frac{\mathbb{E}_{\boldsymbol{\xi}}[\psi(\boldsymbol{\xi})^2]}{2}} - \frac{\|\xi\|_{L^\infty} \|\psi\|_{Lip}}{\sqrt{c}} \sqrt{r}.
\end{align*}
\begin{enumerate}
    \item If $\mathbb{E}_{\boldsymbol{\xi}}[\psi(\boldsymbol{\xi})^2] \ge \frac{4}{c} \|\xi\|_{L^\infty}^2 \|\psi\|_{Lip}^2$, the aforementioned variance bound implies $[\mathbb{E}_{\boldsymbol{\xi}}\psi(\boldsymbol{\xi})]^2 \ge \frac{1}{2}\mathbb{E}_{\boldsymbol{\xi}}[\psi(\boldsymbol{\xi})^2]$. Substituting this into the lower tail probability inequality yields the desired conclusion.

    \item Conversely, if $\mathbb{E}_{\boldsymbol{\xi}}[\psi(\boldsymbol{\xi})^2] < \frac{4}{c} \|\xi\|_{L^\infty}^2 \|\psi\|_{Lip}^2$, as long as we choose $r \ge 2$, the right-hand side of the target lower bound, $\sqrt{\frac{1}{2}\mathbb{E}_{\boldsymbol{\xi}}[\psi(\boldsymbol{\xi})^2]} - c^{-1/2}\|\xi\|_{L^\infty} \|\psi\|_{Lip} \sqrt{r}$, is strictly non-positive. Since $\psi(\boldsymbol{\xi}) \ge 0$ holds almost surely by the definition of a norm, the lower bound holds trivially in this case.
\end{enumerate}

Similar to the derivation of \eqref{eq:first_lower_bound}, there holds
\begin{align}\label{eq:lower_estimation_error_bounded}
\begin{aligned}
    &\|\hat{f}_N(\vf^* + \boldsymbol{\xi}) - f^*\|_{L^2(\mu)} \\
    &\ge \frac{1}{2}\|\hat{f}_N(\vf^*) - f^*\|_{L^2(\mu)} + \frac{\sigma_\xi}{2} \sqrt{\frac{\operatorname{Tr}[\Sigma \varphi_t^2(\hat{\Sigma})\hat{\Sigma}]}{N}} - \frac{\|\xi\|_{L^\infty}}{\sqrt{c}} \left\|\Sigma^{1/2}\varphi_t(\hat{\Sigma})\frac{\mathbb{X}^\top}{\sqrt{N}}\right\|_{op} \sqrt{\frac{r}{N}}.
\end{aligned}
\end{align}

Here, the lower bound for the bias term $\frac{1}{2}\|\hat{f}_N(\vf^*) - f^*\|_{L^2(\mu)}$ still holds, namely the derivation in Section~\ref{sec:proof_lower_bound_bias} remains valid. Therefore, with probability at least $\frac{9998}{10000}$, there still holds
\begin{align}\tag{\ref{eq:LB_final_bias_term}}
    \norm{\hat f_N(\vf^*)-f^*}_{L^2(\mu)} \geq \frac12\|\psi_t(T_K)f^*_J\|_{L^2(\mu)} + \frac{1}{2}\norm{f_{J^c}^*}_{L^2(\mu)} - \frac{C \square}{t} \norm{T_K^{-1}f^*_J}_{L^2(\mu)}.
\end{align}In addition, the derivation in Section~\ref{sub:an_upper_bound_for_the_weak_variance_term_} remains valid, namely
\begin{equation}\tag{\ref{eq:LB_weak_var_term}}
  \norm{\Sigma^{1/2} \varphi_t(\hat\Sigma)(\bX^\top/ \sqrt{N})}_{op} \leq \norm{\Sigma^{1/2} \Sigma_t^{-1/2}}_{\op}  \norm{\Sigma_t^{1/2} \hat \Sigma_t^{-1/2}}_{\op} \norm{\hat \Sigma_t\varphi_t(\hat\Sigma)^{2}\hat \Sigma}^{1/2}_{op}\lesssim 1. 
\end{equation}
Therefore, we may take $r = \frac{c_0^2c^2}{16} \frac{\sigma_\xi^2}{\|\xi\|_{L^\infty}^2} k^*$ for some absolute constant $c_0$ that appears later in Proposition~\ref{prop:lower_variance_general_determinisitc}, and the weak variance term $\frac{\|\xi\|_{L^\infty}}{\sqrt{c}} \left\|\Sigma^{1/2}\varphi_t(\hat{\Sigma})\frac{\mathbb{X}^\top}{\sqrt{N}}\right\|_{op} \sqrt{\frac{r}{N}}\leq \frac{c_0}{4}\sigma_\xi\sqrt{\frac{k^*}{N}}$. So far, we have proved that
\begin{Proposition}\label{prop:lower_bias_weak_variance_general}
    For $c_0$ that will appear later in Proposition~\ref{prop:lower_variance_general_determinisitc}, suppose $k^*\geq \frac{32}{c_0^2c^2}\frac{\|\xi\|_{L^\infty}^2}{\sigma_\xi^2}$. There exist absolute constants $C,c'$ depending only on $c,c_0$, such that with probability at least $\frac{9998}{10000} - \exp(-Ck^*)$,
    \begin{align*}
        &\frac{1}{2}\|\hat{f}_N(\vf^*) - f^*\|_{L^2(\mu)}  - \frac{\|\xi\|_{L^\infty}}{\sqrt{c}} \left\|\Sigma^{1/2}\varphi_t(\hat{\Sigma})\frac{\mathbb{X}^\top}{\sqrt{N}}\right\|_{op} \sqrt{\frac{r}{N}} \\
        &\geq c'\|\psi_t(T_K)f^*_J\|_{L^2(\mu)} + c'\norm{f_{J^c}^*}_{L^2(\mu)} - \frac{C \square}{t} \norm{T_K^{-1}f^*_J}_{L^2(\mu)} - \frac{c_0}{4}\sigma_\xi\sqrt{\frac{k^*}{N}}.
    \end{align*}
\end{Proposition}
We now prove that the variance lower bound indeed contains the term $\sigma_\xi\sqrt{\frac{k^*}{N}}$.
For the lower bound of the variance term $\frac{\sigma_\xi}{2} \sqrt{\frac{\operatorname{Tr}[\Sigma \varphi_t^2(\hat{\Sigma})\hat{\Sigma}]}{N}}$, we need to use a method different from that of \cite{tsigler_benign_2023}.

\subsubsection{Stochastic argument of the variance lower bound}

Let $\fU(J^c,N) = Ct^{-2}$, and $\fL(J^c,N)=c_0'\sum_{j\in J_*^c}\sigma_j^2$ for some absolute constant $c_0'$ that will be determined in Lemma~\ref{lemma:lower_trace_lower_bound_variance}. In this section, slightly abusing notation, we write $P_J = \sum_{j\in J}\ve_j\otimes_\cH\ve_j$, $P_{J^c} = \sum_{j\in J^c}\ve_j\otimes_\cH\ve_j$, and denote $\Sigma_{J^c} = P_{J^c}\Sigma P_{J^c}$, $\bX_J = \bX P_J$, and $\bX_{J^c} = \bX P_{J^c}$. Here, we do not use $f_j$, because for the variance term $\frac{1}{N}\varphi_t(\hat\Sigma)\bX^\top\bxi$, all arguments can be carried out within $\cH$. We consider the following random event $\Omega_{\mathrm{lower}}$:
\begin{align*}
    \Omega_{\mathrm{lower}} =& \left\{ \frac{1}{N}\bX_{J^c}\bX_{J^c}^\top + t^{-1}I_N\preceq C\left( \frac{\Tr(\Sigma_{J^c})}{N} + t^{-1}\right)I_N \right\}\\
    &\cap \left\{ \forall f_J\in V_J,\, c\|\Sigma_J^{\frac{1}{2}}f_J\|_\cH\leq \frac{1}{\sqrt{N}}\|\bX_J f_J\|_2 \leq C'\|\Sigma_J^{\frac{1}{2}}f_J\|_\cH \right\}\\
    &\cap \left\{ \norm{\frac{1}{N}\bX_{J^c}\Sigma_{J^c}\bX_{J^c}^\top}_{\op}\leq \mathfrak{U}(J^c,N)\right\} \\
    &\cap \left\{ \frac{1}{N}\Tr(\bX_{J^c}\Sigma_{J^c}\bX_{J^c}^\top) \geq \mathfrak{L}(J^c,N) \right\}.
\end{align*}
Here, we call the event $\left\{ \frac{1}{N}\bX_{J^c}\bX_{J^c}^\top + t^{-1}I_N\preceq C\left( \frac{\Tr(\Sigma_{J^c})}{N} + t^{-1} \right)I_N \right\}$ the upper side of the Dvoretzky--Milman theorem; see \cite{gavrilopoulos_geometrical_2025}. We call the event $\left\{ \forall f_J\in V_J,\, c\|\Sigma_J^{\frac{1}{2}}f_J\|_\cH\leq \frac{1}{\sqrt{N}}\|\bX_J f_J\|_2 \leq C'\|\Sigma_J^{\frac{1}{2}}f_J\|_\cH \right\}$ the isomorphic property on $V_J$.
We now prove that the event $\Omega_{\mathrm{lower}}$ holds with high probability.

We first prove that both the upper side of the Dvoretzky--Milman theorem and the isomorphic property are consequences of the change-of-norm argument. This only requires the condition $\|K\|_{L^\infty(\mu\otimes\mu)}<\infty$.
\begin{Lemma}\label{lemma:DM_RIP_on_Omega_t}
    \begin{align*}
        \Omega_t\subset &\Bigg(\left\{ \frac{1}{N}\bX_{J^c}\bX_{J^c}^\top + t^{-1}I_N\preceq C\left( \frac{\Tr(\Sigma_{J^c})}{N} + t^{-1}I_N \right) \right\}\\
    &\cap \left\{ \forall f_J\in V_J,\, c\|\Sigma_J^{\frac{1}{2}}f_J\|_\cH\leq \frac{1}{\sqrt{N}}\|\bX_J f_J\|_2 \leq C'\|\Sigma_J^{\frac{1}{2}}f_J\|_\cH \right\}\cap \left\{ \norm{\frac{1}{N}\bX_{J^c}\Sigma_{J^c}\bX_{J^c}^\top}_{\op}\leq \mathfrak{U}(J^c,N)\right\}\Bigg).
    \end{align*}
\end{Lemma}
\beginproof
Let us first consider the event $\left\{ \frac{1}{N}\bX_{J^c}\bX_{J^c}^\top + t^{-1}I_N\preceq C\left( \frac{\Tr(\Sigma_{J^c})}{N} + t^{-1}I_N \right) \right\}$. On $\Omega_t$, we have $P_{J^c}\hat\Sigma P_{J^c} \preceq (1+\square)\Sigma_{J^c} + \square t^{-1}I_{V_{J^c}}$. Since $\|\Sigma_{J^c}\|_{\op}\leq bt^{-1}$, we have $P_{J^c}\hat\Sigma P_{J^c} \preceq Ct^{-1}I_{V_{J^c}}$, that is, for all $f_{J^c}\in V_{J^c}$, the corresponding empirical quadratic form admits the same upper bound. Consequently, $\frac{1}{N}\bX_{J^c}\bX_{J^c}^\top\preceq C t^{-1} I_N\preceq C\left(t^{-1}+\frac{\Tr(\Sigma_{J^c})}{N}\right)I_N$.
    
Next, we show that the event $\left\{ \forall f_J\in V_J,\, c\|\Sigma_J^{\frac{1}{2}}f_J\|_\cH\leq \frac{1}{\sqrt{N}}\|\bX_J f_J\|_2 \leq C'\|\Sigma_J^{\frac{1}{2}}f_J\|_\cH \right\}$ contains $\Omega_t$ (see \eqref{eq:change_of_norm}). Indeed, since $\sigma_j\geq bt^{-1}$ for every $j\in J$, we have $t^{-1}\|f_J\|_\cH^2 \leq \frac{1}{b}\|\Sigma_J^{1/2}f_J\|_\cH^2$ for every $f_J\in V_J$. Therefore, by the definition $\Sigma_t = \Sigma + t^{-1}I$, we have $\|\Sigma_t^{\frac{1}{2}}f_J\|_\cH^2\leq\frac{1+b}{b}\|\Sigma_J^{\frac{1}{2}}f_J\|_\cH^2$. Substituting this into $\Omega_t$, we obtain $\left|\frac{1}{N}\|\bX_J f_J\|_2^2 - \|\Sigma_J^{1/2}f_J\|_\cH^2\right| \leq \square\frac{1+b}{b}\|\Sigma_J^{\frac{1}{2}}f_J\|_\cH^2$. Hence, when $\square\leq \frac{b}{2(1+b)}$, one may take $c = \frac{1}{\sqrt{2}}$ and $C' = \sqrt{\frac{3}{2}}$. For the third random event, by equation \eqref{eq:alternative_upper_DM}, we have $\norm{\frac{1}{N}\bX_{J^c}\Sigma_{J^c}\bX_{J^c}^\top}_{\op}\leq Ct^{-2}$ on $\Omega_t$. 
\endproof

For the last event, we have the following lemma:
\begin{Lemma}\label{lemma:lower_trace_lower_bound_variance}
    Grant the assumptions of Theorem~\ref{theo:main_RKHS}. Let $0<\delta'<\frac{1}{2}$ be some real number. If $N\gtrsim \log(1/\delta')$, then there exists an absolute constant $c_0'$ depending on $\delta$, such that
    \begin{align*}
        \bP\left\{ \frac{1}{N}\Tr(\bX_{J^c}\Sigma_{J^c}\bX_{J^c}^\top) \geq c_0'\Tr(\Sigma_{J^c}^2) \right\}\geq 1-\delta'.
    \end{align*}
\end{Lemma}
\beginproof Recall that $S:g\in\cH\mapsto Sg\in L^2(\mu)$. As a result, $\phi_{J^c}(\vx) = (\sum_{j\in J^c}\ve_j\otimes_\cH\ve_j)\phi(\vx) = \sum_{j\in J^c}\ve_j(\vx)\ve_j$. Moreover, by, $S\ve_j = \sqrt{\sigma_j}f_j$, and $\ve_j(\vx) = \sqrt{\sigma_j}f_j(\vx)$, there holds $S\phi_{J^c}(\vx) = \sum_{j\in J^c}\sigma_j f_j(\vx)f_j$. Consequently, $\|S\phi_{J^c}(\vx)\|_{L^2(\mu)}^2 = \sum_{j\in J^c}\sigma_j^2 f_j^2(\vx)$.
This random event follows by observing that $$\frac{1}{N}\Tr(\bX_{J^c}\Sigma_{J^c}\bX_{J^c}^\top) = \frac{1}{N}\sum_{i=1}^N\|S\phi_{J^c}(X_i)\|_{L^2(\mu)}^2 = \frac{1}{N}\sum_{i=1}^N\left[\sum_{j\in J^c}\sigma_j^2 f_j^2(X_i)\right]$$ is an average of i.i.d. random variables, and hence follows immediately from Bernstein's inequality applied to selectors $\{\delta_i\}_{i=1}^N$, where $\delta_i = \1_{\{ \|S\phi_{J^c}(X_i)\|_{L^2(\mu)}^2 \geq \frac{1}{10}\Tr(\Sigma_{J^c}^2) \}}$. Since $\|S\phi_{J^c}(X_i)\|_{L^2(\mu)}^2\geq 0$, we have $\|S\phi_{J^c}(X_i)\|_{L^2(\mu)}^2\geq \frac{1}{10}\delta_i\Tr(\Sigma_{J^c}^2)$ and hence $\frac{1}{N}\|S\phi_{J^c}(X_i)\|_{L^2(\mu)}^2\geq\frac{1}{10}\Tr(\Sigma_{J^c}^2)\frac{1}{N}\sum_{i=1}^N\delta_i$. By the Bernstein's inequality applied to $\frac{1}{N}\sum_{i=1}^N\delta_i$, the proof is complete.
\endproof

In summary, we obtain the following proposition.
\begin{Proposition}\label{prop:stochastic_argument_lower_bound_variance}
    Under the assumptions of Theorem~\ref{theo:main_RKHS}, $\Omega_{\mathrm{lower}}$ happens with probability at least $\frac{9998}{10000}$.
\end{Proposition}

\subsubsection{Deterministic argument of the variance lower bound}
In the following, we work on $\Omega_{\mathrm{lower}}$. Note that that $\bX\Sigma\bX^\top = \bX_J\Sigma_J\bX_J^\top + \bX_{J^c}\Sigma_{J^c}\bX_{J^c}^\top$ and $\bX^\top\varphi_t(\frac{1}{N}\bX\bX^\top) = \varphi_t(\frac{1}{N}\bX^\top\bX)\bX^\top$. Therefore, we have
\begin{align}\label{eq:lower_variacne_decomposition_bounded}
    \begin{aligned}
        &\Tr\left[\Sigma \varphi_t^2(\hat{\Sigma})\hat{\Sigma}\right] = \frac{1}{N}\Tr\left[ \Sigma\varphi_t\left(\frac{1}{N}\bX^\top\bX\right)\bX^\top\bX\varphi_t\left(\frac{1}{N}\bX^\top\bX\right) \right]\\
        &= \frac{1}{N}\Tr\left[ \Sigma\bX^\top\varphi_t\left( \frac{1}{N}\bX\bX^\top \right)\varphi_t\left( \frac{1}{N}\bX\bX^\top \right)\bX \right] = \frac{1}{N}\Tr\left[ \varphi_t\left( \frac{1}{N}\bX\bX^\top \right)(\bX\Sigma\bX^\top)\varphi_t\left( \frac{1}{N}\bX\bX^\top \right) \right]\\
        &=\frac{1}{N}\Tr\left[\varphi_t\left(\frac{1}{N}\bX\bX^\top\right)\left(\bX_J\Sigma_J\bX_J^\top\right)\varphi_t\left(\frac{1}{N}\bX\bX^\top\right)\right] \\
        &+\frac{1}{N} \Tr\left[\varphi_t\left(\frac{1}{N}\bX\bX^\top\right)\left(\bX_{J^c}\Sigma_{J^c}\bX_{J^c}^\top\right)\varphi_t\left(\frac{1}{N}\bX\bX^\top\right)\right].
    \end{aligned}
\end{align}
\paragraph{Lower bound for \texorpdfstring{$\frac{1}{N}\Tr\left[\varphi_t\left(\frac{1}{N}\bX\bX^\top\right)\left(\bX_J\Sigma_J\bX_J^\top\right)\varphi_t\left(\frac{1}{N}\bX\bX^\top\right)\right]$}{Tr 1}.}
By Assumption~\ref{ass:filter_fct}, $\varphi_t(x)\geq \oc{c_filter_1}(x+t^{-1})^{-1}$, there holds
\begin{align*}
    &\frac{1}{N}\Tr\left[\varphi_t\left(\frac{1}{N}\bX\bX^\top\right)\left(\bX_J\Sigma_J\bX_J^\top\right)\varphi_t\left(\frac{1}{N}\bX\bX^\top\right)\right]\geq \oc{c_filter_1}^2\Tr\left(\frac{1}{N}\bX_J\Sigma_J\bX_J^\top\left(\frac{1}{N}\bX\bX^\top + t^{-1}I_N\right)^{-2}\right)
\end{align*}Recall that $\Sigma_J = \sum_{j\in J}\sigma_j\ve_j\otimes\ve_j$. Therefore, $\frac{1}{N}\bX_J\Sigma_J\bX_J^\top = \sum_{j\in J}\sigma_j^2\left(\frac{\bX_J\ve_j}{\sqrt{N\sigma_j}}\right)\otimes\left(\frac{\bX_J\ve_j}{\sqrt{N\sigma_j}}\right)$. Therefore,
\begin{align}\label{eq:lower_variance_pre_0}
    &\frac{1}{N}\Tr\left[\varphi_t\left(\frac{1}{N}\bX\bX^\top\right)\left(\bX_J\Sigma_J\bX_J^\top\right)\varphi_t\left(\frac{1}{N}\bX\bX^\top\right)\right]\geq\oc{c_filter_1}^2\sum_{j\in J}\sigma_j^2\left< \left(\frac{1}{N}\bX\bX^\top + t^{-1}I_N\right)^{-2}\frac{\bX_J\ve_j}{\sqrt{N\sigma_j}},\frac{\bX_J\ve_j}{\sqrt{N\sigma_j}}\right>_{\ell_2^N}.
\end{align}By Cauchy-Schwartz, for any $j\in J$,
\begin{align}\label{eq:lower_variance_pre_1}
    &\left< \left(\frac{1}{N}\bX\bX^\top + t^{-1}I_N\right)^{-2}\frac{\bX_J\ve_j}{\sqrt{N\sigma_j}},\frac{\bX_J\ve_j}{\sqrt{N\sigma_j}}\right>_{\ell_2^N} \norm{\frac{\bX_J\ve_j}{\sqrt{N\sigma_j}}}_2^2 \geq \left< \left(\frac{1}{N}\bX\bX^\top + t^{-1}I_N\right)^{-1}\frac{\bX_J\ve_j}{\sqrt{N\sigma_j}},\frac{\bX_J\ve_j}{\sqrt{N\sigma_j}}\right>_{\ell_2^N}^2.
\end{align}Moreover, on $\Omega_{\mathrm{lower}}$, we have
\begin{align*}
    \left(\frac{1}{N}\bX\bX^\top + t^{-1}I_N\right)^{-1}\succeq \left(\frac{1}{N}\bX_J\bX_J^\top + C\left(t^{-1} + \frac{\Tr(\Sigma_{J^c})}{N}\right)I_N\right)^{-1}
\end{align*}so that we finally obtain
\begin{align*}
    &\frac{1}{N}\Tr\left[\varphi_t\left(\frac{1}{N}\bX\bX^\top\right)\left(\bX_J\Sigma_J\bX_J^\top\right)\varphi_t\left(\frac{1}{N}\bX\bX^\top\right)\right]\\ &\geq\oc{c_filter_1}^2\sum_{j\in J}\sigma_j^2\left< \left(\frac{1}{N}\bX_J\bX_J^\top + C\left(t^{-1} + \frac{\Tr(\Sigma_{J^c})}{N}\right)I_N\right)^{-1}\frac{\bX_J\ve_j}{\sqrt{N\sigma_j}},\frac{\bX_J\ve_j}{\sqrt{N\sigma_j}}\right>_{\ell_2^N}^2 \norm{\frac{\bX_J\ve_j}{\sqrt{N\sigma_j}}}_2^{-2}.
\end{align*}

Next, we make use of the isomorphic property on $V_J$ to establish two results:
\begin{enumerate}
    \item First, rewriting the isomorphic property on $V_J$, we have that $\bigg(\vu_j = \frac{\bX_J\ve_j}{\sqrt{N\sigma_j}}\bigg)_{j\in J}$ forms a frame on $\ell_2^J$, that is, for any $\va\in \ell_2^J$, there holds $c\|\va\|_2\leq \|\sum_{j\in J}a_j\vu_j\|_2 \leq C\|\va\|_2$, see \cite[Definition 3.3.8]{vershynin_high-dimensional_2018}.


    \item We prove that for every $j\in J$, there exists $\vv_j\in\Span(\ve_\ell:\ell\in J)$, such that for every $\ell\in J$,  $\langle\vv_j,\vu_\ell\rangle = \1_{\ell=j}$, and $\|\vv_j\|_2\leq \frac{1}{\sqrt{c}}$. Indeed, let $T = (T_{j_1,j_2})_{j_1,j_2\in J} = (\langle \vu_{j_1},\vu_{j_2}\rangle)_{j_1,j_2\in J}$ be the Gram matrix of $(\vu_j)_{j\in J}$. By the frame property proved above, we know that, for any non-zero $\va = (a_j)_{j\in J}\in \ell_2^J$, $\va^\top T \va = \| \sum_{j\in J}a_j\vu_j\|_2^2>c^2\|\va\|_2^2 >0$. Therefore, $T$ is invertible. Hence, for any $j\in J$, let $\vdelta_j = (0,\cdots,0,1,0,\cdots,0)\in\ell_2^J$, namely the vector whose $j$-th coordinate is $1$ and whose other coordinates are $0$. Then there exists $\vb_j = T^{-1}\vdelta_j$ such that, for this $\vb_j$, the vector $\vv_j = \sum_{j\in J}b_j\vu_j$ satisfies $\langle \vv_j,\vu_\ell\rangle = \1_{\ell=j}$ for every $\ell\in J$. Here, we used the identity $\langle \vv_j,\vu_\ell\rangle = \sum_{k\in J}b_k\langle\vu_k,\vu_\ell\rangle = (T\vb_j)_\ell$.

    Moreover, for this $\vv_j$, we have $\|\vv_j\|_2^2 = \langle \sum_{j_1\in J}b_{j_1}\vu_{j_1},\sum_{j_2\in J}b_{j_2}\vu_{j_2}\rangle = \vb_j^\top T\vb_j$. Since $\vb_j = T^{-1}\vdelta_j$, it follows that $\|\vv\|_2^2 = \vdelta_j^\top T^{-1}\vdelta_j$. Since $T\succeq cI$, we have $T^{-1}\preceq c^{-1}I$. Hence, this $\vv$ satisfies $\|\vv\|_2 \leq 1/\sqrt{c}$.
\end{enumerate}
Since $\frac{1}{N}\bX_J\bX_J^\top = \frac{1}{N}\bX_J(\sum_{j\in J}\ve_j\otimes\ve_j)\bX_J^\top = \sum_{j\in J}\sigma_j\left(\frac{\bX_J\ve_j}{\sqrt{N\sigma_j}}\right)\otimes\left(\frac{\bX_J\ve_j}{\sqrt{N\sigma_j}}\right) = \sum_{j\in J}\sigma_j\vu_j\otimes\vu_j$, for each $j\in J$, there exists $\vv_j\in\Span(\vu_\ell:\ell\in J)$ such that $\langle\vv_j,\vu_\ell\rangle=\1_{\ell=j}$ and $\|\vv_j\|_2\leq \frac{1}{\sqrt{c}}$. Therefore, $\langle\vv_j,\frac{1}{N}\bX_J\bX_J^\top\vv_j\rangle = \sigma_j$. Therefore,
\begin{align*}
    \left< \left[ \frac{1}{N}\bX_J\bX_J^\top + C\left(t^{-1} + \frac{\Tr(\Sigma_{J^c})}{N}\right)I_N \right]\vv,\vv \right> \leq \sigma_j + \frac{C}{\sqrt{c}}\left(t^{-1}+\frac{\Tr(\Sigma_{J^c})}{N}\right).
\end{align*}We use the following formula\footnote{This can be proved by Cauchy's inequality $\langle\vu,\vz\rangle^2 = \langle A^{-1/2}\vu,A^{1/2}\vz\rangle^2\leq \langle A^{-1}\vu,\vu\rangle\langle A\vz,\vz\rangle$ and equality holds for $\vz=A^{-1}\vu.$}: for any positive definite matrix $A$ and vector $\vu$,
\begin{align*}
    \langle A^{-1}\vu,\vu\rangle = \sup_{\vz\neq\vzero}\frac{\langle\vu,\vz\rangle^2}{\langle A\vz,\vz\rangle}
\end{align*}applied to $\vu=\vu_j$, $\vz = \vv_j$ and
\begin{align*}
    A =  \frac{1}{N}\bX_J\bX_J^\top + C\left(t^{-1} + \frac{\Tr(\Sigma_{J^c})}{N}\right)I_N.
\end{align*}Then
\begin{align*}
    \left< \left[\frac{1}{N}\bX_J\bX_J^\top + C\left(t^{-1} + \frac{\Tr(\Sigma_{J^c})}{N}\right)I_N\right]^{-1}\frac{\bX_J\ve_j}{\sqrt{N\ve_j}}, \frac{\bX_J\ve_j}{\sqrt{N\ve_j}} \right> \geq \frac{1}{\sigma_j + \frac{C}{\sqrt{c}}\left(t^{-1}+\frac{\Tr(\Sigma_{J^c})}{N}\right)}.
\end{align*}Plug it back to \eqref{eq:lower_variance_pre_1} together with the fact that $\|\frac{\bX_J\ve_j}{\sqrt{N\sigma_j}}\|_2\leq C'$, we obtain that
\begin{align*}
    \left< \left(\frac{1}{N}\bX\bX^\top + t^{-1}I_N\right)^{-2}\frac{\bX_J\ve_j}{\sqrt{N\sigma_j}},\frac{\bX_J\ve_j}{\sqrt{N\sigma_j}}\right>_{\ell_2^N} \geq \frac{c}{C'^2C^2}\frac{1}{\sigma_j + \left(t^{-1}+\frac{\Tr(\Sigma_{J^c})}{N}\right)^2}.
\end{align*}Plug back to \eqref{eq:lower_variance_pre_0}, since the above inequality holds for any $j\in J$, we have
\begin{align*}
    &\frac{1}{N}\Tr\left[\varphi_t\left(\frac{1}{N}\bX\bX^\top\right)\left(\bX_J\Sigma_{J}\bX_J^\top\right)\varphi_t\left(\frac{1}{N}\bX\bX^\top\right)\right]\geq\frac{c\oc{c_filter_1}^2}{C'^4C^2}\sum_{j\in J}\frac{\sigma_j^2}{\left(\sigma_j + \left(t^{-1}+\frac{\Tr(\Sigma_{J^c})}{N}\right)\right)^2}
\end{align*}Now take $J=J_{*}=[k^{*}]$. Since we have assumed $t^{-1}>100\frac{\Tr(\Sigma_{J_{**}^c})}{N}$, by the fact that $k^*\geq k^{**}$, there holds $\frac{\Tr(\Sigma_{J_*^c})}{N}\leq \frac{\Tr(\Sigma_{J_{**}^c})}{N}<\frac{1}{100}t^{-1}$. Therefore, $\frac{1}{N}\Tr(\Sigma_{J_{*}^c})+t^{-1}\leq \frac{1.01}{b}\sigma_{k^{*}}\leq \frac{1.01}{b}\sigma_j$ for any $j\in J_{*}$, hence
\begin{align}\label{eq:lower_variance_1}
    &\frac{1}{N}\Tr\left[\varphi_t\left(\frac{1}{N}\bX\bX^\top\right)\left(\bX_{J_*}\Sigma_{J_*}\bX_{J_*}^\top\right)\varphi_t\left(\frac{1}{N}\bX\bX^\top\right)\right] \geq \frac{cc_1^2b^2}{C'^4C^2(b+1.01)^2}|J_{*}|.
\end{align}

\paragraph{Lower bound for \texorpdfstring{$\frac{1}{N} \Tr\left[\varphi_t\left(\frac{1}{N}\bX\bX^\top\right)\left(\bX_{J^c}\Sigma_{J^c}\bX_{J^c}^\top\right)\varphi_t\left(\frac{1}{N}\bX\bX^\top\right)\right]$.}{Trace 2}}
By Assumption~\ref{ass:filter_fct}, $\varphi_t(x)\geq \oc{c_filter_1}(x+t^{-1})^{-1}$ and so we have
\begin{align*}
    &\frac{1}{N}\Tr\left[\varphi_t\left(\frac{1}{N}\bX\bX^\top\right)\left(\bX_{J^c}\Sigma_{J^c}\bX_{J^c}^\top\right)\varphi_t\left(\frac{1}{N}\bX\bX^\top\right)\right]\geq \oc{c_filter_1}^2\Tr\left(\frac{1}{N}\bX_{J^c}\Sigma_{J^c}\bX_{J^c}^\top\left(\frac{1}{N}\bX\bX^\top + t^{-1}I_N\right)^{-2}\right).
\end{align*}We now study a lower bound, in the Loewner order, for $\left( \frac{1}{N}\bX\bX^\top + t^{-1} I_N \right)^{-2}$.

On the event $\Omega_{\mathrm{lower}}$, we have $\frac{1}{N}\bX_{J^c}\bX_{J^c}^\top + t^{-1} I_N \preceq C\left(\frac{\Tr(\Sigma_{J^c})}{N}+t^{-1}\right)I_N$. Moreover, we also know that $\rank(\frac{1}{N}\bX_J\bX_J^\top)\leq |J|$, and
$\frac{1}{N}\bX\bX^\top + t^{-1}I_N = \frac{1}{N}\bX_{J^c}\bX_{J^c}^\top + t^{-1} I_N + \frac{1}{N}\bX_J\bX_J^\top$. That is, $\frac{1}{N}\bX\bX^\top + t^{-1}I_N$ is decomposed as $ \frac{1}{N}\bX_{J^c}\bX_{J^c}^\top + t^{-1} I_N$ and a rank $|J|$ perturbation. Let $\frac{1}{N}\bX\bX^\top + t^{-1}I_N = \sum_{i=1}^N \lambda_i \vv_i\otimes\vv_i$ be its spectral decomposition. Then we know that, almost surely, $\lambda_N>0$. Furthermore,
$\left( \frac{1}{N}\bX\bX^\top + t^{-1}I_N\right)^{-2} = \sum_{i=1}^{|J|} \lambda_i^{-2} \vv_i\otimes\vv_i + \sum_{i>|J|}^{N} \lambda_i^{-2} \vv_i\otimes\vv_i \succeq \sum_{i>|J|}^{N} \lambda_i^{-2} \vv_i\otimes\vv_i$. It follows from the interlacing property (see \cite[Weyl's inequality in Example 7.5.2]{meyer_matrix_2023}) that, on $\Omega_t$, for any $i\geq |J|+1$, 
$$\lambda_i \leq \lambda_{|J|+1} \leq \sigma_1\left(\frac{1}{N}\bX_{J^c}\bX_{J^c}^\top + t^{-1} I_N\right) \leq C\left(\frac{\Tr(\Sigma_{J^c})}{N}+t^{-1}\right).$$ Let $\mathrm{Proj}_{|J|+1:N} = \sum_{i>|J|}^N \vv_i\otimes\vv_i$. Then $\left( \frac{1}{N}\bX\bX^\top + t^{-1}I_N\right)^{-2}\succeq \frac{1}{C^2}\left(\frac{\Tr(\Sigma_{J^c})}{N}+t^{-1}\right)^{-2}\mathrm{Proj}_{|J|+1:N}$. In conclusion, we have
\begin{align}\label{eq:variance_lower_2_pre}
    &\oc{c_filter_1}^2\Tr\left(\frac{1}{N}\bX_{J^c}\Sigma_{J^c}\bX_{J^c}^\top\left(\frac{1}{N}\bX\bX^\top + t^{-1}I_N\right)^{-2}\right)\geq \frac{\oc{c_filter_1}^2}{C^2}\frac{\Tr\left(\frac{1}{N}\bX_{J^c}\Sigma_{J^c}\bX_{J^c}^\top\mathrm{Proj}_{|J|+1:N}\right)}{\left(t^{-1} + \frac{\Tr(\Sigma_{J^c})}{N}\right)^2}.
\end{align}We now study a lower bound for $\Tr\left(\frac{1}{N}\bX_{J^c}\Sigma_{J^c}\bX_{J^c}^\top\mathrm{Proj}_{|J|+1:N}\right)$. By $\Omega_{\mathrm{lower}}$, together with the fact that $\rank(I_N - \mathrm{Proj}_{|J|+1:N})\leq |J|$, there holds
\begin{align*}
    &\Tr\left(\frac{1}{N}\bX_{J^c}\Sigma_{J^c}\bX_{J^c}^\top\mathrm{Proj}_{|J|+1:N}\right) = \Tr\left(\frac{1}{N}\bX_{J^c}\Sigma_{J^c}\bX_{J^c}^\top\right) - \Tr\left(\frac{1}{N}\bX_{J^c}\Sigma_{J^c}\bX_{J^c}^\top\left( I_N - \mathrm{Proj}_{|J|+1:N}\right)\right)\\
    &\geq \left[ \mathfrak{L}(J^c,N) - \rank(I_N - \mathrm{Proj}_{|J|+1:N})\norm{\frac{1}{N}\bX_{J^c}\Sigma_{J^c}\bX_{J^c}^\top}_{\op}\right]_+ \geq \left[ \mathfrak{L}(J^c,N) - |J|\mathfrak{U}(J^c,N) \right]_+.
\end{align*}Now, for $J = J_{*} = [k^{*}]$ defined in \eqref{eq:def_estimation_dimension}, plug it back into \eqref{eq:variance_lower_2_pre}, we have
\begin{align}\label{eq:lower_variance_2}
    &\frac{1}{N}\Tr\left[\varphi_t\left(\frac{1}{N}\bX\bX^\top\right)\left(\bX_{J^c}\Sigma_{J^c}\bX_{J^c}^\top\right)\varphi_t\left(\frac{1}{N}\bX\bX^\top\right)\right]\geq \frac{\oc{c_filter_1}^2}{C^2}\frac{\left( \mathfrak{L}(J_{*}^c,N) - k^{**}\mathfrak{U}(J_{*}^c,N) \right)_+}{\left(t^{-1} + \frac{\Tr(\Sigma_{J_{*}^c})}{N}\right)^2}.
\end{align}
Combining \eqref{eq:lower_variacne_decomposition_bounded}, \eqref{eq:lower_variance_1} and \eqref{eq:lower_variance_2}, when $J = J_{*}$ and when $t^{-1}>100\frac{\Tr(\Sigma_{J_{**}^c})}{N}$, there holds
\begin{align*}
    \frac{\sigma_\xi^2}{4}\Tr[\Sigma\varphi_t^2(\hat\Sigma)\hat\Sigma] \geq \frac{cc_1^2b^2}{4C'^4C^2(b+1)^2} \sigma_\xi^2k^{*} + \sigma_\xi^2\frac{\oc{c_filter_1}^2}{4C^2} \frac{\left( \mathfrak{L}(J_{*}^c,N) - k^{*}\mathfrak{U}(J_{*}^c,N) \right)_+}{\left(t^{-1} + \frac{\Tr(\Sigma_{J_{*}^c})}{N}\right)^2}. 
\end{align*}Now, we use the following lemma.
\begin{Lemma}\label{lemma:positive_part}
  For any $x,y\geq0$, $A,B,D>0$, there holds
  \begin{align}\label{eq:positive_part}
      Ax+\left(By-Dx\right)_+ \geq \frac{AB}{A+B+D}(x+y).
  \end{align}
\end{Lemma}
\beginproof (Proof of Lemma~\ref{lemma:positive_part})
We separate into two cases.
\begin{enumerate}
    \item When $y\leq \frac{D}{B}x$. In this case, the left-hand-side of \eqref{eq:positive_part} is $Ax$. By $y\leq \frac{D}{B}x$, there holds $x+y\leq \frac{D+B}{B}x$, hence $Ax\geq \frac{AB}{B+D}(x+y)>\frac{AB}{A+B+D}(x+y)$, which is the right-hand-side of \eqref{eq:positive_part}.

    \item When $y>\frac{D}{B}x$, the left-hand-side of \eqref{eq:positive_part} is $(A-D)x+By$. We only need to prove that $h(x,y):=\frac{(A-D)x+By}{x+y}\geq \frac{AB}{A+B+D}$. If $x=y=0$, \eqref{eq:positive_part} holds trivially. If $x=0$, then from $y>\frac{D}{B}x$, we know $y>0$, therefore $h(0,y)=B>\frac{A}{A+B+D}B$. It is impossible in this case that $y=0$ but $x>0$. Therefore in the following, we assume $x,y>0$. Let $z=\frac{y}{x}$, and $g(z) = \frac{(A-D)+Bz}{1+z}$, where $z>\frac{D}{B}$. We only need to prove that $\inf(g(z):z>\frac{D}{B})\geq \frac{AB}{A+B+D}$. We have $g'(z) = \frac{B+D-A}{(1+z)^2}$.
    \begin{enumerate}
        \item When $B+D-A=0$. Then $g(z)$ is a constant, which equals to $B>\frac{A}{A+B+D}B$.
        \item When $B+D-A>0$. Then $\inf(g(z):z>\frac{D}{B}) = g(\frac{D}{B}) = \frac{AB}{B+D}>\frac{AB}{A+B+D}$.
        \item When $B+D-A<0$. Then $\inf(g(z):z>\frac{D}{B}) = \lim_{z\to\infty}g(z) = B>\frac{A}{A+B+D}B$.
    \end{enumerate}
\end{enumerate}In summary, \eqref{eq:positive_part} is verified.
\endproof

Applying Lemma~\ref{lemma:positive_part} to
\begin{align*}
    A = \frac{c\oc{c_filter_1}^2b^2}{4C'^4C^2(b+1.01)^2},\, B = \frac{\oc{c_filter_1}^2}{4C^2},\, D = \frac{\oc{c_filter_1}^2\mathfrak{U}(J_{*}^c,N)}{4C^2\left(t^{-1}+\frac{\Tr(\Sigma_{J_{*}^c})}{N}\right)^2}, x = \sigma_\xi^2 k^{*} \mbox{ and } y = \sigma_\xi^2\frac{\mathfrak{L}(J_{*}^c,N)}{\left(t^{-1}+\frac{\Tr(\Sigma_{J_{*}^c})}{N}\right)^2}.
\end{align*}
 By $\mathfrak{U}(J_{*}^c, N) = Ct^{-2}$, $D \lesssim 1$. Thus $A, B$ and $D$ are bounded by absolute constants, and $\frac{AB}{A+B+D}$ is bounded from below by an absolute constant $c_0>0$. We obtain
\begin{align*}
    \frac{\sigma_\xi^2}{4}\frac{\Tr[\Sigma\varphi_t^2(\hat\Sigma)\hat\Sigma]}{N}\geq 4c_0^2\sigma_\xi^2\frac{k^{*}}{N} + 4c_0^2\sigma_\xi^2\frac{\mathfrak{L}(J_{*}^c,N)}{N\left(t^{-1}+\frac{\Tr(\Sigma_{J_{*}^c})}{N}\right)^2}.
\end{align*}
So far, we have proved the following proposition:
\begin{Proposition}\label{prop:lower_variance_general_determinisitc}
    Grant the same assumptions as Theorem~\ref{theo:main_RKHS}.
    On $\Omega_{\mathrm{lower}}$, there exists an absolute constant $c''>0$ such that
    \begin{align*}
        \frac{\sigma_\xi^2}{4}\frac{\Tr[\Sigma\varphi_t^2(\hat\Sigma)\hat\Sigma]}{N}\geq 4c_0^2\sigma_\xi^2 \left( \frac{k^{*}}{N} + \frac{\mathfrak{L}(J_{*}^c,N)}{N\left(t^{-1}+\frac{\Tr(\Sigma_{J_{*}^c})}{N}\right)^2}\right).
    \end{align*}
\end{Proposition}
The variance term of the lower bound of Theorem~\ref{theo:main_RKHS} follows by using again $\frac{\Tr(\Sigma_{J_*^c})}{N}\leq\frac{\Tr(\Sigma_{J_{**}^c})}{N}<\frac{1}{100}t^{-1}$ together with Lemma~\ref{lemma:lower_trace_lower_bound_variance}

\paragraph{End of the proof of the lower bound of Theorem~\ref{theo:main_RKHS}.}
Combining \eqref{eq:lower_estimation_error_bounded_0}, \eqref{eq:lower_estimation_error_bounded}, Proposition~\ref{prop:lower_bias_weak_variance_general}, and Proposition~\ref{prop:lower_variance_general_determinisitc}, the proof of the lower bound in Theorem~\ref{theo:main_RKHS} is complete.

\section{Auxiliaries results}

We start with some results on the concentration of sum of independent sub-exponential variables. We first start with the definition of $\psi$-norm (see for instance, Chapter~1 in \cite{ChafaiGuedonLecuePajor2012}). Let $\psi$ be an Orlicz function. We define the Orlicz norm of a random variable $Z$ as 
\begin{equation*}
    \norm{Z}_\psi = \inf\left(c: \bE \psi(|Z|/c)\leq \psi(1)\right).
\end{equation*}Orlicz functions that are of particular interest to us are, for all $\alpha\geq1$, $\psi_\alpha:t\geq 0\to \exp(t^\alpha)-1$. It follows from Theorem~1.1.5 in \cite{ChafaiGuedonLecuePajor2012} that for all $\alpha\geq1$, there is equivalence between:
\begin{itemize}
    \item[(a)] there is a constant $K_1>0$ such that $\norm{Z}_{\psi_\alpha}\leq K_1$
    \item[(b)] there is a constant $K_2>0$ such that for all $p\geq \alpha$, $\norm{Z}_{L_p}\leq K_2 p^{1/\alpha}$
    \item[(c)] there exists $K_3, K_3^\prime$ such that for all $t\geq K_3^\prime$, with probability at least $1-\exp(-t^\alpha/K_3^\alpha), |Z|\leq t$.
\end{itemize}Moreover, $K_2\leq 2e K_1$, $K_3\leq e K_2$, $K_3^\prime\leq e^2 K_2$ and $K_1\leq 2\max(K_3, K_3^\prime)$. It follows from these equivalence that
\begin{equation*}
    \norm{Z}_{\psi_\alpha} \sim \sup_{p\geq \alpha} \frac{\norm{Z}_{L_p}}{p^{1/\alpha}}.
\end{equation*}In particular, if $X$ is a sub-gaussian vector as defined in Assumption~\ref{assumption:main_upper} then there exists some absolute constant $C>0$ such that for all $\vv\in\bR^p$, $\norm{\inr{X, \vv}}_{\psi_2}\leq C \norm{\Sigma^{\frac12}\vv}_2$. It is also clear from the definition of the $\psi_1$ and $\psi_2$ norm that for all $\vv$ we have $\norm{\inr{X, \vv}^2}_{\psi_1} = \norm{\inr{X, \vv}}_{\psi_2}^2\leq C^2 \norm{\Sigma^{\frac12}\vv}_2^2$. Finally, the last tool we need is Bernstein's inequality for the sum of independent $\psi_1$ variable (see for instance Theorem~1.2.7 in \cite{ChafaiGuedonLecuePajor2012}): if $Z_1, \ldots, Z_N$ are independent $\psi_1$ random variables then for all $t\geq1$, with probability at least $1-\exp(-ct)$,
\begin{equation*}
    \left|\frac{1}{N}\sum_{i=1}^N Z_i-\bE Z_i\right|\leq \sigma_1 \sqrt{\frac{t}{N}} + M_1 \frac{t}{N}
\end{equation*}where $M_1 = \max_{1\leq i\leq N}\norm{Z_i-\bE Z_i}_{\psi_1}$ and $\sigma_1^2 = (1/N)\sum_{i=1}^N \norm{Z_i-\bE Z_i}_{\psi_1}^2$. In particular, if we apply this result for $Z_i = \inr{X_i, \vv}^2$ (which is a $\psi_1$ random variable according to the argument above), we get that with probability at least $1-\exp(-ct)$,
\begin{equation}\label{eq:Bernstein_ineq_psi1}
    \left|\frac{1}{N}\sum_{i=1}^N \inr{X_i, \vv}^2-\bE \inr{X_i, \vv}^2\right|\leq \norm{\inr{X, \vv}^2 - \bE\inr{X, \vv}^2}_{\psi_1}\left( \sqrt{\frac{t}{N}} + \frac{t}{N}\right)
\end{equation}and  
\begin{equation*}
    \norm{\inr{X, \vv}^2 - \bE\inr{X, \vv}^2}_{\psi_1}\leq \norm{\inr{X, \vv}^2}_{\psi_1} + \norm{\bE\inr{X, \vv}^2}_{\psi_1}\leq \norm{\inr{X, \vv}}_{\psi_2}^2 + \norm{1}_{\psi_1} \bE\inr{X, \vv}^2\leq C \norm{\Sigma^{\frac12}\vv}_2^2
\end{equation*}where we used the subgaussian property of $X$. As a consequence, we proved the following result.

\begin{Lemma}\label{lem:sum_squares}
    There is some absolute constant $c>0$ such that the following holds. Let $X$ be a sub-gaussian vector in $\bR^p$ and denote $\Sigma=\bE X X^\top$ ($X$ is not necessarily centered).  Let $\vv\in\bR^p$. With probability at least $1-\exp(-cN)$, we have
\begin{equation}\label{eq:concentration_sum_square_psi2}
    \frac{1}{2}\norm{\Sigma^{\frac12}\vv}_2^2  \leq \frac{1}{N}\sum_{i=1}^N \inr{X_i, \vv}^2 \leq \frac{3}{2}\norm{\Sigma^{\frac12}\vv}_2^2. 
\end{equation}
\end{Lemma}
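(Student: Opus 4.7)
The statement is a direct consequence of the Bernstein-type inequality for sums of independent $\psi_1$ random variables recorded in \eqref{eq:Bernstein_ineq_psi1}, applied to the quadratic observables $Z_i := \inr{X_i, \vv}^2$. My plan is to verify that the $\psi_1$-moment of $Z_i - \bE Z_i$ is controlled by $\norm{\Sigma^{1/2}\vv}_2^2$ and then to tune the deviation parameter so that the Bernstein bound yields a relative deviation of at most $1/2$.

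First, since $X$ is sub-gaussian, the one-dimensional marginal $\inr{X, \vv}$ is a sub-gaussian random variable with $\|\inr{X,\vv}\|_{\psi_2} \lesssim \|\Sigma^{1/2}\vv\|_2$ (this follows from the $L^q$-moment characterization of sub-gaussianity recalled in the excerpt, and does not rely on $X$ being centered). Then $\inr{X,\vv}^2$ is $\psi_1$ with $\|\inr{X,\vv}^2\|_{\psi_1} = \|\inr{X,\vv}\|_{\psi_2}^2 \lesssim \|\Sigma^{1/2}\vv\|_2^2$, and by the triangle inequality for $\|\cdot\|_{\psi_1}$ together with $\|1\|_{\psi_1}\lesssim 1$, one gets
\begin{equation*}
    \norm{\inr{X,\vv}^2 - \bE\inr{X,\vv}^2}_{\psi_1} \leq C \norm{\Sigma^{1/2}\vv}_2^2.
\end{equation*}

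Next, I plug this bound into \eqref{eq:Bernstein_ineq_psi1} applied to the i.i.d.\ variables $Z_i - \bE Z_i$. Since $\bE Z_i = \norm{\Sigma^{1/2}\vv}_2^2$, this yields that for every $t \geq 1$, with probability at least $1-\exp(-ct)$,
\begin{equation*}
    \left|\frac{1}{N}\sum_{i=1}^N \inr{X_i, \vv}^2 - \norm{\Sigma^{1/2}\vv}_2^2\right| \leq C\norm{\Sigma^{1/2}\vv}_2^2 \left(\sqrt{\frac{t}{N}} + \frac{t}{N}\right).
\end{equation*}
Choosing $t = c_0 N$ for a sufficiently small absolute constant $c_0 > 0$ (so that $C(\sqrt{c_0} + c_0) \leq 1/2$) forces the deviation to be at most $\tfrac{1}{2}\norm{\Sigma^{1/2}\vv}_2^2$, which gives the announced two-sided inequality with probability at least $1 - \exp(-c c_0 N)$. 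After renaming the absolute constant, this is exactly \eqref{eq:concentration_sum_square_psi2}.

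There is no real obstacle in this argument: all the ingredients (the $\psi_2 \Rightarrow \psi_1$ squaring identity, the Bernstein bound for $\psi_1$ sums, and the sub-gaussian marginal estimate) are already recalled in the excerpt, and the conclusion follows by the routine choice $t \sim N$.
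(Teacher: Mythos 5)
Your argument is correct and is essentially identical to the paper's own proof: both bound $\norm{\inr{X,\vv}^2-\bE\inr{X,\vv}^2}_{\psi_1}\lesssim \norm{\Sigma^{1/2}\vv}_2^2$ via the sub-gaussian marginal and the $\psi_2$--$\psi_1$ squaring identity, then apply the Bernstein inequality \eqref{eq:Bernstein_ineq_psi1} with $t\sim N$ chosen small enough to make the relative deviation at most $1/2$. No gap to report.
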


Next we use the classical generic chaining bound for sub-gaussian processes that follows from Theorem~2.2.27 in \cite{talagrand_majorizing_1996}. Note that the following result requires less assumptions than the one required in Hanson-Wright inequality from Theorem~6.2.1 in \cite{vershynin_high-dimensional_2018}.

\begin{Lemma}\label{lem:HW_better}
There is an absolute constant $c>0$ such that the following holds.
    Let $X$ be a sub-gaussian vector in $\bR^p$ and denote $\Sigma=\bE X X^\top$ ($X$ is not necessarily centered). Let $A$ be a matrix in $\bR^{p\times d}$. We have for all $t>0$, with probability at least $1-\exp(-t)$,
    \begin{align*}
       \norm{AX}_2 \leq c\left(\norm{\Sigma^{1/2}A^\top}_{HS} + \norm{\Sigma^{1/2}A^\top}_{op}\sqrt{t}\right).
    \end{align*}We also have
    \begin{equation*}
        \bE \norm{AX}_2^2  = \norm{\Sigma^{1/2}A^\top}_{HS}^2.
    \end{equation*}
\end{Lemma}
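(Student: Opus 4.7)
The expectation identity is a direct computation: since $\norm{AX}_2^2 = \Tr(A^\top A\, XX^\top)$, taking expectation gives $\bE\norm{AX}_2^2 = \Tr(A^\top A\,\Sigma) = \Tr(\Sigma^{1/2}A^\top A\,\Sigma^{1/2}) = \norm{\Sigma^{1/2}A^\top}_{HS}^2$. For the high-probability bound, the plan is to write $\norm{AX}_2 = \sup_{\vu\in S^{d-1}}\langle X, A^\top\vu\rangle$ as the supremum of a sub-Gaussian process indexed by the set $T := A^\top S^{d-1}\subset\bR^p$, and then to invoke the generic chaining deviation inequality of Talagrand (Theorem~2.2.27 of \cite{talagrand_majorizing_1996}).

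The sub-Gaussian assumption on $X$ yields, for any $s,t\in\bR^p$, the increment bound $\norm{\langle X, s-t\rangle}_{\psi_2} \lesssim \norm{\langle X, s-t\rangle}_{L^2(\mu)} = \norm{\Sigma^{1/2}(s-t)}_2$, hence the process $(\langle X,s\rangle)_{s\in T}$ has sub-Gaussian increments with canonical metric dominated by $d(s,t) := \norm{\Sigma^{1/2}(s-t)}_2$. Talagrand's theorem then gives, for every $r > 0$, with probability at least $1 - \exp(-r)$,
\[
\sup_{s\in T}\,|\langle X, s\rangle - \langle X, s_0\rangle| \lesssim \gamma_2(T,d) + \diam(T\cup\{s_0\}, d)\sqrt{r},
\]
where $s_0$ is any fixed reference point. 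Choosing $s_0 = 0$ (so that $\langle X, s_0\rangle = 0$ deterministically) turns the left-hand side into $\sup_{s\in T}|\langle X,s\rangle| = \norm{AX}_2$.

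It remains to estimate the two geometric quantities on the right-hand side. For the diameter, $d(s,0) = \norm{\Sigma^{1/2}s}_2 \leq \norm{\Sigma^{1/2}A^\top}_{op}$ for every $s\in T$, so $\diam(T\cup\{0\},d) \leq 2\norm{\Sigma^{1/2}A^\top}_{op}$. For $\gamma_2(T,d)$, I would apply Talagrand's majorizing measure theorem: the canonical Gaussian process attached to $(T,d)$ is $g_s = \langle G, \Sigma^{1/2}s\rangle$ for $G\sim\cN(0, I_p)$, so that
\[
\gamma_2(T,d) \lesssim \bE\sup_{s\in T}g_s = \bE\sup_{\vu\in S^{d-1}}\langle G, \Sigma^{1/2}A^\top\vu\rangle = \bE\norm{A\Sigma^{1/2}G}_2.
\]
By Jensen's inequality this last quantity is at most $\sqrt{\bE\norm{A\Sigma^{1/2}G}_2^2} = \sqrt{\Tr(A\Sigma A^\top)} = \norm{\Sigma^{1/2}A^\top}_{HS}$. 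Combining the three estimates yields the stated deviation bound.

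Strictly speaking there is no serious obstacle: this is a clean application of generic chaining. The only subtlety to keep in mind is that $X$ need not be centered, which is why the statement uses $\Sigma = \bE XX^\top$ in place of a covariance matrix. This is absorbed automatically since Assumption~\ref{assumption:main_upper}'s sub-Gaussian condition is formulated in terms of $L^2$ norms rather than centered moments, so the increment bound $\norm{\langle X,s-t\rangle}_{\psi_2}\lesssim\norm{\Sigma^{1/2}(s-t)}_2$ holds without any centering step and the chaining argument goes through verbatim.
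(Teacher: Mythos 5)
Your proof is correct and follows essentially the same route as the paper: Talagrand's generic chaining deviation bound together with the majorizing measure theorem to control $\gamma_2$ by the Gaussian width $\bE\norm{A\Sigma^{1/2}G}_2\leq \norm{\Sigma^{1/2}A^\top}_{HS}$, plus the diameter term, and the same trace computation for the expectation identity. The only (harmless) difference is that the paper first splits off the mean via $\norm{AX}_2\leq\norm{A(X-\bE X)}_2+\norm{A\bE X}_2$ and chains the centered process before re-absorbing both terms into $\norm{\Sigma^{1/2}A^\top}_{HS}$, whereas you chain the uncentered process directly, which is legitimate here since the sub-Gaussian assumption is stated with uncentered $L^2$ norms and the chaining bound only uses increments.
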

\begin{proof}
We first note that 
\begin{equation*}
    \norm{AX}_2 \leq \norm{A(X-\bE X)}_2 + \norm{A\bE X}_2. 
\end{equation*}Then, we write $\norm{A(X-\bE X)}_2$ as the supremum of a centered sub-gaussian process: $\norm{A(X-\bE X)}_2 = \sup(Z_x: x \in A^\top B_2^d)$ where $Z_x=\inr{X-\bE X, x}$. The canonical metric associated with this process is $(u,v) \to \left(\bE(Z_u - Z_v)^2\right)^{1/2} = \norm{\Sigma_0^{1/2}(u-v)}_2$ where $\Sigma_0=\bE\left[(X-\bE X)(X-\bE X)^\top\right]$. It follows from Theorem~2.2.27 in \cite{talagrand_majorizing_1996}, that for all $t>0$, with probability at least $1-\exp(-t)$, $\norm{A(X-\bE X)}_2\lesssim \gamma_2+ \sqrt{t} D$ where $\gamma_2 = \gamma_2(\Sigma_0^{1/2}A^\top B_2^d, \ell_2^p)$ is Talagrand's $\gamma_2$-functional and $D$ is the diameter of $\Sigma_0^{1/2}A^\top B_2^d$ with respect to $\ell_2^p$. It follows from Talagrand's majorizing measure that
\begin{equation*}
    \gamma_2(\Sigma_0^{1/2}A^\top B_2^d, \ell_2^p) \lesssim \bE \norm{\Sigma_0^{1/2}A^\top G}_2\lesssim \Tr[A\Sigma_0A^\top]^{1/2} = \norm{\Sigma_0^{1/2}A^\top}_{HS}
\end{equation*}and $D= \norm{\Sigma_0^{1/2}A^\top}_{op}$. We conclude the proof of the exponential bound by using that $\norm{A\bE X}_2 + \norm{\Sigma_0^{1/2}A^\top}_{HS} \lesssim \norm{\Sigma^{1/2}A^\top}_{HS}$. The result in expectation follows from $\bE \norm{AX}_2^2 = \Tr[\bE[AXX^\top A^\top]]  = \norm{\Sigma^{1/2}A^\top}_{HS}^2$.
\end{proof}
Under the same assumptions as in Lemma~\ref{lem:HW_better}, we get that for all $t\geq \norm{\Sigma^{1/2}A^\top}_{HS}$ with probability at least $1-\exp\left(-c t^2 / \norm{\Sigma^{1/2}A^\top}_{op}^2\right)$, $\norm{AX}_2\leq t$. The latter statement coincides with point \textit{(c)} above for $\alpha=2$, $K_3 \sim \norm{\Sigma^{1/2}A^\top}_{op}$ and $K_3^\prime\sim \norm{\Sigma^{1/2}A^\top}_{HS}$ meaning that $\norm{AX}_2$ is a subgaussian variable with subgaussian norm satisfying
\begin{equation*}
    \norm{\norm{AX}_2}_{\psi_2}\lesssim \norm{\Sigma^{1/2}A^\top}_{HS}.
\end{equation*}This follows from the equivalence between \textit{(a)} and \textit{(c)} above. As a consequence, $\norm{\norm{AX}_2^2}_{\psi_1}\lesssim \norm{\Sigma^{1/2}A^\top}_{HS}^2 = \bE\norm{A X}_2^2$ and so it follows from \eqref{eq:Bernstein_ineq_psi1} that for all $t>0$, with probability at least $1-\exp(-ct)$, 
\begin{equation*}
    \left|\frac{1}{N}\sum_{i=1}^N \norm{A X_i}_2^2 - \bE\norm{A X}_2^2 \right|\leq c\bE\norm{A X}_2^2 \left(\sqrt{\frac{t}{N}} + \frac{t}{N}\right).
\end{equation*}(Note that this results holds even if $X$ is not centered and does not have independent coordinates unlike the Hansen-Wright inequality from Theorem~6.2.1 in \cite{vershynin_high-dimensional_2018}). For $t\sim N$ we just proved the following result.

\begin{Lemma}\label{lem:sum_norm_square_psi_2}
    There exists an absolute constant $c>0$ such that the following holds. Let $X$ be a sub-gaussian vector in $\bR^p$ and denote $\Sigma=\bE X X^\top$ ($X$ is not necessarily centered). Let $A$ be a matrix in $\bR^{p\times d}$.
    With probability at least $1-\exp(-cN)$, 
    \begin{equation*}
        \frac{1}{2} \norm{\Sigma^{1/2}A^\top}_{HS}^2 \leq \frac{1}{N}\sum_{i=1}^N \norm{A X_i}_2^2 \leq \frac{3}{2} \norm{\Sigma^{1/2}A^\top}_{HS}^2.
    \end{equation*}
\end{Lemma}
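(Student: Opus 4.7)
The plan is to follow the path already sketched in the paragraph immediately preceding the lemma, turning the chain of observations there into a clean three-step argument. All the ingredients we need are on hand: the high-probability bound of Lemma~\ref{lem:HW_better}, the equivalence between Orlicz norms and sub-gaussian tails recalled at the start of the section (items \textit{(a)}--\textit{(c)}), and Bernstein's inequality for sub-exponential sums in the form \eqref{eq:Bernstein_ineq_psi1}.

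First, I would convert the tail bound of Lemma~\ref{lem:HW_better} into an Orlicz-norm statement. For every $t\geq 0$, with probability at least $1-\exp(-t)$ one has $\norm{AX}_2\leq c\bigl(\norm{\Sigma^{1/2}A^\top}_{HS}+\norm{\Sigma^{1/2}A^\top}_{op}\sqrt{t}\bigr)$; equivalently, for every $s\geq c\norm{\Sigma^{1/2}A^\top}_{HS}$, with probability at least $1-\exp\!\bigl(-c s^2/\norm{\Sigma^{1/2}A^\top}_{op}^2\bigr)$ we have $\norm{AX}_2\leq s$. Matching this with criterion \textit{(c)} for $\alpha=2$ gives constants $K_3\sim \norm{\Sigma^{1/2}A^\top}_{op}$ and $K_3'\sim \norm{\Sigma^{1/2}A^\top}_{HS}$, so by the equivalence $(a)\Leftrightarrow(c)$,
\begin{equation*}
\norm{\norm{AX}_2}_{\psi_2}\;\lesssim\;\max\bigl(\norm{\Sigma^{1/2}A^\top}_{op},\norm{\Sigma^{1/2}A^\top}_{HS}\bigr)\;\lesssim\;\norm{\Sigma^{1/2}A^\top}_{HS}.
\end{equation*}

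Second, I would square to pass from $\psi_2$ to $\psi_1$. Using the identity $\norm{Z^2}_{\psi_1}=\norm{Z}_{\psi_2}^2$ together with the expectation formula $\bE\norm{AX}_2^2=\norm{\Sigma^{1/2}A^\top}_{HS}^2$ from Lemma~\ref{lem:HW_better},
\begin{equation*}
\bigl\|\norm{AX}_2^2\bigr\|_{\psi_1}\;\lesssim\;\norm{\Sigma^{1/2}A^\top}_{HS}^2\;=\;\bE\norm{AX}_2^2.
\end{equation*}
Thus the i.i.d.\ variables $Z_i:=\norm{AX_i}_2^2$ are sub-exponential with centered $\psi_1$-norm controlled by their common mean.

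Third, I would apply Bernstein's inequality \eqref{eq:Bernstein_ineq_psi1} to $(Z_i-\bE Z_i)_{i=1}^N$: for every $t>0$, with probability at least $1-\exp(-ct)$,
\begin{equation*}
\left|\frac{1}{N}\sum_{i=1}^N \norm{AX_i}_2^2-\bE\norm{AX}_2^2\right|\;\leq\;C\,\bE\norm{AX}_2^2\left(\sqrt{\tfrac{t}{N}}+\tfrac{t}{N}\right).
\end{equation*}
Choosing $t=c_0 N$ for a sufficiently small absolute constant $c_0>0$ makes the bracket at most $1/2$, and the claimed two-sided inequality follows with probability at least $1-\exp(-cN)$ for $c=c c_0$.

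There is no real obstacle here: everything is already assembled in the discussion above the lemma, so the only care needed is to make the two $\psi_2\to\psi_1$ passes explicit and to pick the Bernstein level $t\sim N$ with a small enough absolute constant to absorb the deviation into $(1/2)\bE\norm{AX}_2^2$. No centering or independence of coordinates of $X$ is required, which matches the scope stated in the lemma.
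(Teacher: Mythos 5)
Your proposal is correct and follows essentially the same route as the paper: the paper's own proof is precisely the discussion preceding the lemma — convert the tail bound of Lemma~\ref{lem:HW_better} into $\norm{\norm{AX}_2}_{\psi_2}\lesssim\norm{\Sigma^{1/2}A^\top}_{HS}$ via the $(a)\Leftrightarrow(c)$ equivalence, square to get the $\psi_1$ bound by $\bE\norm{AX}_2^2$, and apply Bernstein with $t\sim N$. The only detail worth making explicit (as the paper does for the rank-one case) is that Bernstein is applied with the centered norm $\norm{Z-\bE Z}_{\psi_1}\lesssim\norm{Z}_{\psi_1}$, but this is routine and does not change the argument.
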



\subsection{Proof of Corollary~\ref{coro:Sobo}}\label{sec:proof_Coro_Sobo}

By the proof of Proposition 7 of \cite{gavrilopoulos_geometrical_2025}, if $t^{-1}\sim N^{-\frac{\alpha }{1+s\alpha}}$, regardless of the relationship between $s$ and $2$, we always have
\begin{align*}
    \sigma_\xi^2\frac{|J_*|}{N} + \sigma_\xi^2\frac{N\Tr(\Sigma_{J_*^c}^2)}{(Nt^{-1})^2}\sim \sigma_\xi^2 N^{-\frac{\alpha s}{1+\alpha s}},\quad \mbox{ and }\quad\|\Sigma_{J_*^c}^{\frac{1}{2}}\bbeta_{J_*^c}^*\|_2^2\sim N^{-\frac{\alpha s}{1+\alpha s}}.
\end{align*}The difference is that for ridge, $\psi_t^{(B)}(x)=\frac{1}{xt+1}$, hence by the proof of Proposition 7 of \cite{gavrilopoulos_geometrical_2025},
\begin{align*}
    \norm{\Sigma_{J_*}^{\frac{1}{2}}\psi_t^{(B)}(\Sigma)\bbeta_{J_*}^*}_2^2 \sim N^{-\frac{\alpha \tilde s}{1+\alpha\tilde s}}.
\end{align*}On the other hand, by Definition~\ref{assumption:main_upper}, item \emph{2.}, we have
\begin{align*}
    &\norm{\Sigma_{J_*}^{\frac{1}{2}}\psi_t^{(A)}(\Sigma)\bbeta_{J_*}^*}_2^2 = \sum_{j\leq k_{t^{-1},b}^*}\sigma_j^s(\psi_t^{(A)}(\sigma_j))^2\sigma_j^{1-s}\langle\bbeta^*,\ve_j\rangle^2 \leq \oC{C_filter_2}^2t^{-s}\norm{\Sigma_{J_*}^{\frac{1-s}{2}}\bbeta_{J_*}^*}_2^2\lesssim N^{-\frac{\alpha s}{1+\alpha s}}.
\end{align*}
As the choice of $t$ is optimal over the class $\fR_{\mathrm{Sob}}(s,\alpha)$ (see, for instance, \cite{li_generalization_2024}), we conclude that $\{\varphi^{(A)}\}_{t\geq1}\preceq_{\cR}\{\varphi^{(B)}\}_{t\geq1}$ for any $\cR\in\fR_{\mathrm{Sob}}(s,\alpha)$.




\subsection{Proof of Corollary~\ref{coro:saturation}}\label{sec:proof_coro_saturation}

For any $t$ in the interval $I = \{t: b^{-1}\varepsilon\leq t^{-1}<\sigma\}$, it is easy to verify that $k_{t^{-1},b}^* = k$. Moreover, since we have assumed that for any $1\leq j\leq k$, there holds $|\langle\bbeta^*,\ve_j\rangle|=\alpha_*$, and for any $j>k$, $\langle\bbeta^*,\ve_j\rangle=0$, we have $\|\Sigma_{J_*^c}^{1/2}\bbeta_{J_*^c}^*\|_2=0$. Moreover, $\|\Sigma_{J_*}^{1/2}\psi_t(\Sigma)\bbeta_{J_*}^*\|_2 = (\sum_{j\leq k}\sigma\psi_t^2(\sigma)\alpha_*^2)^{1/2} = \alpha_*\psi_t(\sigma)\sqrt{k\sigma}$, and $\sigma_\xi t\sqrt{\Tr(\Sigma_{J_*^c}^2)/N}=\sigma_\xi\varepsilon t\sqrt{(p-k)/N}$. We compute that
\begin{align*}
    R = \frac{\alpha_*}{\sigma_\xi}\frac{\sigma^{3/2}}{\varepsilon}\sqrt{\frac{kN}{p-k}}.
\end{align*}
\begin{enumerate}
    \item When $\psi_t(x)=\psi_t^{(\mathrm{Ridge})}(x) = \frac{1}{xt + 1}$. Then
    \begin{align*}
        \min_{t\in I} r^{(\mathrm{Ridge})}(V_{J_*},V_{J_*^c}) = \sigma_\xi\sqrt{\frac{k}{N}} + \min_{t\in I}\left( \sigma_\xi\varepsilon t\sqrt{\frac{p-k}{N}} + \alpha_*\frac{\sqrt{k\sigma}}{\sigma t + 1} \right).
    \end{align*}Under the assumption that
    \begin{align*}
        4< \frac{\alpha_*}{\sigma_\xi}\frac{\sigma^{3/2}}{\varepsilon}\sqrt{\frac{kN}{p-k}}< \frac{\sigma}{\varepsilon}b \leq \big(1+\frac{\sigma}{\varepsilon}b\big)^2,
    \end{align*}the minimum is given by
    \begin{align}\label{eq:optimal_ridge}
        \min_{t\in I}r^{(\mathrm{Ridge})}(V_{J_*},V_{J_*^c}) = \sigma_\xi\sqrt{\frac{k}{N}} + \frac{\sigma_\xi}{\sigma}\varepsilon\sqrt{\frac{p-k}{N}}\left(2\sqrt{R}-1\right).
    \end{align}

    \item When $\psi_t(x)=\psi_t^{(\mathrm{GF})}(x) = \exp(-tx)$. Then
    \begin{align*}
        \min_{t\in I} r^{(\mathrm{GF})}(V_{J_*},V_{J_*^c}) = \sigma_\xi\sqrt{\frac{k}{N}} + \min_{t\in I}\left( \sigma_\xi\varepsilon t\sqrt{\frac{p-k}{N}} + \alpha_* \sqrt{k\sigma}\exp(-t\sigma) \right).
    \end{align*}Under the assumption that
    \begin{align*}
        e<\frac{\alpha_*}{\sigma_\xi}\frac{\sigma^{3/2}}{\varepsilon}\sqrt{\frac{kN}{p-k}}< \frac{\sigma}{\varepsilon}b\leq \exp\bigg(\frac{\sigma}{\varepsilon}b\bigg),
    \end{align*}the minimum is given by
    \begin{align}\label{eq:optimal_GF}
        \min_{t\in I}r^{(\mathrm{GF})}(V_{J_*},V_{J_*^c}) = \sigma_\xi\sqrt{\frac{k}{N}} + \frac{\sigma_\xi}{\sigma}\varepsilon\sqrt{\frac{p-k}{N}}\left(1+\log(R)\right).
    \end{align}
\end{enumerate}
Combining \eqref{eq:optimal_ridge} and \eqref{eq:optimal_GF} and using the fact that $1+\log(R)\leq 2\sqrt{R}-1$ for any $R\geq 1$, we know that 
\begin{align*}
    \min_{t\in I}r^{(\mathrm{GF})}(V_{J_*},V_{J_*^c})\leq \min_{t\in I} r^{(\mathrm{Ridge})}(V_{J_*},V_{J_*^c}).
\end{align*}
Moreover, when $R\to\infty$, $\{\varphi_t^{(\mathrm{Ridge})}\}_{t\in I}\prec_{\cR}\{\varphi_t^{(\mathrm{GF})}\}_{t\in I}$.

\subsection{Definition of the  contour \texorpdfstring{$\cC_t$}{C} and proof of Lemma~\ref{lemma:spectral_calculus_li}}\label{sec:choice_contour}
In this section, we construct the family of  contours $(\cC_t)_{t\geq1}$ used in the formulae \eqref{eq:residual_theorem}. This formulae follows from the residue theorem, but, in order to apply this theorem, we need the contour $\cC_t$ to surround both spectra of $\Sigma$ and $\hat \Sigma$. By definition, the spectrum of $\Sigma$ lies in $[0,\sigma_1]$ and the one of $\hat\Sigma$ lies in $[0, \hat \sigma_1]$. Moreover,  thanks to Lemma~\ref{lem:largest_sing_Hat_Sigma}, we know that on $\Omega_t$, we have $\hat\sigma_1\leq 4(\sigma_1+t^{-1})$. As a consequence, formulae \eqref{eq:residual_theorem} is valid on $\Omega_t$ if we construct a contour $\cC_t$ in such a way that it contains $[0, 4(\sigma_1+t^{-1})]$. Moreover, we also need to choose $\cC_t$ so that Lemma~\ref{lemma:spectral_calculus_li} and \ref{lemma:operator_norm_Sigma_J} hold on $\Omega_t$.

We follow \cite{li_generalization_2024} for the construction of such a contour: for all $t\geq1$, define $\mathcal{C}_{t} =\mathcal{C}_{t,1} \cup \mathcal{C}_{t,2} \cup \mathcal{C}_{t,3}$ where $\cC_{t,k}, k=1,2,3$ are defined now. We let $L:x\in\bR\to\alpha x + \beta$, where
\begin{align*}
    \alpha = \frac{5(\sigma_1+t^{-1})}{\sigma_1+t^{-1}/2},\,\mbox{ and } \beta = \frac{\alpha}{2t}.
\end{align*}Note that $L(-1/(2t))=0$ and $L(\sigma_1)= 5(\sigma_1+t^{-1})$ so that by setting
\begin{equation}\label{contour}
   \begin{aligned}
\mathcal{C}_{t,1} & =\left\{x + L(x) i: x \in[-1/(2t), \sigma_1]\right\}, \\
\mathcal{C}_{t,2} & =\left\{x - L(x) i : x \in[-1/(2t), \sigma_1]\right\}, \\
\mathcal{C}_{t,3} & =\left\{z \in \mathbb{C}: | z-\sigma_1 |=5(\sigma_1+t^{-1}) , \operatorname{Re}(z) \geq \sigma_1\right\},
\end{aligned} 
\end{equation}the union $\cup_{k=1,2,3}\cC_{t,k}$ is well defining a contour in $\mathbb{C}$; this is the one we call $\cC_t$ depicted in Figure~\ref{fig:contour}.


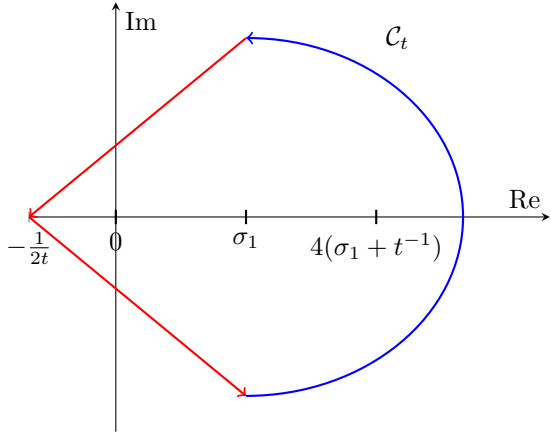
\begin{figure}[!ht]
    \centering
\begin{tikzpicture}
    \begin{axis}[
        axis lines = middle,
        xlabel = {Re},
        ylabel = {Im},
        xmin=-1, xmax=5,
        ymin=-3, ymax=3,
        xtick={-1,0},
        ytick = \empty,
        xticklabels={$-\frac{1}{2t}$,$0$},
    ]
    \draw[<-, thick, color=blue] (1.5,2.5) arc (90:-90:2.5);
    \draw[<-, thick, color=red] (-1,0) -- (1.5,2.5) ;
    \draw[->, thick, color=red] (-1,0) -- (1.5,-2.5);
    \draw[thick] (0,-0.1) -- (0,0.1) node[below=5pt] {$0$};
    \draw[thick] (1.5,-0.1) -- (1.5,0.1) node[below=5pt] {$\sigma_1$};
    \draw[thick] (3,-0.1) -- (3,0.1) node[below=5pt] {$4(\sigma_1+t^{-1})$};
    \node at (3.25,2.5) {$\cC_t$};
    \end{axis}
\end{tikzpicture}
\caption{The contour $\mathcal{C}_{t}$ defined in \eqref{contour} surrounds both spectra of $\Sigma$ and of $\hat\Sigma$ on $\Omega_t$ since, on that event, $\hat\sigma_1\leq 4(\sigma_1+t^{-1})$ thanks to Lemma~\ref{lem:largest_sing_Hat_Sigma}.}\label{fig:contour}
\end{figure}



\subsubsection{Proof of Lemma~\ref{lemma:spectral_calculus_li}}\label{sec:proof_Lemma_Li}

\beginproof Let $z\in\cC_t$. We first show that $\left\|\Sigma_t^{\frac{1}{2}}\left(\hat\Sigma-zI_p\right)^{-1} \Sigma_t^{\frac{1}{2}}\right\|_{\op} \leq 3 C$. To that end, we first bound $\|\hat\Sigma_{t}^{\frac{1}{2}}(\hat\Sigma-zI_p)^{-1} \hat\Sigma_{t}^{\frac{1}{2}}\|_{\op}$ from above and then we will conclude using Lemma~\ref{lemma:operator_norm_Sigma_J}.
Using SVD, we have
\begin{align*}
    \left\|\hat\Sigma_{t}^{\frac{1}{2}}(\hat\Sigma-zI_p)^{-1} \hat\Sigma_{t}^{\frac{1}{2}}\right\|_{\op}=\sup _{\sigma \in \sigma(\hat\Sigma)}\left|\frac{\sigma+t^{-1}}{\sigma-z}\right|
\end{align*}where $\sigma(\hat\Sigma)$ denotes the spectrum of $\hat\Sigma$. We recall that $\hat\sigma_1$ denotes the largest singular values of  $\hat\Sigma_1$ so that $\sigma(\hat\Sigma)\subset [0, \hat \sigma_1]$. Moreover, by Lemma~\ref{lem:largest_sing_Hat_Sigma}, $\hat{\sigma}_1 < 4(\sigma_1 + t^{-1})$ on $\Omega_t$. As a consequence, on $\Omega_t$,
\begin{equation*}
    \left\|\hat\Sigma_{t}^{\frac{1}{2}}(\hat\Sigma-zI_p)^{-1} \hat\Sigma_{t}^{\frac{1}{2}}\right\|_{\op}\leq \sup _{0\leq \sigma\leq 4(\sigma_1 + t^{-1})}\left|\frac{\sigma+t^{-1}}{\sigma-z}\right|.
\end{equation*}
We are now considering two cases: either $z$ belongs to the 'linear' section $\mathcal{C}_{t, 1} \cup \mathcal{C}_{t, 2}$ of the contour $\cC_t$ or to the semi-circle section $\cC_{t,3}$, see the definitions in \eqref{contour}. We start with the linear section.

\textbf{First case, when $z=x \pm L(x) i \in \mathcal{C}_{t, 1} \cup \mathcal{C}_{t, 2}$}, where $x \in\left[-1 / (2t), \sigma_1\right]$, we get
\begin{align*}
    \sup _{\sigma \in \sigma(\hat\Sigma)}\left|\frac{\sigma+t^{-1}}{\sigma-z}\right|^2 \leq \sup _{\sigma \geq 0}\left|\frac{\sigma+t^{-1}}{\sigma-z}\right|^2.
\end{align*}
Let $y = \sigma + t^{-1}$, $B = x+t^{-1}$, and $ C = B^2 + L(x)^2$. Then $|\sigma-z|^2 = (\sigma-x)^2 + L(x)^2 = (y-B)^2+C-B^2$, thus
\begin{align*}
    \left|\frac{\sigma+t^{-1}}{\sigma-z}\right|^2 = \frac{y^2}{y^2 - 2By + C}.
\end{align*}The function $y\mapsto \frac{y^2}{y^2 - 2By + C}$ is maximized at $y = \max\{\frac{C}{B},t^{-1}\}$.
Therefore when $\frac{C}{B}>t^{-1}$, we have the maximum $\frac{C}{C-B^2}$, otherwise we have the maximum when $y = t^{-1}$, when $\sigma = 0$. Solving $t^{-1}=\frac{C}{B}$ gives $x_0 = -\frac{1}{2t}+\frac{1}{2t\sqrt{1+\alpha^2}}$.
\begin{itemize}
    \item     If $\frac{C}{B}>t^{-1}$, combined with $x > -\frac{1}{2t}$, we have $ x > -\frac{2-\sqrt{2}}{4}t^{-1}$, and the maximum is given by $\frac{C}{C-B^2} = 1+\frac{(x+t^{-1})^2}{\alpha^2(x+\frac{1}{2t})^2}$. Let $\delta = tx$, then
\begin{align*}
    &\sup _{\sigma \geq 0}\left|\frac{\sigma+t^{-1}}{\sigma-z}\right|^2 = \sup\left( 1+\frac{1}{\alpha^2}\frac{(\delta+1)^2}{(\delta+\frac{1}{2})^2}:\, -\frac{1}{2} \leq \delta \leq t\sigma_1\right).
\end{align*}One may show that the maximum is achieved when $\delta = tx_0$, and
\begin{align*}
    &\sup _{\sigma \geq 0}\left|\frac{\sigma+t^{-1}}{\sigma-z}\right|^2 = 2 + \frac{2}{\alpha^2}\bigg(1+\sqrt{1+\alpha^2}\bigg),\quad \mbox{ where } \alpha = \frac{5(\sigma_1+t^{-1})}{\sigma_1+t^{-1}/2}.
\end{align*}
\item Else, the maximum is given by
\begin{align*}
    \sup _{\sigma \geq 0}\left|\frac{\sigma+t^{-1}}{\sigma-z}\right|^2 = \frac{t^{-2}}{x^2 + \alpha^2\left(x+\frac{1}{2t}\right)^2} \leq \frac{5(1+\alpha^2)}{\alpha^2}.
\end{align*}
\end{itemize}
As a consequence, when $z \in \mathcal{C}_{t, 1} \cup \mathcal{C}_{t, 2}$, we have
$$
\sup _{\sigma \in \sigma(\hat\Sigma)}\left|\frac{\sigma+t^{-1}}{\sigma-z}\right|^2 \leq 8.
$$

\textbf{Second case, when $z \in \mathcal{C}_{t, 3}$.} We have $|\sigma-z| \geq 2\sigma_1 + t^{-1}$ for $\sigma \in \sigma(\hat{\Sigma}) \subseteq\left[0, \hat{\sigma}_1\right]$, so, on $\Omega_t$, it follows from Lemma~\ref{lem:largest_sing_Hat_Sigma} that $\hat{\sigma}_1 < 4(\sigma_1 + t^{-1})$ and so
$$
\sup _{\sigma \in \sigma(\hat{\Sigma})}\left|\frac{\sigma+t^{-1}}{\sigma-z}\right| \leq \frac{4\sigma_1+5t^{-1}}{2\sigma_1 + 5t^{—1}} \leq 5.
$$Recall that from Lemma~\ref{lemma:operator_norm_Sigma_J},
\begin{align*}
    \|\Sigma_t^{-\frac{1}{2}}\hat{\Sigma}_t^{\frac{1}{2}}\|_{\op}^2 &\le 2,\quad\mbox{ and }\quad\|\Sigma_J^{\frac{1}{2}}\hat{\Sigma}_t^{-\frac{1}{2}}\|_{\op}^2 \le 2.
\end{align*}
The upper bound of $\|\Sigma_J^{\frac{1}{2}}\left(\hat\Sigma-zI_p\right)^{-1} \Sigma_J^{\frac{1}{2}}\|_{\op}$ is given by:
$$
\left\|\Sigma_J^{\frac{1}{2}}\left(\hat\Sigma-zI_p\right)^{-1} \Sigma_J^{\frac{1}{2}}\right\|_{\op} < \left\|\Sigma_J^{\frac{1}{2}}\hat{\Sigma}_t^{-\frac{1}{2}}\right\|_{\op}\left\|\hat\Sigma_{t}^{\frac{1}{2}}(\hat\Sigma-zI_p)^{-1} \hat\Sigma_{t}^{\frac{1}{2}}\right\|_{\op}\left\|\Sigma_J^{\frac{1}{2}}\hat{\Sigma}_t^{-\frac{1}{2}}\right\|_{\op}< 3C,
$$for some absolute constant $C>1$.

The upper bound for $ \|\Sigma_t^{\frac{1}{2}}(\Sigma-zI_p)^{-1} \Sigma_t^{\frac{1}{2}}\|_{\op}$ is similar  but simpler since
\begin{align*}
    &\left\|\Sigma_t^{\frac{1}{2}}(\Sigma-zI_p)^{-1} \Sigma_t^{\frac{1}{2}}\right\|_{\op}=\sup _{\sigma \in \sigma(\Sigma)}\left|\frac{\sigma + t^{-1}}{\sigma-z}\right|
\end{align*} and $\sigma(\Sigma)\subset [0, \sigma_1]$, so we omit it.

Finally, we move to the integral of the holomorphic extensions of the filter and residual functions. We have 
$$
\oint_{\mathcal{C}_{t}}\left|\varphi_{t}(z) \mathrm{d} z\right| \leq C \oint_{\mathcal{C}_{t}} \frac{1}{|z+t^{-1}|}|\mathrm{d} z| .
$$
Now we focus on the latter integral. For $z \in \mathcal{C}_{t,1}$, we have $|z+t^{-1}| \geq \sqrt{17}t^{-1}$ and thus
$$
\int_{\mathcal{C}_{t,1}} \frac{1}{|z+t^{-1}|}|\mathrm{d} z| \leq \frac{1}{\sqrt{17}} t^{-1}\left|\mathcal{C}_{t,1}\right| \leq C
$$for some absolute constant $C>1$, where we notice that $\left|\mathcal{C}_{t,1}\right| \leq C t^{-1}$. For $\mathcal{C}_{t,2}$, we have
\begin{align*}
    \int_{\mathcal{C}_{t,2}} \frac{1}{|z+t^{-1}|}|\mathrm{d} z| =2 \int_0^{\sigma_1} \frac{1}{|x+(x+t^{-1} / 2) i+t^{-1}|} \sqrt{2} \mathrm{~d} x  \leq C \int_0^{\sigma_1} \frac{1}{x+t^{-1}} \mathrm{d} x   \leq C \log(t),
\end{align*}where we have used that assumption that $\sigma_1$ is at most a constant.
For $z \in \mathcal{C}_{t,3}$, we have $|z+t^{-1}| \geq \sqrt{17}(\sigma_1+t^{-1})$ and thus
$$
\int_{\mathcal{C}_{t,3}} \frac{1}{|z+t^{-1}|}|\mathrm{d} z| \leq \frac{1}{\sqrt{17}(\sigma_1+t^{-1})}\left|\mathcal{C}_{t,3}\right| \leq C,
$$for some absolute constant.

\endproof

\subsection{Proof of Lemma~\ref{lemma:operator_norm_Sigma_J}}\label{sec:proof_operator_norm_Sigma_J}

\beginproof On the event $\Omega_t$, we have
\begin{equation*}
\begin{aligned}
\left\|\Sigma_t^{-\frac{1}{2}} \hat\Sigma_t^{\frac{1}{2}}\right\|_{\op}^2 & =\left\|\Sigma_t^{-\frac{1}{2}} \hat\Sigma_t \Sigma_t^{-\frac{1}{2}}\right\|_{\op}=\left\|\Sigma_t^{-\frac{1}{2}}\left(\hat{\Sigma}+t^{-1} I\right) \Sigma_t^{-\frac{1}{2}}\right\|_{\op} =\left\|\Sigma_t^{-\frac{1}{2}}\left(\hat{\Sigma}-\Sigma+\Sigma+t^{-1} I\right) \Sigma_t^{-\frac{1}{2}}\right\|_{\op} \\
& \leq \left\|\Sigma_t^{-\frac{1}{2}}\left(\hat{\Sigma}-\Sigma\right) \Sigma_t^{-\frac{1}{2}}\right\|_{\op} + 1 \leq \square + 1 \leq 2.
\end{aligned}
\end{equation*}
Let us now move to the first statemant of Lemma~\ref{lemma:operator_norm_Sigma_J}. On the event $\Omega_t$ we have $\norm{ \Sigma_t^{-1/2} (\hat\Sigma - \Sigma) \Sigma_t^{-1/2} }_{\op}\leq \square<1$ as a consequence, we have on that event
\begin{equation*}
    \left\|\left(I-\Sigma_t^{-\frac{1}{2}}\left(\hat\Sigma-\Sigma\right) \Sigma_t^{-\frac{1}{2}}\right)^{-1}\right\|_{\op} \leq \sum_{k=0}^{\infty} \left\|\Sigma_t^{-\frac{1}{2}}\left(\hat\Sigma-\Sigma\right) \Sigma_t^{-\frac{1}{2}}\right\|_{\op}^k \leq \sum_{k=0}^{\infty}\square^k \leq 2
\end{equation*}because $\square\leq1/2$. Next, using \eqref{eq:compare_Sigma_t_and_Sigma_J_Jc}, we observe that
\begin{equation*}
\begin{aligned}
&\left\|\Sigma_J^{\frac{1}{2}} \hat\Sigma_t^{-\frac{1}{2}}\right\|_{\op}^2 \leq \norm{\Sigma_J^{\frac{1}{2}}\Sigma_t^{-\frac{1}{2}}}_{\op}^2\norm{\Sigma_t^{\frac{1}{2}}\hat\Sigma_t^{-\frac{1}{2}}}_{\op}^2 \leq \norm{\Sigma_t^{\frac{1}{2}}\hat\Sigma_t^{-\frac{1}{2}}}_{\op}^2 =\left\|\Sigma_t^{\frac{1}{2}} \hat\Sigma_t^{-1} \Sigma_t^{\frac{1}{2}}\right\|_{\op}\\
&=\left\|\left(\Sigma_t^{-\frac{1}{2}} \hat\Sigma_t \Sigma_t^{-\frac{1}{2}}\right)^{-1}\right\|_{\op}= \left\|\left(I-\Sigma_t^{-\frac{1}{2}}\left(\hat\Sigma-\Sigma\right) \Sigma_t^{-\frac{1}{2}}\right)^{-1}\right\|_{\op}.
\end{aligned}
 \end{equation*}
\endproof

\subsection{Examples the assumption of Theorem~\ref{theo:main_RKHS} holds}\label{sec:examples_theo_main_RKHS}

We prove a stronger statement: there exists $\gamma>1$ such that $\sum_{j\in J_*^c}\sigma_j^2f_j^2(X) \leq \gamma\Tr(\Sigma_{J_*^c}^2)$ almost surely, where we abuse the notation to write $\Tr(\Sigma_{J_*^c}^2) = \sum_{j\in J_*^c}\sigma_j^2$. It is easy to verify that this implies the original small-ball probability lower bound. Indeed, for any nonnegative random variable $Z$ satisfying $\bE[Z]>0$, if $Z\leq\gamma\bE[Z]$ almost surely, then $\bE[Z] = \bE[Z\1_{Z<\frac{1}{10}\bE[Z]}] + \bE[Z\1_{Z\geq \frac{1}{10}\bE[Z]}] \leq \frac{1}{10}\bE[Z] + \gamma\bE[Z]\bP(Z\geq\frac{1}{10}\bE[Z])$. Since $\bE[Z]>0$, we have $\bP(Z\geq\frac{1}{10}\bE[Z])\geq \frac{9}{10\gamma}$.

\paragraph{When the eigenfunctions are uniformly bounded.} Let $(f_j)_{j=1}^\infty$ be the eigenfunctions of $K$ in $L^2(\mu)$. Suppose there exists an absolute constant $C$ such that $\sup_{j\in J_*^c}\|f_j(X)\|_{L^\infty(\mu)}\leq C$. Then almost surely,
\begin{align*}
    \sum_{j\in J_*^c}\sigma_j^2 f_j^2(X) \leq C^2\Tr(\Sigma_{J_*^c}^2).
\end{align*}

\paragraph{Inner product kernel over sphere.} Suppose $\cX = S_2^{d-1}$, $\mu$ is the uniform distribution over $\cX$, and there exists $h:\bR\to\bR$ such that $K:(\vx_1,\vx_2)\mapsto h(\langle\vx_1,\vx_2\rangle)$. By spherical harmonic analysis, the eigenfunctions $(f_j)_{j=1}^\infty$ are given by spherical harmonics, and $L^2(\mu) = \bigoplus_{k=0}^\infty V_k$, where $V_k$ is the eigen-subspace formed by degree $k$ spherical harmonics, whose dimension is denoted by $N(d,k)$. For any $k$, take $\{Y_{k,\ell}\}_{\ell=1}^{N(d,k)}$ be an orthonormal basis in $L^2(\mu)$ inner product of $V_k$. Moreover, all the eigenfunctions $\{Y_{k,\ell}\}_{\ell=1}^{N(d,k)}$ have the same eigenvalue, denoted by $\lambda_k$. That is, the distinct spectrum of $K$ is given by $\lambda_1,\cdots$, where for each $\lambda_k$, its multiplicity is $N(d,k)$. By addition theorem of spherical harmonics, \cite[Theorem 4.11]{frye_spherical_2012}, for any $\vx\in \cX$, $\sum_{\ell=1}^{N(d,k)}Y_{k,\ell}^2(\vx)=N(d,k)$. Therefore, if $k^* = \sum_{k=0}^LN(d,k)$ for some $L\in\bN_+$, then $\sum_{j\in J_*^c}\sigma_j^2 f_j^2(X) = \sum_{k=L+1}^\infty \lambda_k^2 \sum_{\ell=1}^{N(d,k)}Y_{k,\ell}^2(X) = \sum_{k>L}\lambda_k^2 N(d,k)=\Tr(\Sigma_{J_*^c}^2)$.

\paragraph{Regular RKHS.}
Let $T_K:f\in L^2(\mu)\mapsto \int_\cX K(\vx,\cdot)f(\vx)d\mu(\vx)$ be the integral operator induced by $K$. Let its distinct eigenvalues be $\mu_1\geq\mu_2\geq\cdots$, with multiplicity $N_1,N_2,\cdots$. Let $I_m = \{\mu_m,\cdots,\mu_m\}$ ($N_m$ times), and $K_m:(\vx_1,\vx_2)\mapsto \sum_{j\in I_m}f_j(\vx_1)f_j(\vx_2)$. Then $\bE[K_m(X,X)]=N_m$. 
\begin{Proposition}
Suppose $J_*^c = \bigcup_{m>L}I_m$ for some $L\in\bN_+$, and suppose there exists some absolute constant $C$, such that for any $M>L$ and any $\vx\in\cX$, $\sum_{m=L+1}^M K_m(\vx,\vx)\leq C\sum_{m=L+1}^M N_m$. Then we have $\sup_{\vx\in\cX}\frac{\sum_{j\in J_*^c}\sigma_j^2 f_j^2(X)}{\Tr(\Sigma_{J_*^c}^2)}\leq C$.
\end{Proposition}
\beginproof
For any $\vx\in\cX$, $\|\phi_{J_*^c}(\vx)\|_{L^2(\mu)}^2 = \sum_{m=L+1}^\infty \mu_m K_m(\vx,\vx)$. On the other hand, $\Tr(\Sigma_{J_*^c}^2) = \sum_{m=L+1}^\infty \mu_m N_m.$ 
Define the partial sums for the tail blocks as $S_m(\vx) = \sum_{k=L+1}^m K_k(\vx,\vx)$ and $D_m = \sum_{k=L+1}^m N_k$ for $m > L$, with the convention $S_L(\vx) = 0$ and $D_L = 0$. By assumption, $S_m(\vx) \leq C D_m$ holds for all $m > L$. For any finite $M > L$, applying Abel summation (summation by parts) to the truncated series yields
\begin{align*}
\sum_{m=L+1}^M \mu_m K_m(\vx,\vx) &= \sum_{m=L+1}^M \mu_m (S_m(\vx) - S_{m-1}(\vx)) = \sum_{m=L+1}^{M-1} (\mu_m - \mu_{m+1}) S_m(\vx) + \mu_M S_M(\vx).
\end{align*}
Since $\mu_1\geq\mu_2\geq\cdots$ are distinct eigenvalues, we strictly have $\mu_m - \mu_{m+1} > 0$ for all $m$. Coupled with the assumptions $S_m(\vx) \leq C D_m$, $S_M(\vx) \geq 0$, and $\mu_M > 0$, we can bound the summation by
\begin{align*}
&\sum_{m=L+1}^{M-1} (\mu_m - \mu_{m+1}) S_m(\vx) + \mu_M S_M(\vx) \leq C \left( \sum_{m=L+1}^{M-1} (\mu_m - \mu_{m+1}) D_m + \mu_M D_M \right) \\
&= C \sum_{m=L+1}^M \mu_m (D_m - D_{m-1}) = C \sum_{m=L+1}^M \mu_m N_m.
\end{align*}
Taking the limit as $M \to \infty$, we obtain
\begin{align*}
\sum_{j\in J_*^c}\sigma_j^2 f_j^2(X) = \sum_{m=L+1}^\infty \mu_m K_m(\vx,\vx) \leq C \sum_{m=L+1}^\infty \mu_m N_m = C \Tr(\Sigma_{J_*^c}^2).
\end{align*}
Since this inequality holds for all $\vx \in \cX$, taking the supremum over $\cX$ yields $\sup_{\vx\in\cX}\frac{\sum_{j\in J_*^c}\sigma_j^2 f_j^2(X)}{\Tr(\Sigma_{J_*^c}^2)}\leq C$, which completes the proof.
\endproof

\paragraph{Sobolev spaces.} An example of a regular RKHS is a Sobolev space satisfying a local Weyl estimate. Here, if for all $m\geq 1$, $\sum_{j=1}^m |f_j(\vx)|^2 \leq Cm$ almost surely, we say it satisfies the local Weyl's estimate, see \cite[Equation 29.1.3]{hormanderAnalysisLinearPartial2009} applied to $Q=I$ and $\mu=0$.
\begin{Proposition}
    Let $\cH$ be a RKHS on $\cX$ generated by some kernel function $K$, satisfying $\sigma_j\sim j^{-\frac{2s}{d}}$ with $s>\frac{d}{2}$. Let $(f_j)_{j=1}^\infty$ be the sequence of eigenfunctions of $T_K:f\in L^2(\mu)\mapsto \int_\cX K(\vx,\cdot)f(\vx)d\mu(\vx)$. Suppose there exists an absolute constant $C$ such that for all $m\geq 1$, $\sum_{j=1}^m |f_j(\vx)|^2 \leq Cm$ almost surely. Then there exists $\gamma>1$, such that for any $k\in\bN_+$ and $J^c = \{k+1,k+2,\cdots\}$, $\sum_{j\in J_*^c}\sigma_j^2 f_j^2(X)\leq\gamma \Tr(\Sigma_{J^c}^2)$ almost surely.
\end{Proposition}

\beginproof Notice that $\Tr(\Sigma_{J^c}^2)\sim k^{1-\frac{4s}{d}}$ for any $k$ such that $J=[k]$. On the other hand, by $\{j>k\} = \bigcup_{\ell\geq 0}\{ 2^\ell k< j \leq 2^{\ell+1}k \}$. Then
\begin{align*}
    &\sup_{\vx}\sum_{j>k}\sigma_j^2f_j^2(\vx)\leq \sum_{\ell\geq 0}\sup_\vx\sum_{2^\ell k<j\leq 2^{\ell+1}k}\sigma_j^2 f_j^2(\vx)\lesssim \sum_{\ell\geq 0}(2^\ell k)^{-\frac{4s}{d}}\sup_\vx\sum_{j\leq 2^{\ell+1}k}f_j^2(\vx)\\
    &\lesssim \sum_{\ell\geq 0}(2^\ell k)^{-\frac{4s}{d}}2^\ell k = k^{1-\frac{4s}{d}}\sum_{\ell\geq 0}2^{\ell(1-\frac{4s}{d})}.
\end{align*}Since $s>\frac{d}{2}$, $1-\frac{4s}{d}<-1$, the series $\sum_{\ell\geq 0}2^{\ell(1-\frac{4s}{d})}$ converges, and hence $\sup_{\vx}\sum_{j>k}\sigma_j^2f_j^2(\vx)\lesssim k^{1-\frac{4s}{d}}$.
\endproof

\section{Statistical analysis of PCR: proof of Theorem~\ref{theo:main_PCR}} 
\label{sec:statistical_analysis_of_pcr}
In this section we prove Theorem~\ref{theo:main_PCR}. We recall that the Principle Component Regression (PCR) estimator $\hat\bbeta = \frac{1}{N}\varphi_t(\hat\Sigma)\bX^\top\vy$ is obtained for the filter function and its associated residual function given for $t\geq1$ by
\begin{align*}
    \varphi_t:x>0\mapsto x^{-1}\1(x\geq bt^{-1}),\mbox{ and } \psi_t:x\in\bR\mapsto 1-x\varphi_t(x) = \1(x< bt^{-1})
\end{align*}where $0<b<1$ is the same parameter used in the definition of $k^*:= \min\left(k\in[p]: \sigma_{k+1}\leq bt^{-1}\right)$, the estimation dimension. In this section, we also denote  $J= J_*=[k^*]$. 

First note that for all $t\geq 1$ and $x>0$, we have
\begin{equation}\label{eq:assum_1_PCR}
    \varphi_t(x) = \frac{1}{x}\1(x\geq bt^{-1}) \leq \frac{C_1}{x+t^{-1}} \mbox{ for } C_1 = \frac{b+1}{b}
\end{equation}so that Assumption~\ref{ass:filter_fct} is satisfied by PCR's filter function for $c_1=0$ and $C_1 = (b+1)/b$.

A key observation in the analysis of PCR estimator is that, for a given SDP matrix $M$, $\psi_t(M)$ is the orthogonal projection on the eigenspace of $M$ spanned by all eigenvectors associated with eigenvalues less than $bt^{-1}$. In particular, $\psi_t(\Sigma) = P_{J^c}=\sum_{j\in J^c}\ve_j\otimes\ve_j$ and so for all $\bbeta\in V_J, \psi_t(\Sigma)\bbeta = 0$. We also observe that $x\varphi_t(x)=\1(x\geq bt^{-1})$ so that $\Sigma \varphi_t(\Sigma) = P_J$; in particular, for $\tilde\bbeta_J$ defined in Section~\ref{ssub:risk_decomposition_of_the_estimation_part_hat_bbeta_j} we have $\tilde\bbeta_J= \varphi_t(\Sigma)\Sigma\bbeta^* = P_J \bbeta^* = \bbeta^*_J$. As a consequence, the risk decomposition of the estimation part from Section~\ref{ssub:risk_decomposition_of_the_estimation_part_hat_bbeta_j} can be made simpler in the PCR case. 

Let us now start the risk analysis of the PCR estimator. As in \eqref{eq:risk_decomposition_origin_FSD}, we recall the risk decomposition that follows from the FSD method:
\begin{equation*}
 \norm{\Sigma^{1/2}\left(\hat\bbeta - \bbeta^*\right)}_2  
    \leq \norm{\Sigma_J^{1/2}\left(\hat\bbeta_J - \bbeta^*_J\right)}_2 + \norm{\Sigma_{J^c}^{1/2}\hat\bbeta_{J^c}}_2 +\norm{ \Sigma_{J^c}^{1/2}\bbeta_{J^c}^*}_2.
\end{equation*}Next, as mentioned above, the risk decomposition of the estimation part is simpler for the PCR estimator than in \eqref{eq:risk_decomp_esti_part} since we have
\begin{equation*}
    \norm{\Sigma_J^{1/2}(\hat\bbeta_J - \bbeta^*_J)}_2 \leq \norm{\Sigma_J^{1/2}(\hat\bbeta_J(\bX\bbeta^*_J) - \bbeta_J^*)}_2 +  \norm{\Sigma_J^{1/2}\hat\bbeta_J(\bX\bbeta_{J^c}^*+\bxi)}_2.
\end{equation*}
Now, we upper bound the two terms from this sum. For the first term, we have on $\Omega_t$
\begin{equation*}
  \norm{\Sigma_J^{1/2}(\hat\bbeta_J(\bX\bbeta^*_J) - \bbeta_J^*)}_2 = \norm{\Sigma_J^{1/2}\big(\psi_t(\hat \Sigma) - \psi_t(\Sigma) \big)\bbeta_J^*}_2 \lesssim \frac{\square}{\theta^2} \left\|\Sigma_J^{-1/2} \bbeta^*_J\right\|_{ 2 } 
\end{equation*}where the last inequality follows from an adaptation of the argument used in \eqref{eq:result_upper_bias_3} to the PCR case for the contour $\cC_t$ defined in \eqref{eq:contour_PCR}:  thanks to \eqref{eq:residual_theorem_PCR}, we indeed have
    \begin{align}\label{eq:result_upper_bias_3_PCR}
    \begin{aligned}
         &\left\|\Sigma_J^{\frac{1}{2}}\left(\psi_{t}(\Sigma)-\psi_{t}(\hat\Sigma)\right)\bbeta^*_J\right\|_2 = \frac{1}{2\pi} \norm{\oint_{\cC_t}\Sigma_J^{\frac{1}{2}}(\hat\Sigma-zI_p)^{-1}(\hat\Sigma-\Sigma)(\Sigma-zI_p)^{-1}\bbeta^*_J dz}_2 \\  
         &\leq \frac{1}{2\pi} \oint_{\mathcal{C}_t} \left\|\Sigma_t^{\frac{1}{2}}\left(\hat\Sigma-zI_p\right)^{-1} \Sigma_t^{\frac{1}{2}}\right\|_{op} \left\|\Sigma_t^{-\frac{1}{2}}\left(\hat\Sigma - \Sigma\right) \Sigma_t^{-\frac{1}{2}}\right\|_{op} 
        \left\|\Sigma_t^{\frac{1}{2}}(\Sigma-zI_p)^{-1} \Sigma_J^{\frac{1}{2}}\right\|_{op}\left\|\Sigma_J^{-1/2} \bbeta^*_J\right\|_{2}\left|d z\right| \\
        &\lesssim   \frac{\square}{\theta^2} \left\|\Sigma_J^{-1/2} \bbeta^*_J\right\|_{ 2 } \oint_{\mathcal{C}_t}\left|d z\right|\lesssim \frac{\square}{\theta^2} \left\|\Sigma_J^{-1/2} \bbeta^*_J\right\|_{ 2 }.
    \end{aligned}
    \end{align}For the second term, we use exactly the same arguments as in Section~\ref{ssub:upper_bound_on_the_variance_term_texorpdfstring_norm_sigma_j_1_2}: with probability at least $1-\exp(-c|J|)-\bP[\Omega_t^c]$,
\begin{equation*}
     \|\Sigma_J^{1/2}\hat\bbeta_J(\bX\bbeta_{J^c}^*+\bxi)\|_2 \lesssim \|\Sigma_{J^c}^{1/2}\bbeta_{J^c}^*\|_2 + \sigma_\xi \sqrt{\frac{|J|}{N}}.
\end{equation*}
As a consequence, we conclude that for the estimation part, we have with probability at least $1-\exp(-c|J|)-\bP[\Omega_t^c]$,
\begin{equation*}
 \norm{\Sigma_J^{1/2}\left(\hat\bbeta_J - \bbeta_J^*\right)}_2  
    \lesssim \frac{\square}{\theta^2} \left\|\Sigma_J^{-1/2} \bbeta^*_J\right\|_{ 2 } +  \|\Sigma_{J^c}^{1/2}\bbeta_{J^c}^*\|_2 + \sigma_\xi \sqrt{\frac{|J|}{N}}.
\end{equation*}     

Now, we prove a high probability  upper bound on the 'noise absorption' part of the PCR estimator, i.e. on the quantity $\norm{\Sigma_{J^c}^{1/2}\hat\bbeta_{J^c}}_2$. We follow the same analysis as in Section~\ref{sec:noise_absorption} but for the contour $\cC_t$ specially designed for the PCR estimator, i.e. the one from \eqref{eq:contour_PCR} and where we use Lemma~\ref{lemma:spectral_calculus_li_PCR} instead of Lemma~\ref{lemma:spectral_calculus_li}: with probability at least $1-2\exp(-c|J|) - \bP[\Omega_t^c]$,
\begin{equation*}
    \norm{\Sigma_{J^c}^{1/2}\hat\bbeta_{J^c}}_2 \lesssim \frac{\square}{\theta^2}\norm{\Sigma_J^{-\frac{1}{2}}\bbeta^*_J}_2 + \norm{\Sigma_{J^c}^{1/2}\bbeta_{J^c}^*}_2 +  \sigma_\xi \sqrt{\frac{|J|}{N}} + \sigma_\xi t \sqrt{\frac{\Tr(\Sigma_{J^c}^2)}{N}}.
\end{equation*}

Gathering both controls on the estimation part and the noise absorption part in the risk decomposition of the PCR estimator that follows from the FSD method, we obtain that with probability at least $1-c\exp(-|J|/c) - c\exp(-\square^2 N/c)$,
\begin{equation*}
    \norm{\Sigma^{1/2}\big(\hat\bbeta - \bbeta^*\big)}_2 \lesssim   \norm{\Sigma_{J^c}^{1/2}\bbeta_{J^c}^*}_2 +  \sigma_\xi \sqrt{\frac{|J|}{N}} + \sigma_\xi t \sqrt{\frac{\Tr(\Sigma_{J^c}^2)}{N}} + \frac{\square}{\theta^2}\norm{\Sigma_J^{-\frac{1}{2}}\bbeta^*_J}_2 \lesssim r(V_J, V_{J^c}) + \frac{\square}{\theta^2}\norm{\Sigma_J^{-\frac{1}{2}}\bbeta^*_J}_2.
\end{equation*}


\subsection{Construction and properties of the contour for the analysis of PCR} 
\label{sub:construction_of_the_contour_for_pcr}
Let $\cC_t\subset\bC$ be a contour such that: 
\begin{itemize}
    \item[(i)] $\cC_t$ surrounds  the set of all singular values of $\Sigma$ and $\hat \Sigma$ below $bt^{-1}$, i.e. the set $\big[\sigma(\Sigma)\cup \sigma(\hat \Sigma)\big]\cap[0, bt^{-1}]$,
    \item[(ii)] all singular values of $\Sigma$ and $\hat \Sigma$ above $bt^{-1}$, i.e. the set $\big[\sigma(\Sigma)\cup \sigma(\hat \Sigma)\big]\cap[bt^{-1}, +\infty]$ are 'outside' $\cC_t$. 
\end{itemize}
For a contour $\cC_t$ satisfying the two points above, it follows from \cite[pp. 39]{kato_perturbation_1995}, see also \cite[pp. 1984]{koltchinskii_asymptotics_2016} that 
\begin{align}\label{eq:residual_theorem_PCR}
 \nonumber\psi_t(\Sigma) - \psi_t(\hat\Sigma) & =P_{J^c} -  \hat P =\frac{1}{2\pi i}\oint_{\cC_t}\left[(\hat\Sigma-zI)^{-1} -(\Sigma-zI)^{-1}\right]dz \\
 &=-\frac{1}{2\pi i}\oint_{\cC_t}(\hat\Sigma-zI)^{-1}(\hat\Sigma-\Sigma)(\Sigma-zI)^{-1}dz  
\end{align}where $\hat P$ is the orthogonal projection onto the space spanned by all singular vectors of $\hat \Sigma$ associated with a singular value less than $bt^{-1}$. In particular, we recover a formulae similar to \eqref{eq:residual_theorem} but for $\psi_t$.

Now we define a contour that to satisfies the two requirements above. This contour is a counterclockwise rectangle $\cC_t = \cC_{t,1}\sqcup \cC_{t,2}\sqcup \cC_{t,3}\sqcup \cC_{t,4}$ made of the four segments:  
\begin{equation}\label{eq:contour_PCR}
   \begin{aligned}
    \cC_{t,1} &= \left\{ -1+ iy: -1\leq y\leq 1 \right\},\, \cC_{t,2} = \left\{ bt^{-1} + iy: -1\leq y\leq 1 \right\},\\
    \cC_{t,3} &= \left\{ x + i: -1\leq x\leq bt^{-1}\right\}\mbox{ and } \cC_{t,4} = \left\{ x - i: -1\leq x\leq bt^{-1} \right\}.
\end{aligned} 
\end{equation}It is clear from the definition of $\cC_t$ that the two conditions \textit{(i)} and \textit{(ii)} are satisfied by this contour.
Let us now turn to properties of $\cC_t$ that will be useful for the statistical analysis of PCR, i.e. to results similar to the one from Lemma~\ref{lemma:spectral_calculus_li}. We first recall that the $k^*$-th spectral gap of $\Sigma$ is the quantity $\gamma_{k^*} = \sigma_{k^*} - \sigma_{k^*+1}$. The following result requires $\gamma_{k^*}$ to be large enough so that $\theta>0$ where we recall that
\begin{equation*}
    \theta := \min\left(bt^{-1} - \big(\sigma_{k^*+1} + \square(\sigma_{k^*+1} + t^{-1})\big), \big(\sigma_{k^*} - \square(\sigma_{k^*} + t^{-1})\big) - bt^{-1}\right) 
\end{equation*}

\begin{Lemma}\label{lemma:spectral_calculus_li_PCR} Let $t\geq1$, $0<\square<1/9$ and $0<b<1$ be such that $\theta>0$. Let $\cC_t$ be the contour defined in \eqref{eq:contour_PCR}.  For all $z\in\cC_t$, we have
\begin{align*}
    \left\|\Sigma_t^{\frac{1}{2}}(\Sigma-zI_p)^{-1} \Sigma_t^{\frac{1}{2}}\right\|_{\op} \leq \frac{2}{\theta} \mbox{ and } \oint_{\mathcal{C}_{t}}\left|dz\right|  \leq 6.
\end{align*}
Moreover, on $\Omega_t$ we have for all $z\in\cC_t$, $\left\|\Sigma_t^{1/2}\left(\hat\Sigma-zI_p\right)^{-1} \Sigma_t^{1/2}\right\|_{\op} \leq  2/\theta$.
\end{Lemma}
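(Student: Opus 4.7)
The length bound is an elementary perimeter computation: $\cC_t$ is a rectangle whose two vertical sides have length $2$ and whose two horizontal sides have length $1+bt^{-1}\leq 2$ (since $\theta>0$ together with $\|\Sigma\|_{\op}\leq 1$ force $bt^{-1}\leq\sigma_{k^*}\leq 1$), so $\oint_{\cC_t}|dz|\leq 8$; the stated constant $6$ is recovered under a mildly sharper numerical bound on $bt^{-1}$.

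For the population resolvent bound, the plan is to diagonalize $\Sigma$ and $\Sigma_t$ simultaneously, reducing matters to
\[
\|\Sigma_t^{1/2}(\Sigma-zI_p)^{-1}\Sigma_t^{1/2}\|_{\op} = \sup_{\sigma\in\sigma(\Sigma)}\frac{\sigma+t^{-1}}{|\sigma-z|}.
\]
The numerator is at most $\sigma_1+t^{-1}\leq 2$, so everything hinges on lower-bounding the denominator uniformly in $z\in\cC_t$. I would examine each side of the rectangle separately: on $\cC_{t,1}$, $\mathrm{Re}(z)=-1\leq 0\leq\sigma$ gives $|\sigma-z|\geq 1$; on $\cC_{t,3}\cup\cC_{t,4}$, $\mathrm{Im}(z)=\pm 1$ gives the same; and on the critical side $\cC_{t,2}$, the defining property of $k^*$ forces every $\sigma\in\sigma(\Sigma)$ into $[0,\sigma_{k^*+1}]\cup[\sigma_{k^*},\sigma_1]$, so that $|\sigma-z|\geq|\sigma-bt^{-1}|\geq\min(bt^{-1}-\sigma_{k^*+1},\sigma_{k^*}-bt^{-1})\geq\theta$, directly from the definition of $\theta$ (the $\square$-corrections only weaken these raw spectral gaps). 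Combining with $\theta\leq 1$ yields $\|A\|_{\op}\leq 2/\theta$.

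For the empirical bound on $\Omega_t$, my plan is to sandwich
\[
\|\Sigma_t^{1/2}(\hat\Sigma-zI_p)^{-1}\Sigma_t^{1/2}\|_{\op} \leq \|\Sigma_t^{1/2}\hat\Sigma_t^{-1/2}\|_{\op}^2\,\sup_{\sigma\in\sigma(\hat\Sigma)}\frac{\sigma+t^{-1}}{|\sigma-z|},
\]
where the first factor is at most $2$ on $\Omega_t$ by Lemma~\ref{lemma:operator_norm_Sigma_J}, and the supremum is treated exactly as for $\Sigma$ provided every $\hat\sigma_j$ stays at distance at least $\theta$ from $bt^{-1}$. The crucial step I would carry out is a \emph{relative} Weyl inequality $|\hat\sigma_j-\sigma_j|\leq\square(\sigma_j+t^{-1})$ for every $j$, derived by applying Courant--Fischer to the quadratic-form identity $|\vu^\top(\hat\Sigma-\Sigma)\vu|\leq\square\,\|\Sigma_t^{1/2}\vu\|_2^2=\square(\vu^\top\Sigma\vu+t^{-1}\|\vu\|_2^2)$ that encodes $\Omega_t$ on unit vectors $\vu$. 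Monotonicity of $\sigma\mapsto(1-\square)\sigma-\square t^{-1}$ then yields $\hat\sigma_j\geq\sigma_{k^*}-\square(\sigma_{k^*}+t^{-1})$ for $j\leq k^*$ and $\hat\sigma_j\leq\sigma_{k^*+1}+\square(\sigma_{k^*+1}+t^{-1})$ for $j>k^*$, so that $|\hat\sigma_j-bt^{-1}|\geq\theta$ follows immediately from the definition of $\theta$. The main obstacle is exactly this upgrade from an absolute to a relative Weyl bound: classical Weyl only gives $|\hat\sigma_j-\sigma_j|\leq\|\hat\Sigma-\Sigma\|_{\op}\leq\square(\sigma_1+t^{-1})$, which is too weak when $\sigma_{k^*}\ll\sigma_1$ to recover the $\square(\sigma_{k^*}+t^{-1})$ and $\square(\sigma_{k^*+1}+t^{-1})$ corrections that are built into $\theta$; the resolution is to remain in the $\Sigma_t$-weighted geometry throughout, which is precisely how $\theta$ has been tailored.
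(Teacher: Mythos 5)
Your argument is essentially the paper's: the population bound is reduced to the scalar ratios $(\sigma+t^{-1})/|\sigma-z|$ with the observation that the nearest point of the rectangle \eqref{eq:contour_PCR} to any $\sigma\in[0,1]$ is $bt^{-1}$, and the empirical bound rests on exactly the relative eigenvalue perturbation you describe, i.e.\ Courant--Fischer applied to the quadratic-form version \eqref{eq:change_of_norm} of $\Omega_t$, giving $\hat\sigma_{k^*}\geq \sigma_{k^*}-\square(\sigma_{k^*}+t^{-1})$ and $\hat\sigma_{k^*+1}\leq \sigma_{k^*+1}+\square(\sigma_{k^*+1}+t^{-1})$, hence $\min_j|\hat\sigma_j-bt^{-1}|\geq\theta$; this is word for word the paper's route (only the two indices $k^*,k^*+1$ are needed, not all $j$, but that is immaterial). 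The one place where your variant is both heavier and, as literally written, slightly off is the empirical resolvent: after sandwiching with $\hat\Sigma_t^{\pm1/2}$ and invoking Lemma~\ref{lemma:operator_norm_Sigma_J}, you say the supremum over $\sigma\in\sigma(\hat\Sigma)$ is ``treated exactly as for $\Sigma$'', but the numerator bound $\sigma+t^{-1}\leq 2$ is no longer available since on $\Omega_t$ one only has $\sigma(\hat\Sigma)\subset[0,8]$ (Lemma~\ref{lem:largest_sing_Hat_Sigma}); you can still salvage a bound of the form $C/\theta$ (e.g.\ using that $\sigma\mapsto(\sigma+t^{-1})/(\sigma-bt^{-1})$ is decreasing above $bt^{-1}+\theta$), but not the stated constant $2/\theta$, and the extra factor from Lemma~\ref{lemma:operator_norm_Sigma_J} makes it worse. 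The paper avoids this entirely by bounding $\|\Sigma_t^{1/2}(\hat\Sigma-zI_p)^{-1}\Sigma_t^{1/2}\|_{\op}\leq\|\Sigma_t\|_{\op}\,\|(\hat\Sigma-zI_p)^{-1}\|_{\op}\leq (\sigma_1+t^{-1})/\min_j|\hat\sigma_j-z|\leq 2/\theta$, keeping the weight $\Sigma_t$ (whose norm is at most $2$) rather than passing to $\hat\Sigma_t$; since the lemma is only used downstream up to absolute constants, your version is acceptable in substance, but if you want the constant $2/\theta$ as stated you should switch to this direct bound. On the perimeter, your count is the accurate one: the length of $\cC_t$ is $6+2bt^{-1}\leq 8$, so the paper's ``$6$'' is itself a harmless numerical slip and nothing needs to be recovered.
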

\beginproof Let $z\in\cC_t$. We have 
\begin{equation*}
    \left\|\Sigma_t^{\frac{1}{2}}(\Sigma-zI_p)^{-1} \Sigma_t^{\frac{1}{2}}\right\|_{\op}= \max\left(\left|\frac{\sigma_j + t^{-1}}{\sigma_j-z}\right|:\, j\in J\right)\leq \max\left(\left|\frac{\sigma_j + t^{-1}}{\sigma_j-bt^{-1}}\right|:\, j\in J\right)\leq \frac{2}{\theta}.
\end{equation*}
Given that $bt^{-1}\leq1$, the length of $\cC_t$ is at most $6$ and so $\oint_{\mathcal{C}_{t}}\left|d z\right|  \leq 6.$ Next, we have 
   \begin{align*}
        \left\|\Sigma_t^{\frac{1}{2}}\left(\hat\Sigma-zI_p\right)^{-1} \Sigma_t^{\frac{1}{2}}\right\|_{\op} \leq  \frac{\sigma_1 + t^{-1}}{\min_{j}|\hat \sigma_j-z|}\leq \frac{2}{\min_{j}\left|\hat \sigma_j-bt^{-1}\right|}.
    \end{align*}

On the event $\Omega_t$, it follows from \eqref{eq:change_of_norm} that for all $\vu\in\bR^p$,
\begin{equation} 
    (1-\square)\norm{\Sigma^{1/2}\vu}_2^2 - \square t^{-1}\norm{\vu}_2^2 \leq \norm{\hat\Sigma^{1/2} \vu}_2^2 \leq (1+\square)\norm{\Sigma^{1/2}\vu}_2^2  +  \square t^{-1}\norm{\vu}_2^2.
\end{equation}As a consequence, for all $\vu\in V_{J_*}$, we have
\begin{equation}\label{eq:lower_sing_var_hat_sigma_J_PCR}
    \norm{\hat\Sigma \vu}_2  \geq \left[(1-\square)\sigma_{k^*} - \square t^{-1}\right]\norm{\vu}_2
\end{equation}and for all $\vu\in V_{J_*^c}$,
\begin{equation}\label{eq:upper_sing_var_hat_sigma_Jc_PCR}
    \norm{\hat\Sigma \vu}_2  \leq \left[(1+\square)\sigma_{k^*+1} + \square t^{-1}\right]\norm{\vu}_2.
\end{equation}Given that $V_{J_*}$ is of dimension $k^*$ (and so $V_{J_*^c}$ is of dimension $p-k^*$), it follows from \eqref{eq:lower_sing_var_hat_sigma_J_PCR}, \eqref{eq:upper_sing_var_hat_sigma_Jc_PCR} and the Courant-Fischer minimax variational formulas (see for instance Theorem~4.2.1 in \cite{ChafaiGuedonLecuePajor2012}) that 
\begin{equation*}
    \hat \sigma_{k^*} = \max_{V: {\rm dim}(V)=k^*}\min_{\vu\in V: \norm{\vu}_2=1} \norm{\hat\Sigma \vu}_2\geq \min_{\vu\in V_{J_*}: \norm{\vu}_2=1} \norm{\hat\Sigma \vu}_2\geq  \sigma_{k^*} -  \square\left[\sigma_{k^*} + t^{-1}\right].
\end{equation*} and 
\begin{equation*}
    \hat \sigma_{k^*+1} = \min_{V: {\rm dim}(V)=p-k^*}\max_{\vu\in V: \norm{\vu}_2=1} \norm{\hat\Sigma \vu}_2\leq \max_{\vu\in V_{J_*^c}: \norm{\vu}_2=1} \norm{\hat\Sigma \vu}_2\leq  \sigma_{k^*+1} +  \square\left[\sigma_{k^*+1} + t^{-1}\right].
\end{equation*}As a consequence, on $\Omega_t$, we obtain that
\begin{equation*}
    \min_{j}\left|\hat \sigma_j-bt^{-1}\right|\geq \theta
\end{equation*}and so the result follows.
\endproof

\subsection{Proof of Proposition~\ref{prop:single_index_problem}}\label{sec:proof_single_index_problem}

It follows that $f^* \in \mathcal{H}$, which implies $g_{\mathcal{H}}^* = f^*$. We denote $\mu_\iota(\sigma) = (\mathbb{E}_t[\sigma(t) Q_\iota^{(d)}(t)])^2$, where $t = \langle X, \vw^* \rangle$ follows a probability distribution on $[-1, 1]$, and $Q_\iota^{(d)}$ is the $d$-dimensional Gegenbauer polynomial of order $\iota$. Here, $\mu_\iota(\sigma)$ is referred to as the $\iota$-th Gegenbauer coefficient of $\sigma$. For any $\iota \in \mathbb{N}_+$, let $N(d, \iota) = \binom{d+\iota-1}{\iota} - \binom{d+\iota-3}{\iota-2}$. For a fixed $\iota$, $N(d, \iota) = \Theta(d^\iota / \iota!)$ as $d \to \infty$. For any $\ell \leq L$, denote $D_\ell = \sum_{\iota=0}^\ell N(d, \iota)$ with the convention $D_{-1} = 0$. By spherical harmonic analysis (see, for instance, \cite{mei_generalization_2022} and the references therein), the spectrum of the covariance operator of $\mathcal{H}$ satisfies the following property: for any $j \leq D_L$, $\sigma_j = \lambda_\ell$ whenever $D_{\ell-1} < j \leq D_\ell$, where $\lambda_\ell = \mu_\ell(\sigma) / N(d, \ell)$; for $j > D_L$, $\sigma_j = 0$. One can verify that $\mu_\iota(\sigma) = \sum_{j: \sigma_j = \lambda_\iota} \langle f^*, \ve_j \rangle^2$. The eigenfunctions of the covariance operator of $\mathcal{H}$ are given by spherical harmonics.

Therefore, for any $k \leq D_L$, let $\ell$ satisfy $D_{\ell-1} < k \leq D_\ell$. It follows that $\|P_{k+1:\infty}g_{\mathcal{H}}^*\|_{L^2(\mu)}^2 = \sum_{j=k+1}^{D_\ell}\langle f^*, \ve_j \rangle^2 + \sum_{\iota=\ell+1}^\infty (\sum_{j=D_{\iota-1}+1}^{D_\iota} \langle f^*, \ve_j \rangle^2) = \sum_{j=k+1}^{D_\ell} \langle f^*, \ve_j \rangle^2 + \sum_{\iota=\ell+1}^L \mu_\iota(\sigma)$. It is easy to see that $\|P_{D_L+1:\infty}f^*\|_{L^2(\mu)}=0$, hence $k^\circ(N)\leq D_L$. Let $\alpha = \frac{|\langle f^*, \ve_{D_L} \rangle|^2}{2\sigma_\xi^2}$. We consider the following two cases:
\begin{enumerate}
    \item If $N \leq D_L$, there exists $\ell \leq L$ such that $D_{\ell-1} < N \leq D_\ell$. From the aforementioned estimate of $\|P_{k+1:\infty}g_{\mathcal{H}}^*\|_{L^2(\mu)}^2$, we have $\frac{k^\circ(N)}{N} \geq \sigma_\xi^{-2} (\sum_{j=k+1}^{D_\ell} \langle f^*, \ve_j \rangle^2 + \sum_{\iota=\ell+1}^L \mu_\iota(\sigma)) > \alpha$. In this case, alignment is deficient. Moreover, notice that $2\alpha\sigma_\xi^2\leq \|P_{k^\circ+1:\infty}f^*\|_{L^2(\mu)}^2$. Therefore, $2\alpha\sigma_\xi^2\leq\|P_{k^\circ+1:\infty}f^*\|_{L^2(\mu)}^2\leq \sigma_\xi^2\frac{k^\circ(N)}{N}$, and hence $k^\circ(N)\geq 2\alpha N$. Combining with $k^\circ(N)\leq D_L$, there holds $k^\circ(N)\geq\min(\lceil 2\alpha N\rceil, D_L)$.

    \item If $N > D_L$, for $k = D_L$, we have $\|P_{k+1:\infty}g_{\mathcal{H}}^*\|_{L^2(\mu)}^2 = 0$. Consequently, $\frac{k^\circ(N)}{N} \leq \frac{D_L}{N}$. When $N > (1/\alpha) D_L$, alignment is efficient. Suppose there exists $k\leq D_L-1$, such that $\|P_{k+1:\infty}f^*\|_{L^2(\mu)}^2\leq \sigma_\xi^2\frac{k}{N}$, then necessarily $2\alpha\sigma_\xi^2\leq \sigma_\xi^2\frac{k}{N}$, that is, $k\geq 2\alpha N$. However, since $\alpha N>D_L$, we know $k\geq 2\alpha N>2D_L>D_L$, leading to a contradiction. Therefore, in this case, $k^\circ(N)=D_L$.
\end{enumerate}

\subsection{Proof of Proposition~\ref{prop:sobolev_regression}}\label{sec:proof_sobolev_regression}

Let $f^* = \sum_{j=1}^\infty c_j f_j$. Then from $\|T_K^{-s/2}f^*\|_{L^2(\mu)}<\infty$, there exists some absolute constant $0<M<\infty$ such that $\sum_{j=1}^\infty c_j^2\sigma_j^{-s}<M$. Now we have
\begin{align*}
\|P_{k+1:\infty}f^*\|_{L^2(\mu)}^2 = \sum_{j>k}c_j^2 \leq \sup_{j>k}\sigma_j^s\sum_{j>k}c_j^2\sigma_j^{-s} \leq M\sigma_{k+1}^s \lesssim M k^{-s\alpha}.
\end{align*}
Solving $Mk^{-s\alpha}\leq \sigma_\xi^2\frac{k}{N}$ gives $k^\circ(N)\lesssim N^{\frac{1}{1+s\alpha}}$.




\bibliographystyle{alpha}
\bibliography{biblio,other}
\end{document}